\let\frak\mathfrak
\def\>{\relax\ifmmode\mskip.666667\thinmuskip\relax\else\kern.111111em\fi}
\def\<{\relax\ifmmode\mskip-.333333\thinmuskip\relax\else\kern-.0555556em\fi}
\def\vsk#1>{\vskip#1\baselineskip}
\def\vv#1>{\vadjust{\vsk#1>}\ignorespaces}
\def\vvn#1>{\vadjust{\nobreak\vsk#1>\nobreak}\ignorespaces}
 \let\alb\allowbreak
\def\fratop{\genfrac{}{}{0pt}1}
\def\satop#1#2{\fratop{\scriptstyle#1}{\scriptstyle#2}}
\let\dsize\displaystyle  \let\ssize\scriptstyle
\let\sssize\scriptscriptstyle 
 \let\vp\vphantom \let\hp\hphantom
\let\Medskip\medskip
\def\medskip{\par\Medskip}
\let\Bigskip\bigskip
\def\bigskip{\par\Bigskip}
\let\Maketitle\maketitle
\def\maketitle{\Maketitle\thispagestyle{empty}\let\maketitle\empty}
\newtheorem{thm}{Theorem}[section]
\newtheorem{cor}[thm]{Corollary}
\newtheorem{lem}[thm]{Lemma}
\newtheorem{prop}[thm]{Proposition}
\numberwithin{equation}{section}
\theoremstyle{definition}
\newtheorem*{rem}{Remark}
\newtheorem*{example}{Example}
\let\mc\mathcal
\let\nc\newcommand
\let\al\alpha
\let\bt\beta
\let\dl\delta
\let\Dl\Delta
\let\eps\varepsilon
\let\gm\gamma
\let\Gm\Gamma
\let\ka\kappa
\let\la\lambda
\let\pho\phi
\let\phi\varphi
\let\si\sigma
\let\Si\Sigma
\let\Tht\Theta
\let\tht\theta
\let\Om\Omega
\let\der\partial
\let\Hat\widehat
\let\ox\otimes
\let\Tilde\widetilde
\let\bra\langle
\let\ket\rangle
\let\ge\geqslant
\let\geq\geqslant
\let\le\leqslant
\let\leq\leqslant
\let\on\operatorname
\let\bi\bibitem
\let\bs\boldsymbol
\def\C{{\mathbb C}}
\def\Z{{\mathbb Z}}
\def\R{{\mathbb R}}
\def\Cc{{\mc C}}
\def\F{{\mc F}}
\def\Hc{{\mc H}}
\def\Ic{{\mc I}}
\def\Mc{{\mc M}}
\def\+#1{^{\{#1\}}}
\def\lsym#1{#1\alb\dots\relax#1\alb}
\def\lc{\lsym,}
\def\Hom{\on{Hom}}
\def\id{\on{id}}
\def\Res{\on{Res}}
\def\tr{\on{tr}}
\def\cirs{{\raise.2ex\hbox{$\sssize\circ$}}}
\def\albt{\al,\>\bt}
\def\btal{\bt\<,\>\al}
\def\blai{\bla,\<\>i}
\def\ii{i,\<\>i}
\def\ij{i,\<\>j}
\def\ik{i,\<\>k}
\def\ji{j,\<\>i}
\def\jk{j,\<\>k}
\def\ki{k,\<\>i}
\def\mm{m,\<\>m}
\def\Ii{I\<\<,\<\>i}
\def\IJ{I\<\<,\<\>J}
\def\JJ{J,\<\>J}
\def\JK{J,\<\>K}
\def\pp{p,\<\>p}
\def\gl{\mathfrak{gl}}
\def\glnn{{\gl_n}}
\def\beq{\begin{equation}}
\def\eeq{\end{equation}}
\def\be{\begin{equation*}}
\def\ee{\end{equation*}}
\nc{\bea}{\begin{eqnarray*}}
\nc{\eea}{\end{eqnarray*}}
\nc{\bean}{\begin{eqnarray}}
\nc{\eean}{\end{eqnarray}}
\nc{\Ref}[1]{{\rm(\ref{#1})}}
\let\ga\gamma
\let\Ga\Gamma
\def\Cxs{\C^\times}
\def\Cxx{{\Cxs}}
\def\mb{\bs m}
\def\bss{\bs s}
\def\Thb{\bs\Tht}
\nc{\bla}{{\bs\la}}
\nc{\Il}{{\Ic_{\bla}}}
\nc{\Fla}{\F_\bla}
\nc{\tfl}{{T^*\<\Fla}}
\nc{\GL}{{GL_n(\C)}}
\nc{\GLC}{{GL_n(\C)\times\C^*}}
\def\tfls{{T^*\<\<\Fla}}
\def\dti{\tilde d}
\def\et{\tilde e}
\def\ellt{\tilde\ell}
\def\blat{{\tilde{\bs\la}}}
\def\qti{\tilde q}
\def\kat{\tilde\ka}
\def\dch{\check d}
\def\tch{\check t}
\def\gac{\check\gm}
\def\gmd{\,\acute{\!\gm}}
\def\GGd{\acute\GG}
\def\Sid{\acute\Si}
\def\thd{\acute\tht}
\def\Thd{\acute\Thb}
\def\tdd{\acute t}
\def\zdd{\acute z}
\def\ttd{\>\acute{\<\<\TT}}
\def\zzd{\>\acute{\<\<\zz}}
\def\pii{\pi\sqrt{\<-1}}
\let\Dx D
\def\xxx{x_1\lc x_n}
\def\zzz{z_1\lc z_n}
\def\Ima{I^{\<\>\max}}
\def\Imi{I^{\<\>\min}}
\def\Sla{S_{\la_1}\!\<\lsym\times S_{\la_N}}
\def\It{\tilde I}
\let\sd s 
\def\Wt{{\<\>\Tilde{\<W\<}\<\>}}
\def\Uh{\Hat U}
\def\Wh{{\<\>\Hat{\<W\<}\<\>}}
\def\Who{\Wh{\vp W}^\cirs}
\def\Omh{\Hat\Om}
\def\Psh{\Hat\Psi}
\def\Gmh{\Hat\Gm}
\def\Yh{\Hat Y}
\def\gaq{\<\>\tilde{\<\gm\<\>}\<}
\def\GGq{\Tilde\GG}
\def\thq{\tilde\tht}
\def\Thq{\Tilde\Thb}
\def\Pst{\Tilde\Psi}
\def\Pshb{\>\overline{\<\<\smash{\Psh}\vp\Psi\<\<}\>}
\def\zb{\bs z}
\def\ddk_#1{q_{#1}\<\>\frac\der{\der\<\>q_{#1}}}
\def\Hqtl{\Hc^{\<\>\qti}_T(\tfl)}
\def\Wqt{W^{\qti}}
\def\hgrtv{h_{\<\>\mathrm{GRTV}}}
\def\bul{\mathbin{\raise.2ex\hbox{$\sssize\bullet$}}}
\def\intt{\mathchoice
{\mathop{\raise.2ex\rlap{$\,\,\ssize\backslash$}{\intop}}\nolimits}
{\mathop{\raise.3ex\rlap{$\,\sssize\backslash$}{\intop}}\nolimits}
{\mathop{\raise.1ex\rlap{$\sssize\>\backslash$}{\intop}}\nolimits}
{\mathop{\rlap{$\sssize\<\>\backslash$}{\intop}}\nolimits}}
\let\kp\kappa 
\def\GZ/{Gelfand-Zetlin}
\def\KZ/{{\slshape KZ\/}}
\def\qKZ/{{\slshape qKZ\/}}
\def\qKZB/{{\slshape qKZB\/}}
\def\XXX/{{\slshape XXX\/}}
\def\XXZ/{{\slshape XXZ\/}}
\def\new{{\mathrm{new}}}
\def\FF{{\bs F}}
\def\rr{{\bs r}}
\def\ww{{\bs w}}
\def\zz{{\bs z}}
\def\pp{{\bs p}}
\def\qq{{\bs q}}
\def\TT{{\bs t}}
\def\TTT{\tilde\TT}
\def\TTc{\check\TT}
\def\glN{{\frak{gl}_N}}
\def\Sym{\on{Sym}}
\def\GG{{\bs\Ga}}
\def\XX{{\mc X}}
\def\St{{\on{Stab}}}
\def\xx{{\bs x}}
\def\yy{{\bs y}}
\def\Cnn{(\C^N)^{\otimes\<\>n}}
\def\Cnnl{\Cnn_{\<\>\bla}}
\def\UU{{\check U}}
\def\WW{{\check W}}
\def\qqt{{\bs{\tilde q}}}
\def\naqla{\nabla^{\>\vp|\mathrm{\sssize quant}}}
\begin{document}

\hrule width0pt
\vsk->

\title[Quantum differential equations, Pieri rules, and Gamma-theorem]
{$\qq\<\>$-Hypergeometric solutions of quantum differential\\[3pt]
equations, quantum Pieri rules, and Gamma theorem}

\author[Vitaly Tarasov and Alexander Varchenko]
{Vitaly Tarasov$\>^\circ$ and Alexander Varchenko$\>^\star$}

\maketitle

\begin{center}
{\it $^{\star}\<$Department of Mathematics, University
of North Carolina at Chapel Hill\\ Chapel Hill, NC 27599-3250, USA\/}

\vsk.5>
{\it $^{\star}\<$Faculty of Mathematics and Mechanics, Lomonosov Moscow State
University\\ Leninskiye Gory 1, 119991 Moscow GSP-1, Russia\/}

\vsk.5>
{\it $\kern-.4em^\circ\<$Department of Mathematical Sciences,
Indiana University\,--\>Purdue University Indianapolis\kern-.4em\\
402 North Blackford St, Indianapolis, IN 46202-3216, USA\/}

\vsk.5>
{\it $^\circ\<$St.\,Petersburg Branch of Steklov Mathematical Institute\\
Fontanka 27, St.\,Petersburg, 191023, Russia\/}
\end{center}

{\let\thefootnote\relax
\footnotetext{\vsk-.8>\noindent
$^\circ\<${\sl E\>-mail}:\enspace vt@math.iupui.edu\>, vt@pdmi.ras.ru\>
\\
$^\star\<${\sl E\>-mail}:\enspace anv@email.unc.edu\>,
supported in part by NSF grants DMS-1362924, DMS-1665239}}

\vsk>
{\leftskip3pc \rightskip\leftskip \parindent0pt \Small
{\it Key words\/}: Flag varieties, quantum differential equation,
dynamical connection, $q$-hyper\-geometric solutions
\vsk.6>
{\it 2010 Mathematics Subject Classification\/}: 82B23, 17B80, 14N15, 14N35
\par}

\begin{abstract}
We describe \,$q$-hypergeometric solutions of the equivariant quantum
differential equations and associated \qKZ/ difference equations for
the cotangent bundle $\tfl$ of a partial flag variety \,$\Fla$\,.
These \,$q$-hypergeometric solutions manifest a Landau-Ginzburg mirror symmetry
for the cotangent bundle. We formulate and prove Pieri rules for quantum
equivariant cohomology of the cotangent bundle. Our Gamma theorem for \,$\tfl$
\,says that the leading term of the asymptotics of the \,$q$-hypergeometric
solutions can be written as the equivariant Gamma class of the tangent bundle
of $\tfl$ multiplied by the exponentials of the equivariant first Chern classes
of the associated vector bundles. That statement is analogous to the statement
of the gamma conjecture by B.\,Dubrovin and by S.\,Galkin, V.\,Golyshev,
and H.\,Iritani, see also the Gamma theorem for \,$\F_\bla$
\,in Appendix~\ref{app A}.
\end{abstract}

\vsk.8>
\rightline{\it In memory of Victor Lomonosov (1946\,--\,2018)}

{\small\tableofcontents\par}

\setcounter{footnote}{0}
\renewcommand{\thefootnote}{\arabic{footnote}}

\section{Introduction}

In \cite{MO}, D.\,Maulik and A.\,Okounkov develop a general theory connecting
quantum groups and equivariant quantum cohomology of Nakajima quiver varieties,
see \cite{N1, N2}. In particular, in \cite{MO} the operators of quantum
multiplication by divisors are described. As it is well-known, these operators
determine the equivariant quantum differential equations of a quiver variety.
In this paper we apply this description to the cotangent bundles \,$\tfl$
\,of the \,$\glnn$ \,$N$-step partial flag varieties and construct
\,$q$-hypergeometric solutions of the associated equivariant quantum
differential equations and \qKZ/ difference equations. The \,$q$-hypergeometric
solutions are constructed in the form of Jackson integrals.

Studying solutions of the equivariant quantum differential equations may lead
to better understanding Gromov-Witten invariants of the cotangent bundle,
cf.~Givental's study of the $J$-function in \cite{G1,G2,G3}.

The presentation of solutions of the equivariant quantum differential equations
as \,$q$-hypergeometric integrals manifests a version of the Landau-Ginzburg
mirror symmetry for the cotangent bundle.

In \cite{MO} the equivariant quantum differential equations come together
with a compatible system of difference equations called the \qKZ/ equations.
In \cite{GRTV, RTV} the equivariant quantum differential equations and \qKZ/
difference equations were identified with the dynamical differential equations
and \qKZ/ difference equations with values in the tensor product
$\Cnn$ of vector representations of $\frak{gl}_N$.
The $q$-hypergeometric solutions of the $\Cnn$-valued \qKZ/
difference equations were constructed long time ago in \cite{TV1}, see also
\cite{TV2}-\cite{TV4}. It was expected that those \,$q$-hypergeometric
solutions are also solutions of the compatible dynamical differential
equations. That fact is proved in this paper and is the first main result of
the paper. The proof is based on some new rather nontrivial identities for the
integrand of the Jackson integral. The integrand is the product of the scalar
master function and a vector-valued function, whose coordinates are called
weight functions. In \cite{RTV} it was shown that the weight functions are
nothing else but the stable envelopes of \cite{MO} for the cotangent bundle of
the partial flag varieties. Our new identities can be interpreted as new
identities for stable envelopes. We interpret these new identities as Pieri
rules in quantum equivariant cohomology of the cotangent bundle of the partial
flag variety. That is our second main result.

Our Gamma theorem for $\tfl$ (Theorem \ref{thm gamma}) says that the leading
term of the asymptotics of the \,$q$-hypergeometric solutions for $\tfl$ is
the product of the equivariant gamma class of the tangent bundle of $\tfl$
and the exponentials of the equivariant first Chern classes of the associated
vector bundles. That statement is analogous to the statement of the gamma
conjecture by B.\,Dubrovin and by S.\,Galkin, V.\,Golyshev, and H.\,Iritani,
see Appendix~\ref{app A}. See also the Gamma theorem for $\F_\bla$ (Theorem
\ref{thm gamma fl}).

\vsk.2>
The paper is organized as follows. In Section \ref{DQKZ} we introduce
the $\Cnn$-valued dynamical and \qKZ/ equations. In Section
\ref{Wfmf} we define the weight functions and list their basic properties.
In Section \ref{sMFDiDi} we introduce the master function and describe the
discrete differentials --- the quantities with zero Jackson integrals.
We also formulate there two key identities for the weight functions ---
Theorems \ref{thm1} and \ref{2thm}. We prove Theorem \ref{thm1} in
Section \ref{prthm1} and Theorem \ref{2thm} in Section \ref{prthm2}.
In Section \ref{Corr}, we summarize Theorems \ref{thm1} and \ref{2thm}
as a statement about the integrand of the main Jackson integral.
In Section \ref{sirfs} we construct integral representations for solutions
of the $\Cnn$-valued dynamical equations. In Section \ref{sQde}
we introduce the equivariant quantum differential equations and explain
how their \,$q$-hypergeometric solutions are obtained from solutions of
the $\Cnn$-valued dynamical equations. In Section \ref{QuPr} we formulate and
prove Pieri rules. In Section~\ref{seckth} we show that the space of solutions
of the quantum differential equation can be identified with the vector space of
the equivariant K-theory algebra. We also discuss two limiting cases of
the quantum differential equation. In Appendix~\ref{Sch} we discuss the basic
properties of Schubert polynomials, and in Appendix~\ref{app A} we formulate
our Gamma theorems.

\vsk.2>
The authors thank G.\,Cotti, V.\,Golyshev, and R.\,Rimanyi for useful
discussions. The second author thanks the Hausdorff Institute for Mathematics
in Bonn for hospitality in March 2018, when the Gamma theorem was discovered.
The second author also thanks the Max Planck Institute for Mathematics in Bonn
for hospitality in May--June 2018.

\section{Dynamical and \qKZ/ equations}
\label{DQKZ}

\subsection{Notations}
\label{Nts}
Fix $N, n\in \Z_{>0}$ and $h,\ka\in\Cxs$.
Let \,$\bla\in\Z^N_{\geq 0}$, \,$|\bla|=\la_1+\ldots+\la_N =n$.
Let $I=(I_1\lc I_N)$ be a partition of $\{1\lc n\}$ into disjoint subsets
$I_1\lc I_N$. Denote $\Il$ the set of all partitions $I$ with
$|I_j|=\la_j$, \;$j=1\lc N$.

Consider $\C^N$ with basis
$v_i=(0\lc 0,1_i,0\lc 0)$, $i=1\lc N$, and the tensor product
$\Cnn$ with basis
\vvn-.5>
\be
v_I\>=\,v_{i_1}\otimes\dots\otimes v_{i_n},
\ee
where the index $I$ is a partition $(I_1\lc I_N)$ of $\{1\lc n\}$ into disjoint subsets
$I_1\lc I_N$ and $i_j =m$ if $j\in I_m$.

The space $\Cnn$ is a module over the Lie algebra $\glN$ with basis $e_{\ij}$, $i,j=1\lc N$.
The $\glN$-module $\Cnn$ has weight decomposition $\Cnn=\sum_{|\bla|=n} \Cnnl$,
where $\Cnnl$ is the subspace with basis $(v_I)_{I\in \Il}$.

\subsection{Dynamical differential equations}
\label{Dde}
Define the linear operators \,$X_1\lc X_n$ \,acting on \,$\Cnn$-valued
functions of \,$\zz=(z_1\lc z_n)$\,, \,$\qq=(q_1\lc q_N)$ \,and called
the dynamical Hamiltonians:
\vvn-.5>
\begin{align}
\label{Xi}
X_i(\zz;h;\qq)\,=\,\sum_{a=1}^n\,z_ae^{(a)}_{\ii}-\>
h\>\Bigl(\>\frac{\et_{i,i}(1-\et_{\ii})}2\,&{}+\!
\sum_{1\leq a<b\leq n}\,\sum_{k=1}^N\,e_{\ik}^{(a)}\>e_{\ki}^{(b)}\>+{}
\\[3pt]
&{}+\>\sum_{\satop{j=1}{j\ne i}}^N\,\frac{q_j}{q_i\<-q_j}\,
(\et_{\ij}\>\et_{\ji}\<-\et_{\ii})\<\Bigr)\>,\kern-1.4em
\notag
\\[-22pt]
\notag
\end{align}
where \,$\et_{s,\<\>t} = \sum_{a=1}^n e_{s,\<\>t}^{(a)}$ and a superscript
means that the corresponding operator acts on the corresponding tensor factor.
The differential operators
\beq
\label{nady}
\nabla_{\qq,\ka,i}\,=\,\ka\>q_i\frac{\der}{\der q_i} - X_i(\zz;h;\qq)\,,
\qquad i=1\lc N\>,
\eeq
preserve the weight decomposition of $\Cnn$ and
pairwise commute, see \cite{TV2}, also \cite[Section 3.4]{GRTV}, \cite[Section 7.1]{RTV}, \cite{MTV1}.
The operators $\nabla_{\qq,\ka,i}$ define the $\Cnn$-valued {\it dynamical connection}.
The system of differential equations
\beq
\label{DEQ}
\ka\>q_i\>\frac{\der f}{\der q_i}\,=\,X_i(\zz;h;\qq)\>f\,, \qquad i=1\lc N\>,
\eeq
on a \,$\Cnn$-valued function $f(\zz;h;\qq)$ is called the
{\it dynamical equations}.

\subsection{Difference \qKZ/ equations}
\label{sec qkz}

Define the $R$-matrices acting on $\Cnn$,
\be
R^{(\ij)}(u)\,=\,\frac{u-h\<\>P^{(\ij)}}{u-h}\;,\qquad
i,j=1\lc n\,,\quad i\ne j\,. \kern-3em
\vv.3>
\ee
Define the \qKZ/ operators \,$K_1\lc K_n$ acting on $\Cnn$:
\vvn.4>
\begin{align*}
K_i(\zz;h;\qq;\kp)\>&{}=\,
R^{(\ii-1)}(z_i\<-z_{i-1}+\ka)\,\dots\,R^{(i,1)}(z_i\<-z_1+\ka)\,\times{}
\\[2pt]
& {}\>\times\<\;q_1^{e_{1,1}^{(i)}}\!\dots\,q_N^{e_{N,N}^{(i)}}\,
R^{(i,n)}(z_i\<-z_n\<\>)\,\dots\,R^{(\ii+1)}(z_i\<-z_{i+1}
\<\>)\,.
\end{align*}
The \qKZ/ operators preserve the weight decomposition of $\Cnn$
and form a discrete flat connection,
\be
K_i(z_1\lc z_j+\ka\lc z_n;\qq;\ka)\,K_j(\zz;h;\qq;\kp)\,=\,
K_j(z_1\lc z_i+\ka\lc z_n;\qq;\ka)\,K_i(\zz;h;\qq;\kp)
\ee
for all $i,j$, see \cite{FR}.
The system of difference equations with step $\ka$,
\beq
\label{K_i}
f(z_1\lc z_i+\ka\lc z_n;\qq)\,=\,K_i(\zz;h;\qq;\kp)\,f(z_1\lc z_n;\qq),\qquad
i=1\lc N\>,
\vv.3>
\eeq
on a \,$ (\C^N)^{\otimes n}\<$-valued
function $f(\zz,\qq)$ is called the \qKZ/ equations.

\begin{thm} [{\cite{TV2}}]
\label{thm qkz}
The systems of dynamical and \qKZ/ equations are compatible.
\qed
\end{thm}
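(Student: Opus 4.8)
The statement asserts that the difference connection $\{K_i\}$ and the differential (dynamical) connection $\{\nabla_{\qq,\ka,i}\}$ together form an integrable, i.e.\ flat, system. Two of the three ingredients of flatness are already recorded above: the $\nabla_{\qq,\ka,i}$ pairwise commute, and the $K_i$ form a discrete flat connection. So the remaining content is the \emph{mixed} compatibility, obtained by applying $\ka\>q_i\>\der/\der q_i$ to the \qKZ/ equation \Ref{K_i} and substituting the dynamical equation \Ref{DEQ} at both $\zz$ and the shifted point. The plan is to first show that common solvability of \Ref{DEQ} and \Ref{K_i} is equivalent to the operator identity
\beq
\label{plan-mix}
\ka\>q_i\,\frac{\der K_j}{\der q_i}\,=\,X_i(z_1\lc z_j+\ka\lc z_n;h;\qq)\,K_j\,-\,K_j\,X_i(\zz;h;\qq)\,,
\eeq
for all $i=1\lc N$ and $j=1\lc n$, where $K_j=K_j(\zz;h;\qq;\kp)$.

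The right-hand side simplifies because the only $\zz$-dependence of $X_i$ in \Ref{Xi} sits in the term $\sum_a z_a e^{(a)}_{\ii}$, so that $X_i(z_1\lc z_j+\ka\lc z_n;h;\qq)=X_i(\zz;h;\qq)+\ka\>e^{(j)}_{\ii}$ and the right side of \Ref{plan-mix} becomes $[\,X_i,K_j\,]+\ka\>e^{(j)}_{\ii}K_j$. For the left side, the key observation is that $K_j$ depends on $\qq$ only through its diagonal middle factor $D_j=q_1^{e^{(j)}_{1,1}}\!\dots q_N^{e^{(j)}_{N,N}}$; writing $K_j=A_jD_jB_j$ with $A_j,B_j$ the ($\qq$-independent) strings of $R$-matrices, one gets $\ka\>q_i\der_{q_i}K_j=\ka\>A_j\>e^{(j)}_{\ii}D_jB_j$. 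Thus \Ref{plan-mix} reduces to the purely algebraic commutator identity
\beq
\label{plan-comm}
[\,X_i(\zz;h;\qq),\,K_j\,]\,=\,\ka\bigl(A_j\>e^{(j)}_{\ii}-e^{(j)}_{\ii}A_j\bigr)D_jB_j\,.
\eeq
Moreover the term $-h\>\et_{\ii}(1-\et_{\ii})/2$ in \Ref{Xi} is diagonal and global, hence commutes with every $R$-matrix and with $D_j$, so it drops out of both sides at once.

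To verify \Ref{plan-comm} I would use three structural facts. First, the $\gl_N$-invariance of the rational $R$-matrix, $[\,R^{(a,b)}(u),\,e^{(a)}_{s,t}+e^{(b)}_{s,t}\,]=0$, which implies that every global current $\et_{s,t}=\sum_a e^{(a)}_{s,t}$ commutes with $A_j$ and $B_j$. Second, the conjugation rule $D_j\>\et_{s,t}\>D_j^{-1}=\et_{s,t}+(q_s/q_t-1)\>e^{(j)}_{s,t}$, which is the sole source of $\qq$-rational factors. Third, the Yang–Baxter equation together with the explicit form $R^{(a,b)}(u)=(u-h\>P^{(a,b)})/(u-h)$, used to transport the weighted term $\sum_a z_a e^{(a)}_{\ii}$ and the ordered term $\sum_{a<b}\sum_k e^{(a)}_{\ik}e^{(b)}_{\ki}$ through the string of $R$-matrices. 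Commuting $X_i$ past $A_j$ and $B_j$ by the first and third rules, and then past $D_j$ by the second, should produce on the right exactly the diagonal insertion $\ka\>e^{(j)}_{\ii}$ accompanied by residual rational terms, which must telescope to the right-hand side of \Ref{plan-comm}.

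The main obstacle is the simultaneous bookkeeping of the ordered ``triangular'' term $-h\sum_{a<b}\sum_k e^{(a)}_{\ik}e^{(b)}_{\ki}$ and the $\qq$-rational term $-h\sum_{l\ne i}\frac{q_l}{q_i-q_l}(\et_{i,l}\et_{l,i}-\et_{\ii})$. The former does not commute with the $R$-matrices site by site and must be pushed through $A_j$ and $B_j$ by iterated Yang–Baxter, each step generating boundary contributions localized at site $j$; the latter, after conjugation across $D_j$, produces factors $q_i/q_l$ that must combine with those boundary contributions so that the rational coefficients $q_l/(q_i-q_l)$ cancel in pairs, leaving only $\ka\>e^{(j)}_{\ii}K_j$. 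Checking that the ``KZ part'' carried by the $R$-matrices and the ``dynamical part'' carried by $D_j$ balance exactly at site $j$ is the crux. Alternatively, once the operators here are matched to the standard normalization, one may deduce \Ref{plan-mix} from the general compatibility theorem for trigonometric differential and difference equations of \cite{TV2}, which is the route behind the cited result.
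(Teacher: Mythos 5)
The paper offers no argument for this theorem: it is stated with a reference to \cite{TV2} and a QED symbol, so its ``proof'' is precisely the citation you mention in your closing sentence. Measured against that, your reduction is a correct and useful unpacking of what compatibility means. The equivalence of joint solvability of \Ref{DEQ} and \Ref{K_i} with your mixed operator identity, the simplification $X_i(z_1\lc z_j+\ka\lc z_n;h;\qq)=X_i(\zz;h;\qq)+\ka\,e^{(j)}_{\ii}$, the computation $\ka\,q_i\>\der_{q_i}K_j=\ka\,A_j\>e^{(j)}_{\ii}D_jB_j$, the observation that the diagonal term $-\>h\>\et_{i,i}(1-\et_{\ii})/2$ drops out, and the three structural facts (weight preservation by the $R$-matrices, the conjugation rule $D_j\>\et_{s,t}\>D_j^{-1}=\et_{s,t}+(q_s/q_t-1)\>e^{(j)}_{s,t}$, and Yang--Baxter) are all accurate.

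Nevertheless, as a standalone proof the proposal has a genuine gap at exactly the point you flag as ``the crux.'' The entire content of the theorem is the cancellation between the boundary terms generated by pushing $-\>h\sum_{a<b}\sum_k e^{(a)}_{\ik}e^{(b)}_{\ki}$ through the ordered products $A_j$ and $B_j$, and the terms with coefficients $q_l/(q_i-q_l)$ produced by conjugating $\et_{i,l}\>\et_{l,i}-\et_{\ii}$ across $D_j$. You assert that these ``must telescope'' but do not exhibit the computation, and it is not routine: the $R$-matrices in $A_j$ carry arguments shifted by $\ka$ while those in $B_j$ do not, so the two strings contribute asymmetrically; the cross terms $e^{(a)}_{\ik}e^{(b)}_{\ki}$ with exactly one of $a,b$ equal to $j$ must be tracked separately from the rest; and the weighted term $\sum_a z_a e^{(a)}_{\ii}$ produces its own commutators with each $R^{(j,a)}$ that must be absorbed. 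Until that bookkeeping is carried out --- or the precise matching of the present normalization of $X_i$ and $K_j$ with the operators of \cite{TV2} is made explicit, which is what the paper implicitly relies on --- the argument remains a plan rather than a proof.
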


\section{Weight functions}
\label{Wfmf}

\subsection{Weight functions $\WW_I$}
\label{dwf}
For $I\in \Il$, we define the weight functions $\WW_I(\TT;\zz)$,
cf.~\cite{TV1,TV4,RTV}. The functions \,$\WW_I(\TT;\zz)$ \,here coincide with
the functions \,$W_I(\TT;\zz;h)$ defined \,in \cite[Section~3.1]{RTV}.

\vsk.2>
Recall \,$\bla=(\la_1\lc\la_N)$.
Denote \,$\la^{(i)}\<=\la_1\lsym+\la_i$, $i=1\lc N-1$,
$\la^{(N)}=n$, and
\,$\la^{\{1\}}\<=\sum_{i=1}^{N-1}\la^{(i)}=$
$ \sum_{i=1}^{N-1}(N\<\<-i)\>\la_i$\>.
Recall $I=(I_1\lc I_N)$. Set
\;$\bigcup_{\>k=1}^{\,j}I_k=\>\{\>i^{(j)}_1\!\lsym<i^{(j)}_{\la^{(j)}}\}$\>.
Consider the variables \,$h$ \,and
\,$t^{(j)}_a$, \,$j=1\lc N-1$, \,$a=1\lc\la^{(j)}$.
\,Set \,$t^{(N)}_a\!=z_a$, \,$a=1\lc n$\>.
Denote $\TT^{(j)}=(t^{(j)}_k)_{k\leq\la^{(j)}}$ and
\,$\TT=(\TT^{(1)}\lc\TT^{(N-1)})$.

\vsk.2>
The weight functions are
\vvn.4>
\beq
\label{hWI-}
\WW_I(\TT;\zb)\,=\,(-h)^{\>\la^{\{1\}}}\,
\Sym_{\>t^{(1)}_1\!\lc\,t^{(1)}_{\la^{(1)}}}\,\ldots\;
\Sym_{\>t^{(N-1)}_1\!\lc\,t^{(N-1)}_{\la^{(N-1)}}}\UU_I(\TT;\zb)\,,
\vv.3>
\eeq
\be
\UU_I(\TT;\zb)\,=\,\prod_{j=1}^{N-1}\,\prod_{a=1}^{\la^{(j)}}\,\biggl(
\prod_{\satop{c=1}{i^{(j+1)}_c\<<\>i^{(j)}_a}}^{\la^{(j+1)}}
\!\!(t^{(j)}_a\<\<-t^{(j+1)}_c-h)
\prod_{\satop{d=1}{i^{(j+1)}_d>\>i^{(j)}_a}}^{\la^{(j+1)}}
\!\!(t^{(j)}_a\<\<-t^{(j+1)}_d)\,\prod_{b=a+1}^{\la^{(j)}}
\frac{t^{(j)}_a\<\<-t^{(j)}_b\<\<-h}{t^{(j)}_a\<\<-t^{(j)}_b}\,\biggr)\,.
\ee
In these formulas for a function $f(t_1\lc t_k)$ of some variables, we denote
\be
\Sym_{t_1\lc t_k}f(t_1\lc t_k)\,=\,
\sum_{\sigma\in S_k}f(t_{\sigma_1}\lc t_{\sigma_k}).
\ee

\begin{example}
Let $N=2$, $n=2$, $\bla=(1,1)$, $I=(\{1\},\{2\})$,
$J=(\{2\}, \{1\})$. Then
\bea
\WW_I(\TT;\zz)= -h\, (t^{(1)}_1\<\<-z_2),
\qquad
\WW_J(\TT;\zz)= -h\, (t^{(1)}_1\<\<-z_1-h).
\eea
\end{example}

\begin{example}
Let $N=2$, $n=3$, $\bla=(1,2)$, $I=(\{2\},\{1,3\})$. Then
\bea
\WW_I(\TT;\zz)= -h\, (t^{(1)}_1\<\<-z_1-h)\>(t^{(1)}_1\<\<-z_3).
\eea
\end{example}

\begin{example}
Let $N=2$, $n=3$, $\bla=(2,1)$, $I=(\{1,3\},\{2\})$. Then
\begin{align*}
\WW_I(\TT;\zz)\,=\,(-h)^2\,\Bigl( & (t^{(1)}_1\<\<-z_2)\>(t^{(1)}_1\<\<-z_3)\>
(t^{(1)}_2\<\<-z_1-h)(t^{(1)}_2\<\<-z_2-h)\,
\frac{t^{(1)}_1\<\<-t^{(1)}_2\<\<-h}{t^{(1)}_1\<\<-t^{(1)}_2}\>+{}
\\[3pt]
{}+{}\> & (t^{(1)}_2\<\<-z_2)\>(t^{(1)}_2\<\<-z_3)\>
(t^{(1)}_1\<\<-z_1-h)(t^{(1)}_1\<\<-z_2-h)\,
\frac{t^{(1)}_2\<\<-t^{(1)}_1\<\<-h}{t^{(1)}_2\<\<-t^{(1)}_1}\>\Bigr)\>.
\end{align*}
\end{example}

For a subset $A=\{a_1\lc a_j\}\subset\{1\lc n\}$, denote
$\zz_A=(z_{a_1}\lc z_{a_j})$. For $I\in\Il$, denote
$\zz_I=(\zz_{I_1}\lc\zz_{I_N})$. For
$f(\TT^{(1)}\lc\TT^{(N)})\in\C[\TT^{(1)}\lc\TT^{(N)}]^{S_{\la^{(1)}}\times\ldots\times S_{\la^{(N)}}}$,
we define $f(\zz_I)$ by substituting \,$\TT^{(j)}=(\zz_{I_1}\lc\zz_{I_j})$\,,
$\;j=1\lc N$.

\subsection{Weight functions $\WW_{\si,I}$}
\label{sec 3.2}

For $\si\in S_n$ and $I\in\Il$, we define
\beq
\label{WWsi}
\WW_{\si,I}(\TT;\zz) = \WW_{\si^{-1}(I)}(\TT;z_{\si(1)}\lc z_{\si(n)}),
\qquad
\UU_{\si,I}(\TT;\zz) = \UU_{\si^{-1}(I)}(\TT;z_{\si(1)}\lc z_{\si(n)}),
\eeq
where $\si^{-1}(I)=(\si^{-1}(I_1)\lc\si^{-1}(I_N))$.

\begin{example}
Let $N=2$, $n=2$, $\bla=(1,1)$, $I=(\{1\}, \{2\})$, $J=(\{2\},\{1\})$. Then
\vvn.2>
\begin{alignat*}2
\WW_{\id\<\>,\>I}(\TT;\zz)\,&{}=\,-\>h\>(t^{(1)}_1\<\<-z_2)\,, &
\WW_{\id\<\>,\>J}(\TT;\zz)\,&{}=\,-\>h\>(t^{(1)}_1\<\<-z_1\<-h)\,,
\\[2pt]
\WW_{s,\>I}(\TT;\zz)\, &{}=\,-\>h\>(t^{(1)}_1\<\<-z_2\<-h)\,, \qquad &
\WW_{s,\>J}(\TT;\zz)\, &{}=\,-\>h\>(t^{(1)}_1\<\<-z_1)\,,
\end{alignat*}
where \,$s$ \,is the transposition.
\end{example}

\subsection{Three-term relation}

\begin{lem}[{\cite[Lemma~3.6]{RTV}}]
\label{lem W si W}
For any $\si\in S_n$, \,$I\in \Il$, $\;i=1\lc n-1$, \,we have
\beq
\label{WW3}
\WW_{\sigma s_{\ii+1},I} =
\frac{z_{\sigma(i)}-z_{\sigma(i+1)}}{z_{\sigma(i)}-z_{\sigma(i+1)}+h}\WW_{\sigma,I} +
\frac h{z_{\sigma(i)}-z_{\sigma(i+1)}+h} \WW_{\sigma, s_{\sigma(i),\sigma(i+1)}(I)} ,
\eeq
where $s_{\ij}\in S_n$ is the transposition of \,$i$ and $j$.
\qed
\end{lem}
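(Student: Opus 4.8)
\emph{First I would strip off the permutation $\sigma$} and reduce \eqref{WW3} to its special case $\sigma=\id$. Put $\tau=s_{\ii+1}$, $K=\sigma^{-1}(I)$, and introduce the reordered variables $w=(w_1\lc w_n)$ with $w_k=z_{\sigma(k)}$. Unfolding Definition \eqref{WWsi} and using $(\sigma\tau)^{-1}=\tau\,\sigma^{-1}$ together with the relabelling identity $\sigma^{-1}\,s_{\sigma(i),\sigma(i+1)}=s_{\ii+1}\,\sigma^{-1}$ (an equality of operations on subsets of $\{1\lc n\}$, checked elementwise), one finds
\[
\WW_{\sigma\tau,I}(\TT;\zz)=\WW_{s_{\ii+1}(K)}(\TT;w_1\lc w_{i+1},w_i\lc w_n),\qquad
\WW_{\sigma,I}(\TT;\zz)=\WW_K(\TT;w),
\]
and $\WW_{\sigma,s_{\sigma(i),\sigma(i+1)}(I)}(\TT;\zz)=\WW_{s_{\ii+1}(K)}(\TT;w)$. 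Since $w_i-w_{i+1}=z_{\sigma(i)}-z_{\sigma(i+1)}$, after renaming $w\mapsto\zz$ and $K\mapsto I$ the assertion \eqref{WW3} becomes the single statement
\[
\WW_{s_{\ii+1}(I)}(\TT;\zzii)=\frac{z_i-z_{i+1}}{z_i-z_{i+1}+h}\,\WW_I(\TT;\zz)+\frac{h}{z_i-z_{i+1}+h}\,\WW_{s_{\ii+1}(I)}(\TT;\zz),
\]
which I recognize as the $R$-matrix exchange relation for the pair $\bigl(\WW_I,\WW_{s_{\ii+1}(I)}\bigr)$ under the swap $z_i\leftrightarrow z_{i+1}$. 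This reduction is routine bookkeeping with permutations.

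\emph{The structural remark that drives the rest} is that $i$ and $i+1$ are consecutive integers: no element of $\{1\lc n\}$ lies strictly between them, so $i$ and $i+1$ have the same rank relative to every other index. Hence in the summand $\UU_I$ the only factors that can distinguish $z_i$ from $z_{i+1}$ are the level-$(N-1)$ factors $t^{(N-1)}_a-z_i$, $t^{(N-1)}_a-z_i-h$ and their $i+1$-counterparts, the shift by $-h$ being dictated solely by whether $i$, resp.\ $i+1$, precedes $i^{(N-1)}_a$. This localizes the whole identity to those factors and to how the block membership of $i,i+1$ enters through the orderings $i^{(j)}$.

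\emph{The easy case} is $i,i+1\in I_m$. Then $s_{\ii+1}(I)=I$, the two coefficients on the right sum to $1$, and the claim collapses to the symmetry $\WW_I(\TT;\zzii)=\WW_I(\TT;\zz)$. This holds because for every $a$ one has $i<i^{(N-1)}_a\iff i+1<i^{(N-1)}_a$, so the $-h$ shift attaches to the $z_i$- and $z_{i+1}$-factors simultaneously; the residual asymmetry inside each summand is then washed out by the symmetrizations $\Sym_{\TT^{(j)}}$, exactly as in the elementary verification of the $N=2$, $n=2$ example.

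\emph{The main obstacle is the case of different blocks}, say $i\in I_p$, $i+1\in I_q$ with $p\neq q$, where $s_{\ii+1}(I)$ genuinely differs from $I$. Because $i$ and $i+1$ share the same rank everywhere, the partial unions $\bigcup_{k\le j}I_k$ and $\bigcup_{k\le j}(s_{\ii+1}(I))_k$ agree except for $\min(p,q)\le j<\max(p,q)$, and there only by the replacement of the value $i$ by $i+1$ in a single slot of the same rank. I would exploit this to match $\UU_I$ against $\UU_{s_{\ii+1}(I)}$ factor by factor, isolate the single two-body factor carrying the genuine $z_i,z_{i+1}$-dependence, clear the denominator $z_i-z_{i+1}+h$, and verify the resulting polynomial identity summand-by-summand after symmetrization. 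The delicate part will be precisely the bookkeeping across the intermediate levels $\min(p,q)\le j<\max(p,q)$: once the common factors are shown to factor out of all three terms, the identity collapses to the same two-variable relation $(z_i-z_{i+1})(t-z_{i+1})+h(t-z_i-h)=(z_i-z_{i+1}+h)(t-z_{i+1}-h)$ that already settles the $N=2$, $n=2$ example, which finishes the proof.
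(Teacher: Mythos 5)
First, note that the paper does not prove Lemma \ref{lem W si W} at all: it is quoted from \cite[Lemma~3.6]{RTV} with no argument, so your attempt has to be judged on its own. Your reduction to $\si=\id$ is correct (the permutation bookkeeping checks out), and the resulting statement
$\WW_{s_{\ii+1}(I)}(\TT;\zzii)=\tfrac{z_i-z_{i+1}}{z_i-z_{i+1}+h}\,\WW_I(\TT;\zz)+\tfrac{h}{z_i-z_{i+1}+h}\,\WW_{s_{\ii+1}(I)}(\TT;\zz)$
is the right thing to prove; the case split according to the blocks containing $i$ and $i+1$ is also the right skeleton.

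The gap is in the main case. Your plan — factor out everything common to $\UU_I$ and $\UU_{s_{\ii+1}(I)}$, isolate \emph{the} two-body factor carrying the genuine $z_i,z_{i+1}$-dependence, and reduce to $(z_i-z_{i+1})(t-z_{i+1})+h(t-z_i-h)=(z_i-z_{i+1}+h)(t-z_{i+1}-h)$ — only works when one of $i,i+1$ lies in the last block $I_N$. In that sub-case $\UU_{s_{\ii+1}(I)}$ is indeed obtained from $\UU_I$ by replacing a single factor $(t^{(N-1)}_a-z_{i+1})$ by $(t^{(N-1)}_a-z_i-h)$, all other factors coincide and are $z_i\leftrightarrow z_{i+1}$-symmetric, and the identity holds summand by summand. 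But if $i\in I_p$ and $i+1\in I_q$ with $p,q<N$, then both indices belong to $\bigcup_{k\le N-1}I_k$ for $I$ and for $s_{\ii+1}(I)$ alike, so the $z_i,z_{i+1}$-dependence of $\UU_I$ and $\UU_{s_{\ii+1}(I)}$ is \emph{identical} (both contain the same block $(t^{(N-1)}_a-z_{i+1})(t^{(N-1)}_{a+1}-z_i-h)$), while the two functions differ instead in a $z$-free factor at the interface of levels $\max(p,q)-1$ and $\max(p,q)$, of the shape $(t^{(q-1)}_b-t^{(q)}_{a+1})$ versus $(t^{(q-1)}_b-t^{(q)}_{a}-h)$. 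The three-term relation is then false summand by summand — already for $N=3$, $\bla=(1,1,1)$, $I=(\{1\},\{2\},\{3\})$, $i=1$ a direct numerical check on the individual terms of $\Sym_{t^{(2)}_1,\,t^{(2)}_2}$ shows they do not satisfy it — so no two-variable identity can finish the argument. What is actually needed is the symmetry of $\Sym_{s_1,s_2}\bigl((s_1-u_2-h)(s_2-u_1)\,\tfrac{s_2-s_1-h}{s_2-s_1}\bigr)$ in $u_1,u_2$, propagated along the chain of levels from $\max(p,q)$ up to $N-1$ — exactly the mechanism of Theorem \ref{thm si}, formulas \Ref{si} and \Ref{sisi} — or equivalently an induction on $N$ through the shuffle recursion of Lemma \ref{lem shuf}. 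The same remark applies to your "easy case" with $i,i+1\in I_m$, $m<N$: the symmetry there is also not termwise and needs the same multi-level exchange identity, not just the $N=2$, $n=2$ computation. Without this ingredient the proof does not close.
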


\subsection{Weight functions $W_I(\TT;\zz)$}

Let \,$\si_0\<\in S_n$ be the longest permutation,
\,$\si_0(i)=n+1-i$\,, \;$i=1\lc n$\,. For \,$I\in\Il$\,, denote
\beq
\label{WW}
W_I(\TT;\zz)\,=\,(-h)^{-\la^{\{1\}}}\>\WW_{\si_0, I}(\TT; \zz)\,,\qquad
U_I(\TT;\zz)\,=\,\UU_{\si_0, I}(\TT; \zz)\,.
\eeq
In other words, we have
\vvn.2>
\beq
\label{hWI--}
W_I(\TT;\zb)\,=\,
\Sym_{\>t^{(1)}_1\!\lc\,t^{(1)}_{\la^{(1)}}}\,\ldots\;
\Sym_{\>t^{(N-1)}_1\!\lc\,t^{(N-1)}_{\la^{(N-1)}}}
U_I(\TT;\zb)\,,
\vv-.4>
\eeq
\begin{align}
\label{UI}
& U_I(\TT;\zb)\,={}
\\[4pt]
\notag
&{}\!=\,\prod_{j=1}^{N-1}\,\prod_{a=1}^{\la^{(j)}}\,\biggl(
\prod_{\satop{c=1}{i^{(j+1)}_c\<<\>i^{(j)}_a}}^{\la^{(j+1)}}
\!\!(t^{(j)}_a\<\<-t^{(j+1)}_c)
\prod_{\satop{d=1}{i^{(j+1)}_d>\>i^{(j)}_a}}^{\la^{(j+1)}}
\!\!(t^{(j)}_a\<\<-t^{(j+1)}_d-h )\,\prod_{b=a+1}^{\la^{(j)}}
\frac{t^{(j)}_b\<\<-t^{(j)}_a\<\<-h}{t^{(j)}_b\<\<-t^{(j)}_a}\,\biggr)\,.
\kern-1em
\end{align}

\begin{example}
Let $N=2$, $n=2$, $\bla=(1,1)$, $I=(\{1\}, \{2\})$, $J=(\{2\},\{1\})$. Then
\vvn.1>
\be
W_I(\TT;\zz)\,=\,t^{(1)}_1\<\<-z_2-h\,, \qquad
W_J(\TT;\zz)\,=\,t^{(1)}_1\<\<-z_1\,.
\vv.5>
\ee
\end{example}

\subsection{Modification of the three-term relation}

For a function $f(z_1\lc z_n)$ and $i=1\lc n-1$, define the operator
\,$S_{\ii+1}$ by the formula
\vvn.1>
\beq
\label{Skk+1}
S_{\ii+1}\<\>f(z_1\lc z_n)\,=\,\frac{z_i\<-z_{i+1}\<-h}{z_i\<-z_{i+1}}\,
f(z_1\lc z_{i+1},z_i\lc z_n)\>+\>\frac h{z_i\<-z_{i+1}}\,f(z_1\lc z_n)\,.
\kern-2em
\vv.2>
\eeq
Lemma \ref{lem W si W} can be reformulated as follows.

\goodbreak
\begin{lem}
\label{c3t}
For any $I\in \Il$, $i=1\lc n-1$, we have
\vvn.3>
\begin{align}
\label{m3t}
& W_{s_{\ii+1}(I)}(\TT;\zz)\,=\,(S_{\ii+1}\>W_I)(\TT;\zz)
\\[4pt]
\notag
\!&{}=\,\frac{z_i\<-z_{i+1}\<-h}{z_i\<-z_{i+1}}\,
W_I(\TT;z_1\lc z_{i+1},z_i\lc z_n)\>+\>\frac{h}{z_i\<-z_{i+1}}\,
W_I(\TT; z_1\lc z_n)\,.
\\[-12pt]
\noalign{\qed}
\notag
\\[-33pt]
\notag
\end{align}
\end{lem}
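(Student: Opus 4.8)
The plan is to deduce the modified three-term relation \eqref{m3t} directly from the three-term relation \eqref{WW3} of Lemma~\ref{lem W si W}; the one new ingredient needed is a rule describing how $\WW_{\si,I}$ transforms under a permutation of the variables $\zz$. First I would record such a rule: for any $\pi,\si\in S_n$ and $I\in\Il$,
\[
\WW_{\si,I}(\TT;z_{\pi(1)}\lc z_{\pi(n)})\,=\,\WW_{\pi\si,\>\pi(I)}(\TT;\zz)\,.
\]
This is immediate from the definition \eqref{WWsi}: substituting $z_m\mapsto z_{\pi(m)}$ into $\WW_{\si,I}(\TT;\zz)=\WW_{\si^{-1}(I)}(\TT;z_{\si(1)}\lc z_{\si(n)})$ turns the $m$-th argument $z_{\si(m)}$ into $z_{(\pi\si)(m)}$, and since $(\pi\si)^{-1}\pi(I)=\si^{-1}(I)$, the result is precisely $\WW_{\pi\si,\pi(I)}(\TT;\zz)$.

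Write $s=s_{\ii+1}$. Taking $\si=\si_0$ and $\pi=s$ in this rule and using the definition \eqref{WW} of $W_I$ gives
\[
W_I(\TT;z_1\lc z_{i+1},z_i\lc z_n)\,=\,(-h)^{-\la^{\{1\}}}\,\WW_{s\>\si_0,\>s(I)}(\TT;\zz)\,.
\]
The key bookkeeping step is the identity $\si_0\>s_{n-i,\>n-i+1}=s\>\si_0$ in $S_n$, checked index by index from $\si_0(m)=n+1-m$. I would then apply \eqref{WW3} with $\si=\si_0$, with the transposition index $n-i$ in place of $i$, and with $s(I)$ in place of $I$. Since $\si_0(n-i)=i+1$ and $\si_0(n-i+1)=i$, the prefactors specialize to $\tfrac{z_{i+1}-z_i}{z_{i+1}-z_i+h}$ and $\tfrac{h}{z_{i+1}-z_i+h}$, the transposition $s_{\si_0(n-i),\si_0(n-i+1)}$ becomes $s$, and $s\bigl(s(I)\bigr)=I$, so \eqref{WW3} reads
\[
\WW_{s\>\si_0,\>s(I)}\,=\,\frac{z_{i+1}-z_i}{z_{i+1}-z_i+h}\,\WW_{\si_0,\>s(I)}\,+\,\frac{h}{z_{i+1}-z_i+h}\,\WW_{\si_0,\>I}\,.
\]

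Multiplying by $(-h)^{-\la^{\{1\}}}$ and applying \eqref{WW} to each of the three terms converts this into
\[
W_I(\TT;z_1\lc z_{i+1},z_i\lc z_n)\,=\,\frac{z_{i+1}-z_i}{z_{i+1}-z_i+h}\,W_{s(I)}(\TT;\zz)\,+\,\frac{h}{z_{i+1}-z_i+h}\,W_I(\TT;\zz)\,.
\]
Solving for $W_{s(I)}(\TT;\zz)$ and rewriting the two resulting coefficients as $\tfrac{z_i-z_{i+1}-h}{z_i-z_{i+1}}$ and $\tfrac{h}{z_i-z_{i+1}}$ (multiply numerator and denominator by $-1$) gives exactly \eqref{m3t}; the equality of the right-hand side with $(S_{\ii+1}W_I)(\TT;\zz)$ is then just the definition \eqref{Skk+1}. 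The only genuine obstacle is this index juggling in the middle: one must insert $\si_0$ on the correct side, reflect the transposition index to $n-i$, and track that the roles of $z_i$ and $z_{i+1}$ get swapped so that the signs in the prefactors come out right. Everything after that is a one-line rearrangement.
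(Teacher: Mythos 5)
Your proposal is correct and follows exactly the route the paper intends: Lemma~\ref{c3t} is stated there as a mere reformulation of the three-term relation of Lemma~\ref{lem W si W}, and your argument supplies the omitted bookkeeping (the covariance rule for $\WW_{\si,I}$ under permuting $\zz$, the identity $\si_0\>s_{n-i,\>n-i+1}=s_{\ii+1}\>\si_0$, and the specialization of \Ref{WW3} at $\si=\si_0$ with index $n-i$) correctly. All steps check out, including the final sign manipulation of the prefactors.
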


\subsection{Shuffle properties}
\label{sShp}

Let $n,n_1,n_2\in \Z_{>0}$, $n=n_1+n_2$. Let $\bla^1, \bla^2, \bla \in\Z_{\geq 0}^N$,
$|\bla^1|=n_1$,
$|\bla^2|=n_2$, $\bla=\bla^1+\bla^2$. Let $I^1=(I^1_1\lc I^1_N)$
be a decomposition of the set $\{1\lc n_1\}$ into subsets such that
$|I^1_j|=\la^1_j$. Let $I^2=(I^2_1\lc I^2_N)$
be a decomposition of the set $\{n_1+1\lc n\}$ into subsets such that
$|I_j^2|=\la^2_j$.
Define the decomposition $I=(I_1\lc I_N)$ of the set
$\{1\lc n\}$ by the rule: $I_j=I^1_j\cup I^2_j$.

Consider the weight function $W_I$
of variables $(\TT^{(1)}\!\lc\TT^{(N)})$,
where $\TT^{(j)}=(t^{(j)}_1\!\lc t^{(j)}_{\la^{(j)}})$,
$\la^{(j)}=\la_1+\ldots+\la_j$, $j=1\lc N-1$,
and $\TT^{(N)}=(z_1\lc z_n)$.

Consider the weight function $W_{I^1}$ of variables
\,$(\TTT^{(1)}\!\lc\TTT^{(N)})$,
where \,$\TTT^{(j)}=(t^{(j)}_1\!\lc t^{(j)}_{(\bla^1)^{(1)}})$,
$(\bla^1)^{(j)}=\la^1_1+\ldots+\la_j^1$, $j=1\lc N-1$,
and $\TTT^{(N)}=(z_1\lc z_{n_1})$.
Consider the weight function $W_{I^2}$
of variables $\TTc=(\TTc^{(1)}\!\lc\TTc^{(N)})$,
where $\TTc^{(j)}=(t^{(j)}_{(\bla^1)^{(j)}+1}\lc t^{(j)}_{\la^{(j)}})$
for $j=1\lc N-1$, and $\TTc^{(N)}=(z_{n_1+1}\lc z_{n})$.
Denote $(\bla^2)^{(j)}=\la^2_1+\ldots+\la_j^2$, $j=1\lc N-1$.

\vsk.2>
Define the connection coefficient
\vvn.4>
\begin{align}
\label{ccoe}
C_{\bla^1\!,\<\>\bla^2}(\TT;\zz)\,&{}=\,\prod_{j=1}^{N-1}\Bigl[
\Bigl(\prod_{a=1}^{(\bla^1)^{(j)}}\prod_{b=(\bla^1)^{(j)}+1}^{\la^{(j)}}
\frac{t^{(j)}_b-t^{(j)}_a-h}{t^{(j)}_b-t^{(j)}_a}\>\Bigr)\times{}
\\[2pt]
\notag
&{}\times\,
\Bigl(\prod_{a=1}^{(\bla^1)^{(j)}}\prod_{c=(\bla^1)^{(j+1)}+1}^{\la^{(j+1)}}(t^{(j)}_a-t^{(j)}_c-h)\Bigr)
\Bigl(\prod_{a=(\bla^1)^{(j)}+1}^{\la^{(j)}}\prod_{c=1}^{(\bla^1)^{(j+1)}}(t^{(j)}_a-t^{(j)}_c)\Bigr)\Bigr]\,.
\kern-1.2em
\end{align}

\begin{lem}
\label{lem shuf}
We have
\vvn-.1>
\beq
W_I(\TT;\zz)\,=\,\frac{\Sym_{\>t^{(1)}_1\!\lc\,t^{(1)}_{\la^{(1)}}}\ldots\;
\Sym_{\>t^{(N-1)}_1\!\lc\,t^{(N-1)}_{\la^{(N-1)}}}\Wt_{I^1,I^2}(\TT;\zz)}
{\prod_{j=1}^{N-1} ((\bla^1)^{(j)})!((\bla^2)^{(j)})!}\;,
\kern-1em
\eeq
where
\vvn-.6>
\begin{align*}
\Wt_{I^1\!,\<\>I^2}(\TT;\zz)\,=\,C_{\bla^1\!,\<\>\bla^2}(\TT;\zz)\;
& W_{I^1}(\TTT^{(1)}\lc\TTT^{(N-1)};z_1\lc z_{n_1})
\\[1pt]
{}\times{} & W_{I^2}(\TTc^{(1)}\lc\TTc^{(N-1)}; z_{n_1+1}\lc z_n)\,.
\kern-1em
\\[-14pt]
\noalign{\qed}
\\[-34pt]
\end{align*}
\end{lem}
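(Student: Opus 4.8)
The plan is to reduce the lemma to a \emph{pointwise} factorization of the rational function $U_I$ and then to a short combinatorial manipulation of the symmetrizations. Throughout, write $\Sym_{\TT}=\Sym_{t^{(1)}_1,\ldots,t^{(1)}_{\la^{(1)}}}\cdots\Sym_{t^{(N-1)}_1,\ldots,t^{(N-1)}_{\la^{(N-1)}}}$, which is summation over the group $G=\prod_{j=1}^{N-1}S_{\la^{(j)}}$, and let $H=\prod_{j=1}^{N-1}\bigl(S_{(\bla^1)^{(j)}}\times S_{(\bla^2)^{(j)}}\bigr)\subset G$ be the Young subgroup that, at each level $j$, permutes the first $(\bla^1)^{(j)}$ variables $\TTT^{(j)}$ and the last $(\bla^2)^{(j)}$ variables $\TTc^{(j)}$ separately. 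Note $|H|=\prod_{j=1}^{N-1}((\bla^1)^{(j)})!\,((\bla^2)^{(j)})!$ is exactly the denominator in the statement.

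First I would establish the pointwise identity
\be
U_I(\TT;\zz)=C_{\bla^1,\bla^2}(\TT;\zz)\,U_{I^1}(\TTT;z_1,\ldots,z_{n_1})\,U_{I^2}(\TTc;z_{n_1+1},\ldots,z_n).
\ee
The crucial observation is the order separation: every element of $\bigcup_{k=1}^{j}I^1_k\subset\{1,\ldots,n_1\}$ is smaller than every element of $\bigcup_{k=1}^{j}I^2_k\subset\{n_1+1,\ldots,n\}$, so in the increasing enumeration $\bigcup_{k=1}^{j}I_k=\{i^{(j)}_1<\cdots<i^{(j)}_{\la^{(j)}}\}$ the first $(\bla^1)^{(j)}$ entries are exactly the elements of $\bigcup_{k=1}^{j}I^1_k$ and the last $(\bla^2)^{(j)}$ entries are exactly those of $\bigcup_{k=1}^{j}I^2_k$. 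This matches the variable labelling, so block-one variables feed $U_{I^1}$ and block-two variables feed $U_{I^2}$. I then split each of the three products in \eqref{UI} according to whether the indices involved lie in block one, in block two, or straddle the two blocks. The same-block factors reassemble into $U_{I^1}$ and $U_{I^2}$, because the order comparisons inside a block are unchanged when computed relative to $\{1,\ldots,n_1\}$, resp.\ $\{n_1+1,\ldots,n\}$. The straddling factors are then forced by the order separation: a straddling within-level pair gives the first product of $C_{\bla^1,\bla^2}$; for a level-$j$ block-one index against a level-$(j+1)$ block-two index the latter exceeds $n_1\ge i^{(j)}_a$, so only the factors $(t^{(j)}_a-t^{(j+1)}_c-h)$ survive, giving the second product of $C$; symmetrically the block-two against block-one case yields only factors $(t^{(j)}_a-t^{(j+1)}_c)$, the third product of $C$.

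Next I would symmetrize. Since $C_{\bla^1,\bla^2}$ is $H$-invariant, $U_{I^1}$ depends only on $\TTT$, and $U_{I^2}$ only on $\TTc$, the definition of $\Wt_{I^1,I^2}$ gives $\Wt_{I^1,I^2}=C_{\bla^1,\bla^2}\,(\Sym_{\TTT}U_{I^1})(\Sym_{\TTc}U_{I^2})=\sum_{\tau\in H}\tau\cdot(C_{\bla^1,\bla^2}U_{I^1}U_{I^2})$. Applying $\Sym_{\TT}=\sum_{\sigma\in G}\sigma$ and using that $\sigma\tau$ runs over $G$ exactly $|H|$ times as $(\sigma,\tau)$ ranges over $G\times H$ (because $H\le G$), one gets $\Sym_{\TT}\Wt_{I^1,I^2}=|H|\,\Sym_{\TT}(C_{\bla^1,\bla^2}U_{I^1}U_{I^2})$. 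By the pointwise identity the right-hand side equals $|H|\,\Sym_{\TT}U_I=|H|\,W_I$, and dividing by $|H|$ yields the claimed formula.

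The main obstacle is the pointwise factorization: one must carefully partition each product in \eqref{UI} into same-block and straddling contributions and verify that, after invoking the order separation $I^1<I^2$, the straddling contributions reproduce precisely the three products defining $C_{\bla^1,\bla^2}$ (with the $-h$ shifts falling exactly where $C$ prescribes). Once this bookkeeping is done, the remaining step is the formal double-counting above, in which the factorials $\prod_{j}((\bla^1)^{(j)})!\,((\bla^2)^{(j)})!$ appear simply as $|H|$.
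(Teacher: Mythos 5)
Your argument is correct and complete. The paper states Lemma \ref{lem shuf} without proof (it is treated as immediate from the definitions, cf.\ \cite{TV4,RTV}), and your two-step verification --- the pointwise identity $U_I=C_{\bla^1\!,\<\>\bla^2}\,U_{I^1}U_{I^2}$ forced by the order separation between $\{1\lc n_1\}$ and $\{n_1+1\lc n\}$, followed by the coset double-counting over the Young subgroup $H$ of order $\prod_{j}((\bla^1)^{(j)})!\,((\bla^2)^{(j)})!$ using the $H$-invariance of $C_{\bla^1\!,\<\>\bla^2}$ --- is exactly the intended justification. The bookkeeping of the straddling factors matches the three products defining $C_{\bla^1\!,\<\>\bla^2}$ as you describe.
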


\subsection{Factorization}

Consider the $\frak{gl}_N$ weight function
$W_{\{1\lc n\},\emptyset\lc\emptyset}$ of variables $(\TT^{(1)}\lc\TT^{(N)})$,
where \,$\TT^{(j)}=(t^{(j)}_1\lc t^{(j)}_{n})$ \,for \,$j=1\lc N-1$, and
$\TT^{(N)}=(z_1\lc z_n)$.

\begin{lem}
\label{lem fa}
We have
\vvn-.4>
\beq
\label{factor}
W_{\{1\lc n\},\emptyset\lc\emptyset}(\TT^{(1)}\lc\TT^{(N-1)};\TT^{(N)})
\,=\>\prod_{j=1}^{N-1}\>
W^{\frak{gl}_2}_{\{1\lc n\},\emptyset}(\TT^{(j)};\TT^{(j+1)})\,,
\vv-.5>
\eeq
where \,$W^{\frak{gl}_2}_{\{1\lc n\},\emptyset}(\TT^{(j)};\TT^{(j+1)})$ is
\vvn.1>
the $\;\frak{gl}_2$ weight function assigned to the partition of the set
$\{1\lc n\}$ into two subsets \,$\{1\lc n\}$ and \;$\emptyset$.
\end{lem}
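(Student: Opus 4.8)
The plan is to specialize the general formulas \eqref{hWI--}--\eqref{UI} to the index $I=(\{1\lc n\},\emptyset\lc\emptyset)$ and then to decouple the nested symmetrizations one $\frak{gl}$-level at a time, using a symmetry property of the $\frak{gl}_2$ weight function. First I would record the combinatorial data for this $I$: since all of $\{1\lc n\}$ lies in the first block, $\la^{(j)}=n$ and $i^{(j)}_a=a$ for every $j$ and $a$, so each group $\TT^{(j)}$, $j=1\lc N-1$, consists of $n$ variables. Substituting the equivalences $i^{(j+1)}_c<i^{(j)}_a\iff c<a$ and $i^{(j+1)}_d>i^{(j)}_a\iff d>a$ into \eqref{UI}, the product over $j$ separates:
\[
U_{\{1\lc n\},\emptyset\lc\emptyset}(\TT;\zz)\,=\,\prod_{j=1}^{N-1}U^{(j)},\qquad
U^{(j)}:=U^{\frak{gl}_2}_{\{1\lc n\},\emptyset}(\TT^{(j)};\TT^{(j+1)}),
\]
where $U^{(j)}$ is the summand (i.e.\ \eqref{UI} with $N=2$, the point variables being $\TT^{(j+1)}$) of the $\frak{gl}_2$ weight function $W^{\frak{gl}_2}_{\{1\lc n\},\emptyset}$; indeed the $j$-th factor of \eqref{UI} depends only on $\TT^{(j)}$ and $\TT^{(j+1)}$ and matches the $\frak{gl}_2$ summand term by term.

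Next I would establish the key symmetry: $W^{\frak{gl}_2}_{\{1\lc n\},\emptyset}(\TT^{(j)};\TT^{(j+1)})$ is symmetric in the point variables $\TT^{(j+1)}$. This follows from the modified three-term relation \eqref{m3t} (Lemma \ref{c3t}) for $\frak{gl}_2$: because both $i$ and $i+1$ lie in the single block, $s_{\ii+1}(\{1\lc n\},\emptyset)=(\{1\lc n\},\emptyset)$, so the left-hand side of \eqref{m3t} is again $W^{\frak{gl}_2}$, and \eqref{m3t} reads $W=\tfrac{z_i-z_{i+1}-h}{z_i-z_{i+1}}\,(s_{\ii+1}\cdot W)+\tfrac{h}{z_i-z_{i+1}}\,W$, where $s_{\ii+1}\cdot W$ denotes $W$ with $z_i,z_{i+1}$ interchanged. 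Collecting the two terms containing $W$ gives $\tfrac{z_i-z_{i+1}-h}{z_i-z_{i+1}}\,W=\tfrac{z_i-z_{i+1}-h}{z_i-z_{i+1}}\,(s_{\ii+1}\cdot W)$, and cancelling the common factor yields $W=s_{\ii+1}\cdot W$ for every $i$, hence full symmetry in $\TT^{(j+1)}$.

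Finally I would decouple the symmetrizations in \eqref{hWI--}. The operators $\Sym_{\TT^{(k)}}$ act on pairwise disjoint variable sets, hence commute, so I apply them in the order $k=1,2,\dots,N-1$ and prove by induction that after step $k$ the expression equals $\bigl(\prod_{m=1}^{k}V^{(m)}\bigr)\prod_{j=k+1}^{N-1}U^{(j)}$, where $V^{(m)}:=\Sym_{\TT^{(m)}}U^{(m)}=W^{\frak{gl}_2}_{\{1\lc n\},\emptyset}(\TT^{(m)};\TT^{(m+1)})$. For the step from $k$ to $k+1$, the variables $\TT^{(k+1)}$ occur only in $V^{(k)}$ (as its point argument) and in $U^{(k+1)}$ (as its first argument); since $V^{(k)}$ is symmetric in $\TT^{(k+1)}$ by the previous paragraph, it passes outside the symmetrizer and $\Sym_{\TT^{(k+1)}}\bigl(V^{(k)}U^{(k+1)}\bigr)=V^{(k)}\,\Sym_{\TT^{(k+1)}}U^{(k+1)}=V^{(k)}V^{(k+1)}$. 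After the last level this gives $W_{\{1\lc n\},\emptyset\lc\emptyset}=\prod_{j=1}^{N-1}V^{(j)}$, which is exactly \eqref{factor}.

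The main obstacle is the decoupling of the symmetrizations: a priori $\Sym_{\TT^{(k)}}$ entangles the two consecutive factors $U^{(k-1)}$ and $U^{(k)}$ that share the variables $\TT^{(k)}$, so the product of the $U^{(j)}$ does not factor term by term. The symmetry of the $\frak{gl}_2$ weight function in its point variables is precisely what lets each already-symmetrized factor $V^{(k)}$ commute past the next symmetrizer, and this is the crux of the argument.
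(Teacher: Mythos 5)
Your proposal is correct and follows essentially the same route as the paper: the paper's (very terse) proof likewise rests on the symmetry of $W^{\frak{gl}_2}_{\{1\lc n\},\emptyset}(\TT^{(j)};\TT^{(j+1)})$ in the variables $\TT^{(j+1)}$, deduced from the $\frak{gl}_2$ three-term relation, combined with the factorization of the summand $U_I$ and the definition \Ref{hWI--}. You have simply filled in the details (the term-by-term matching of the $j$-th factor of \Ref{UI} with the $\frak{gl}_2$ summand, and the inductive decoupling of the commuting symmetrizers) that the paper leaves implicit.
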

\begin{proof}
The function \,$W^{\frak{gl}_2}_{\{1\lc n\},\emptyset}(\TT^{(j)};\TT^{(j+1)})$
is symmetric in variables \,$(t^{(j+1)}_1\!\lc t^{(j+1)}_n)$ \,due to
the \,$\frak{gl}_2$ three-term relations of Lemma \ref{lem W si W}.
That symmetry and formula \Ref{hWI-} imply formula \Ref{factor}.
\end{proof}

\goodbreak
\subsection{Useful identities}

\begin{thm}
\label{thm si}
Given $k\in\Z_{\geq 0}$, consider variables
\,$t^{(0)}_1, t^{(k+1)}_1$ and \,$t^{(i)}_1, t^{(i)}_2$ for $i=1\lc k$.
Set
\be
F\,=\,(t^{(0)}_1\<\<-t^{(1)}_1)\>(t^{(k)}_1\<\<-t^{(k+1)}_1\<\<-h) -
(t^{(0)}_1\<\<-t^{(1)}_2\<\<-h)\>(t^{(k)}_2\<\<-t^{(k+1)}_1)\,,
\vvn.6>
\ee
\be
G\,=\,(t^{(0)}_1\<\<-t^{(1)}_2\<\<-h)\>(t^{(k)}_1\<\<-t^{(k+1)}_1\<\<-h) -
(t^{(0)}_1\<\<-t^{(1)}_1)\>(t^{(k)}_2\<\<-t^{(k+1)}_1)\,,
\vv.1>
\ee
and
\vvn-.2>
\be
H\,=\,\prod_{i=1}^{k-1}\bigl(
(t^{(i)}_1\<\<-t^{(i+1)}_2\<\<-h)\>(t^{(i)}_2\<\<-t^{(i+1)}_1)\bigr)\,
\prod_{i=1}^{k}\,\frac{t^{(i)}_2\<\<-t^{(i)}_1\<\<-h}{t^{(i)}_2\<\<-t^{(i)}_1}
\;.
\ee
Then
\beq
\label{si}
\Sym_{\>t^{(1)}_1\!\<,\>t^{(1)}_2}\,\ldots\;
\Sym_{\>t^{(k)}_1\!\<,\>t^{(k)}_2}(FH)\,=\,0
\eeq
and
\beq
\label{sisi}
\Sym_{\>t^{(1)}_1\!\<,\>t^{(1)}_2}\,\ldots\;
\Sym_{\>t^{(k)}_1\!\<,\>t^{(k)}_2}(G\<\>H)\,=\,0\,.
\eeq
\end{thm}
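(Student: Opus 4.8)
The plan is to clear the denominators and reduce both identities to two \emph{one-ended} vanishing statements, which are then disposed of by a telescoping induction built from two elementary local identities. The decisive preliminary observation is a bilinear factorization: writing $L_1=t^{(0)}_1-t^{(1)}_1$, $L_2=t^{(0)}_1-t^{(1)}_2-h$ (depending only on the first level) and $R_1=t^{(k)}_1-t^{(k+1)}_1-h$, $R_2=t^{(k)}_2-t^{(k+1)}_1$ (depending only on the last level), one has $F=L_1R_1-L_2R_2$ and $G=L_2R_1-L_1R_2$, whence
\[
F+G=(L_1+L_2)(R_1-R_2),\qquad F-G=(L_1-L_2)(R_1+R_2).
\]
Here $L_1+L_2=2t^{(0)}_1-t^{(1)}_1-t^{(1)}_2-h$ and $R_1+R_2=t^{(k)}_1+t^{(k)}_2-2t^{(k+1)}_1-h$ are invariant under every transposition $\tau_i\colon t^{(i)}_1\leftrightarrow t^{(i)}_2$, while $R_1-R_2=t^{(k)}_1-t^{(k)}_2-h$ and $L_1-L_2=-(t^{(1)}_1-t^{(1)}_2-h)$ are ``insertions'' supported at a single level. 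Since $\Sym_{t^{(i)}_1,t^{(i)}_2}=1+\tau_i$ and any $\tau$-symmetric factor pulls through $\Sym$, the formulas $F=\tfrac12(F+G)+\tfrac12(F-G)$ and $G=\tfrac12(F+G)-\tfrac12(F-G)$ reduce the whole theorem to the two lemmas $\Sym\bigl[(t^{(k)}_1-t^{(k)}_2-h)H\bigr]=0$ and $\Sym\bigl[(t^{(1)}_1-t^{(1)}_2-h)H\bigr]=0$, with $\Sym=\Sym_{t^{(1)}_1,t^{(1)}_2}\cdots\Sym_{t^{(k)}_1,t^{(k)}_2}$.

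Next I would remove the denominators of $H$. Writing $H=\bigl(\prod_i(t^{(i)}_2-t^{(i)}_1)\bigr)^{-1}\Pi$ with the polynomial $\Pi=\prod_{i=1}^{k-1}(t^{(i)}_1-t^{(i+1)}_2-h)(t^{(i)}_2-t^{(i+1)}_1)\cdot\prod_{i=1}^{k}(t^{(i)}_2-t^{(i)}_1-h)$, a direct check (swapping level $i$ flips the sign of the denominator) gives $\Sym[gH]=\bigl(\prod_i(t^{(i)}_2-t^{(i)}_1)\bigr)^{-1}\prod_i(1-\tau_i)(g\Pi)$ for any $g$. Thus each lemma becomes a polynomial identity $\prod_i(1-\tau_i)\Pi_\bullet=0$, where $\Pi_p=(t^{(k)}_1-t^{(k)}_2-h)\Pi$ and $\Pi_c=(t^{(1)}_1-t^{(1)}_2-h)\Pi$.

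The engine is a pair of local identities. First, $(\alpha-h)(-\alpha-h)=h^2-\alpha^2$ shows that the product of an insertion $t^{(i)}_1-t^{(i)}_2-h$ with the diagonal factor $t^{(i)}_2-t^{(i)}_1-h$ is $\tau_i$-symmetric, call it $\Sigma_i$. Second, the transfer identity $(1-\tau_i)\bigl[(t^{(i-1)}_1-t^{(i)}_2-h)(t^{(i-1)}_2-t^{(i)}_1)\bigr]=(t^{(i)}_2-t^{(i)}_1)(t^{(i-1)}_1-t^{(i-1)}_2-h)$ turns the bond at level $i$, after antisymmetrization, into a level-$i$ antisymmetric factor times a fresh insertion at level $i-1$. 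For $\Pi_p$ I antisymmetrize in the order $k,k-1,\dots$: the level-$k$ insertion merges with the level-$k$ diagonal into $\Sigma_k$, and $(1-\tau_k)$ meets only the neighboring bond, producing $\Sigma_k(t^{(k)}_2-t^{(k)}_1)\,\Pi_p^{(k-1)}$, where $\Pi_p^{(k-1)}$ is precisely the analogous expression for the $(k-1)$-chain (the regenerated insertion again merging with the level-$(k-1)$ diagonal). As $\Sigma_k(t^{(k)}_2-t^{(k)}_1)$ is $\tau_i$-invariant for $i<k$, induction on $k$ closes the argument; the base case $k=1$ is immediate since $\Pi_p=\Sigma_1$ is already $\tau_1$-symmetric. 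The statement for $\Pi_c$ is the mirror image, telescoping from level $1$ upward with the same two identities.

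The main obstacle is less the computation than seeing past it: a frontal assault on $\Sym(FH)$ fails because symmetrizing a single level produces cross terms (e.g.\ a $t^{(k-1)}_1t^{(k-1)}_2$ term carrying first-level data) that couple the two ends of the chain, so no clean one-level recursion is visible. The factorization of $F\pm G$ is exactly what decouples the ends and strips off the symmetric prefactors, after which the problem collapses to the two one-ended telescopes, each a routine induction. In writing this up I would take particular care with the signs in the transfer identity and in the $\Sigma_i$ product, since the entire telescope relies on the insertion/diagonal product being symmetric and on the bond identity regenerating an insertion exactly one level down.
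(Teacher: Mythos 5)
Your proposal is correct, and its decisive first step is exactly the paper's: the factorization $F\pm G=(L_1\pm L_2)(R_1\mp R_2)$ into a factor that is invariant under every $\tau_i$ (and so pulls out of the symmetrization) times a single-level ``insertion'', reducing the theorem to the two one-ended identities $\Sym\bigl[(t^{(k)}_1\<\<-t^{(k)}_2\<\<-h)\>H\bigr]=0$ and $\Sym\bigl[(t^{(1)}_1\<\<-t^{(1)}_2\<\<-h)\>H\bigr]=0$. Where you genuinely diverge is in how these are killed. The paper groups each bond with the adjacent ratio so that symmetrizing level $i$ produces the $\frak{gl}_2$ weight function $W^{\frak{gl}_2}_{\{1,2\},\emptyset}(t^{(i)}_1,t^{(i)}_2,t^{(i+1)}_1,t^{(i+1)}_2)$, which is symmetric in \emph{both} pairs of variables (a consequence of the three-term relation, proved there by an explicit $\Sym_{s}=\Sym_{u}$ identity); all intermediate symmetrizations therefore yield symmetric factors, and only the extreme level survives, where $\Sym_{t^{(k)}_1,\>t^{(k)}_2}\bigl[(t^{(k)}_1\<\<-t^{(k)}_2\<\<-h)\,\frac{t^{(k)}_2-t^{(k)}_1-h}{t^{(k)}_2-t^{(k)}_1}\bigr]=0$ because the numerator is symmetric and the denominator antisymmetric. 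You instead clear denominators, convert each $\Sym_i$ into $(1-\tau_i)$, and telescope with the transfer identity $(1-\tau_i)\bigl[(t^{(i-1)}_1\<\<-t^{(i)}_2\<\<-h)(t^{(i-1)}_2\<\<-t^{(i)}_1)\bigr]=(t^{(i)}_2\<\<-t^{(i)}_1)(t^{(i-1)}_1\<\<-t^{(i-1)}_2\<\<-h)$ together with the symmetry of the insertion-times-diagonal product $\Sigma_i$; I checked both identities and the induction (including the mirror version running upward from level $1$), and they are correct. The two finishes are of comparable length: yours is more elementary and purely polynomial, while the paper's reuses the double symmetry of $W^{\frak{gl}_2}_{\{1,2\},\emptyset}$, which is part of the weight-function machinery needed elsewhere in the paper.
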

\begin{proof}
Formulae \Ref{si}, \Ref{sisi} are equivalent to
\vvn.3>
\be
\Sym_{\>t^{(1)}_1\!\<,\>t^{(1)}_2}\,\ldots\;
\Sym_{\>t^{(k)}_1\!\<,\>t^{(k)}_2}\bigl((F\pm G)\>H\bigr)\,=\,0\,.
\vv-.2>
\ee
Observe that
\vv-.1>
\be
F+G\,=\,(2\>t^{(0)}_1\<\<-t^{(1)}_1\<\<-t^{(1)}_2\<\<-h)\>
(t^{(k)}_1\<\<-t^{(k)}_2\<\<-h)\,,
\vvn.5>
\ee
\be
F-G\,=\,(t^{(1)}_1\<\<-t^{(1)}_2\<\<-h)\>
(t^{(k)}_1\<\<+t^{(k)}_2\<\<-2\>t^{(k+1)}_1\<\<-h)\,,
\vv-.4>
\ee
and
\vvn.1>
\be
\Sym_{\>s_1,s_2}\Bigl((s_1\<-u_2\<-h)\>(s_2\<-u_1)\,
\frac{s_2\<-s_1\<-h}{s_2-s_1}\>\Bigr)\,=\,
\Sym_{\>u_1,u_2}\Bigl((s_1\<-u_2\<-h)\>(s_2\<-u_1)\,
\frac{u_2\<-u_1\<-h}{s_2-s_1}\>\Bigr)\>.
\vv.1>
\ee
Hence the function
\vvn.2>
\be
W^{\frak{gl}_2}_{\{1,2\},\emptyset}(s_1,s_2, u_1,u_2)\,=\,
\Sym_{\>s_1,s_2}\Bigl((s_1\<-u_2\<-h)\>(s_2\<-u_1)\,
\frac{s_2\<-s_1\<-h}{s_2-s_1}\>\Bigr)
\vv.2>
\ee
is symmetric both in \,$s_1,s_2$ \,and \,$u_1,u_2$\,. Therefore,
\vvn.4>
\begin{align*}
& \Sym_{\>t^{(1)}_1\!\<,\>t^{(1)}_2}\,\ldots\;
\Sym_{\>t^{(k)}_1\!\<,\>t^{(k)}_2}\bigl((F+G)\>H\bigr)\,=\,
(2\>t^{(0)}_1\<\<-t^{(1)}_1\<\<-t^{(1)}_2\<\<-h)\>\times{}
\\[3pt]
&\;\,{}\times\>\Sym_{\>t^{(k)}_1\!\<,\>t^{(k)}_2}
\Bigl((t^{(k)}_1\<\<-t^{(k)}_2\<\<-h)\,
\frac{t^{(k)}_2\<\<-t^{(k)}_1\<\<-h}{t^{(k)}_2\<\<-t^{(k)}_1}\>\Bigr)\,
\prod_{i=1}^{k-1}\,W^{\frak{gl}_2}_{\{1,2\},\emptyset}
(t^{(i)}_1,t^{(i)}_2, t^{(i+1)}_1,t^{(i+1)}_2)\,=\,0\,.
\end{align*}
and
\begin{align*}
& \Sym_{\>t^{(1)}_1\!\<,\>t^{(1)}_2}\,\ldots\;
\Sym_{\>t^{(k)}_1\!\<,\>t^{(k)}_2}\bigl((F-G)\>H\bigr)\,=\,
(t^{(k)}_1\<\<+t^{(k)}_2\<\<-2\>t^{(k+1)}_1\<\<-h)
\>\times{}
\\[3pt]
&\;\,{}\times\>\Sym_{\>t^{(1)}_1\!\<,\>t^{(1)}_2}
\Bigl((t^{(1)}_1\<\<-t^{(1)}_2\<\<-h)\,
\frac{t^{(1)}_2\<\<-t^{(1)}_1\<\<-h}{t^{(1)}_2\<\<-t^{(1)}_1}\>\Bigr)\,
\prod_{i=1}^{k-1}\,W^{\frak{gl}_2}_{\{1,2\},\emptyset}
(t^{(i)}_1,t^{(i)}_2, t^{(i+1)}_1,t^{(i+1)}_2)\,=\,0\,.
\end{align*}
Theorem \ref{thm si} is proved.
\end{proof}

\section{Master function and discrete differentials}
\label{sMFDiDi}
\subsection{Master function}
\label{sMF}

Let \,$\pho(x)=\Ga(x/\ka)\,\Ga\bigl((h-x)/\ka\bigr)$\,.
Define the master function:
\vvn.3>
\begin{align}
\label{PHI}
\Phi_\bla(\TT;\zz;h;\qq)\,=\,
(e^{\<\>\pii\,(n\<\>-\<\la_N)}q_N)^{\>\sum_{a=1}^nz_a/\ka}\,
\prod_{i=1}^{N-1}\>\Bigl(\>e^{\<\>\pii\,(\la_{i+1}-\la_i)}\>
\frac{q_i}{q_{i+1}}\>\Bigr)^{\sum_{j=1}^{\la^{(i)}} t^{(i)}_j\!/\ka}\times{}&
\\[2pt]
\notag
{}\times\<{}\prod_{i=1}^{N-1}\,\prod_{a=1}^{\la^{(i)}}\,\biggl(\,
\prod_{\satop{b=1}{b\ne a}}^{\la^{(i)}}\,
\frac1{(t_a^{(i)}\<\<-t_b^{(i)}\<\<-h)\,\pho(t_a^{(i)}\<\<-t_b^{(i)})}\,
\prod_{c=1}^{\la^{(i+1)}}\<\pho(t_a^{(i)}\<\<-t_c^{(i+1)})\<\biggr)\,&.
\end{align}
It is a symmetric function of variables in each of the groups \;$\TT^{(i)}$,
\,$i=1\lc N-1$.

\subsection{Definition of discrete differentials}
\label{DefDi}
Consider the space $\mc S$ of functions of the form
$\Phi_\bla(\TT;\zz;h;\qq) f(\TT;\zz;h;\qq)$ where $f(\TT;\zz;h;\qq)$ is a rational function.
Consider the lattice $\ka\Z^{\la^{\{1\}}}$ whose coordinates are labeled by variables $t^{(i)}_j\in\TT$.
The shifts $t^{(i)}_j \mapsto t^{(i)}_j + \ka$ of any of the $\TT$-variables preserve the space $\mc S$ and extend to
an action of the lattice $\ka\Z^{\la^{\{1\}}}$ on $\mc S$. A {\it discrete
differential} is a finite sum of rational functions of the form
\vvn.5>
\beq
\label{41}
\frac{\Phi_\bla(\TT+w;\zz;h;\qq)}{\Phi_\bla(\TT;\zz;h;\qq)}\,f(\TT+\ww;\zz;h;\qq)
\>-f(\TT;\zz;h;\qq)\,,\kern-1.2em
\vv-.5>
\eeq
where \,$\ww\in \ka \Z^{\la^{\{1\}}}$.

\subsection{Special discrete differentials}
\label{sec sdd}
For integers ${1\leq\al<\bt\leq N}$, split
the variables \,$\TT=(\TT^{(1)}\lc\TT^{(N-1)})$\,,
$\;\TT^{(i)}=(t^{(i)}_1\lc t^{(i)}_{\la^{(i)}})$\,, into two groups
\,$\TT^{\{\albt\}}$ and \,$\TT_{\{\albt\}}$ \,as follows:
\vvn.2>
\be
\TT^{\{\albt\}} =\,(t^{(\al)}_{\la^{(\al)}}\>,\,t^{(\al+1)}_{\la^{(\al+1)}}\>
\lc\,t^{(\bt-1)}_{\la^{(\bt-1)}})
\vv.1>
\ee
and \,$\TT_{\{\albt\}}=(\TT^{(1)}_{\{\albt\}}\>\lc\TT^{(N-1)}_{\{\albt\}})$\,,
where
\,$\TT^{(i)}_{\{\albt\}}=(t^{(i)}_1\lc t^{(i)}_{\la^{(i)}-1})$
\,if $\;\al\leq i <\bt$ \,and \,$\TT^{(i)}_{\{\albt\}}=\TT^{(i)}$, otherwise.

\vsk.3>
For a rational function \,$g$ \,of $\TT_{\{\albt\}},\zz,\qq$\,, denote
\begin{align}
\label{dab}
d_{\TT^{\{\albt\}}}g\,:=\;
\frac{g(\TT_{\{\albt\}};\zz;h;\qq)}{q_\al-q_\bt}\,\Bigl(\>
& q_\bt\>(t_{\la^{(\al-1)}}^{(\al-1)}\<-t_{\la^{(\al)}}^{(\al)})\,
\prod_{i=\al}^{\bt-1}\,\prod_{a=1}^{\la^{(i-1)}-1}\!
(t_a^{(i-1)}-t_{\la^{(i)}}^{(i)})\,\times{}
\\[1pt]
\notag
{}\times\,
(t_{\la^{(\bt-1)}}^{(\bt-1)}\<-t_{\la^{(\bt)}}^{(\bt)}\<-h)\,
& \prod_{i=\al}^{\bt-1}\,\prod_{a=1}^{\la^{(i+1)}-1}
(t_{\la^{(i)}}^{(i)}-t_a^{(i+1)}-h)\,
\prod_{i=\al}^{\bt-1}\,\prod_{a=1}^{\la^{(i)}-1}\,
\frac {t_a^{(i)}\<\<-t_{\la^{(i)}}^{(i)}-h}{t_a^{(i)}\<\<-t_{\la^{(i)}}^{(i)}}
\,-{}\kern-2em
\\[3pt]
\notag
{}-\,{} & q_\al\>(t_{\la^{(\al-1)}}^{(\al-1)}\<-t_{\la^{(\al)}}^{(\al)}\<-h)\,
\prod_{i=\al}^{\bt-1}\,\prod_{a=1}^{\la^{(i-1)}-1}\!
(t_a^{(i-1)}\<\<-t_{\la^{(i)}}^{(i)}\<-h)\,\times{}
\\[2pt]
\notag
{}\times\,(t_{\la^{(\bt-1)}}^{(\bt-1)}\<-t_{\la^{(\bt)}}^{(\bt)}\bigr)\,
& \prod_{i=\al}^{\bt-1}\,\prod_{a=1}^{\la^{(i+1)}-1}\!
(t_{\la^{(i)}}^{(i)}-t_a^{(i+1)})\,
\prod_{i=\al}^{\bt-1}\,\prod_{a=1}^{\la^{(i)}-1}\,
\frac {t_{\la^{(i)}}^{(i)}-t_a^{(i)}-h}{t_{\la^{(i)}}^{(i)}-t_a^{(i)}}
\>\Bigr)\>.
\end{align}

\begin{lem}
\label{ldab}
The function $\;d_{\TT^{\{\albt\}}}g$ \,is a discrete differential.
\qed
\end{lem}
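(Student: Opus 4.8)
The plan is to realize $d_{\TT^{\{\albt\}}}g$ as a single summand of the form \Ref{41}. Denote by $\Pi_\bt$ and $\Pi_\al$ the two products multiplying $q_\bt$ and $q_\al$ on the right-hand side of \Ref{dab}, so that
\[
d_{\TT^{\{\albt\}}}g\,=\,\frac{g}{q_\al-q_\bt}\,\bigl(q_\bt\,\Pi_\bt-q_\al\,\Pi_\al\bigr)\,.
\]
Let $\ww\in\ka\Z^{\la^{\{1\}}}$ be the shift sending $t^{(i)}_{\la^{(i)}}\mapsto t^{(i)}_{\la^{(i)}}-\ka$ for $\al\le i\le\bt-1$ and fixing every other variable; then $\ww$ moves exactly the coordinates of $\TT^{\{\albt\}}$ and leaves $\TT_{\{\albt\}}$, $\zz$, $\qq$ fixed, so $g$ is $\ww$-invariant. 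Put $f=\dfrac{g\,q_\al}{q_\al-q_\bt}\,\Pi_\al$, a rational function. Since $-f$ is precisely the $q_\al$-summand of $d_{\TT^{\{\albt\}}}g$, the lemma reduces to the single identity
\[
\frac{\Phi_\bla(\TT+\ww)}{\Phi_\bla(\TT)}\,q_\al\,\Pi_\al(\TT+\ww)\,=\,q_\bt\,\Pi_\bt\,;
\]
granting it, $d_{\TT^{\{\albt\}}}g=\dfrac{\Phi_\bla(\TT+\ww)}{\Phi_\bla(\TT)}\,f(\TT+\ww)-f(\TT)$ has the form \Ref{41}.

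To establish the displayed identity I would evaluate $\Phi_\bla(\TT+\ww)/\Phi_\bla(\TT)$ from \Ref{PHI} factor by factor. The $\qq$-exponential prefactor telescopes: lowering $\sum_j t^{(i)}_j/\ka$ by one for each $i\in[\al,\bt-1]$ multiplies it by $\prod_{i=\al}^{\bt-1}\bigl(e^{\pii(\la_{i+1}-\la_i)}q_i/q_{i+1}\bigr)^{-1}=(-1)^{\la_\bt-\la_\al}\,q_\bt/q_\al$. For the $\pho$-factors the one input needed is the shift relation
\[
\frac{\pho(x+\ka)}{\pho(x)}\,=\,\frac{x}{h-x-\ka}\,,
\]
which follows from $\Ga(y+1)=y\,\Ga(y)$, together with its reciprocal $\pho(x-\ka)/\pho(x)=(h-x)/(x-\ka)$. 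A factor $\pho(\,\cdot\,)$ of \Ref{PHI} then contributes to the ratio exactly when precisely one of its two arguments is a shifted top $t^{(i)}_{\la^{(i)}}$ with $\al\le i\le\bt-1$; a factor joining two consecutive shifted tops is unchanged, as both arguments move by $-\ka$. The purely polynomial factors $1/(t^{(i)}_a-t^{(i)}_b-h)$ of \Ref{PHI} contribute the corresponding shifted linear terms.

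It then remains to multiply this ratio by $q_\al\,\Pi_\al(\TT+\ww)$ and simplify. The mechanism is a systematic cancellation of $\ka$: each shifted linear factor of $\Pi_\al(\TT+\ww)$ cancels the $\ka$-dependent denominator produced by the matching $\pho$-ratio, converting every ``$-h$''~factor of $\Pi_\al$ into the plain factor of $\Pi_\bt$, every plain factor of $\Pi_\al$ into the ``$-h$''~factor of $\Pi_\bt$, and the within-group quotient $\prod(t^{(i)}_{\la^{(i)}}-t^{(i)}_a-h)/(t^{(i)}_{\la^{(i)}}-t^{(i)}_a)$ of $\Pi_\al$ into the quotient $\prod(t^{(i)}_a-t^{(i)}_{\la^{(i)}}-h)/(t^{(i)}_a-t^{(i)}_{\la^{(i)}})$ of $\Pi_\bt$. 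The rearrangement also produces a sign $(-1)^{\la_\al+\la_\bt}$, of the same parity as the sign $(-1)^{\la_\bt-\la_\al}$ of the $\qq$-exponential, so the two cancel and no residual sign survives. The main obstacle is exactly this bookkeeping: one must pair each of the many factors correctly across the three sources --- the $\pho$-ratios, the shifted polynomial part of the master function, and $\Pi_\al(\TT+\ww)$ --- and verify both the disappearance of every $\ka$ and the parity count. At the boundaries $\al=1$ and $\bt=N$ one uses the conventions $\la^{(0)}=0$ and $\TT^{(N)}=\zz$; then the group-$0$ factors drop out of both $\Pi_\al,\Pi_\bt$ and of the ratio, while the group-$N$ tops are the fixed variables $z_a$, and the same computation applies.
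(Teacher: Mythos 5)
Your proposal is correct and is essentially the paper's own proof read in the opposite direction: the paper exhibits \Ref{dab} as a single term of \Ref{41} by taking $f$ to be $g$ times the product multiplying $q_\bt$ (your $\Pi_\bt$) with $\ww$ the shift by $+\ka$, whereas you take the $q_\al$-summand with the shift by $-\ka$, the two choices being related by replacing $\ww$ with $-\ww$. Both versions rest on the same telescoping identity for $\Phi_\bla(\TT+\ww;\zz;h;\qq)/\Phi_\bla(\TT;\zz;h;\qq)$ against the shifted product, which the paper leaves entirely to the reader and which you outline in more detail (your parity count is right, and your inclusion of the $\qq$-dependent prefactor in $f$ is in fact needed for the expression to be literally a single term of the form \Ref{41}).
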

\begin{proof} Formula \Ref{dab} is an example of formula \Ref{41}, where
\vvn.3>
\begin{align*}
f(\TT;\zz;h;\qq)\,&{}=\,
g(\TT_{\{\albt\}};\zz;h;\qq)\,
(t_{ \la^{(\al-1)} }^{(\al-1)}\<-t_{\la^{(\al)}}^{(\al)})\,
\prod_{i=\al}^{\bt-1}\,\prod_{a=1}^{\la^{(i-1)}-1}\!
(t_a^{(i-1)}\<\<-t_{\la^{(i)}}^{(i)})\,\times{}
\\[2pt]
\notag
&{}\times\,(t_{\la^{(\bt-1)}}^{(\bt-1)}\<-t_{\la^{(\bt)}}^{(\bt)}\<-h)
\prod_{i=\al}^{\bt-1}\,\prod_{a=1}^{\la^{(i+1)}-1}\!
(t_{\la^{(i)}}^{(i)}\<-t_a^{(i+1)}\<-h)\,
\prod_{i=\al}^{\bt-1}\,\prod_{a=1}^{\la^{(i)}-1}\,
\frac{t_a^{(i)}\<\<-t_{\la^{(i)}}^{(i)}\<-h}
{t_a^{(i)}\<\<-t_{\la^{(i)}}^{(i)}}
\\[-14pt]
\end{align*}
and $\ww$ has coordinates \,$t^{(\al)}_{\la^{(\al)}}\>,
\,t^{(\al+1)}_{\la^{(\al+1)}}\>\lc\,t^{(\bt-1)}_{\la^{(\bt-1)}}$
\,equal to \,$\ka$ \,and other coordinates equal to zero.
\end{proof}

We rewrite $d_{\TT^{\{\albt\}}}g$ as
\be
d_{\TT^{\{\albt\}}}g\,=\,
\frac{q_\bt}{q_\al -q_\bt}\,\dti_{\TT^{\{\albt\}}}g
-\dch_{\TT^{\{\albt\}}}g,
\ee
where
\vvn-.4>
\begin{align}
\label{dabt}
\dti_{\TT^{\{\albt\}}}g\,:=\,g(\TT_{\{\albt\}};\zz;h;\qq)\,\Bigl(\>
& (t_{\la^{(\al-1)}}^{(\al-1)}\<-t_{\la^{(\al)}}^{(\al)})\,
\prod_{i=\al}^{\bt-1}\,\prod_{a=1}^{\la^{(i-1)}-1}\!
(t_a^{(i-1)}-t_{\la^{(i)}}^{(i)})\,\times{}
\\[1pt]
\notag
{}\times\,
(t_{\la^{(\bt-1)}}^{(\bt-1)}\<-t_{\la^{(\bt)}}^{(\bt)}\<-h)\,
& \prod_{i=\al}^{\bt-1}\,\prod_{a=1}^{\la^{(i+1)}-1}
(t_{\la^{(i)}}^{(i)}-t_a^{(i+1)}-h)\,
\prod_{i=\al}^{\bt-1}\,\prod_{a=1}^{\la^{(i)}-1}\,
\frac {t_a^{(i)}\<\<-t_{\la^{(i)}}^{(i)}-h}{t_a^{(i)}\<\<-t_{\la^{(i)}}^{(i)}}
\,-{}\kern-2em
\\[3pt]
\notag
{}-\,{} & (t_{\la^{(\al-1)}}^{(\al-1)}\<-t_{\la^{(\al)}}^{(\al)}\<-h)\,
\prod_{i=\al}^{\bt-1}\,\prod_{a=1}^{\la^{(i-1)}-1}\!
(t_a^{(i-1)}\<\<-t_{\la^{(i)}}^{(i)}\<-h)\,\times{}
\\[2pt]
\notag
{}\times\,(t_{\la^{(\bt-1)}}^{(\bt-1)}\<-t_{\la^{(\bt)}}^{(\bt)}\bigr)\,
& \prod_{i=\al}^{\bt-1}\,\prod_{a=1}^{\la^{(i+1)}-1}\!
(t_{\la^{(i)}}^{(i)}-t_a^{(i+1)})\,
\prod_{i=\al}^{\bt-1}\,\prod_{a=1}^{\la^{(i)}-1}\,
\frac {t_{\la^{(i)}}^{(i)}-t_a^{(i)}-h}{t_{\la^{(i)}}^{(i)}-t_a^{(i)}}
\>\Bigr)
\end{align}
and
\vvn-.6>
\begin{align}
\label{dabc}
\kern.6em\dch_{\TT^{\{\albt\}}}g\,&{}:=\,g(\TT_{\{\albt\}};\zz;h;\qq)\,
(t_{\la^{(\al-1)}}^{(\al-1)}\<-t_{\la^{(\al)}}^{(\al)}\<-h)\,
\prod_{i=\al}^{\bt-1}\,\prod_{a=1}^{\la^{(i-1)}-1}\!
(t_a^{(i-1)}\<\<-t_{\la^{(i)}}^{(i)}\<-h)\,\times{}
\\[2pt]
\notag
&{}\;\times\,(t_{\la^{(\bt-1)}}^{(\bt-1)}\<-t_{\la^{(\bt)}}^{(\bt)}\bigr)\,
\prod_{i=\al}^{\bt-1}\,\prod_{a=1}^{\la^{(i+1)}-1}\!
(t_{\la^{(i)}}^{(i)}-t_a^{(i+1)})\,
\prod_{i=\al}^{\bt-1}\,\prod_{a=1}^{\la^{(i)}-1}\,
\frac {t_{\la^{(i)}}^{(i)}-t_a^{(i)}-h}{t_{\la^{(i)}}^{(i)}-t_a^{(i)}}\;.
\end{align}
Denote
\beq
\label{ddab}
d_{\{\albt\}}g\,:=\,
\Sym_{\>t^{(1)}_1\!\lc\,t^{(1)}_{\la^{(1)}}}\,\ldots\;
\Sym_{\>t^{(N-1)}_1\!\lc\,t^{(N-1)}_{\la^{(N-1)}}}d_{\TT^{\{\albt\}}}g,
\vvn-.4>
\eeq
\beq
\label{tdab}
\dti_{\{\albt\}}g\,:=\,
\Sym_{\>t^{(1)}_1\!\lc\,t^{(1)}_{\la^{(1)}}}\,\ldots\;
\Sym_{\>t^{(N-1)}_1\!\lc\,t^{(N-1)}_{\la^{(N-1)}}}\dti_{\TT^{\{\albt\}}}g,
\vvn-.4>
\eeq
\beq
\label{cdab}
\dch_{\{\albt\}}g\,:=\,
\Sym_{\>t^{(1)}_1\!\lc\,t^{(1)}_{\la^{(1)}}}\,\ldots\;
\Sym_{\>t^{(N-1)}_1\!\lc\,t^{(N-1)}_{\la^{(N-1)}}}
\dch_{\TT^{\{\albt\}}}g\,.
\eeq
Then
\vvn-.4>
\beq
\label{dabg}
d_{\{\albt\}}g\,=\,\frac{q_\bt}{q_\al -q_\bt}\, \dti_{\{\albt\}}g
-\dch_{\{\albt\}}g.
\eeq

\begin{cor}
The function $\;d_{\{\albt\}}g$ \,is a discrete differential.
\qed
\end{cor}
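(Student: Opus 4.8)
The plan is to reduce the statement to the fact already established in Lemma~\ref{ldab} that $d_{\TT^{\{\albt\}}}g$ is a discrete differential, combined with the observation that the total symmetrization appearing in \Ref{ddab} sends discrete differentials to discrete differentials. Since a discrete differential is by definition a finite sum of basic terms of the form \Ref{41}, and since the operator $\Sym_{\>t^{(1)}_1\!\lc\,t^{(1)}_{\la^{(1)}}}\ldots\Sym_{\>t^{(N-1)}_1\!\lc\,t^{(N-1)}_{\la^{(N-1)}}}$ is linear, it suffices to check that the symmetrization of a single basic term \Ref{41} is again a discrete differential, and then apply this term by term.

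The key input is that the master function $\Phi_\bla$ is symmetric in each group of variables $\TT^{(i)}$, hence invariant under the action of the group $G=S_{\la^{(1)}}\times\dots\times S_{\la^{(N-1)}}$ that permutes the coordinates of $\TT$ within each group. First I would fix a basic term
\[
D\,=\,\frac{\Phi_\bla(\TT+\ww)}{\Phi_\bla(\TT)}\,f(\TT+\ww)\>-\>f(\TT),\qquad \ww\in\ka\Z^{\la^{\{1\}}},
\]
and compute the contribution $D(\tau\TT)$ of each $\tau\in G$ to the symmetrized sum. Using $\Phi_\bla(\tau\TT)=\Phi_\bla(\TT)$ and $\Phi_\bla(\tau\TT+\ww)=\Phi_\bla(\TT+\tau^{-1}\ww)$ (both by symmetry of $\Phi_\bla$, the latter after applying $\tau^{-1}$ to the argument), and setting $f^\tau(\TT):=f(\tau\TT)$ and $\ww^\tau:=\tau^{-1}\ww$, I would rewrite
\[
D(\tau\TT)\,=\,\frac{\Phi_\bla(\TT+\ww^\tau)}{\Phi_\bla(\TT)}\,f^\tau(\TT+\ww^\tau)\>-\>f^\tau(\TT).
\]

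This last expression is again of the form \Ref{41}: the function $f^\tau$ is rational because $f$ is rational and $\tau$ merely permutes variables, and $\ww^\tau$ still lies in $\ka\Z^{\la^{\{1\}}}$ because permuting the coordinates of a lattice vector leaves it in the lattice. Summing over all $\tau\in G$ then exhibits the symmetrization of $D$ as a finite sum of basic terms \Ref{41}, i.e.\ as a discrete differential. Applying this observation term by term to the discrete differential $d_{\TT^{\{\albt\}}}g$ of Lemma~\ref{ldab} yields that $d_{\{\albt\}}g$, as defined in \Ref{ddab}, is a discrete differential. There is no serious obstacle here; the only point requiring care is tracking how $\tau$ acts simultaneously on the shift vector $\ww$ and on the rational prefactor, and verifying that the ratio $\Phi_\bla(\TT+\ww)/\Phi_\bla(\TT)$ transforms compatibly under the group action, which is immediate from the group-wise symmetry of $\Phi_\bla$.
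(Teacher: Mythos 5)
Your argument is correct and is exactly the one the paper intends (the corollary is stated with no written proof, since it follows from Lemma~\ref{ldab} once one observes that symmetrization preserves discrete differentials). Your careful verification that each permuted term $D(\tau\TT)$ is again of the form \Ref{41}, with $f$ replaced by $f^\tau$ and $\ww$ by $\tau^{-1}\ww$, using the group-wise symmetry of $\Phi_\bla$, is precisely the missing detail and is handled correctly.
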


\subsection{First key formula}
\label{sec fi}

Let \,$\bla\in\Z^N_{\geq 0}$\,, \,$|\bla|=n$.
For \,$\al,\bt=1\lc N$, \,$\al\ne\bt$\,, denote
\vvn.3>
\beq
\label{stri}
\bla_{\albt}\,=\,(\la_1\lc \la_\al-1\lc\la_\bt+1\lc\la_N)\,.\kern-1.4em
\vv.3>
\eeq
Notice that $|\>\bla_{\albt}\>|=|\>\bla\>|$\,.

\vsk.2>
Let \,$I=(I_1\lc I_N)\in\Il$, \,$I_k=(\ell_{k,1}\lc\ell_{k,\la_k})$,
\,$k=1\lc N$. For \,$\al\ne\bt$\,, \,$a=1\lc \la_\al$\,, \,$b=1\lc\la_\bt$\,,
denote
\vvn.3>
\beq
\label{abla}
(I)^{\<\>b}_{\btal}\,=\,
(I_1\lc I_\al\cup\{\ell_{\bt,b}\}\lc I_\bt-\{\ell_{\bt,b}\}\lc I_N)
\in\Ic_{\,\bla_{\albt}}\,.\kern-1.4em
\eeq

\vsk.2>
For \,$J\in\Ic_{\bla_{\albt}}$ \,and \,$b=1\lc \la_\bt+1$, we have
\,$(J)_{\btal}^{\<\>b}\in\Ic_\bla$\,. The function \,$U_J$ defined
by formula \Ref{hWI--} is a function of variables \,$\TT_{\albt}\>,\zz$\,.

\begin{thm}
\label{thm1}
We have
\vvn-.6>
\beq
\label{1id}
(\dti_{\{\albt\}} U_J)(\TT;\zz)\,=\,
-h\>\sum_{b=1}^{\la_\bt+1}\>W_{(J)_{\btal}^{\<\>b}}(\TT;\zz)\,.
\eeq
\end{thm}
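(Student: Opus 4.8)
The plan is to prove \Ref{1id} by showing that the difference of the two sides, taken before the outer symmetrization in \Ref{tdab}, lies in the kernel of that symmetrization, and then to identify this kernel membership with the vanishing statements of Theorem~\ref{thm si}. First I would unfold the left-hand side using \Ref{dabt}: after factoring out $U_J$, regarded as a function of the reduced variables $\TT_{\{\albt\}}$, the operator $\dti_{\TT^{\{\albt\}}}$ multiplies it by the difference $P_+-P_-$ of two ``ladder'' products. These ladders are built from the distinguished top variables $t^{(j)}_{\la^{(j)}}$, $\al\le j<\bt$, together with the two boundary variables $t^{(\al-1)}_{\la^{(\al-1)}}$ and $t^{(\bt)}_{\la^{(\bt)}}$; the two summands differ only in the placement of the shifts $-h$, and each carries the ratio factors $\prod_i\frac{t^{(i)}_a-t^{(i)}_{\la^{(i)}}-h}{t^{(i)}_a-t^{(i)}_{\la^{(i)}}}$ (respectively with numerator and denominator reversed) that coincide with the factor $H$ of Theorem~\ref{thm si}.

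Second I would rewrite the right-hand side combinatorially. For each $b$ the index $(J)_{\btal}^{b}$ of \Ref{abla} is obtained from $J$ by deleting the $b$-th element of $J_\bt$ and inserting it into the nested sets at levels $\al\lc\bt-1$; since this changes none of the comparisons among the remaining elements, the function $U_{(J)_{\btal}^{b}}$ factors as $U_J$ times an insertion chain $Q_b$ consisting precisely of the factors that involve the inserted variable at those levels. Using the invariance of $\Sym$ under a fixed relabeling, I would move the inserted variable into the top slot $t^{(i)}_{\la^{(i)}}$ at each level, so that $-h\sum_b W_{(J)_{\btal}^{b}}=\Sym\bigl[U_J\cdot(-h\sum_b Q_b)\bigr]$.

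The core of the argument is then the identity of rational functions $U_J\bigl(P_+-P_-+h\sum_b Q_b\bigr)=R$, where $R$ is a finite sum of terms of the form $F\,H$ and $G\,H$ in the notation of Theorem~\ref{thm si}. The two ladders $P_\pm$ carry no factor directly linking consecutive top variables $t^{(i)}_{\la^{(i)}}$ and $t^{(i+1)}_{\la^{(i+1)}}$, whereas each connected chain $Q_b$ does; reconciling these is exactly a telescoping in the insertion position $b$, and the residual ``crossing'' terms that survive the telescope have the shape of $F$ or $G$ times the connecting and ratio factors $H$, with $t^{(0)}_1,t^{(k+1)}_1$ of Theorem~\ref{thm si} playing the roles of $t^{(\al-1)}_{\la^{(\al-1)}},t^{(\bt)}_{\la^{(\bt)}}$ and $k=\bt-\al$. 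By \Ref{si} and \Ref{sisi} each such term is annihilated by the symmetrization, so applying $\Sym$ to the displayed identity yields \Ref{1id}.

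I expect the reconciliation in the last step to be the main obstacle. The delicate points are: exhibiting the telescope in $b$ that converts the single two-term ladder $P_+-P_-$ into the full sum $-h\sum_b Q_b$ up to $FH,GH$ corrections; matching the missing top-top links of $P_\pm$ against the crossing factors of $F,G$ and $H$; and carrying the lone overall factor $-h$ together with all signs correctly through the reorganization, including the degenerate boundary factors at levels $\al-1$ and $\bt$. A safe alternative, should the single application of Theorem~\ref{thm si} prove unwieldy, is to run the same reduction as an induction on $k=\bt-\al$, peeling one intermediate level at a time and invoking \Ref{si}, \Ref{sisi} at each step.
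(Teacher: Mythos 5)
Your strategy is built from the same two ingredients as the paper's proof --- the factorization of $U_{(J)_{\btal}^{\,b}}$ as $U_J$ times an ``insertion chain'' (this is exactly the shuffle property, Lemma~\ref{lem shuf}), and Theorem~\ref{thm si} as the engine that kills the residual terms --- but you organize them as a single global rational-function identity before symmetrization, whereas the paper organizes them as an induction on $n$. The step you yourself flag as the main obstacle is precisely the one the paper's induction is designed to tame: the two ladders $P_\pm$ differ by an $h$-shift in \emph{every} factor, one ``column'' of factors for each point $z_a$, $a=1\lc n$, so the telescope must run over all $n$ points, not only over the $\la_\bt+1$ insertion positions $b$. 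When the new point lies in $J_\bt$ the corresponding telescope term produces $-h$ times a connection coefficient times a lower-$n$ weight function, i.e.\ one summand $-h\,W_{(J)_{\btal}^{\,b}}$; when it lies in some $J_\ga$ with $\ga\ne\bt$ the term must vanish, and that vanishing is exactly the content of the paper's base case (Lemma~\ref{lem n=1}), proved case by case: $\bt<\ga$ gives $1-1=0$ trivially, $\ga<\al<\bt$ uses \Ref{si}, $\al<\ga<\bt$ uses \Ref{sisi}, and $\al=\ga<\bt$ uses the \emph{degeneration} of \Ref{si} as $t^{(0)}_1\to\infty$. Your $F/G$ dichotomy does not cover this last boundary case as stated, and your proposed telescope indexed only by $b$ cannot by itself account for the columns at points outside $J_\bt$; both issues are real but repairable, and the repair is essentially to adopt the paper's induction on $n$ (your suggested fallback induction on $k=\bt-\al$ is a different induction and is not what the paper does for this theorem, although a $k$-type parameter does appear inside Theorem~\ref{thm si} itself). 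In short: same mechanism, different bookkeeping; to make your version complete you would need to (i) extend the telescope to all $n$ columns, (ii) verify that the spectator factors multiplying each $FH$ or $GH$ block are symmetric in the relevant variable groups so that Theorem~\ref{thm si} applies, and (iii) add the $t^{(0)}_1\to\infty$ degenerate identity to your toolkit.
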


Theorem \ref{thm1} is proved in Section \ref{prthm1}.

\subsection{Second key formula}
\label{seckey}
Let \,$I=(I_1\lc I_N)\in\Il$\,,
\,$I_k=(\ell_{k,1}\lc\ell_{k,\la_k})$\,. For \,$k_1, k_2=1\lc N$,
\,$k_1\ne k_2$, and \,$m_1=1\lc\la_{k_1}$\,, \,$m_2=1\lc\la_{k_2}$\,, define
the element \,$I_{k_1,k_2;m_1,m_2}=(\It_1\lc \It_N) \in \Il$
\,such that \,$\It_k = I_k$ \,if \,$k\ne k_1,k_2$\,, and
\vvn.4>
\beq
\label{Ikkmm}
\It_{k_1} = I_{k_1}\cup \{\ell_{k_2,m_2}\} - \{\ell_{k_1,m_1}\},\qquad
\It_{k_2}=I_{k_2}\cup \{\ell_{k_1,m_1}\} - \{\ell_{k_2,m_2}\}\,.
\vv.6>
\eeq

\begin{thm}
\label{2thm}
For $\;I\in\Il$ \,and $\;i=1\lc N-1$, we have
\vvn.4>
\begin{alignat}2
\label{44}
\Bigl(\,\sum_{j=1}^{\la^{(i)}}t^{(i)}_j
&{}-\sum_{j=1}^{\la^{(i-1)}}t^{(i-1)}_j-\sum_{a\in I_i}\,z_a \Bigr)\,W_I\,={}
\\
\notag
& {}=\,h\>\sum_{j=1}^{i-1}\,\sum_{m_1=1}^{\la_i}
\sum_{\satop{m_2=1}{\ell_{i,{m_1}}\<>\>\ell_{j,{m_2}}\!\!\!\!}}^{\la_j}
W_{I_{i,j;m_1,m_2}}\!
&{}-h\> \sum_{j=i+1}^N\,\sum_{m_1=1}^{\la_i}
\sum_{\satop{m_2=1}{\ell_{i,{m_1}}\<<\>\ell_{j,{m_2}}\!\!\!\!}}^{\la_j}
W_{I_{i,j;m_1,m_2}}\,+{}&
\\
\notag
&& \kern-4em
{}+\sum_{j=i+1}^N \sum_{a=1}^{\la_i} \dch_{\{i,j\}} U_{(I)_{\ij}^{\<\>a}}
- \sum_{j=1}^{i-1} \sum_{a=1}^{\la_j} \dch_{\{j,i\}}U_{(I)_{\ji}^{\<\>a}}
\,.&
\end{alignat}
\end{thm}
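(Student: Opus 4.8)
\emph{Reduction to the unsymmetrized level.} My plan is first to strip away the symmetrizers. Write $M_i=\sum_{j=1}^{\la^{(i)}}t^{(i)}_j-\sum_{j=1}^{\la^{(i-1)}}t^{(i-1)}_j-\sum_{a\in I_i}z_a$ for the scalar factor on the left of \Ref{44}. Since $M_i$ is separately symmetric in each group $\TT^{(1)}\lc\TT^{(N-1)}$ (its $z$-part is constant in $\TT$), it commutes with every symmetrizer entering \Ref{hWI--}, so that $M_i W_I=\Sym_{\TT^{(1)}}\ldots\Sym_{\TT^{(N-1)}}(M_i\,U_I)$. Every summand on the right of \Ref{44} is a symmetrization as well: the swap terms $W_{I_{i,j;m_1,m_2}}$ by \Ref{hWI--}, and the correction terms by \Ref{cdab}. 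Thus I would reduce the theorem to an identity between the symmetrizands, to be closed by applying $\Sym_{\TT^{(1)}}\ldots\Sym_{\TT^{(N-1)}}$ at the end.

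\emph{The local telescoping identity.} Next I would compute $M_i\,U_I$ directly from \Ref{UI}. In $U_I$ the variables $t^{(i)}_a$ occur both in the block $j=i$, as the upper variables interacting with $\TT^{(i+1)}$, and in the block $j=i-1$, as the lower variables interacting with $\TT^{(i-1)}$. Multiplying by $\sum_a t^{(i)}_a$ and reorganizing the linear factors $t^{(i)}_a-t^{(i\pm1)}_c$ and $t^{(i)}_a-t^{(i\pm1)}_c-h$ should exhibit, for the distinguished variable carrying the index $\ell_{i,m_1}$, a chain of differences that telescopes across the intermediate levels between $i$ and the part $j$ it is moved to. The boundary of this telescoping is exactly the product displayed in \Ref{dabc}; this is the mechanism by which the terms $\dch_{\{i,j\}}U_{(I)^a_{ij}}$ and $\dch_{\{j,i\}}U_{(I)^a_{ji}}$ appear, the lower-weight argument $U_{(I)^a_{ij}}$ arising because one level-$i$ variable is absorbed into the boundary factor.

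\emph{Assembling swaps and discarding chains.} The interior of the telescoping I would match, after symmetrization, to the swap weight functions using the first key formula, Theorem \ref{thm1}: the sums $-h\sum_b W_{(J)^b_{\bt\al}}$ it produces are of precisely the shape of the $h$-terms in \Ref{44}, and the three-term relation of Lemma \ref{c3t} (equivalently Lemma \ref{lem W si W}) rewrites them in the swap form $W_{I_{i,j;m_1,m_2}}$, with the sign of the crossing shift $-h$ forcing the ordering constraints $\ell_{i,m_1}>\ell_{j,m_2}$ when $j<i$ and $\ell_{i,m_1}<\ell_{j,m_2}$ when $j>i$. The genuinely multilevel contributions — those threading two variables through several consecutive levels — should cancel after symmetrization by Theorem \ref{thm si}, whose functions $F,G,H$ are tailored to exactly this shape.

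\emph{Where the difficulty lies.} The entire weight of the argument sits in the bookkeeping of the two preceding steps: tracking which single index $\ell_{i,m_1}$ is displaced, the exact pattern of $-h$ shifts it acquires as it crosses each intermediate level, and the resulting signs, while carrying the two directions $j>i$ and $j<i$ of \Ref{44} in parallel. The delicate point I anticipate is showing that, after symmetrization, every term that is neither a boundary term nor a single swap either cancels in pairs or is annihilated by Theorem \ref{thm si}, leaving precisely the stated swaps with their ordering conditions together with the $\dch$ boundary terms. Establishing this cancellation uniformly in $N$ and $i$ — rather than the individual elementary algebraic identities — is what I expect to be the main obstacle.
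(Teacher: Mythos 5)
Your proposal is a plan rather than a proof, and the step you yourself flag as ``the main obstacle'' is precisely the content of the theorem; as written there is a genuine gap. Two concrete problems. First, the mechanism you propose for producing the ordering-constrained swap sums is not right: Theorem \ref{thm1} computes $\dti_{\{\al,\bt\}}U_J$, not $\dch_{\{\al,\bt\}}U_J$, and yields the \emph{unrestricted} sum $-h\sum_{b=1}^{\la_\bt+1}W_{(J)^{\,b}_{\bt,\al}}$; it plays no role in the paper's proof of \Ref{44} and is only combined with Theorem \ref{2thm} afterwards, in Corollary \ref{corth}, to assemble the full discrete differential $D_{I,i}$. The restricted sums over pairs with $\ell_{i,m_1}>\ell_{j,m_2}$ (resp.\ $<$) cannot come from it. Second, the ``uniform cancellation after symmetrization'' of everything that is neither a $\dch$ boundary term nor a single swap is asserted, not established, and Theorem \ref{thm si} by itself does not deliver it for general $I$.

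The paper's route is different and worth internalizing. One first proves \Ref{44} for the extreme partition $I=\Ima$, for which both ordering-constrained swap sums are \emph{empty}: this is reduced, via the factorization Lemma \ref{lem fa} and the shuffle structure, to a genuine $\frak{gl}_2$ telescoping identity \Ref{2=2} proved by induction on $n$ (Lemma \ref{l22}), and then extended to arbitrary $\bla$ by Proposition \ref{prop il}, which is where Theorem \ref{thm si} actually enters. General $I$ is then reached by induction on the length of $\si_I$ applied to the summed identity \Ref{44f}: applying the operator $S_{m,m+1}$ of Lemma \ref{c3t} to the identity for $\It=s_{m,m+1}(I)$ produces the identity for $I$, and each such step with $p\le l<r$ creates exactly one new term $h\,W_{I_{p,r;\,m,m+1}}$ --- this, not Theorem \ref{thm1}, is the source of the ordering-constrained sums, one term per inversion. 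If you want to salvage your telescoping picture, you would still need this induction (or an equivalent device) to control the combinatorics for arbitrary $I$; your current sketch gives no argument that closes it.
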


\vsk-.2>
Theorem \ref{2thm} is proved in Section \ref{prthm2}.

\section{Proof of Theorem \ref{thm1}}
\label{prthm1}

For $n=1$\,, Theorem \ref{thm1} is the following statement.

\begin{lem}
\label{lem n=1}
Let $n=1$.
For \,$1\le\ga\le n$\,, let \,$J^\ga=(J_1\lc J_N)$ \,be the decomposition
of the one-element set \,$\{1\}$\,, such that \,$J_\ga=\{1\}$ \,and
\,$J_j=\emptyset$ \,for \,$j\ne\ga$\,. Let \,$1\leq \al<\bt\leq N$. Then
\begin{alignat}2
\label{n1}
\dti_{\{\albt\}}\>U_{J^\ga}\, &{}=\,-hW_{J^\al}\,,\qquad &&\bt=\ga\,,
\\[4pt]
\label{n10}
\dti_{\{\albt\}}\>U_{J^\ga}\, &{}=\,0\,,\qquad &&\bt\ne\ga\,.
\end{alignat}
\end{lem}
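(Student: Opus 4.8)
The plan is to prove this base case by a direct evaluation, exploiting that for $n=1$ the source weight functions are trivial and that nearly every product in \Ref{dabt} collapses. The first step is to trivialize the weight functions: in $J^\ga$ the unique element of $\{1\}$ is the only (hence smallest) element of every nonempty partial union $\bigcup_{k\le j}J_k$, so $i^{(j)}_1=1$ for all relevant $j$. Consequently, in \Ref{UI} no index $c$ satisfies $i^{(j+1)}_c<i^{(j)}_a$, no index $d$ satisfies $i^{(j+1)}_d>i^{(j)}_a$, and the product over $b>a$ is empty; hence $U_{J^\ga}=1$, and for the same reason $W_{J^\al}=1$. The lemma thus reduces to evaluating the bracket of \Ref{dabt} at $g\equiv1$.

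For $\bt=\ga$ the partial sums entering \Ref{dabt} are those of the weight with $\la_\al=1$ and all other parts zero, so $\la^{(i)}=0$ for $i<\al$ and $\la^{(i)}=1$ for $i\ge\al$; every group has at most one variable and the symmetrization is the identity. Combining each standalone boundary factor with the $i=\al$ (resp.\ $i=\bt-1$) term of its product turns the $\al$-side into the empty product $\prod_{a=1}^{\la^{(\al-1)}}(\,\cdot\,)=1$ and leaves on the $\bt$-side only $(t^{(\bt-1)}_1-t^{(\bt)}_1-h)$ in the first term and $(t^{(\bt-1)}_1-t^{(\bt)}_1)$ in the second. Their difference is $-h$, which is $-h\>W_{J^\al}$; this is \Ref{n1}.

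For $\bt\ne\ga$ I would split on the position of $\ga$ relative to $\bt$. If $\ga>\bt$, then $\la^{(\al-1)}=\la^{(\bt)}=0$, so both boundary products and all interior products are empty, both terms of the bracket reduce to $1$, and $\dti_{\TT^{\{\albt\}}}1=1-1=0$ with no symmetrization needed. If $\ga<\bt$, the partial sums equal $2$ precisely for the groups $i$ with $\max(\al,\ga)\le i\le\bt-1$, so exactly those groups acquire a second variable and must be genuinely symmetrized. The bracket then matches the combination $F\>H$ of Theorem~\ref{thm si} (equivalently $(F\pm G)H$) up to factors symmetric in the variables of the doubled groups, the doubled groups playing the role of $t^{(1)}_1,t^{(1)}_2\lc t^{(k)}_1,t^{(k)}_2$ (with $k=\bt-\max(\al,\ga)$) and the two neighbouring single slots the role of $t^{(0)}_1,t^{(k+1)}_1$; hence the symmetrization vanishes by \Ref{si}--\Ref{sisi}, giving \Ref{n10}.

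The empty-product bookkeeping is routine; the substantive point is the identification in the last case of the symmetrized integrand with the vanishing combination of Theorem~\ref{thm si}. The only delicate instance is the degenerate boundary $\ga=\al$, where the lower anchor $t^{(0)}_1$ is absent: one checks that this case still reduces to the $F+G$ computation carried out in the proof of that theorem, so the vanishing persists.
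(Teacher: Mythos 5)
Your proof is correct and follows essentially the same route as the paper's: trivialize $U_{J^\ga}=W_{J^\al}=1$, evaluate the bracket of \Ref{dabt} directly for $\bt=\ga$ and for $\ga>\bt$, and reduce the remaining cases with $\ga<\bt$ to Theorem~\ref{thm si} (the paper invokes \Ref{si} for $\ga<\al$, \Ref{sisi} for $\al<\ga<\bt$, and the degeneration of \Ref{si} as $t^{(0)}_1\to\infty$ for $\al=\ga$, which matches your three subcases). The only imprecision is that for $\al<\ga<\bt$ the symmetrized bracket is $G\<\>H$ rather than $F H$, but since you cite both \Ref{si} and \Ref{sisi} this does not affect the argument.
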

\begin{proof}
For any $\ga$, we have $W_{J^\ga}=U_{J^\ga}$, and $U_{J^\ga}$ is the function of
$t^{(\ga)}_1\lc t^{(N-1)}_1, t^{(N)}_1=z_1$, which is identically equal to 1, see \Ref{hWI--}.

If \,$\bt=\ga$, then
\vvn.2>
\be
\dti_{\{\albt\}} U_{J^\ga}\,=\,\dti_{\{\al,\ga\}} U_{J^\ga}\,=\,
(t^{(\ga-1)}_1\<\<-t^{(\ga)}_1\<\<-h)-(t^{(\ga-1)}_1\<\<-t^{(\ga)}_1)\,=\,-h
\,=\,-h\>W_{J^\al}\,,
\vv.3>
\ee
which proves \Ref{n1}\,.

The proof of \Ref{n10} is by cases. If \,$\bt<\ga$, then \,$\dti_{\{\albt\}} U_{J^\ga}=(1-1)\cdot1=0$\,.
If \,$\ga<\al<\bt$, then \,$\dti_{\{\albt\}} U_{J^\ga}=0$ \,by identity \Ref{si}.
If \,$\al<\ga<\bt$, then \,$\dti_{\{\albt\}} U_{J^\ga}=0$ \,by identity \Ref{sisi}.
If \,$\al=\ga<\bt$, then \,$\dti_{\{\albt\}} U_{J^\ga}=0$ \,by the degeneration of
identity \Ref{si} as \,$t^{(0)}_1\!\to\infty$\,.
\end{proof}

For arbitrary $n$, Theorem \ref{thm1} follows by induction on \,$n$ \,from
the shuffle properties of weight functions in Lemma \ref{lem shuf}.
To avoid writing numerous indices we illustrate the reasoning by an example.

\vsk.3>
Let $N=3$, \,$n=2$, \,$J=(\emptyset,\{1,2\}, \emptyset)$, \,$\al=1$, \,$\bt=2$.
Then formula \Ref{1id} reads
\vvn.2>
\beq
\label{tie}
\dti_{\{1,2\}}U_J\,=\,
-hW_{(\{1\},\{2\}, \emptyset)}-hW_{(\{2\},\{1\}, \emptyset)}\,.
\eeq
Indeed, we have
\begin{align*}
& \begin{aligned}
\dti_{\{1,2\} }U_J\>=\,\Sym_{t^{(2)}_1, t^{(2)}_2}\Bigl(\<\<\bigl(
(t^{(1)}_1\<\<-t^{(2)}_1\<\<-h)\>(t^{(1)}_1\<\<-t^{(2)}_2\<\<-h)
-(t^{(1)}_1\<\<-t^{(2)}_1)\>(t^{(1)}_1\<\<-t^{(2)}_2)\bigr)\,&{}\times{}
\\
{}\times\,(t^{(2)}_1\<\<-z_2-h)\>(t^{(2)}_2\<\<-z_1)\,
\frac{t^{(2)}_2\<\<-t^{(2)}_1\<\<-h}{t^{(2)}_2\<\<-t^{(2)}_1}\>\Bigr)\>&{}={}
\end{aligned}
\\[4pt]
& \begin{aligned}
\phantom{\dti_{\{1,2\} }U_J}\>
=\,\Sym_{t^{(2)}_1, t^{(2)}_2}\Bigl(\<\<\bigl(
(t^{(1)}_1\<\<-t^{(2)}_1\<\<-h)\>(t^{(1)}_1\<\<-t^{(2)}_2\<\<-h)
-(t^{(1)}_1\<\<-t^{(2)}_1)\>(t^{(1)}_1\<\<-t^{(2)}_2\<\<-h) &\>{}+{}
\\[3pt]
{}+\>(t^{(1)}_1\<\<-t^{(2)}_1)\>(t^{(1)}_1\<\<-t^{(2)}_2\<\<-h)
-(t^{(1)}_1\<\<-t^{(2)}_1)(t^{(1)}_1\<\<-t^{(2)}_2)& \bigr)\,\times{}
\\
{}\times\,(t^{(2)}_1\<\<-z_2-h)\>(t^{(2)}_2\<\<-z_1)\,
\frac{t^{(2)}_2\<\<-t^{(2)}_1\<\<-h}{t^{(2)}_2\<\<-t^{(2)}_1}\,&\<\Bigr)\>.
\end{aligned}
\\[-24pt]
\end{align*}
This is the sum of four terms. The first two are
\vvn.3>
\begin{align*}
& \begin{aligned}
\Sym_{t^{(2)}_1, t^{(2)}_2}\Bigl(\<\<\bigl(
(t^{(1)}_1\<\<-t^{(2)}_1\<\<-h)\>(t^{(1)}_1\<\<-t^{(2)}_2\<\<-h)
-(t^{(1)}_1\<\<-t^{(2)}_1)\>(t^{(1)}_1\<\<-t^{(2)}_2\<\<-h)\bigr)\,&{}\times{}
\\
{}\times\,(t^{(2)}_1\<\<-z_2-h)\>(t^{(2)}_2\<\<-z_1)\,
\frac{t^{(2)}_2\<\<-t^{(2)}_1\<\<-h}{t^{(2)}_2\<\<-t^{(2)}_1}\>\Bigr)\>&{}={}
\end{aligned}
\\[4pt]
\noalign{\allowbreak}
&{}=\,-h\>\Sym_{t^{(2)}_1, t^{(2)}_2}\Bigl((t^{(1)}_1\<\<-t^{(2)}_2\<\<-h)
\>(t^{(2)}_1\<\<-z_2-h)\>(t^{(2)}_2\<\<-z_1)\,
\frac{t^{(2)}_2\<\<-t^{(2)}_1\<\<-h}{t^{(2)}_2\<\<-t^{(2)}_1}\>\Bigr)\,=\,
-h\>W_{\{1\},\{2\},\emptyset}\,,
\\[-20pt]
\end{align*}
the last two are
\vvn.2>
\begin{align*}
& \begin{aligned}
\Sym_{t^{(2)}_1, t^{(2)}_2}\Bigl(\<\<\bigl(
(t^{(1)}_1\<\<-t^{(2)}_1)\>(t^{(1)}_1\<\<-t^{(2)}_2\<\<-h)
-(t^{(1)}_1\<\<-t^{(2)}_1)\>(t^{(1)}_1\<\<-t^{(2)}_2)\bigr)\,&{}\times{}
\\
{}\times\,(t^{(2)}_1\<\<-z_2-h)\>(t^{(2)}_2\<\<-z_1)\,
\frac{t^{(2)}_2\<\<-t^{(2)}_1\<\<-h}{t^{(2)}_2\<\<-t^{(2)}_1}\>\Bigr)\>&{}={}
\end{aligned}
\\[4pt]
\noalign{\allowbreak}
&{}=\,-h\>\Sym_{t^{(2)}_1, t^{(2)}_2}\Bigl((t^{(1)}_1\<\<-t^{(2)}_2)
\>(t^{(2)}_1\<\<-z_2-h)\>(t^{(2)}_2\<\<-z_1)\,
\frac{t^{(2)}_2\<\<-t^{(2)}_1\<\<-h}{t^{(2)}_2\<\<-t^{(2)}_1}\>\Bigr)\,=\,
-h\>W_{\{2\},\{1\},\emptyset}\,,
\\[-20pt]
\end{align*}
and we get \Ref{tie}.

\vsk.2>
The treatment of these four terms is an inductive step from $n=1$ to $n=2$.
The analysis of the first two terms is the application of Theorem \ref{thm1}
for $n=1$ at the first point \,$z_1$. Namely, the factor
\,$\bigl((t^{(1)}_1\<\<-t^{(2)}_1\<\<-h)-(t^{(1)}_1\<\<-t^{(2)}_2)\bigr)$
\,corresponds to \,$\dti_{\{1,2\}}$ at \,$z_1$ and the product
\be
(t^{(1)}_1\<\<-t^{(2)}_2\<\<-h)\>(t^{(2)}_1\<\<-z_2-h)\>(t^{(2)}_2\<\<-z_1)\,
\frac{t^{(2)}_2\<\<-t^{(2)}_1\<\<-h}{t^{(2)}_2\<\<-t^{(2)}_1}
\vv.2>
\ee
is the connection coefficient between $W_{\{1\},\emptyset,\emptyset}$ sitting
at \,$z_1$ and $W_{\emptyset,\{2\},\emptyset}$ sitting at \,$z_2$, see
Lemma \ref{lem shuf}. And the analysis of the last two terms is the application
of Theorem \ref{thm1} for $n=1$ at the second point \,$z_2$. Namely, the factor
\,$\bigl((t^{(1)}_1\<\<-t^{(2)}_1\<\<-h)-(t^{(1)}_1\<\<-t^{(2)}_2)\bigr)$
\,corresponds to \,$\dti_{\{1,2\}}$ at \,$z_2$ and the product
\vvn-.6>
\be
(t^{(1)}_1\<\<-t^{(2)}_2)\>(t^{(2)}_1\<\<-z_2-h)\>(t^{(2)}_2\<\<-z_1)\,
\frac{t^{(2)}_2\<\<-t^{(2)}_1\<\<-h}{t^{(2)}_2\<\<-t^{(2)}_1}
\vv.2>
\ee
is the connection coefficient between $W_{\{2\},\emptyset,\emptyset}$ sitting
at \,$z_1$ and $W_{\emptyset,\{1\},\emptyset}$ sitting at \,$z_2$, see
Lemma \ref{lem shuf}.

\section{Proof of Theorem \ref{2thm}}
\label{prthm2}

\subsection{Proof of Theorem \ref{2thm} for $N=2$, \,$\bla=(n,0)$,
\,$I=(\{1\lc n\},\emptyset)$\,}

\begin{lem}
\label{l22}
We have
\vvn.2>
\beq
\label{2=2}
\sum_{l=1}^n\, (t^{(1)}_l - z_l)\,W^{\frak{gl}_2}_{\{1\lc n\},\emptyset}\,=\,
\sum_{a=1}^n\, \dch_{\{1,2\}}\, U^{\frak{gl}_2}_{\{1\lc a-1,a+1\lc n\},\{a\}}
\,.
\eeq
\end{lem}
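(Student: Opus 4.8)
The plan is to prove Lemma~\ref{l22} by computing both sides explicitly for the $\gl_2$ case and matching them. First I would write out the left-hand side using the symmetrization formula \Ref{hWI--}. For $N=2$, $\bla=(n,0)$, and $I=(\{1\lc n\},\emptyset)$, the weight function $W^{\gl_2}_{\{1\lc n\},\emptyset}(\TT^{(1)};\zz)$ is the symmetrization over $t^{(1)}_1\lc t^{(1)}_n$ of the product $U_I$, where all of $t^{(1)}_1\lc t^{(1)}_n$ belong to the first subset. The factor $\sum_{l=1}^n(t^{(1)}_l-z_l)$ on the left is itself symmetric in the $t^{(1)}$-variables only after the full symmetrization, so I would be careful to perform the multiplication and then symmetrize.

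Next I would turn to the right-hand side. By definition \Ref{cdab}, each term $\dch_{\{1,2\}}U^{\gl_2}_{\{1\lc a-1,a+1\lc n\},\{a\}}$ is the symmetrization of the explicit expression \Ref{dabc} specialized to $\al=1$, $\bt=2$. For $N=2$ the indices simplify dramatically: there is only the single group $\TT^{(1)}$, the factor $(t^{(\al-1)}_{\la^{(\al-1)}}-\ldots)$ reduces to a $\TT^{(0)}=\zz$-type boundary factor, and the products over $i=\al\lc\bt-1$ collapse to a single value $i=1$. I expect \Ref{dabc} to reduce to a product of a linear factor in some distinguished $t^{(1)}$-variable (playing the role of $t^{(\al)}_{\la^{(\al)}}$) times the $U$-factors that reconstruct the weight function for the subset where $a$ has been moved into the second block.

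The key step is to recognize the combinatorial identity that organizes the sum over $a$ on the right into the single sum over $l$ on the left. Concretely, after symmetrization each $\dch_{\{1,2\}}U_{(I)^{a}_{1,2}}$ contributes a term in which one of the symmetrized $t^{(1)}$-variables is singled out and paired against $z_a$; summing over $a=1\lc n$ should reassemble exactly $\sum_l(t^{(1)}_l-z_l)W^{\gl_2}_{\{1\lc n\},\emptyset}$. I would verify this by isolating, in the symmetrization of both sides, the coefficient of a fixed monomial ordering of the $t^{(1)}$-variables, or equivalently by comparing residues as one variable tends to a pole, reducing the identity to the $n$-term telescoping relation among the linear factors $(t^{(1)}_l-z_l)$.

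The main obstacle will be bookkeeping the symmetrization: both sides are $\Sym$ over the same $n$ variables, and the nontrivial content is that the \emph{difference} of the $\dch$-factor \Ref{dabc} across the two linear boundary factors (the $q_\al$ versus $q_\bt$ split in \Ref{dab}, here isolated in $\dch$) produces precisely the multiplier $(t^{(1)}_l-z_l)$ after symmetrization. I would handle this by exploiting the same symmetry trick used in Theorem~\ref{thm si}: rewrite the summand so that the factor $\prod (t^{(1)}_b-t^{(1)}_a-h)/(t^{(1)}_b-t^{(1)}_a)$ lets me freely interchange $t$-variables with $z$-variables under $\Sym$, collapsing the double structure. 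Once that symmetry is established, \Ref{2=2} follows by a direct term-by-term match, giving the $N=2$, $\bla=(n,0)$ base case for the induction underlying Theorem~\ref{2thm}.
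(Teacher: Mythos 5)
Your setup is correct as far as it goes: for $N=2$, $\al=1$, $\bt=2$ the product \Ref{dabc} does collapse to $(t^{(1)}_n-z_n)\prod_{b=1}^{n-1}\bigl((t^{(1)}_n-z_b)\,\tfrac{t^{(1)}_n-t^{(1)}_b-h}{t^{(1)}_n-t^{(1)}_b}\bigr)$ times $U^{\frak{gl}_2}_{\{1\lc a-1,a+1\lc n\},\{a\}}$ in the remaining $n-1$ variables, and the two-variable symmetrization identity (trading $t^{(1)}_n-z_n$ for $t^{(1)}_{n-1}-z_n-h$ under $\Sym$) is indeed one of the tools needed. But the core of the argument is missing: you assert that summing over $a$ "should reassemble" the left-hand side, and none of the verification methods you offer delivers this. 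Comparing residues as a variable approaches a pole is vacuous here, since after symmetrization both sides are polynomials (the only candidate poles $t^{(1)}_a=t^{(1)}_b$ cancel); there is no telescoping among the factors $(t^{(1)}_l-z_l)$; and "isolating the coefficient of a fixed monomial ordering" is not a procedure one can actually run on these symmetrized rational expressions without essentially redoing the whole computation.

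What the paper actually does --- and what your proposal lacks --- is an induction on $n$ driven by two explicit factorization identities read off from \Ref{UI}: first, $U^{\frak{gl}_2}_{\{1\lc n\},\emptyset}(\TT,\zz)=A_n(\TT,\zz')\,U^{\frak{gl}_2}_{\{1\lc n-1\},\{n\}}(\TT',\zz)$, where $(t^{(1)}_n-z_n)\,A_n(\TT,\zz')$ is exactly the $\dch$-prefactor, so the $a=n$ summands on the two sides cancel identically \emph{before} symmetrization; second, for $a<n$ the function $U^{\frak{gl}_2}_{\{1\lc a-1,a+1\lc n\},\{a\}}$ factors through its $(n-1)$-variable analogue, and after applying the $\Sym$ identity in $t^{(1)}_{n-1},t^{(1)}_n$ one pulls out a common symmetric factor and lands exactly on the statement for $n-1$. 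Without identifying this recursive structure (or some equally concrete substitute, e.g.\ evaluation at sufficiently many points together with a degree bound), the proposed "direct term-by-term match" remains an unproven claim, so the argument as written does not establish \Ref{2=2}.
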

\begin{proof}
We will prove formula \Ref{2=2} by induction on $n$. Denote
\be
\TT'=\<\>(t^{(1)}_1\lc t^{(1)}_{n-1})\,,\qquad
\TT''=\<\>(t^{(1)}_1\lc t^{(1)}_{n-2})\,,\qquad
\zz'\<=(z_1\lc z_{n-1})\,,
\ee
\be
A_n(\TT,\zz')\,=\,\prod_{b=1}^{n-1}\Bigl((t^{(1)}_n\<-z_b)\,
\frac{t_n^{(1)}\<-t_b^{(1)}\<-h}{t_n^{(1)}\<-t_b^{(1)}}\>\Bigr)\,,\qquad
B(\TT''\<\<,z)\,=\,\prod_{a=1}^{n-2}\,(t^{(1)}_a\<-z-h)\,.
\vv.3>
\ee
By formulae \Ref{dabc}\<\>, \Ref{cdab}\<\>, equality \Ref{2=2} reads as
follows:
\vvn-.2>
\begin{align}
\label{22}
\Sym_{t_1^{(1)}\lc t_n^{(1)}}\>\sum_{a=1}^n{}\Bigl(&(t^{(1)}_a\<-z_a)\,
U^{\frak{gl}_2}_{\{1\lc n\},\emptyset}(\TT,\zz)\,-{}
\\
\notag
{}-\,{} & (t^{(1)}_n\<-z_n)\,A_n(\TT,\zz')\,
U^{\frak{gl}_2}_{\{1\lc a-1,a+1\lc n\},\{a\}}(\TT'\<,\zz)\Bigr)
\>=\,0\,.
\kern-2em
\end{align}
For $n=1$\,, formula \Ref{22} is clearly true. For the induction step,
we explore formula \Ref{UI}\<\>. It implies that the summation term with
\,$a=n$ in formula \Ref{22} \,vanishes,
\beq
\label{UA}
U^{\frak{gl}_2}_{\{1\lc n\},\emptyset}(\TT,\zz)\,=\,
A_n(\TT,\zz)\,U^{\frak{gl}_2}_{\{1\lc n-1\},\emptyset}(\TT'\<,\zz')\,
B(\TT''\<\<,z_n)\,(t^{(1)}_{n-1}\<-z_n\<-h)\,,
\vv.1>
\eeq
and for \,$a<n$\,,
\vvn.1>
\begin{align}
\label{UU}
& U^{\frak{gl}_2}_{\{1\lc a-1,a+1\lc n\},\{a\}}(\TT'\<,\zz)\,={}
\\[4pt]
\notag
&{}\!=\,(t^{(1)}_{n-1}\<-z_{n-1})\,A_{n-1}(\TT'\<,\zz')\,
U^{\frak{gl}_2}_{\{1\lc a-1,a+1\lc n-1\},\{a\}}(\TT''\<,\zz')\,
B(\TT''\<\<,z_n)\,.
\end{align}
The last formula and the identity
\vvn.1>
\be
\Sym_{t^{(n)}_{n-1},\>t^{(n)}_n}\,(t^{(n)}_n\<-z_n)\,
\frac{t^{(n)}_n\<-t^{(n)}_{n-1}\<-h}{t^{(n)}_n\<-t^{(n)}_{n-1}}\,=\,
\Sym_{t^{(n)}_{n-1},\>t^{(n)}_n}\,(t^{(n)}_{n-1}\<-z_n\<-h)\,
\frac{t^{(n)}_n\<-t^{(n)}_{n-1}\<-h}{t^{(n)}_n\<-t^{(n)}_{n-1}}\;,
\vv-.1>
\ee
yield
\vvn-.2>
\begin{align}
\Sym_{t_1^{(1)}\lc t_n^{(1)}}\,(t^{(1)}_n\<-z_n)\, & A_n(\TT,\zz')\,
U^{\frak{gl}_2}_{\{1\lc a-1,a+1\lc n\},\{a\}}(\TT'\<,\zz)\,={}
\\[4pt]
\notag
{}=\,\Sym_{t_1^{(1)}\lc t_n^{(1)}}\, & (t^{(1)}_{n-1}\<-z_{n-1})\,
A_n(\TT,\zz')\,A_{n-1}(\TT'\<,\zz')\,\times{}
\\[4pt]
\notag
{}\times\<\>{}&\> U^{\frak{gl}_2}_{\{1\lc a-1,a+1\lc n-1\},\{a\}}(\TT''\<,\zz')\,
B(\TT''\<\<,z_n)\,(t^{(1)}_{n-1}\<-z_n\<-h)\,.
\\[-14pt]
\notag
\end{align}
Summarizing all observations, we see that formula \Ref{22}
follows from the equality
\vvn.2>
\begin{align}
\label{22a}
\Sym_{t_1^{(1)}\lc t_{n-1}^{(1)}} A_n(\TT, & \zz')\,
B(\TT''\<\<,z_n)\,(t^{(1)}_{n-1}\<-z_n\<-h)\,\times{}
\\[1pt]
\notag
{}\times\>\sum_{a=1}^{n-1}\Bigl({}& (t^{(1)}_a\<-z_a)
\,U^{\frak{gl}_2}_{\{1\lc n-1\},\emptyset}(\TT'\<,\zz')\,-{}
\\
\notag
{}-\,{} & (t^{(1)}_{n-1}\<-z_{n-1})\,A_{n-1}(\TT'\<,\zz')\,
U^{\frak{gl}_2}_{\{1\lc a-1,a+1\lc n-1\},\{a\}}(\TT''\<,\zz')\Bigr)\>=\,0\,,
\end{align}
with \;$t^{(1)}_n$ not involved in the symmetrization. Since the product
\vvn.3>
\be
A_n(\TT,\zz')\,B(\TT''\<\<,z_n)\,(t^{(1)}_{n-1}\<-z_n\<-h)
\vv.1>
\ee
is symmetric in \;$t^{(1)}_1\<\<\lc t^{(1)}_{n-1}$\,, formula \Ref{22a}
follows from the induction assumption
\vvn.3>
\begin{align}
\label{22'}
\Sym_{t_1^{(1)}\lc t_{n-1}^{(1)}}\>\sum_{a=1}^{n-1}{}\Bigl(&(t^{(1)}_a\<-z_a)
\,U^{\frak{gl}_2}_{\{1\lc n-1\},\emptyset}(\TT'\<,\zz')\>-{}
\\
\notag
{}-\,{} &(t^{(1)}_{n-1}\<-z_{n-1})\,A_{n-1}(\TT'\<,\zz')\,
U^{\frak{gl}_2}_{\{1\lc a-1,a+1\lc n-1\},\{a\}}(\TT''\<,\zz')\Bigr)\>=\,0\,.
\kern-2em
\\[-22pt]
\notag
\end{align}
Lemma \ref{l22} is proved.
\end{proof}

\subsection{Proof of Theorem \ref{2thm} for $N=2$ and \,$I=\Ima$}
For $N=2$, \,$\bla=(k,n-k)$\,, we denote
\,$\Ima\<=\bigl(\{n-k+1\lc n\}, \{1\lc n-k\}\bigr)$\,.
Then formula \Ref{44} becomes formula \Ref{22m} below.

\begin{lem}
\label{lem N2m}
We have
\begin{align}
\label{22m}
\sum_{l=1}^{k}\,(t^{(1)}_l - z_{n-k+l}) \,& W^{\frak{gl}_2}_{\{n-k+1\lc n\},
\{1\lc n-k\}}
\\[-3pt]
\notag
{}=\!\sum_{a=n-k+1}^n\!\dch_{\{1,2\}} \,&
U^{\frak{gl}_2}_{\{n-k+1\lc a-1,a+1\lc n\},\{1\lc n-k,a\}},
\end{align}
\end{lem}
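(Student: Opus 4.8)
The plan is to deduce \Ref{22m} from formula \Ref{2=2} of Lemma \ref{l22}, which is the case \,$n-k=0$, by extracting a common symmetric factor carrying all the dependence on \,$z_1\lc z_{n-k}$. Since \,$\Ima$ \,has first block \,$\{n-k+1\lc n\}$, every index \,$c\le n-k$ \,lies below every element of that block, so by formula \Ref{UI} the factor \,$(t^{(1)}_a\<-z_c)$ \,occurs for every \,$a=1\lc k$ \,and every \,$c=1\lc n-k$. This gives the factorization
\be
U^{\frak{gl}_2}_{\{n-k+1\lc n\},\{1\lc n-k\}}(\TT^{(1)};\zz)\,=\,P\,\cdot\,
U^{\frak{gl}_2}_{\{n-k+1\lc n\},\emptyset}(\TT^{(1)};z_{n-k+1}\lc z_n)\,,
\qquad
P\,=\,\prod_{a=1}^{k}\,\prod_{c=1}^{n-k}\,(t^{(1)}_a\<-z_c)\,,
\ee
in which the second factor is the \,$\frak{gl}_2$ \,weight function of the block and \,$P$ \,is symmetric in \,$t^{(1)}_1\lc t^{(1)}_k$. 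Because \,$P$ \,is symmetric, symmetrizing gives \,$W^{\frak{gl}_2}_{\{n-k+1\lc n\},\{1\lc n-k\}}=P\,W^{\frak{gl}_2}_{\{n-k+1\lc n\},\emptyset}$, and since \,$\sum_{l=1}^k t^{(1)}_l$ \,is also symmetric, the left-hand side of \Ref{22m} equals \,$P$ \,times \,$\bigl(\sum_{l=1}^k t^{(1)}_l-\sum_{m=n-k+1}^n z_m\bigr)\>W^{\frak{gl}_2}_{\{n-k+1\lc n\},\emptyset}$, which is precisely \,$P$ \,times the left-hand side of \Ref{2=2} written for the block \,$\{n-k+1\lc n\}$.

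For the right-hand side I would factor each summand the same way. Writing \,$J_a=(\{n-k+1\lc n\}\minus\{a\},\{1\lc n-k\}\cup\{a\})$, the first block \,$\{1\lc n-k\}$ \,still lies below every element of the first part of \,$J_a$, so \Ref{UI} yields \,$U^{\frak{gl}_2}_{J_a}=\Tilde P\,U^{\frak{gl}_2}_{\mathrm{block}}$ \,with \,$\Tilde P=\prod_{a'=1}^{k-1}\prod_{c=1}^{n-k}(t^{(1)}_{a'}\<-z_c)$, one row short of \,$P$. The explicit prefactor created by \,$\dch_{\{1,2\}}$, read off from \Ref{dabc} with \,$N=2$, \,$\al=1$, \,$\bt=2$, contains the product \,$\prod_{a=1}^{n-1}(t^{(1)}_k\<-z_a)$, whose sub-product \,$\prod_{c=1}^{n-k}(t^{(1)}_k\<-z_c)$ \,supplies the missing \,$a'=k$ \,row and completes \,$\Tilde P$ \,to \,$P$, while all its remaining factors depend only on \,$z_{n-k+1}\lc z_n$ \,and reproduce the \,$\dch_{\{1,2\}}$ \,prefactor for the block. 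Since \,$P$ \,is symmetric, it passes through the outer symmetrization \Ref{cdab}, so each summand equals \,$P$ \,times the corresponding summand of the right-hand side of \Ref{2=2} for the block.

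Combining the two computations, both sides of \Ref{22m} are the nonzero symmetric factor \,$P$ \,times the two sides of \Ref{2=2} for the index block \,$\{n-k+1\lc n\}$, relabeled by \,$n-k+l\mapsto l$; dividing by \,$P$ \,then reduces \Ref{22m} to Lemma \ref{l22}. I expect the main obstacle to be the second step: one must verify precisely that the prefactor generated by \,$\dch_{\{1,2\}}$ \,in \Ref{dabc} splits as claimed, with all dependence on \,$z_1\lc z_{n-k}$ \,confined to the factor completing \,$P$ \,and the dependence on \,$z_{n-k+1}\lc z_n$ \,matching term by term the \,$\dch_{\{1,2\}}$ \,data of the block. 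This is only a bookkeeping verification on the products in \Ref{dabc}, but it is the crux, since it is what lets the single symmetric factor \,$P$ \,be pulled out of every term at once.
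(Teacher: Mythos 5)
Your proposal is correct and is essentially the paper's own proof: the paper disposes of Lemma \ref{lem N2m} in one sentence by dividing both sides by \,$\prod_{l=1}^k\prod_{a=1}^{n-k}(t^{(1)}_l-z_a)$\, to recover formula \Ref{2=2}, and your argument is exactly the detailed verification that this symmetric factor pulls out of \,$U_{\Ima}$\,, of each \,$U_{J_a}$\, together with the \,$\dch_{\{1,2\}}$\, prefactor from \Ref{dabc}, and through the symmetrizations.
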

\begin{proof}
Dividing both sides of the equation by
\,$\prod_{i=1}^k\prod_{a=1}^{n-k}(t^{(1)}_l\!-z_a)$\,,
turns formula \Ref{22m} into formula \Ref{2=2}.
\end{proof}

\subsection{Proof of Theorem \ref{2thm} for $i=1$, arbitrary $N$, and
\,$I=(\{1\lc n\},\emptyset\lc\emptyset)\,$}

\begin{prop}
\label{prop empty}
For $I=(\{1\lc n\},\emptyset\lc\emptyset)$\,, we have
\vvn.2>
\beq
\label{1ms}
\Bigl(\,\sum_{l=1}^n\,t^{(1)}_l -\sum_{a=1}^n\,z_a \Bigr)\,W_I\,=\,
\sum_{j=2}^n\,\sum_{a=1}^n\, \dch_{\{1,j\}}\, U_{(I)_{1,j}^{'a}}\,.
\eeq
\end{prop}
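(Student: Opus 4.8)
The plan is to deduce the identity from its \,$\frak{gl}_2$ \,counterpart, Lemma~\ref{l22}, by taking the weight function apart level by level. Since \,$I=(\{1\lc n\},\emptyset\lc\emptyset)$\,, Lemma~\ref{lem fa} factors it as \,$W_I=\prod_{m=1}^{N-1}W^{\frak{gl}_2}_{\{1\lc n\},\emptyset}(\TT^{(m)};\TT^{(m+1)})$\,, where \,$\TT^{(N)}=\zz$\,. I would then write the scalar coefficient on the left as a telescoping sum,
\be
\sum_{l=1}^n t^{(1)}_l-\sum_{a=1}^n z_a\,=\,\sum_{p=1}^{N-1}\Bigl(\sum_{l=1}^n t^{(p)}_l-\sum_{l=1}^n t^{(p+1)}_l\Bigr),
\ee
and reduce the proposition to the family of per-level identities
\beq
\label{plan-perlevel}
\Bigl(\sum_{l=1}^n t^{(p)}_l-\sum_{l=1}^n t^{(p+1)}_l\Bigr)W_I\,=\,\sum_{a=1}^n\dch_{\{1,p+1\}}U_{(I)_{1,p+1}^{'a}}\,,\qquad p=1\lc N-1,
\eeq
whose sum over \,$p$ \,reproduces the right-hand side of \Ref{1ms} after the reindexing \,$j=p+1$\,.

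The core of the argument is a reconstruction identity for the summands of \Ref{plan-perlevel}. The modified index \,$(I)_{1,p+1}^{'a}$ \,places \,$a$ \,in part \,$p+1$\,, so the sets \,$\bigcup_{k\le m}$ \,omit \,$a$ \,precisely for \,$m\le p$\,, and the strand carrying \,$a$ \,is missing from levels \,$1\lc p$\,. I claim that the reintroduction of the top variables \,$t^{(1)}_{\la^{(1)}}\lc t^{(p)}_{\la^{(p)}}$ \,prescribed by \Ref{dabc} restores this strand: at each level \,$m<p$ \,the reintroduced variable, combined with the matching factors of \,$U_{(I)_{1,p+1}^{'a}}$ \,from \Ref{UI}, rebuilds the full factor \,$W^{\frak{gl}_2}_{\{1\lc n\},\emptyset}(\TT^{(m)};\TT^{(m+1)})$\,, the factors with \,$m>p$ \,remaining untouched spectators, while at level \,$p$ \,the reintroduction of \,$t^{(p)}_{\la^{(p)}}$ \,together with the explicit \,$\dch$-factors of \Ref{dabc} produces exactly the \,$\frak{gl}_2$ \,expression \,$\dch_{\{1,2\}}U^{\frak{gl}_2}_{\{1\lc a-1,a+1\lc n\},\{a\}}(\TT^{(p)};\TT^{(p+1)})$\,. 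Granting this factorization, I sum over \,$a$ \,and apply Lemma~\ref{l22} at the pair \,$(\TT^{(p)},\TT^{(p+1)})$\,: the level-\,$p$ \,defect collapses to \,$(\sum_l t^{(p)}_l-\sum_l t^{(p+1)}_l)\,W^{\frak{gl}_2}_{\{1\lc n\},\emptyset}(\TT^{(p)};\TT^{(p+1)})$\,, and the spectators reassemble \,$W_I$\,, which is \Ref{plan-perlevel}. The same bookkeeping can instead be packaged as an induction on \,$N$\,, with base case Lemma~\ref{l22}, the \,$p=1$ \,term handled directly and the remaining terms by the induction hypothesis on levels \,$2\lc N$\,, the only new ingredient being the single step that enlarges \,$\dch_{\{2,j\}}$ \,to \,$\dch_{\{1,j\}}$ \,upon multiplication by \,$W^{\frak{gl}_2}_{\{1\lc n\},\emptyset}(\TT^{(1)};\TT^{(2)})$\,.

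I expect the reconstruction identity to be the main obstacle. The operator \,$\dch_{\{1,p+1\}}$ \,reintroduces all of \,$t^{(1)}_{\la^{(1)}}\lc t^{(p)}_{\la^{(p)}}$ \,at once and couples consecutive levels through the chained products in \Ref{dabc}, so it is not evident a priori that the outcome splits into independent spectator factors and a single-level defect. Verifying the split requires matching the factors of \Ref{dabc} against those of \,$U_{(I)_{1,p+1}^{'a}}$ \,in \Ref{UI} one level at a time, and then using the symmetry of each \,$W^{\frak{gl}_2}_{\{1\lc n\},\emptyset}(\TT^{(m)};\TT^{(m+1)})$ \,in its upper variables (Lemma~\ref{lem fa}) to reconcile the nested symmetrizations \,$\Sym_{\TT^{(1)}}\cdots\Sym_{\TT^{(N-1)}}$ \,appearing on the two sides. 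Once this recombination is established, the telescoping and the appeal to Lemma~\ref{l22} are routine.
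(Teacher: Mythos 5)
Your proposal is correct and follows essentially the same route as the paper: the paper also telescopes the scalar coefficient into per-level differences (its Lemma~\ref{lem in} is exactly your identity \Ref{plan-perlevel} with $j=p+1$), and proves each per-level identity by the factorization of Lemma~\ref{lem fa} together with the observation that $\sum_a\dch_{\{1,j\}}U_{(I)_{1,j}^{'a}}$ splits into the spectator $\frak{gl}_2$ factors times $\sum_a\dch_{\{j-1,j\}}U^{\frak{gl}_2}_{\{1\lc a-1,a+1\lc n\},\{a\}}(\TT^{(j-1)};\TT^{(j)})$, before invoking the $\frak{gl}_2$ case (Lemma~\ref{l22}). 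The reconstruction step you flag as the main obstacle is precisely what the paper dispatches with ``it is easy to see,'' so your more careful factor-matching only makes explicit what the paper leaves to the reader.
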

\begin{proof}
Formula \Ref{1ms} is equivalent to the formula
\vvn.2>
\be
\sum_{j=2}^n\,\sum_{l=1}^n\,(t^{(j-1)}_l\!- t^{(j)}_l)\,W_I\,=\,
\sum_{j=2}^n\,\sum_{a=1}^n\,\dch_{\{1,j\}}\,U_{(I)_{1,j}^{'a}}
\vv.2>
\ee
which follows from the next lemma.

\begin{lem}
\label{lem in}
For $j=2\lc N$ we have
\beq
\label{1j}
\sum_{l=1}^n\,(t^{(j-1)}_l\!- t^{(j)}_l)\,W_I\,=\,
\sum_{a=1}^n\,\dch_{\{1,j\}}\,U_{(I)_{1,j}^{'a}}
\eeq
\end{lem}
\begin{proof}
By Lemma \ref{lem fa}, the left-hand side of \Ref{1j} equals
\vvn.2>
\beq
\label{1jl}
\Bigl(\,\sum_{l=1}^n\, (t^{(j-1)}_l\!- t^{(j)}_l)\,
W^{\frak{gl}_2}_{\{1\lc n\},\emptyset}(\TT^{(j-1)};\TT^{(j)})\Bigr)\,
\prod_{\satop{i=2}{i\ne j}}^N\,
W^{\frak{gl}_2}_{\{1\lc n\},\emptyset}(\TT^{(i-1)};\TT^{(i)})\,.
\vv-.2>
\eeq
It is easy to see that the right hand side equals
\vvn.2>
\beq
\label{rhj}
\Bigl(\,\sum_{a=1}^n\,\dch_{\{j-1,j\}}\,
U_{(\{1\lc a-1,a+1\lc n\}, \{a\})} (\TT^{(j-1)};\TT^{(j)})\Bigr)\,
\prod_{\satop{i=2}{i\ne j}}^N\,
W^{\frak{gl}_2}_{\{1\lc n\},\emptyset}(\TT^{(i-1)};\TT^{(i)})\,.
\vv-.2>
\eeq
Hence, Lemma~\ref{lem in} follows from formula~\Ref{2=2}.
\end{proof}
Proposition \ref{prop empty} is proved.
\end{proof}

\subsection{Proof of Theorem \ref{2thm} for $i=1$, arbitrary $N$,
and \,$I=\Ima$}
For \,$\bla=(\la_1\lc\la_N)$\,, we denote
\,$\Ima\<=\bigl(\{n-\la_1+1\lc n\}\lc\{1\lc\la_N\}\bigr)$\,.
Then formula \Ref{44} takes the form
\beq
\label{i=1m}
\sum_{j=2}^N\,\sum_{l=1}^{\la_1}\,(t^{(j-1)}_{l+\la^{(j-1)}-\la_1}\!-
t^{(j)}_{l+\la^{(j)}-\la_1})\,W_{\Ima}\,=\,
\sum_{j=2}^N\,\sum_{a=1}^{\la_1}\,
\dch_{\{1,j\}} W_{(\Ima)_{1,j}^{'a}}\,.
\vv.3>
\eeq
The following lemma implies formula \Ref{i=1m}.

\begin{lem}
\label{lem i1}
For $j=2\lc N$, we have
\beq
\label{i1}
\sum_{l=1}^{\la_1}\,(t^{(j-1)}_{l+\la^{(j-1)}\!-\la_1} -
t^{(j)}_{l+\la^{(j)}-\la_1})\,W_{\Ima}\,=\,
\sum_{a=1}^{\la_1}\, \dch_{\{1,j\}}\, W_{(\Ima)_{1,j}^{'a}}\,.
\eeq
\end{lem}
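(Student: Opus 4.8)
The plan is to mirror the proof of Lemma~\ref{lem in}, reducing the arbitrary-$N$ identity \Ref{i1} to a single $\gl_2$ statement; the difference is that the target is now the \emph{maximal} $\gl_2$ identity \Ref{22m} of Lemma~\ref{lem N2m} rather than the empty one \Ref{2=2}. The first observation is that the sum on the left of \Ref{i1} runs only over the indices $l+\la^{(i)}-\la_1$, $l=1\lc\la_1$, i.e.\ over the top $\la_1$ variables at levels $j-1$ and $j$. For the maximal partition $\Ima$ these are exactly the variables carrying the first set $I_1=\{n-\la_1+1\lc n\}$, which sits at the top of every level range $\{n-\la^{(j)}+1\lc n\}$.

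Because $\Ima$ is maximal, its first set occupies the top positions at every level, so the corresponding top-$\la_1$ variables form an all-first-set subcolumn. First I would separate this subcolumn from the lower sets $I_2\lc I_N$ using the shuffle property of Lemma~\ref{lem shuf} with the split at $n_1=n-\la_1$: this writes $W_{\Ima}$ as a symmetrization of the connection coefficient $C_{\bla^1,\bla^2}$ times the maximal weight function $W_{I^1}$ for the spectator sets $I_2\lc I_N$ on $\{1\lc n-\la_1\}$ and the all-first-set weight function $W_{I^2}$ on the column $I_1=\{n-\la_1+1\lc n\}$. The latter factorizes across levels into $\gl_2$ pieces by Lemma~\ref{lem fa}, exactly as in \Ref{1jl}, while the contribution of the lower sets enters as common multiplicative factors, playing the role that the product $\prod_l\prod_a(t^{(1)}_l-z_a)$ played in the reduction of \Ref{22m} to \Ref{2=2}.

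With this separation, the left-hand side of \Ref{i1} acquires the shape of \Ref{1jl}: a single active level-$(j-1,j)$ $\gl_2$ factor carrying the telescoping sum, multiplied by the remaining spectator factors. The right-hand side is treated as in the passage from \Ref{1jl} to \Ref{rhj}: the configuration $(\Ima)_{1,j}^{'a}$ only moves the $a$-th element of $I_1$ into set $j$, so it alters only the first-column factor and the level-$(j-1,j)$ interaction, and the multi-level operator $\dch_{\{1,j\}}$ collapses to the single-level $\dch_{\{j-1,j\}}$ acting on the $\gl_2$ piece. After cancelling the common spectator factors from both sides, the identity reduces to the maximal $\gl_2$ formula \Ref{22m}, which is Lemma~\ref{lem N2m}.

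The hard part will be this cancellation of the lower-set contributions. Unlike the empty case of Lemma~\ref{lem in}, the weight function $W_{\Ima}$ is \emph{not} symmetric in its upper variables and does not factorize into $\gl_2$ weight functions across all levels --- only the all-first-set top column does (already for $\gl_2$, $\bla=(1,1)$ one has $W_{\Ima}=t^{(1)}_1-z_1$, which is not symmetric in $z_1,z_2$). Thus one must show directly that the variables of $I_2\lc I_N$ enter \emph{identically} on the two sides of \Ref{i1}, so that they divide out; this is the $\gl_N$ analogue of the division step in Lemma~\ref{lem N2m}. Equivalently, one must verify that the cross-level products inside $\dch_{\{1,j\}}$ in formula \Ref{dabc} reorganize precisely into the spectator factors together with the single-level $\dch_{\{j-1,j\}}$. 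Tracking this staircase bookkeeping for general $\bla$ through the symmetrizations is the main obstacle; once it is done, Lemmas~\ref{lem fa}, \ref{lem shuf} and~\ref{lem N2m} finish the proof.
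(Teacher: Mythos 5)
Your strategy --- factor the maximal weight function across adjacent levels so that only the level pair $(j-1,j)$ is active, and reduce to the rank-two maximal identity --- is the same strategy the paper uses, and your target, Lemma~\ref{lem N2m}, is equivalent to the paper's target \Ref{2=2} (they differ by the division carried out in the proof of Lemma~\ref{lem N2m}). However, the step you defer as ``the main obstacle'' is not bookkeeping to be postponed: it is the entire content of the proof, and your setup makes it harder than necessary. The simplification you overlook is that a level-by-level factorization \emph{does} hold for $\Ima$ --- not for the symmetrized $W_{\Ima}$ (you are right about that), but for the unsymmetrized integrand: directly from \Ref{UI} one has
\be
U_{\Ima}(\TT;\zz)\,=\,\prod_{m=2}^N\,
U^{\frak{gl}_2}_{\{\la_m+1\lc\la^{(m)}\},\{1\lc\la_m\}}(\TT^{(m-1)};\TT^{(m)})\,,
\ee
the analogue of Lemma~\ref{lem fa} for the maximal element. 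With this, $W_{\Ima}$ is the symmetrization of a product in which every level pair other than $(j-1,j)$ is a symmetric spectator, and the left-hand side of \Ref{i1} becomes the paper's display \Ref{lh1j}; on the right-hand side, the multi-level prefactor $G_{1,\<\>j}$ of \Ref{Gtz}--\Ref{dabc} recombines with $U_{(\Ima)_{1,j}^{'a}}$ to reproduce the same spectator $\frak{gl}_2$ factors times a single-level check-differential at $(j-1,j)$, giving \Ref{rh1j}. The identity then collapses to \Ref{2=2} applied to the top $\la_1$ variables at levels $j-1$ and $j$.

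Because you assume only the $I_1$-column factorizes, you route the argument through Lemma~\ref{lem shuf} with $n_1=n-\la_1$, which drags the connection coefficient $C_{\bla^1\!,\<\>\bla^2}$ into the picture; matching its cross factors against the prefactor of $\dch_{\{1,j\}}$ on the two sides of \Ref{i1}, through the symmetrizations, is exactly where your write-up stops. That matching can be done, so your route is viable, but it is strictly more work than the direct factorization of $U_{\Ima}$, and as written the argument is a plan rather than a proof: the claim that ``the multi-level operator $\dch_{\{1,j\}}$ collapses to the single-level $\dch_{\{j-1,j\}}$ acting on the $\frak{gl}_2$ piece'' is asserted, not verified, and verifying it (for general $\bla$, with the distinguished variables $t^{(i)}_{\la^{(i)}}$, $i=1\lc j-1$, threading through all intermediate levels) is precisely the lemma.
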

\begin{proof}
The left-hand side of formula \Ref{i1} equals
\begin{align}
\label{lh1j}
\quad\Sym_{\>t^{(1)}_1\!\lc\,t^{(1)}_{\la^{(1)}}}\ldots\;
\Sym_{\>t^{(N-1)}_1\!\lc\,t^{(N-1)}_{\la^{(N-1)}}}\Bigl(\,
\sum_{l=1}^{\la_1}(t^{(j-1)}_{l+\la^{(j-1)}-\la_1}\!-
t^{(j)}_{l+\la^{(j)}-\la_1}) &
\\
\notag
{}\times\prod_{m=2}^N\,U^{\frak{gl}_2}_{\{\la_m+1\lc\la^{(m)}\},\{1\lc\la_m\}}
(\TT^{(m-1)};\TT^{(m)}) &{}\Bigr)\>,\kern-1em
\end{align}
while the right-hand side of \Ref{i1} equals by definition
\begin{align}
\label{rh1j}
\Sym_{\>t^{(1)}_1\!\lc\,t^{(1)}_{\la^{(1)}}}\ldots\;
\Sym_{\>t^{(N-1)}_1\!\lc\,t^{(N-1)}_{\la^{(N-1)}}}
\Bigl(\;\prod_{l=1}^{\la^{(j)}}(t^{(j-1)}_{\la^{(j-1)}}-t^{(j)}_{l})
\prod_{l=1}^{\la^{(j-1)}-1}\,\frac{t^{(j-1)}_{\la^{(j-1)}}-t^{(j-1)}_l\!-h}
{t^{(j-1)}_{\la^{(j-1)}}-t^{(j-1)}_{l}} &
\\
\notag
{}\!\times\sum_{b=1+\la^{(j)}-\la_1}^{\la^{(j)}}\,U^{\frak{gl}_2}
_{\{\la_j+1\lc\la^{(j)}-\la_1\lc b-1,b+1\lc\la^{(j)}\},\{1\lc\la_j, b\}}
(\TT^{(j-1)}\setminus\{t^{(j-1)}_{\la^{(j-1)}}\}\>,\,\TT^{(j)} &{})
\\
\notag
{}\times\>\prod_{\satop{m=2}{m\ne j}}^N\,U^{\frak{gl}_2}
_{\{\la_m+1\lc\la^{(m)}\},\{1\lc\la_m\}}(\TT^{(m-1)},\TT^{(m)} &{})\Bigr)\>.
\kern-1em
\end{align}
The equality of \Ref{lh1j} and \Ref{rh1j} follows from the following case of
formula \Ref{2=2}\>:
\vvn.3>
\begin{align*}
\sum_{l=1}^{\la_1}\,(t^{(j-1)}_{l+\la^{(j-1)}-\la_1}\!-
t^{(j)}_{l+\la^{(j)}-\la_1})\,W^{\frak{gl}_2}_{\{1\lc\la_1\},\emptyset}
(t^{(j-1)}_{1+\la^{(j-1)}-\la_1}\lc t^{(j-1)}_{\la^{(j-1)}};
t^{(j)}_{1+\la^{(j)}-\la_1}\lc t^{(j)}_{\la^{(j)}}) &
\\
{}=\,\sum_{a=1}^{\la_1}\,\dch_{\{1,2\}}\,
U^{\frak{gl}_2}_{\{1\lc a-1,a+1\lc\la_1\},\{a\}}
(t^{(j-1)}_{1+\la^{(j-1)}-\la_1}\lc t^{(j-1)}_{\la^{(j-1)}-1};
t^{(j)}_{1+\la^{(j)}-\la_1}\lc t^{(j)}_{\la^{(j)}}) &\,.
\\[-40pt]
\end{align*}
\vv>
\end{proof}

\subsection{Proof of Theorem \ref{2thm} for \,$i>1$, arbitrary $N$, \,and
\,$I=(\{1\lc n\},\emptyset\lc\emptyset)\,$}

For $i=2\lc N-1$\,, and \,$I=(\{1\lc n\},\emptyset\lc\emptyset)$\,,
Theorem \ref{2thm} says that
\vvn.3>
\be
\sum_{l=1}^n\,(t^{(i)}_l\<\<- t^{(i-1)}_l)\, W_I\,=\,
-\>\sum_{a=1}^n\,\dch_{\{1,i\}}\,U_{(I)_{1,j}^{'a}}\>,
\vv.1>
\ee
which is formula \Ref{1j}.

\subsection{Proof of Theorem \ref{2thm} for \,$i>1$, arbitrary \,$\bla$\,, and
\,$I=\Ima$}

To prove this case of Theorem \ref{2thm}, we introduce a partition
$I^{\<\>\max,\>j}=(I^{\<\>\max,\>j}_1\lc I^{\<\>\max,\>j}_j)$ of the set
$(1\lc \la^{(j)})$ by the rule
\beq
\label{Imax,j}
I^{\<\>\max,\>j}_a =\,\{\>i\ |\ \la^{(j)}\!-\la^{(a)}\< < i\leq
\la^{(j)}\!-\la^{(a-1)}\>\}\,,
\vv.1>
\eeq
so that $\;|I^{\<\>\max,\>j}_a|=\la_a$\,. For example,
\,$I^{\<\>\max,\>N}\<=\>\Ima$.

\vsk.3>
Formula \Ref{44} for \,$I=\Ima$ can be written as
\begin{align}
\label{44max}
&\sum_{l=1}^{j-1}\,\Bigl(\<\Bigl(\>\sum_{i\in I^{\<\>\max,\<\>j}_l}\!t^{(j)}_i
-\!\sum_{i\in I^{\<\>\max,\<\>j-1}_l}\!t^{(j-1)}_i\Bigr)\,W_{\Ima}\>+
\>\sum_{a=1}^{\la_l}\,\dch_{\{l,j\}}\,U_{(\Ima)_{l,j}^{'a}}\>\Bigr)
\\
\notag
\quad{}+{} & \sum_{l=j+1}^N \Bigl(\<\Bigl(\>\sum_{i\in I^{\<\>\max,\<\>l-1}_j}
\!t^{(l-1)}_j-\!\sum_{j\in I^{\<\>\max,\<\>l}_i}\!t^{(l)}_i\Bigr)\,
W_{\Ima}\>-\>\sum_{a=1}^{\la_l}\,
\dch_{\{j,l\}}\,U_{(\Ima)_{j,l}^{'a}}\>\Bigr)\,=\,0\,.\kern-1em
\end{align}
Formula \Ref{44max} follows from the next Proposition.

\begin{prop}
\label{prop il}
For $i=1,2\lc N-1$\,, and \,$j=i,i+1\lc N-1$\,, we have
\beq
\label{4m1}
\Bigl(\>\sum_{l\in I^{\<\>\max,\<\>j}_i}\!t^{(j)}_l\<-\!
\sum_{l\in I^{\<\>\max,\<\>j}_i}\!t^{(j+1)}_l\Bigr)\,W_{\Ima}\,=\,
\sum_{a=1}^{\la_i}\,\dch_{\{i,j+1\}}\,U_{(\Ima)_{i,j+1}^{'a}}\>.
\eeq
\end{prop}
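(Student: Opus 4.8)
The plan is to reduce the identity \Ref{4m1} to the fundamental $\frak{gl}_2$ formula \Ref{2=2}, following the scheme already used for the first block in the proof of Lemma~\ref{lem i1} but now isolating the interior block $i$. Throughout I would keep $i$ and $j$ fixed and work with the unsymmetrized functions, applying the outer symmetrizations $\Sym_{t^{(1)}_1\lc t^{(1)}_{\la^{(1)}}}\ldots\Sym_{t^{(N-1)}_1\lc t^{(N-1)}_{\la^{(N-1)}}}$ only at the end.

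First I would substitute into the left-hand side of \Ref{4m1} the factorization of the maximal weight function $U_{\Ima}=\prod_{m=2}^N U^{\frak{gl}_2}_{\{\la_m+1\lc\la^{(m)}\},\{1\lc\la_m\}}(\TT^{(m-1)};\TT^{(m)})$, which is exactly the product appearing in \Ref{lh1j}. The junction of levels $j$ and $j+1$ corresponds to the factor with $m=j+1$, and the multiplication operator $\sum_{l\in I^{\max,j}_i}(t^{(j)}_l-t^{(j+1)}_l)$ acts on the block-$i$ variables shared between the factors at the $(j-1,j)$ and $(j,j+1)$ junctions. The remaining $\frak{gl}_2$ factors are symmetric in their own variables, so they can be pulled out of the corresponding symmetrizations, leaving an identity whose nontrivial content sits at the single junction $(j,j+1)$.

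Next I would expand the right-hand side using the definition \Ref{dabc} of $\dch_{\{i,j+1\}}$ and the shape of the modified partitions $(\Ima)_{i,j+1}^{'a}$. I expect the multi-level product of factors defining $\dch_{\{i,j+1\}}$ to recombine with the neighbouring $U^{\frak{gl}_2}$ factors in the factorization, so that, as in the passage from \Ref{lh1j} to \Ref{rh1j}, everything outside the block-$i$ chain at the $(j,j+1)$ junction becomes symmetric and is pulled out. The localized identity that remains is precisely the instance of \Ref{2=2} in which the $\la_i$ block-$i$ variables at level $j$ play the role of the internal $\frak{gl}_2$ variables and the corresponding variables at level $j+1$ play the role of the $z$'s; invoking Lemma~\ref{l22} then closes the argument, and \Ref{4m1} in turn yields \Ref{44max}.

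The hard part will be this middle step, and specifically a mismatch absent when $i=1$. The discrete differential $\dch_{\{i,j+1\}}$ is defined in \Ref{dabc} through the \emph{top} variables $t^{(m)}_{\la^{(m)}}$ at levels $m=i\lc j$, and for the maximal element these top variables belong to the \emph{first} block, not to block $i$. For $i=1$ the two coincide and the reduction in Lemma~\ref{lem i1} is transparent; for general $i$ one must show that, after the symmetrizations and after combining with the factorization, shifting the top variables applied to the rearranged partition $(\Ima)_{i,j+1}^{'a}$ reproduces the block-$i$ operation on the left. Verifying this collapse of the multi-level discrete differential to a single $\frak{gl}_2$ differential on the isolated junction, and checking that the leftover factors are genuinely symmetric so that they survive the pull-out through the nested symmetrizations, is the delicate combinatorial bookkeeping that the interior-block case demands.
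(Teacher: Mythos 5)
Your overall strategy --- reduce to the $\frak{gl}_2$ identity \Ref{2=2} at the junction of levels $j$ and $j+1$ --- is the right one for $i=1$, and it is essentially how the paper proves Lemma~\ref{lem i1}. But for $i>1$ the step you defer as ``delicate combinatorial bookkeeping'' is exactly where your mechanism breaks down, and it is not resolved by pulling out symmetric factors. The operator $\dch_{\{i,j+1\}}$ of \Ref{dabc} shifts the chain of top variables $t^{(i)}_{\la^{(i)}},\dots,t^{(j)}_{\la^{(j)}}$ and carries connecting factors between \emph{every} pair of consecutive levels from $i-1$ to $j+1$; for $\Ima$ these top slots are occupied by earlier-block variables, so the discrepancy between the two sides of \Ref{4m1} and your naively localized identity involves the earlier-block variables at all intermediate levels. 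These discrepancy terms are not symmetric in the relevant groups of variables and do not ``pull out'' of the nested symmetrizations; they vanish only after symmetrization, by the telescoping identity \Ref{si} of Theorem~\ref{thm si} applied to a chain of variables running from level $i-1$ to level $j+1$. Your proposal never invokes this identity (or anything equivalent), and \Ref{2=2} alone cannot supply it, since \Ref{2=2} is a two-level statement while the obstruction genuinely couples $j-i+3$ levels.

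The paper's actual route is an induction on $\la^{(i-1)}$, the total number of variables in blocks $1,\dots,i-1$: the base case $\la^{(i-1)}=0$ is your argument (Lemma~\ref{lem i1} after renaming variables), and each inductive step (Lemmas~\ref{ls1} and~\ref{ls2}) adds a single variable to one of the earlier blocks and verifies that the resulting correction \Ref{ll} is killed by \Ref{si}. To complete your proof you would need both this one-variable-at-a-time organization (or some substitute for it) and Theorem~\ref{thm si}; as written, the proposal correctly identifies the obstruction but does not overcome it.
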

\begin{proof}
For \,$i=1$\,, formula \Ref{4m1} follows from Lemma \ref{lem i1}.
For \,$i>1$, we prove formula \Ref{4m1} by induction on \,$\la^{(i-1)}$,
see Lemmas~\ref{ls1} and~\ref{ls2} below. If \,$\la^{(i-1)}\!=0$\,, that is,
\,$\la_j=0$ \,for all \,$j=1\lc i-1$\,, formula \Ref{4m1} follows from
Lemma~\ref{lem i1} by renaming variables.

\vsk.2>
We will indicate explicitly the dependence of the partitions \,$\Ima$,
\,$I^{\<\>\max,\<\>l}$ on \,$\bla$\,:
\vvn.2>
\be
\Ima_\bla=\>(\Ima_{\bla,1}\lc\Ima_{\bla,N})\,,\qquad
I^{\<\>\max,\<\>l}_\bla=\>(I^{\<\>\max,\<\>l}_{\bla,1}\lc
I^{\<\>\max,\<\>l}_{\bla,l})\,.
\vv.1>
\ee
We fix \,$i,j$ \,until the end of the proof of Proposition~\ref{prop il},
and omit the condition \,$|\bla|=n$\,.

\begin{lem}
\label{ls1}
Assume that formula \Ref{4m1} holds for \,$\bla=(0\lc 0,\la_k\lc\la_N)$
\,with \,$k\leq i$. Then formula \Ref{4m1} holds for
\,$\blat=(0\lc 0,1,\la_k\lc\la_N)$\,.
\end{lem}
\begin{proof}
Formula \Ref{4m1} for \,$\bla$ \,has the form
\vvn.3>
\begin{align}
\label{la}
\Bigl(\>\sum_{l\in I^{\<\>\max,\<\>j}_{\blai}}\!t^{(j)}_l-\!
\sum_{l\in I^{\<\>\max,\<\>j+1}_{\blai}}\!t^{(j+1)}_l\>\Bigr)\,
&\Sym_{\TT^{(k)}}\dots\,\Sym_{\TT^{(N-1)}}
\bigl(U_{\Ima_\bla}(\TT)\bigr)
\\[-5pt]
\notag
{}=\,\Sym_{\TT^{(k)}}\dots\,&\Sym_{\TT^{(N-1)}}
\Bigl(C_{\bla,\ij+1}\>\sum_{a=1}^{\la_i}\,
U_{(\Ima_{\bla})_{\ij+1}^{'a}}(\TT)\<\Bigr)\>,\kern-1em
\end{align}
where $C_{\bla,\>\ij+1}$ is the factor in the second and third lines of
definition \Ref{dabc}.

\vsk.2>
In addition to the variables \,$\TT=(\TT^{(k)}\lc\TT^{(N)})$ \,appearing
in formula \Ref{la}, formula \Ref{4m1} for \,$\blat$ \,contains
the new variables
$\;\TT_\new=\>
\bigl(\>t^{(k-1)}_1,\alb\,t^{(k)}_{1+\la^{(k)}}\>,\alb\,
t^{(k+1)}_{1+\la^{(k+1)}}\>\lc\,t^{(N-1)}_{1+\la^{(N-1)}}\>,\alb
\,t^{(N)}_{1+\la^{(N)}}\bigr)$\,, and has the form
\vvn.5>
\begin{align}
\label{tila}
\Bigl(\>\sum_{l\in I^{\<\>\max,\<\>j}_{\blat,i}}\!t^{(j)}_l-\!
\sum_{l\in I^{\<\>\max,\<\>j+1}_{\blat,i}}\!t^{(j+1)}_l\>\Bigr)\,
&\Sym_{\TTT^{(k)}}\dots\,\Sym_{\TTT^{(N-1)}}
\bigl(U_{\Ima_\blat}(\TTT)\bigr)
\\[-7pt]
\notag
{}=\,\Sym_{\TTT^{(k)}}\dots\,&\Sym_{\TTT^{(N-1)}}
\Bigl(C_{\blat,\ij+1}\>\sum_{a=1}^{\la_i}\,
U_{(\Ima_\blat)_{\ij+1}^{'a}}(\TTT)\<\Bigr)\>,\kern-1em
\end{align}
where \,$\TTT=\>\TT\cup\TT_\new=(\TTT^{(k-1)},\TTT^{(k)}\lc\TTT^{(N)})$\,.
It is easy to see from definition \Ref{UI} that
\vvn.3>
\beq
\label{U}
U_{\Ima_\blat}(\TTT)\,=\,U_{\Ima_{\bla}}(\TT)\,F(\TTT)\,,
\vv.2>
\eeq
where \,$F(\TTT)$ \,is the product of all factors appearing in \Ref{UI}
involving the interrelation of two variables at least one of those being from
\,$\TT_\new$\,. Moreover, \,$F(\TTT)$ \,is symmetric in the variables
\,$\TT^{(l)}$ \,for each \,$l=k\lc N$. Furthermore,
since \,$I^{\<\>\max,\<\>j}_{\blai}= I^{\<\>\max,\<\>j}_{\blat,i}$ \,and
\,$I^{\<\>\max,\<\>j+1}_{\blai}= I^{\<\>\max,\<\>j+1}_{\blat,i}$\,,
the first factors in the left-hand sides of formulas \Ref{la} and \Ref{tila}
coincide.

\vsk.2>
By all these observations, to get formula \Ref{tila} from \Ref{la},
we need to verify that
\vvn.2>
\beq
\label{ll}
\Sym_{\TTT^{(k)}}\dots\,\Sym_{\TTT^{(N-1)}} \Bigl(\>C_{\blat,\>\ij+1}\,
\sum_{a=1}^{\la_i}\,U_{(\Ima_\blat)_{i,j+1}^{'a}}\!-
F\>C_{\bla,\>\ij+1}\,
\sum_{a=1}^{\la_i}\,U_{(\Ima_{\bla})_{i,j+1}^{'a}}\>\Bigr)\>=\,0\,.
\vv.2>
\eeq
This equality follows from identity \Ref{si} for the variables
\,$t^{(i-1)}_{1+\la^{(i-1)}}\>,\alb\,t^{(i)}_{\la^{(i)}}\>,\alb
\,t^{(i)}_{1+\la^{(i)}}\>\lc\,t^{(j)}_{\la^{(j)}}\>,\alb
\,t^{(j)}_{1+\la^{(j)}}\>,\alb\,t^{(j+1)}_{1+\la^{(j+1)}}$\,.
Lemma \ref{ls1} is proved.
\end{proof}

\begin{example}
Let \,$N=5$\,, \,$\bla=(0,0,1,0,0)$\,, \,$\blat=(1,0,1,0,0)$\,.
For \,$i=3$\,, \,$j=3$\,, formulas \Ref{la} and \Ref{ll} take the form
$\;t^{(3)}_1\!-t^{(4)}_1\<=\>t^{(3)}_1\!-t^{(4)}_1$ and
\vvn.2>
\begin{align*}
\Sym_{t^{(3)}_1\!\<,\>t^{(3)}_2}\,\Sym_{t^{(4)}_1\!\<,\>t^{(4)}_2}
\Bigl( & (t^{(3)}_1\!-t^{(4)}_1)\>(t^{(2)}_1\!-t^{(3)}_1)\>
(t^{(3)}_1\!-t^{(4)}_2-h)\>(t^{(3)}_2\!-t^{(4)}_1)
\\
{}\times\,{} &(t^{(4)}_1\!-t^{(5)}_2\!-h)\>(t^{(4)}_2-t^{(5)}_1)\;
\frac{t^{(3)}_2\!-t^{(3)}_1\!-h}{t^{(3)}_2\!-t^{(3)}_1}\,\,
\frac{t^{(4)}_2\!-t^{(4)}_1\!-h}{t^{(4)}_2\!-t^{(4)}_1}\>\Bigr)
\\[6pt]
{}=\,\Sym_{t^{(3)}_1\!\<,\>t^{(3)}_2}\,\Sym_{t^{(4)}_1\!\<,\>t^{(4)}_2}
\Bigl( & (t^{(3)}_1\!-t^{(4)}_2\!-h)\>(t^{(2)}_2\!-t^{(3)}_1)\>
(t^{(3)}_2\!-t^{(4)}_2)\>(t^{(3)}_1\!-t^{(4)}_1)
\\
{}\times\,{} &(t^{(4)}_1\!-t^{(5)}_2\!-h)\>(t^{(4)}_2-t^{(5)}_1)\;
\frac{t^{(3)}_2\!-t^{(3)}_1\!-h}{t^{(3)}_2\!-t^{(3)}_1}\,\,
\frac{t^{(4)}_2\!-t^{(4)}_1\!-h}{t^{(4)}_2\!-t^{(4)}_1}\>\Bigr)\,,
\\[-16pt]
\end{align*}
respectively.
The last equality follows from identity \Ref{si} for the variables
$\;t^{(2)}_1\<,\alb\,t^{(3)}_1\<,\alb\,t^{(3)}_2\<,\alb\,t^{(4)}_2$.

\vsk.3>
For \,$i=3$\,, \,$j=4$\,, formulas \Ref{la} and \Ref{ll} take the form
$\;t^{(4)}_1\!-t^{(5)}_1\<=\>t^{(4)}_1\!-t^{(5)}_1$ and
\vvn.2>
\begin{align*}
\Sym_{t^{(3)}_1\!\<,\>t^{(3)}_2}\,\Sym_{t^{(4)}_1\!\<,\>t^{(4)}_2}
\Bigl( & (t^{(4)}_1\!-t^{(5)}_1)\>(t^{(2)}_1\!-t^{(3)}_1)\>
(t^{(3)}_1\!-t^{(4)}_2\!-h)\>(t^{(3)}_2\!-t^{(4)}_1)
\\
{}\times\,{} & (t^{(4)}_1\!-t^{(5)}_2-h)\>(t^{(4)}_2\!-t^{(5)}_1)\;
\frac{t^{(3)}_2\!-t^{(3)}_1\!-h}{t^{(3)}_2\!-t^{(3)}_1}\,\,
\frac{t^{(4)}_2\!-t^{(4)}_1\!-h}{t^{(4)}_2\!-t^{(4)}_1}\>\Bigr)
\\[6pt]
{}=\,\Sym_{t^{(3)}_1\!\<,\>t^{(3)}_2}\,\Sym_{t^{(4)}_1\!\<,\>t^{(4)}_2}
\Bigl( & (t^{(2)}_1\!-t^{(3)}_2\!-h)\>(t^{(2)}_2\!-t^{(3)}_1)\>
(t^{(3)}_1\!-t^{(4)}_2\!-h)\>(t^{(4)}_2\!-t^{(5)}_1)
\\
& \hphantom{h}\!\times\,(t^{(4)}_2\!-t^{(5)}_2)\>(t^{(4)}_1\!-t^{(5)}_1)\;
\frac{t^{(3)}_2\!-t^{(3)}_1\!-h}{t^{(3)}_2\!-t^{(3)}_1}\,\,
\frac{t^{(4)}_2\!-t^{(4)}_1\!-h}{t^{(4)}_2\!-t^{(4)}_1}\>\Bigr)\,,
\\[-16pt]
\end{align*}
respectively.
The last equality follows from identity \Ref{si} for the variables
$\;t^{(2)}_1\<,\alb\,t^{(3)}_1\<,\alb\,t^{(3)}_2\<,\alb\,t^{(4)}_1\<,\alb\,
t^{(4)}_2\<,\alb\,t^{(5)}_2$.
\end{example}

\begin{lem}
\label{ls2}
Assume that formula \Ref{4m1} holds for \,$\bla=(0\lc 0,\la_k\lc\la_N)$
\,with \,$k<i$ \,and \,$\la_k>0$\,. Then formula \Ref{4m1} holds for
\,$\blat=(0\lc 0,0,\la_k\<+1\lc\la_N)$\,.
\end{lem}
\begin{proof}
The proof is completely similar to that of Lemma~\ref{ls1}.
The only change is that the new variables are
$\;\TT_\new=\>\bigl(\>t^{(k)}_{1+\la^{(k)}}\>,\alb\,
t^{(k+1)}_{1+\la^{(k+1)}}\>\lc\,t^{(N-1)}_{1+\la^{(N-1)}}\>,\alb
\,t^{(N)}_{1+\la^{(N)}}\bigr)$\,.
\end{proof}

\begin{example}
Let \,$N=3$\,, \,$\bla=(1,1,0)$\,, \,$\blat=(2,1,0)$\,. For \,$i=2$\,,
\,$j=2$\,, formula \Ref{ll} proof follows from identity \Ref{si} for
the variables
$\;t^{(1)}_2\<,\alb\,t^{(2)}_2\<,\alb\,t^{(2)}_3\<,\alb\,t^{(3)}_3$.
\end{example}

Lemmas \ref{ls1} and \ref{ls2} yield Proposition \ref{prop il}.
\end{proof}
Theorem \ref{2thm} for \,$i>1$\,, arbitrary \,$N$, \,$\bla$\,, and
\,$I=\Ima$ \>is proved.

\subsection{Modification of the three-term relation}
For integers \,$\al,\bt$\,, $\;1\leq\al<\bt\leq N$, and
\,$\bla\in\Z^N_{\geq 0}$\,, \,$|\bla|=n$\,, \,recall the notations
$\;\TT^{\{\albt\}}$, \,$\TT_{\{\albt\}}$\,, \>$\bla_{\albt}$\,,
in Sections \ref{sec sdd} and \ref{sec fi}.

\begin{lem}
\label{dUW}
For any \,$1\leq \al<\bt \leq N$ and \,$I\in\Ic_{\bla_{\albt}}$\,,
\vvn.1>
we have $\;\dch_{\{\albt\}} U_I\>=\>c_{\albt}\,\dch_{\{\albt\}} W_I$\,,
where \;$\dsize c_{\albt}=\>
\frac{\prod_{i=\al}^{\bt-1}\la^{(i)}}{\prod_{i=1}^{N-1}\la^{(i)}!}\;$.
\end{lem}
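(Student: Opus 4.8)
The plan is to unwind (\ref{cdab}) for both $g=U_I$ and $g=W_I$ and to exploit that $W_I$ is itself a symmetrization of $U_I$. Let $C$ denote the factor multiplying $g(\TT_{\{\albt\}})$ on the right-hand side of (\ref{dabc}); it is independent of $g$ and involves the special variables $t^{(i)}_{\la^{(i)}}$, $\al\le i<\bt$, together with the remaining variables. Then $\dch_{\{\albt\}}g=\Sym\bigl(g(\TT_{\{\albt\}})\,C\bigr)$, where $\Sym$ is the full symmetrization over $t^{(i)}_1\lc t^{(i)}_{\la^{(i)}}$ for $i=1\lc N-1$. For $I\in\Ic_{\bla_{\albt}}$ both $U_I$ and $W_I$ depend only on the variables $\TT_{\{\albt\}}$, and by (\ref{hWI--}) we have $W_I=\sum_{\tau\in S_{\bla_{\albt}}}\tau\,U_I$, where $S_{\bla_{\albt}}=S_{\mu^{(1)}}\times\dots\times S_{\mu^{(N-1)}}$ permutes these variables within each group, with $\mu^{(i)}=\la^{(i)}-1$ for $\al\le i<\bt$ and $\mu^{(i)}=\la^{(i)}$ otherwise.

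The crux is to show that $C$ is invariant under $S_{\bla_{\albt}}$. For a group $i$ with $\al\le i<\bt$ this is immediate: the non-special variables $t^{(i)}_1\lc t^{(i)}_{\la^{(i)}-1}$ enter $C$ only through products over $a=1\lc\la^{(i)}-1$, which are symmetric in them. The two non-obvious cases are the boundary groups $\al-1$ and $\bt$ (present only when $\al\ge2$ and $\bt\le N-1$, respectively). There the variables $t^{(\al-1)}_{\la^{(\al-1)}}$ and $t^{(\bt)}_{\la^{(\bt)}}$ sit in the isolated linear factors $(t^{(\al-1)}_{\la^{(\al-1)}}-t^{(\al)}_{\la^{(\al)}}-h)$ and $(t^{(\bt-1)}_{\la^{(\bt-1)}}-t^{(\bt)}_{\la^{(\bt)}})$, while the other variables of these groups appear in the products over $a=1\lc\la^{(\al-1)}-1$ and $a=1\lc\la^{(\bt)}-1$. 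Merging each isolated factor with the adjacent product yields the fully symmetric expressions $\prod_{a=1}^{\la^{(\al-1)}}(t^{(\al-1)}_a-t^{(\al)}_{\la^{(\al)}}-h)$ and $\prod_{a=1}^{\la^{(\bt)}}(t^{(\bt-1)}_{\la^{(\bt-1)}}-t^{(\bt)}_a)$; since the variables of groups $\al-1$ and $\bt$ occur nowhere else in $C$, it follows that $C$ is symmetric in each of these groups as well. When $\al=1$ the group $\al-1$ is absent, and when $\bt=N$ the group $\bt$ consists of the $\zz$-variables, over which one does not symmetrize, so no condition arises in those cases.

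Granting the invariance $\tau C=C$ for all $\tau\in S_{\bla_{\albt}}$, we obtain $(\tau U_I)\,C=\tau(U_I\,C)$, and because the outer symmetrization runs over the larger group $S_{\la^{(1)}}\times\dots\times S_{\la^{(N-1)}}\supseteq S_{\bla_{\albt}}$, it is invariant under precomposition with $\tau$; hence $\Sym\bigl((\tau U_I)\,C\bigr)=\Sym\bigl(U_I\,C\bigr)=\dch_{\{\albt\}}U_I$ for every $\tau$. Summing over $\tau$ gives
\[
\dch_{\{\albt\}}W_I\,=\,\sum_{\tau\in S_{\bla_{\albt}}}\Sym\bigl((\tau U_I)\,C\bigr)\,=\,|S_{\bla_{\albt}}|\,\dch_{\{\albt\}}U_I.
\]
Finally,
\[
|S_{\bla_{\albt}}|\,=\,\prod_{i=1}^{N-1}(\mu^{(i)})!\,=\,\frac{\prod_{i=1}^{N-1}\la^{(i)}!}{\prod_{i=\al}^{\bt-1}\la^{(i)}}\,=\,c_{\albt}^{-1},
\]
which is exactly the claimed identity $\dch_{\{\albt\}}U_I=c_{\albt}\,\dch_{\{\albt\}}W_I$.

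The main obstacle is establishing the $S_{\bla_{\albt}}$-invariance of $C$ at the two boundary groups $\al-1$ and $\bt$, where the symmetry becomes visible only after combining the isolated linear factors with the neighboring symmetric products; the remaining steps are a routine reindexing of the symmetrization together with a factorial count.
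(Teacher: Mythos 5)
Your proof is correct and follows essentially the same route as the paper's: both rest on the observation that the product $G_{\albt}$ multiplying $g(\TT_{\{\albt\}})$ in \Ref{dabc} is symmetric in each group $\TT^{(i)}_{\{\albt\}}$, so that $U_I$ may be replaced under the overall symmetrization by its symmetrization divided by $|S_{\la^{(1)}-1}\lsym\times S_{\la^{(\bt-1)}-1}\lsym\times S_{\la^{(N-1)}}|$, i.e.\ by $c_{\albt}\,W_I$. Your explicit verification of the symmetry at the boundary groups $\al-1$ and $\bt$ (by merging the isolated linear factors with the adjacent products) fills in a detail the paper leaves implicit.
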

\begin{proof}
Let
\vvn->
\begin{align}
\label{Gtz}
G_{\albt}(\TT,\zz)&{}\,=\,
(t_{\la^{(\al-1)}}^{(\al-1)}\<-t_{\la^{(\al)}}^{(\al)}\<-h)\,
\prod_{i=\al}^{\bt-1}\,\prod_{a=1}^{\la^{(i-1)}-1}\!
(t_a^{(i-1)}\<\<-t_{\la^{(i)}}^{(i)}\<-h)\,\times{}
\\[2pt]
\notag
&{}\;\times\,(t_{\la^{(\bt-1)}}^{(\bt-1)}\<-t_{\la^{(\bt)}}^{(\bt)}\bigr)\,
\prod_{i=\al}^{\bt-1}\,\prod_{a=1}^{\la^{(i+1)}-1}\!
(t_{\la^{(i)}}^{(i)}-t_a^{(i+1)})\,
\prod_{i=\al}^{\bt-1}\,\prod_{a=1}^{\la^{(i)}-1}\,
\frac {t_{\la^{(i)}}^{(i)}-t_a^{(i)}-h}{t_{\la^{(i)}}^{(i)}-t_a^{(i)}}
\\[-14pt]
\notag
\end{align}
be the product in the right-hand side of formula \Ref{dabc}.
Since \,$G_{\albt}(\TT,\zz)$ \,is symmetric in the variables
\,$\TT_{\{\albt\}}^{(i)}$ \,for every \,$i=1\lc N-1$\,, we can apply
the symmetrization in those variables to \,$U_I(\TT_{\{\albt\}},\zz)$
and divide the result by the order of the relevant product of the symmetric
groups \,before doing the overall symmetrization in formula \Ref{cdab} for
\,$\dch_{\{\albt\}} U_I$\,. This results in replacing
\,$U_I(\TT_{\{\albt\}},\zz)$ \,by \,$c_{\albt}\,W_I(\TT_{\{\albt\}},\zz)$\,,
see formula \Ref{hWI--}.
\end{proof}

Recall the operator \,$S_{\ii+1}$ acting on functions of \,$z_1\lc z_n$
\,given by formula \Ref{Skk+1}.

\begin{lem}
\label{3ctt}
For any $\;i=1\lc n-1$\,, $\;1\leq \al<\bt \leq N$, and
$\;I\in\Ic_{\bla_{\albt}}$\,, we have
\vvn.3>
\beq
\label{U3t}
S_{\ii+1}(\dch_{\{\albt\}} U_I)\,=\,\dch_{\{\albt\}} U_{s_{\ii+1}(I)}\,.
\vv.3>
\eeq
\end{lem}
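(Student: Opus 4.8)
The plan is to transfer the identity \Ref{U3t} to the weight functions $W_I$, where a three-term relation is available, and then to observe that $S_{\ii+1}$ commutes with the whole operation $\dch_{\{\albt\}}$. First, by Lemma~\ref{dUW}, for every $J\in\Ic_{\bla_{\albt}}$ one has $\dch_{\{\albt\}}U_J=c_{\albt}\,\dch_{\{\albt\}}W_J$ with the scalar $c_{\albt}$ depending only on $\bla$ and on $\al,\bt$, hence the same for $J=I$ and for $J=s_{\ii+1}(I)\in\Ic_{\bla_{\albt}}$. Since $c_{\albt}$ does not involve $\zz$ and is nonzero, the operator $S_{\ii+1}$ commutes with multiplication by it, so \Ref{U3t} is equivalent to the corresponding statement for weight functions,
\[
S_{\ii+1}\bigl(\dch_{\{\albt\}}W_I\bigr)\,=\,\dch_{\{\albt\}}W_{s_{\ii+1}(I)}\,.
\]

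Next I would show that $S_{\ii+1}$ commutes with $\dch_{\{\albt\}}$ applied to any function $g$ of $\TT_{\{\albt\}},\zz$. By \Ref{dabc} and \Ref{cdab}, the operation $\dch_{\{\albt\}}$ consists of multiplying $g(\TT_{\{\albt\}})$ by the factor $G_{\albt}(\TT,\zz)$ of \Ref{Gtz} and symmetrizing the result over the groups $\TT^{(1)}\lc\TT^{(N-1)}$, whereas $S_{\ii+1}$ acts only on $\zz=\TT^{(N)}$. Because these involve disjoint groups of variables, $S_{\ii+1}$ commutes with the symmetrization. It therefore remains to check that $S_{\ii+1}$ passes through multiplication by $G_{\albt}$, and this reduces to the single fact that $G_{\albt}(\TT,\zz)$ is symmetric in the variables $\zz$.

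That symmetry is the one non-formal point of the proof, and I would establish it by inspecting \Ref{Gtz}. The variables $\zz=\TT^{(N)}$ enter $G_{\albt}$ only when $\bt=N$: namely through the factor $t^{(N-1)}_{\la^{(N-1)}}-t^{(N)}_{\la^{(N)}}=t^{(N-1)}_{\la^{(N-1)}}-z_n$ and through the product $\prod_{a=1}^{n-1}(t^{(N-1)}_{\la^{(N-1)}}-z_a)$, all the remaining factors involving only the $t^{(l)}$ with $l\leq\bt-1\leq N-1$. Together the two $z$-dependent pieces give $\prod_{a=1}^{n}(t^{(N-1)}_{\la^{(N-1)}}-z_a)$, which is symmetric in $z_1\lc z_n$. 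Hence $G_{\albt}$ is invariant under the transposition of $z_i$ and $z_{i+1}$, so in \Ref{Skk+1} the common factor $G_{\albt}$ pulls out of both terms and $S_{\ii+1}(W_I\,G_{\albt})=(S_{\ii+1}W_I)\,G_{\albt}$.

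Combining these observations gives $S_{\ii+1}(\dch_{\{\albt\}}W_I)=\dch_{\{\albt\}}(S_{\ii+1}W_I)$. Finally, applying the modified three-term relation of Lemma~\ref{c3t} to the partition $\bla_{\albt}$ and the variables $\TT_{\{\albt\}}$ yields $S_{\ii+1}W_I=W_{s_{\ii+1}(I)}$, whence $\dch_{\{\albt\}}(S_{\ii+1}W_I)=\dch_{\{\albt\}}W_{s_{\ii+1}(I)}$, as required. The main obstacle is precisely the $\zz$-symmetry of $G_{\albt}$; once it is in place, the rest is the formal commutation of operators acting on disjoint variable groups, together with Lemmas~\ref{dUW} and~\ref{c3t}.
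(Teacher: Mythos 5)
Your proposal is correct and follows essentially the same route as the paper: reduce to the $W_I$ via Lemma~\ref{dUW}, commute $S_{\ii+1}$ past $\dch_{\{\albt\}}$ using the symmetry of $G_{\albt}(\TT,\zz)$ in $z_1\lc z_n$ (which the paper simply asserts and you verify correctly), and conclude with the three-term relation of Lemma~\ref{c3t}.
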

\begin{proof}
The product \,$G_{\albt}(\TT,\zz)$\,, see~\Ref{Gtz}, is symmetric in
$z_1\lc z_n$. Hence
\vvn.3>
\begin{align*}
S_{\ii+1}(\dch_{\{\albt\}}U_I)\,=\,c_{\albt}\,S_{\ii+1}(\dch_{\{\albt\}}W_I)\,
\,=\,c_{\albt}\,\dch_{\{\albt\}}\>\bigl(S_{\ii+1}(W_I)\bigr) &
\\[3pt]
{}=\,c_{\albt}\,\dch_{\{\albt\}}W_{s_{\ii+1}(I)}\,=\,
\dch_{\{\albt\}} U_{s_{\ii+1}(I)} & \,.
\\[-24pt]
\end{align*}
by Lemmas \ref{dUW} and \ref{c3t}.
\end{proof}

\subsection{The end of the proof of Theorem \ref{2thm}}
\label{secsiI}
Given $\;l$, \,$1\leq l\leq N-1$\,, we add formulas \Ref{44} for
\,$i=1\lc l$\,. The result is
\begin{align}
\label{44f}
\Bigl(\,\sum_{j=1}^{\la^{(l)}}\,t^{(l)}_j-\sum_{i=1}^l\,
\sum_{a\in I_i}\,z_a \Bigr)\>W_I\>+\>h\>\sum_{i=1}^l\sum_{j=l+1}^N\,
\sum_{m_1=1}^{\la_i}\<
\sum_{\satop{m_2=1}{\ell_{i,{m_1}}\!<\>\ell_{j,{m_2}}\!\!\!\!}}^{\la_j}
W_{I_{\ij;\>m_1\<,m_2}} &
\\[2pt]
\notag
{}=\,\sum_{i=1}^l\sum_{j=l+1}^N\,\sum_{a=1}^{\la_i}\,
\dch_{\{\ij\}} U_{(I)_{\ij}^{\<\>a}} & \,,
\\[-14pt]
\notag
\end{align}
To finish the proof of Theorem \ref{2thm}, we need to prove formula \Ref{44f}
for any \,$I$ and any \,$i=1\lc N-1$.

\vsk.2>
For any permutation \,$\si$\,, denote by \,$|\<\>\si|$ \,the length of
\,$\si$\,.
For any \,$J,J'\in\Il$\,, define the permutation \,$\si_{\JJ'}$ as follows:
if \,$J_m=\>\{\>j_{m,1}\<\lsym<j_{m,\>\la_m}\}$\,,
\,$J'_m=\>\{\>j'_{m,1}\<\lsym<j'_{m,\>\la_m}\}$\,, then
\,$\si_{\JJ'}(j'_{m,\>l})=j_{m,\>l}$\,. Set \,$\si_J=\si_{J,\<\>\Ima}$\,.
The permutation \,$\si_J$ has the minimal length amongst all permutations
\,$\si$ \,such that \,$\si(\Ima)=J$\,.

\begin{lem}
\label{sis}
Assume that for \,$J\in\Il$ \,and a transposition \,$s_{\ii+1}$, we have
\,${|\<\>s_{\ii+1}\>\si_J\<\>|<|\<\>\si_J\<\>|}$\,. Then
\,$s_{\ii+1}\>\si_J=\>\si_{s_{\ii+1}(J)}$\,.
\qed
\end{lem}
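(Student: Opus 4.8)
The plan is to recognize $\si_{J'}$, where $J' := s_{\ii+1}(J)$, through its defining property and to check that $\rho := s_{\ii+1}\>\si_J$ shares it. By definition $\si_{J'}$ is the permutation that sends $\Ima$ to $J'$ and is increasing on each block $\Ima_m$ (it carries the $l$-th smallest element of $\Ima_m$ to the $l$-th smallest of $J'_m$); among permutations carrying $\Ima$ to $J'$ this determines it uniquely, since the target ordered partition forces $\Ima_m\mapsto J'_m$ blockwise and monotonicity then fixes the bijection on each block. First I would note that $\rho(\Ima)=s_{\ii+1}(\si_J(\Ima))=s_{\ii+1}(J)=J'$, so it only remains to verify that $\rho$ is increasing on every $\Ima_m$.

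Next I would unwind the length hypothesis via the standard inversion criterion for left multiplication by an adjacent transposition: $|s_{\ii+1}\si_J|<|\si_J|$ holds precisely when $i+1$ precedes $i$ in the one-line notation of $\si_J$, i.e. $\si_J^{-1}(i)>\si_J^{-1}(i+1)$. I would then translate this into block data. For a pair $p<q$ in a single block $\Ima_m$ one has $\si_J(p)<\si_J(q)$ (both lie in $J_m$ and $\si_J$ is increasing on blocks), and post-composing with $s_{\ii+1}$ reverses this order only if $\{\si_J(p),\si_J(q)\}=\{i,i+1\}$; hence $\rho$ can fail to be increasing on $\Ima_m$ only through such a reversing pair.

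The key observation is that a reversing pair inside some $\Ima_m$ exists if and only if $i$ and $i+1$ belong to the same block $J_m$. In that case, since $i,i+1$ are consecutive integers they occupy neighboring ranks within $J_m$, and monotonicity of $\si_J^{-1}$ on $J_m$ gives $\si_J^{-1}(i)<\si_J^{-1}(i+1)$, whence $|s_{\ii+1}\si_J|>|\si_J|$, contradicting the hypothesis. Thus the descent hypothesis forces $i\in J_\al$ and $i+1\in J_\bt$ with $\al\ne\bt$, and then no block $\Ima_m$ carries a reversing pair.

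Consequently $\rho$ is increasing on each $\Ima_m$ and sends $\Ima$ to $J'$, so $\rho=\si_{J'}$, which is the claim $s_{\ii+1}\>\si_J=\si_{s_{\ii+1}(J)}$. I expect the main obstacle to be precisely this block analysis of order reversal: pinning down that monotonicity on $\Ima$-blocks breaks only in the same-block configuration, and showing that this configuration is exactly the one excluded by the length hypothesis. Everything else — the computation $\rho(\Ima)=J'$ and the uniqueness characterization of $\si_{J'}$ — is routine bookkeeping.
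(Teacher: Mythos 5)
Your argument is correct and complete: the identification of $\si_{J'}$ by its two defining properties (it sends each $\Ima_m$ to $J'_m$ and preserves the order within each block), the verification that $\rho=s_{\ii+1}\si_J$ has the first property, and the observation that the descent hypothesis $|\<\>s_{\ii+1}\si_J\<\>|<|\<\>\si_J\<\>|$ forces $i$ and $i+1$ into different blocks of $J$ (otherwise $\si_J^{-1}(i)<\si_J^{-1}(i+1)$, an ascent) so that no order reversal can occur inside any $\Ima_m$, together give exactly the intended routine verification. The paper states this lemma without proof, and your write-up supplies the standard argument one would expect.
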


We will prove formula \Ref{44f} by induction with respect to the length of
\,$\si_I$\,. For the base of induction \,$I=\Ima$, formula \Ref{44f}
is proved already.

\vsk.2>
Fix \,$I\in\Il$ \,and find \,$m$ \,such that
\,$|\<\>s_{\mm+1}\>\si_I\<\>|<|\<\>\si_I\<\>|$\,. Let \,$p,r$ \,be such that
\,$m\in I_p$ and \,$m+1\in I_r$. Since
$|\<\>s_{\mm+1}\>\si_I\<\>|<|\<\>\si_I\<\>|$\,, we have \,$p<r$\,.

\vsk.2>
Denote \,$\It=s_{\mm+1}(I)$\,. Then \,$\It_p\<=I_p\<-\{m\}\cup\{m+1\}$\,,
\,$\It_r\<=I_r\<-\{m+1\}\cup\{m\}$\,, and \,$\It_c\<=I_c$\,, otherwise.
And clearly, \,$I=s_{\mm+1}(\It)$\,.

\vsk.2>
Write formula \Ref{44f} for \,$\It$:
\begin{align}
\label{44t}
\Bigl(\,\sum_{j=1}^{\la^{(l)}}\,t^{(l)}_j-\sum_{i=1}^l\,
\sum_{a\in\It_i}\,z_a \Bigr)\>W_{\It}\>+\>h\>\sum_{i=1}^l\sum_{j=l+1}^N\,
\sum_{m_1=1}^{\la_i}\<
\sum_{\satop{m_2=1}{\ellt_{i,{m_1}}\!<\>\ellt_{j,{m_2}}\!\!\!\!}}^{\la_j}
W_{\It_{\ij;\>m_1\<,m_2}} &
\\[2pt]
\notag
{}=\,\sum_{i=1}^l\sum_{j=l+1}^N\,\sum_{a=1}^{\la_i}\,
\dch_{\{\ij\}} U_{(\It)_{\ij}^{\<\>a}} & \,,
\end{align}
where \,$\It_c=(\ellt_{c,1}\lc\ellt_{c,\>\la_c})$. We will show that
applying the operator \,$S_{\mm+1}$ to both sides of formula~\Ref{44t}
transforms it to formula~\Ref{44f} for \,$I$\,.

\vsk.2>
To compare the right-hand sides, observe that
$\;s_{\mm+1}\bigl((\It)_{\ij}^{\<\>a}\bigr)=(I)_{\ij}^{\<\>a}$\,.
Hence, Lemma~\ref{3ctt} yields
$\;S_{\mm+1}\bigl(\dch_{\{\ij\}} U_{(\It)_{\ij}^{\<\>a}}\bigr)=
\dch_{\{\ij\}} U_{(I)_{\ij}^{\<\>a}}$\,, that proves the desired assertion.

\vsk.2>
To compare the left-hand sides, observe first that
\vvn.3>
\be
s_{\mm+1}(\It_{\ij;\>m_1\<,m_2})=I_{\ij;\>s_{\mm+1}(m_1),\>s_{\mm+1}(m_2)}
\vvn-.2>
\ee
and
\vv-.3>
\be
S_{\mm+1}(W_{\It_{\ij;\>m_1\<,m_2}})\,=\,
W_{I_{\ij;\>s_{\mm+1}(m_1),\>s_{\mm+1}(m_2)}}
\vv.4>
\ee
by Lemma~\ref{c3t}. This proves the desired transformation of the second sum
in the left-hand side of~\Ref{44t} term by term provided \,$p>l$ \,or
\,$r\le l$\,. If \,$p\le l<r$\,, the matching between the terms of the second
sums in~\Ref{44t} and~\Ref{44f} is not perfect and the sum in~\Ref{44f}
contains one more term \,$h\>W_{I_{p,r;\>\mm+1}}$\,.

\vsk.2>
If \,$p>l$ \,or \,$r\le l$\,, the sum \,$\sum_{i=1}^l\sum_{a\in\It_i}z_a$
\,in formula~\Ref{44t} \,is symmetric in \,$z_m\>,z_{m+1}$ \,and equals
the sum \,$\sum_{i=1}^l\sum_{a\in I_i}z_a$ \,in formula~\Ref{44f}. Thus
\begin{align*}
S_{\mm+1}\Bigl(\<\Bigl(\,\sum_{j=1}^{\la^{(l)}}\,t^{(l)}_j-\sum_{i=1}^l\,
\sum_{a\in\It_i}\,z_a \Bigr)\>W_{\It}\Bigr)\,&{}=\,
\Bigl(\,\sum_{j=1}^{\la^{(l)}}\,t^{(l)}_j-\sum_{i=1}^l\,
\sum_{a\in I_i}\,z_a \Bigr)\>S_{\mm+1}(W_{\It})
\\
&{}=\,\Bigl(\,\sum_{j=1}^{\la^{(l)}}\,t^{(l)}_j-\sum_{i=1}^l\,
\sum_{a\in I_i}\,z_a \Bigr)\>W_I
\end{align*}
by Lemma~\ref{c3t}. If \,$p\le l<r$\,, then we have
\begin{align*}
S_{\mm+1}\Bigl(\<\Bigl(\,\sum_{j=1}^{\la^{(l)}}\,t^{(l)}_j-\sum_{i=1}^l\,
\sum_{a\in\It_i}\,z_a \Bigr)\>W_{\It}\Bigr)\,&{}=\,
\Bigl(\,\sum_{j=1}^{\la^{(l)}}\,t^{(l)}_j-\sum_{i=1}^l\,
\sum_{a\in I_i}\,z_a \Bigr)\>S_{\mm+1}(W_{\It})\>+\>h\>W_{\It}
\\
&{}=\,\Bigl(\,\sum_{j=1}^{\la^{(l)}}\,t^{(l)}_j-\sum_{i=1}^l\,
\sum_{a\in I_i}\,z_a \Bigr)\>W_I\>+\>h\>W_{I_{p,r;\>\mm+1}}\,,
\end{align*}
since \,$\It=I_{p,r;\>\mm+1}$\,. This shows that the operator \,$S_{\mm+1}$
transforms formula~\Ref{44t} to formula~\Ref{44f}.
This completes the induction step. Theorem \ref{2thm} is proved.

\begin{example}
Let \,$N=2$\,, \,$n=3$\,, \,$\bla=(2,1)$\,, \,$I=(\{1,3\},\{2\})$\,,
\,$\Ima\<=(\{2,3\},\{1\})$\,,~\,$\si_I=s_{1,2}$\,.
Formula \Ref{44f} is
\vvn.3>
\beq
\label{44fe}
(t^{(1)}_1\<\<+\<\>t^{(1)}_2\<\<-z_1\<-z_3)\,W_{\{1,3\},\{2\}}\>+\>
h\>W_{\{2,3\}\{1\}}\,=\,\dch_{\{1,2\}}\>
(U_{\{1\},\{2,3\}}+\>U_{\{3\},\{1,2\}})\,,
\vv.4>
\eeq
formula \Ref{44t} is
\vvn.3>
\beq
\label{44te}
(t^{(1)}_1\<\<+\<\>t^{(1)}_2\<\<-z_2\<-z_3)\,W_{\{2,3\},\{2\}}\,=\,
\dch_{\{1,2\}}\>(U_{\{2\},\{1,3\}}+\>U_{\{3\},\{1,2\}})\,.
\vv.3>
\eeq
and the operator \,$S_{1,2}$ \,transforms formula \Ref{44te}
to formula \Ref{44fe}.
\end{example}

\section{Corollary of Theorems \ref{thm1} and \ref{2thm}}
\label{Corr}

Let \,$\bla\in\Z^N_{\geq 0}$\,, \,$|\bla|=n$\,, and \,$I\in\Il$\,.
Recall the notations \,$(I)^{\<\>a}_{\albt}$\,, \,$I_{\ij;\>m_1\<,m_2}$\,,
\,see \Ref{abla}, \Ref{Ikkmm}, and the discrete differentials
\,$d_{\{\albt\}}\>g$\,, see~\Ref{ddab}. Define the discrete differential
\vvn.3>
\beq
\label{Di}
D_{\Ii}\>=\>\sum_{j=i+1}^N\>\sum_{a=1}^{\la_j}\,d_{\{\ji\}}\>U_{(I)_{\ji}^{\<\>a}}
\<-\>\sum_{j=1}^{i-1}\,\sum_{a=1}^{\la_i}\,d_{\{\ij\}} U_{(I)_{\ij}^{\<\>a}}\>.
\vv.3>
\eeq

\begin{cor}
\label{corth}
We have
\vvn.3>
\begin{align}
\label{Dic}
\biggl(\>\sum_{j=1}^{\la^{(i)}}\,t^{(i)}_j-\<
\sum_{j=1}^{\la^{(i-1)}}\,t^{(i-1)}_j-\sum_{a\in I_i}\, z_a\biggr)\>W_I\>+
\,h\>\biggl(\>
\sum_{j=1}^{i-1}\,\frac{q_i\>\la_j}{q_i\<-q_j}\>+
\sum_{j=i+1}^N\,\frac{q_j\>\la_i}{q_i\<-q_j}\>\biggr)\>W_I\>&{}+{}
\\[2pt]
\notag
{}+\,h\,\sum_{\satop{j=1}{j\ne i}}^N\,\sum_{m_1=1}^{\la_i}\>\biggl(
\sum_{\satop{m_2=1}{\ell_{i,{m_1}}\!<\>\ell_{j,{m_2}}\!\!\!\!}}^{\la_j}
W_{I_{i,j;m_1,m_2}}+\,\frac{q_j}{q_i\<-q_j}\,
\sum_{m_2=1}^{\la_j}W_{I_{\ij;\>m_1\<,m_2}}\biggr){}&\>{}=\,D_{\Ii}\,.\kern-1em
\\[-14pt]
\notag
\end{align}
\end{cor}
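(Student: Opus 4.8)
The plan is to read Corollary \ref{corth} as the result of gluing Theorem \ref{thm1} and Theorem \ref{2thm} together by means of the decomposition \Ref{dabg}, namely $d_{\{\albt\}}g=\frac{q_\bt}{q_\al-q_\bt}\,\dti_{\{\albt\}}g-\dch_{\{\albt\}}g$. First I would substitute \Ref{dabg} into the definition \Ref{Di} of $D_{\Ii}$, splitting it into a \emph{$\dti$-part}, which carries the rational coefficients $q_\bt/(q_\al-q_\bt)$, and a \emph{$\dch$-part}. The whole identity then reduces to evaluating these two parts separately and matching them against the left-hand side of \Ref{Dic} and the content of Theorem \ref{2thm}.

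Next I would evaluate the $\dti$-part by applying Theorem \ref{thm1}, which rewrites each $\dti_{\{\albt\}}U_J$ as $-h\sum_b W_{(J)^b_{\btal}}$. The essential combinatorial observation is that the composite operation occurring here---first moving an element $\ell_{j,a}$ out of $I_j$ into $I_i$, and then moving some element of the enlarged block back out---either returns the original $I$ (when the element reinserted is the one just removed) or produces the transposed partition $I_{\ij;m_1,m_2}$ of \Ref{Ikkmm}. Summing over the free index converts the diagonal contributions into copies of $W_I$ with the multiplicities $\la_i$ and $\la_j$, and converts the off-diagonal contributions into complete sums $\sum_{m_2=1}^{\la_j}W_{I_{\ij;m_1,m_2}}$. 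This is designed to reproduce exactly the terms $h\bigl(\sum_{j<i}\frac{q_i\la_j}{q_i-q_j}+\sum_{j>i}\frac{q_j\la_i}{q_i-q_j}\bigr)W_I$ and $h\sum_{j\ne i}\frac{q_j}{q_i-q_j}\sum_{m_2=1}^{\la_j}W_{I_{\ij;m_1,m_2}}$ appearing in \Ref{Dic}.

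To handle the remaining $\dch$-part I would invoke Theorem \ref{2thm}, whose right-hand side already contains a combination of $\dch_{\{\albt\}}U$ terms. After re-indexing so that the reversed pairs $\{\ji\}$ occurring in \Ref{Di} are matched with the ordered pairs $\{\ij\}$ used in \Ref{44}, the $\dch$-part of $D_{\Ii}$ should coincide, up to sign, with that combination; Theorem \ref{2thm} then lets me replace it by the linear factor $(\sum_j t^{(i)}_j-\sum_j t^{(i-1)}_j-\sum_{a\in I_i}z_a)\,W_I$ together with the strict-inequality weight-function sums. Finally I would add the two parts: the sums over $\ell_{i,m_1}<\ell_{j,m_2}$ and $\ell_{i,m_1}>\ell_{j,m_2}$ coming from Theorem \ref{2thm} merge with the surplus off-diagonal $\dti$-contributions, using that a restricted sum over $\ell_{i,m_1}<\ell_{j,m_2}$ plus the one over $\ell_{i,m_1}>\ell_{j,m_2}$ equals the unrestricted sum (legitimate because the labels of $I_i$ and $I_j$ are always distinct), which yields precisely the unrestricted sums displayed in \Ref{Dic}.

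I expect the main obstacle to be exactly this last reconciliation. The evaluations of $\dti$ through Theorem \ref{thm1} and of $\dch$ through Theorem \ref{2thm} are essentially mechanical once the pairing conventions are fixed; the delicate point is the index and sign bookkeeping needed to confirm that the reversed pairs $\{\ji\}$ in $D_{\Ii}$ line up with the ordered pairs $\{\ij\}$ in \Ref{44}, and that the diagonal multiplicities $\la_i,\la_j$, the $q$-factors $q_i/(q_i-q_j)$ and $q_j/(q_i-q_j)$, and the strict versus unrestricted index conditions all assemble into the coefficients of \Ref{Dic} with no residual terms. The cleanest way to keep this under control is to fix $i$ and a single partner index $j$, verify the matching for that one unordered pair $\{i,j\}$ by tracking the composite moves $((I)^a_{\btal})^b$ explicitly, and only then sum over $j$.
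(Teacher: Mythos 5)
Your proposal is correct and takes essentially the same route as the paper: the paper's proof likewise substitutes the decomposition \Ref{dabg} into the definition \Ref{Di} of $D_{\Ii}$, evaluates the $\dti$-part via Theorem~\ref{thm1} (yielding $W_I$ with multiplicities $\la_i$ or $\la_j$ from the diagonal composites and the unrestricted double sums from the off-diagonal ones) and the $\dch$-part via Theorem~\ref{2thm}, and then obtains \Ref{Dic} by exactly the rearrangement you describe, merging the restricted sums with the surplus $\dti$-contributions through $q_i/(q_i-q_j)=q_j/(q_i-q_j)+1$. The index/sign bookkeeping you single out as the delicate point is indeed the only real content of the argument, and the paper handles it by simply displaying the intermediate identity before rearranging.
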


\begin{proof}
Theorems \ref{thm1} and \ref{2thm}, and formula \Ref{dabg} imply that
\begin{align*}
\Bigl(\,\sum_{j=1}^{\la^{(i)}}\,t^{(i)}_j-\<
\sum_{j=1}^{\la^{(i-1)}}\,t^{(i-1)}_j-\sum_{a\in I_i}\, z_a\Bigr)\>W_I\>
&{}-\>h\>\sum_{j=1}^{i-1}\>\sum_{m_1=1}^{\la_i}\!
\sum_{\satop{m_2=1}{\ell_{i,{m_1}}\!>\>\ell_{j,{m_2}}\!\!\!\!}}^{\la_j}\!\!
W_{I_{\ij;\>m_1\<,m_2}}
\\
\notag
{}+\>h\>\sum_{j=i+1}^N\>\sum_{m_1=1}^{\la_i}\<
\sum_{\satop{m_2=1}{\ell_{i,{m_1}}\!<\>\ell_{j,{m_2}}\!\!\!\!}}^{\la_j}
W_{I_{i,j;m_1,m_2}}\>&{}+\>h\>\sum_{j=1}^{i-1}\,\frac{q_i}{q_i\<-q_j}\,
\Bigl(\>\la_j\>W_I+\sum_{m_1=1}^{\la_i}\>\sum_{m_2=1}^{\la_j}
W_{I_{\ij;\>m_1\<,m_2}}\Bigr)\kern-1em
\\[-2pt]
\notag
{}+\>h\>\sum_{j=i+1}^N \!{}&\,\frac{q_j}{q_i\<-q_j}\,\Bigl(\>\la_i\>W_I+
\sum_{m_1=1}^{\la_i}\>\sum_{m_2=1}^{\la_j}W_{I_{\ij;\>m_1\<,m_2}}\Bigr)\,=\,
D_{\Ii}\,.\kern-1em
\\[-14pt]
\notag
\end{align*}
Formula \Ref{Dic} is obtained now by rearranging the terms in the left-hand
side of this equality.
\end{proof}

Recall the scalar master function \,$\Phi_\bla(\TT;\zz;h;\qq)$
\,given by \Ref{PHI}. Define
\vvn.3>
\beq
\label{Om}
\Om_\bla(\qq)\,=\,
\prod_{i=1}^{N-1}\prod_{j=i+1}^N (1-q_j/q_i)^{h\<\>\la_i/\<\ka}\>.
\vv.2>
\eeq
Introduce the \,$\Cnnl$-valued weight function
\vvn-.2>
\beq
\label{vvwf}
W_\bla(\TT;\zz;h)\,=\,\sum_{I\in\Il}\,W_I(\TT;\zz;h)\,v_I\,.
\vv.2>
\eeq
Recall the dynamical Hamiltonians \,$X_i(\zz;h;\qq)$ defined in \Ref{Xi}.

\begin{thm}
\label{thm main}
For every $\;i=1\lc N$, we have
\begin{align}
\label{ma id}
\Bigl(\ka\>q_i\>\frac{\der}{\der q_i}\>-X_i(\zz;h;\qq)\Bigr)\,{}
& \Om_\bla(\qq)\,\Phi_\bla(\TT;\zz;h;\qq)\>W_\bla(\TT;\zz)\,={}
\\[3pt]
\notag
{}=\,{}& \Om_\bla(\qq)\,\Phi_\bla(\TT;\zz;h;\qq)\>
\sum_{I\in\Il}\>D_{\Ii}(\TT;\zz;h;\qq)\>v_I\,.
\\[-20pt]
\notag
\end{align}
\end{thm}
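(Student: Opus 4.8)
The plan is to prove the identity \Ref{ma id} coordinate by coordinate in the basis $(v_I)_{I\in\Il}$ of $\Cnnl$ and to identify the coefficient of each $v_I$ with the scalar identity \Ref{Dic} of Corollary~\ref{corth}. The key preliminary observation is that the weight functions $W_I$, and hence $W_\bla$, do not depend on $\qq$, while $\Om_\bla(\qq)\>\Phi_\bla(\TT;\zz;h;\qq)$ is a scalar factor commuting with the $\glN$-action. Consequently, in \Ref{ma id} the operator $\ka\>q_i\frac{\der}{\der q_i}$ acts only on the scalar $\Om_\bla\Phi_\bla$ through its logarithmic derivative, whereas $X_i(\zz;h;\qq)$, whose coefficients are scalar functions of $\zz,h,\qq$, commutes past $\Om_\bla\Phi_\bla$ and acts only on $W_\bla$. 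After dividing by the nonzero scalar $\Om_\bla\Phi_\bla$, the assertion reduces to the purely algebraic identity
\be
\Bigl(\ka\>q_i\frac{\der}{\der q_i}\log(\Om_\bla\Phi_\bla)\Bigr)\,W_\bla\,-\,X_i\,W_\bla\,=\,\sum_{I\in\Il}D_{\Ii}\,v_I\,,
\ee
which I would verify by comparing $v_I$-coefficients.

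Second, I would compute the two logarithmic derivatives directly from \Ref{PHI} and \Ref{Om}. Only the prefactors of $\Phi_\bla$ depend on $\qq$, and a short computation gives
\be
\ka\>q_i\frac{\der}{\der q_i}\log\Phi_\bla\,=\,
\sum_{j=1}^{\la^{(i)}}t^{(i)}_j-\sum_{j=1}^{\la^{(i-1)}}t^{(i-1)}_j\,,
\ee
the case $i=N$ being covered by the convention $t^{(N)}_a=z_a$. Differentiating $\log\Om_\bla$ and separating the factors in which $q_i$ appears in the numerator from those in which it appears in the denominator yields
\be
\ka\>q_i\frac{\der}{\der q_i}\log\Om_\bla\,=\,
h\Bigl(\sum_{j=1}^{i-1}\frac{q_i\>\la_j}{q_i-q_j}+\sum_{j=i+1}^N\frac{q_j\>\la_i}{q_i-q_j}\Bigr)\,.
\ee
These reproduce, respectively, the $\TT$-linear term and the rational-in-$\qq$ diagonal terms on the left-hand side of \Ref{Dic}.

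The main step is to expand the $v_I$-coefficient of $X_iW_\bla=\sum_{I'}W_{I'}\,X_iv_{I'}$ using the matrix elements of the operators in \Ref{Xi}, treating its summands one at a time. The term $\sum_a z_a\>e^{(a)}_{\ii}$ is diagonal and contributes $\bigl(\sum_{a\in I_i}z_a\bigr)W_I$, so in $-X_iW_\bla$ it supplies exactly the term $-\sum_{a\in I_i}z_a$ of \Ref{Dic}. The scalar $\et_{\ii}(1-\et_{\ii})/2$ contributes $\la_i(1-\la_i)/2$ and the diagonal $k=i$ part of $\sum_{a<b}\sum_k e^{(a)}_{\ik}e^{(b)}_{\ki}$ contributes $\la_i(\la_i-1)/2$; these two diagonal contributions cancel. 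For $k\ne i$, the operator $e^{(a)}_{\ik}e^{(b)}_{\ki}$ with $a<b$ sends $v_{I'}$ to the basis vector obtained by moving the index $b$ from the $i$-th block of $I'$ into the $k$-th block and the index $a$ from the $k$-th block into the $i$-th; matching the result to $v_I$ and summing over $a<b$ and over $k=j\ne i$ gives $\sum_{j\ne i}\sum_{m_1,m_2:\,\ell_{i,m_1}<\ell_{j,m_2}}W_{I_{i,j;m_1,m_2}}$, the ordering $a<b$ producing precisely the strict inequality. Finally, in $\sum_{j\ne i}\frac{q_j}{q_i-q_j}(\et_{\ij}\et_{\ji}-\et_{\ii})$ the diagonal parts of $\et_{\ij}\et_{\ji}$ and of $\et_{\ii}$ both equal $\la_i$ and cancel, and the off-diagonal part of $\et_{\ij}\et_{\ji}$ produces the unrestricted sum $\sum_{j\ne i}\frac{q_j}{q_i-q_j}\sum_{m_1,m_2}W_{I_{i,j;m_1,m_2}}$. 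Collecting these contributions with the outer factor $-h$ of \Ref{Xi}, the $v_I$-coefficient of the displayed identity coincides termwise with the left-hand side of \Ref{Dic}, which equals $D_{\Ii}$ by Corollary~\ref{corth}; this proves \Ref{ma id}.

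I expect the combinatorial bookkeeping in the third step to be the main obstacle: one must track how each matrix unit $e^{(a)}_{\ij}$ relabels a partition $I'$, determine precisely which $I'$ are sent to a fixed $I$, and confirm that the resulting index data agree with the operations $I_{i,j;m_1,m_2}$ of \Ref{Ikkmm}. The two delicate points are the complete cancellation of the diagonal $h$-terms arising from $X_i$ --- so that the only surviving diagonal $h$-terms are those produced by $\ka\>q_i\frac{\der}{\der q_i}\log\Om_\bla$ --- and the contrast between the quadratic term $\sum_{a<b}e^{(a)}_{\ik}e^{(b)}_{\ki}$, which carries the strict inequality $\ell_{i,m_1}<\ell_{j,m_2}$, and the dynamical term $\et_{\ij}\et_{\ji}$, which contributes the unrestricted sum; matching both against the single inequality appearing in \Ref{Dic} is exactly what forces the coefficients to agree.
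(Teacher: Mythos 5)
Your proposal is correct and follows exactly the route of the paper, whose entire proof of Theorem \ref{thm main} is the one-line assertion that the statement is equivalent to Corollary \ref{corth}; you have simply written out the verification of that equivalence (the logarithmic derivatives of $\Om_\bla\Phi_\bla$, the cancellation of the diagonal $h$-terms of $X_i$, and the matching of the off-diagonal matrix elements of $e^{(a)}_{\ik}e^{(b)}_{\ki}$ and $\et_{\ij}\et_{\ji}$ with the operations $I_{\ij;\>m_1\<,m_2}$), and all of these computations check out.
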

\begin{proof}
The statement is equivalent to Corollary \ref{corth}.
\end{proof}

\section{Integral representations for solutions of dynamical equations}
\label{sirfs}
\subsection{Formal integrals}
Let \,$\bla\in\Z^N_{\geq 0}$\,, \,$|\bla|=n$\,, and \,$\ka\in\Cxs$.
Consider the space of functions of the form
\,$\Phi_\bla(\TT;\zz;h;\qq)\>f(\TT;\zz;h;\qq)$\,, where
\,$\Phi_\bla(\TT;\zz;h;\qq)$ \,is the master function~\Ref{PHI}, and
\,$f(\TT;\zz;h;\qq)$ \,is a polynomial in \,$\TT$ \,and holomorphic function
of \,$\zz,h,\qq$ on some domain \,$L\<\subset\<\C^n\!\times\C\times\C^N$.
Assume that we have a map \,$\Mc$ \,assigning to a function \,$\Phi_\bla f$
a function \,$\Mc(\Phi_\bla f)$ \,of variables \,$\zz,h,\qq$\,, holomorphic
on \,$L$\,, such that:
\begin{enumerate}
\item[(i)]
The map \,$\Mc$ \,is linear over the field of meromorphic on \,$L$ \,functions
in \,$\zz,\qq$,h\,,
\vvn.3>
\beq
\label{fi1}
\Mc\bigl(\Phi_\bla\>(g_1f_1+g_2f_2)\bigr)\,=\,
g_1\>\Mc(\Phi_\bla f_1)+g_2\>\Mc(\Phi_\bla f_2)
\vv.2>
\eeq
for any meromorphic functions \,$g_1,g_2$ \>of \,$\zz,h,\qq$\,, such that
\,$g_1f_1$ \,and \,$g_2f_2$ \,are holomorphic on \,$L$\,.

\vsk.3>
\item[(ii)] For any \,$i=1\lc N$, we have
\vvn.2>
\beq
\label{fi2}
\frac{\der}{\der q_i}\>\Mc(\Phi_\bla f)\,=\,
\Mc\left(\frac{\der}{\der q_i} (\Phi_\bla f)\right).
\eeq

\vsk.3>
\item[(iii)] If \,$f$ \,is a discrete differential of a polynomial
in \,$\TT$\,, then
\vvn.3>
\beq
\label{fi3}
\Mc(\Phi_\bla f)\,=\,0\,.
\eeq
\end{enumerate}

\vsk.2>
A map \,$\Mc$ \,is called a formal integral.
We have the following corollary of Theorem \ref{thm main}.

\begin{lem}
\label{lem foi}
If \,$\Mc$ \,is a formal integral, then the $\Cnnl$-valued
function
\vvn.3>
\be
F_{\Mc}(\zz;h;\qq)\,:=\,\Om_\bla\,\Mc(\Phi_\bla W_\bla)\,=\,
\Om_\bla\>\sum_{I\in\Il}\Mc(\Phi_\bla W_I)\>v_I
\ee
holomorphic on \,$L$\>, is a solution of the dynamical differential equations
\>\Ref{DEQ}.
\qed
\end{lem}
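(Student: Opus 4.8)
The plan is to obtain the statement directly from Theorem~\ref{thm main} by applying the formal integral $\Mc$ componentwise and invoking its three defining properties \Ref{fi1}--\Ref{fi3}. First I would set $G:=\Mc(\Phi_\bla W_\bla)=\sum_{I\in\Il}\Mc(\Phi_\bla W_I)\,v_I$, so that $F_{\Mc}=\Om_\bla\>G$. Since $\Om_\bla$ depends only on $\qq$ and is nowhere zero, the Leibniz rule shows that the dynamical equation \Ref{DEQ} for $F_{\Mc}$ is equivalent, after dividing by $\Om_\bla$, to the modified equation
\be
\ka\>q_i\>\frac{\der G}{\der q_i}\,=\,\bigl(X_i-\ka\>q_i\>A_i\bigr)\>G\,,\qquad
A_i\,:=\,\frac{\der}{\der q_i}\log\Om_\bla\,,
\ee
where $A_i$ is a rational (hence meromorphic) function of $\qq$. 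So it suffices to prove this equation for $G$.

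Next I would write out the $v_J$-component of the identity \Ref{ma id}. Expanding $X_i$ in the basis $(v_I)_{I\in\Il}$ with matrix entries $x^{JI}$ that are polynomial in $\zz$ and rational in $\qq$, using that $W_J$ is independent of $\qq$, and applying the Leibniz rule in the form $\der_{q_i}(\Om_\bla\Phi_\bla)=\Om_\bla\Phi_\bla\>(A_i+B_i)$ with $B_i:=\der_{q_i}\log\Phi_\bla$, that component becomes
\be
\Om_\bla\Phi_\bla\Bigl(\ka\>q_i\>(A_i+B_i)\,W_J-\sum_{I\in\Il}x^{JI}W_I\Bigr)
\,=\,\Om_\bla\Phi_\bla\,D_{J,i}\,.
\ee
Cancelling the common nonzero factor $\Om_\bla$ leaves a scalar identity among functions of the admissible form $\Phi_\bla f$, with $f$ a polynomial in $\TT$ and holomorphic in $\zz,h,\qq$, to which $\Mc$ applies term by term.

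Then I would apply $\Mc$ to this identity. The right-hand side is annihilated by \Ref{fi3}, because $D_{J,i}$ is a discrete differential: by \Ref{Di} it is a finite combination of terms $d_{\{\albt\}}U$, each of which is a discrete differential by Lemma~\ref{ldab}. On the left, the factors $\ka q_i$, $A_i$, and $x^{JI}$ are meromorphic in $\zz,\qq$, so linearity \Ref{fi1} pulls them out of $\Mc$, producing $\ka q_i A_i\,(G)_J$ and $(X_iG)_J=\sum_{I\in\Il}x^{JI}\Mc(\Phi_\bla W_I)$. The remaining term contains $B_i$, which depends on $\TT$ and therefore cannot be extracted by \Ref{fi1}; this is the single step needing care. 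It is resolved by noting that, as $W_J$ is $\qq$-independent, $\ka q_i\>\Phi_\bla B_iW_J=\ka q_i\>\der_{q_i}(\Phi_\bla W_J)$, so that \Ref{fi1} (to extract $\ka q_i$) followed by \Ref{fi2} (to commute $\Mc$ with $\der_{q_i}$) gives $\Mc(\ka q_i\Phi_\bla B_iW_J)=\ka q_i\,\der_{q_i}(G)_J$. Collecting the three contributions reproduces exactly the modified equation for $G$ displayed above, and hence the dynamical equation for $F_{\Mc}$.

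I expect the main obstacle to be conceptual rather than computational. The prefactor $\Om_\bla$ is holomorphic but not meromorphic (its exponents $h\la_i/\ka$ need not be integers), so it cannot be moved through $\Mc$ via \Ref{fi1}; keeping it outside and differentiating it by the Leibniz rule is what forces the correction $-\ka q_iA_i$ and is the reason for the precise normalization $\Om_\bla$ in \Ref{Om}. The only genuinely non-formal manipulation is the treatment of the $\TT$-dependent logarithmic derivative $B_i=\der_{q_i}\log\Phi_\bla$ inside $\Mc$, which must be rewritten as an honest $q_i$-derivative before property \Ref{fi2} can be used.
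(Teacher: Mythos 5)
Your proposal is correct and is exactly the argument the paper intends: the lemma is stated as an immediate corollary of Theorem~\ref{thm main}, and you have filled in the deduction in the natural way — peeling off $\Om_\bla$ by the Leibniz rule, applying $\Mc$ componentwise using \Ref{fi1} for the meromorphic coefficients, \Ref{fi2} after rewriting $\Phi_\bla\>\der_{q_i}\<\<\log\Phi_\bla\cdot W_J$ as $\der_{q_i}(\Phi_\bla W_J)$, and \Ref{fi3} to kill the discrete differentials $D_{\Ii}$. No gaps; your identification of the $B_i$-term as the one nontrivial step is accurate.
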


\subsection{Jackson integral}
\label{secJ}
Consider the space \,$\C^{\la^{\{1\}}}\!\<\<\times\C^n\!\times\C\times\C^N$
with coordinates \,$\TT,\zz,h,\qq$\,. The lattice \,$\ka\>\Z^{\la^{\{1\}}}\<$
naturally acts on this space by shifting the \,$\TT\<\>$-coordinates.

\vsk.2>
Let \,$J=(J_1\lc J_N)\in\Il$\,. Recall the notation
\;$\bigcup_{i=1}^{\,k}J_i=\>\{\>j^{(k)}_1\!\lsym<j^{(k)}_{\la^{(k)}}\}$\,.
Define \,$\Si_J\subset\C^{\la^{\{1\}}}\!\<\<\times\C^n\!\times\C\times\C^N$
by the equations:
\vvn.2>
\beq
\label{cy}
t^{(k)}_i=\>z_{j^{(k)}_i}, \qquad k=1\lc N-1\,,\quad i=1\lc\la^{(k)}\>,
\vv.1>
\eeq
and call it a {\it discrete cycle}.

\vsk.2>
For a function of \,$\TT$ \,and a point \,$\bss\<\in\C^{\la^{\{1\}}}\!$,
define $\;\Res_{\>\TT=\bss}$ to be the iterated residue,
\vvn.2>
\be
\Res_{\>\TT=\bss}\>=\,\Res_{t^{(1)}_1\<=\>s^{(1)}_1}\ldots\>
\Res_{t^{(1)}_{\la^{(1)}}\<=\>s^{(1)}_{\la^{(1)}}}\ldots\;
\Res_{t^{(N-1)}_1\<=\>s^{(N-1)}_1}\ldots\>
\Res_{t^{(N-1)}_{\la^{(N-1)}}\<=\>s^{(N-1)}_{\la^{(N-1)}}}\,.
\ee

\vsk.2>
Let \,$L'$ \>be the complement in \,$\C^n\!\times\C$ \,of the union of
the hyperplanes
\vvn.3>
\beq
\label{zzh}
h\>=\>m\<\>\ka\,,\qquad z_a\<-z_b\>=\>m\<\>\ka\,,\qquad
z_a\<-z_b\<+h\>=\>m\<\>\ka\,,
\vv.3>
\eeq
for all \,$a,b=1\lc n$\,, \,$a\ne b$\,, and all \,$m\in\Z$\,.
Let \,$L''\!\subset\C^N$ be the domain
\vvn.3>
\beq
\label{q/q}
|\>q_{i+1}/q_i|<1\,,\qquad i=1\lc N-1\,,
\vv.3>
\eeq
with additional cuts fixing a branch of $\;\log\>q_i$ \,for all \,$i=1\lc N$.
Set \,$L=L'\!\times L''\!\subset\<\C^n\!\times\C\times\C^N$.

\goodbreak
\vsk.2>
Let \,$f(\TT;\zz;h;\qq)$ be a polynomial in \,$\TT$ and a holomorphic function
of \,$\zz;h;\qq$ \,on \,$L$\,. For \,$(\zz;h;\qq)\in L$\,, define
\beq
\label{MSi}
\Mc_J(\Phi_\bla f)(\zz;h;\qq)\,=\!\sum_{\rr\<\in\Z^{\la^{\{1\}}}\!\!\!\!}
\Res_{\>\TT\>=\>\Si_J+\<\>\rr\ka\>}\bigl(\Phi_\bla(\TT;\zz;h;\qq)\>f(\TT;\zz;h;\qq)
\bigr)\,.
\eeq
This sum is called the {\it Jackson integral over the discrete cycle\/}
\,$\Si_J$\,.

\goodbreak
\begin{lem}
\label{Mji}
The map \,$\Mc_J$ is a formal integral.
\end{lem}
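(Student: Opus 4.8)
The plan is to verify the three defining properties \Ref{fi1}, \Ref{fi2}, \Ref{fi3} of a formal integral for the map $\Mc_J$ of \Ref{MSi}. The feature I would exploit throughout is that the residues in \Ref{MSi} are taken at the points of the shifted discrete cycle $\Si_J+\rr\ka$, whose $\TT$-coordinates \Ref{cy} depend on $\zz$ but not on $\qq$. With this in hand, properties (i) and (ii) are essentially formal, and the whole weight of the argument falls on property (iii), which I would obtain from a telescoping argument rooted in the $\ka\Z^{\la^{\{1\}}}$-lattice invariance of the residue family. Before anything else I would record an auxiliary convergence statement: for $(\zz;h;\qq)\in L$ each iterated residue $\Res_{\TT=\Si_J+\rr\ka}(\Phi_\bla f)$ is holomorphic in $\zz,h,\qq$ on $L$, and the family is absolutely summable, uniformly on compact subsets of $L$, so that \Ref{MSi} defines a holomorphic function on $L$ that may be manipulated term by term.

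Granting that convergence, property (i) is immediate: a function $g$ meromorphic in $\zz,h,\qq$ alone is constant in $\TT$, hence factors through every residue in $\Res_{\TT=\Si_J+\rr\ka}$ and through the summation, so the linearity \Ref{fi1} is just linearity of residues. For property (ii) I would use that the residue loci $\Si_J+\rr\ka$ are independent of $\qq$; therefore $\der/\der q_i$ commutes with each iterated residue, and the uniform convergence above licenses differentiating the series \Ref{MSi} term by term, which gives \Ref{fi2}.

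The core is property (iii). By \Ref{fi1} it suffices to treat one summand of \Ref{41}, so I would write $f(\TT)=\frac{\Phi_\bla(\TT+\ww)}{\Phi_\bla(\TT)}\,g(\TT+\ww)-g(\TT)$ with $g$ a polynomial in $\TT$ and $\ww=\ka\mb$, $\mb\in\Z^{\la^{\{1\}}}$. Clearing the denominator gives $\Phi_\bla(\TT)f(\TT)=\Phi_\bla(\TT+\ww)g(\TT+\ww)-\Phi_\bla(\TT)g(\TT)$, so that applying $\Mc_J$ yields
\beq
\Mc_J(\Phi_\bla f)\,=\,\sum_{\rr\in\Z^{\la^{\{1\}}}}\Bigl(\Res_{\TT=\Si_J+\rr\ka}\bigl(\Phi_\bla(\TT+\ww)\,g(\TT+\ww)\bigr)-\Res_{\TT=\Si_J+\rr\ka}\bigl(\Phi_\bla\,g\bigr)\Bigr).
\eeq
In the first residue I would change variables $\TT\mapsto\TT-\ww$, which moves the locus from $\Si_J+\rr\ka$ to $\Si_J+(\rr+\mb)\ka$; reindexing the resulting sum by $\rr\mapsto\rr-\mb$ — legitimate by absolute convergence — turns it into $\sum_{\rr}\Res_{\TT=\Si_J+\rr\ka}(\Phi_\bla g)$, which exactly cancels the second sum. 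Hence $\Mc_J(\Phi_\bla f)=0$, that is, \Ref{fi3}.

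The step I expect to be the main obstacle is the convergence claim underlying (ii) and (iii), since the bi-infinite reindexing $\rr\mapsto\rr-\mb$ is harmless only once absolute summability is known. I would reduce it to controlling a single lattice step: estimate $\Phi_\bla(\TT+\ka\,\bs e)/\Phi_\bla(\TT)$ for a unit vector $\bs e$, where the factors $\bigl(q_i/q_{i+1}\bigr)^{\sum_j t^{(i)}_j/\ka}$ in \Ref{PHI} contribute the geometric ratios governed by the strict inequalities $|q_{i+1}/q_i|<1$ of \Ref{q/q}, while the ratios of the $\Ga$-factors $\pho$ are rational in $\TT,\zz,h$ and, by the shift relation of $\Ga$, confine the nonvanishing residues to a cone and produce only polynomial growth there. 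Combining the geometric decay from the $\qq$-ratios with this polynomial growth, on the domain $L$ complementary to the hyperplanes \Ref{zzh}, should give absolute and locally uniform convergence; the remaining bookkeeping in (i)--(iii) is then routine.
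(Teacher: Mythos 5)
Your proposal is correct and takes essentially the same route as the paper: the paper's own proof consists precisely of observing that the residues vanish unless $\rr\in\Z_{\le 0}^{\la^{\{1\}}}$, so the sum is one-sided and converges geometrically on $L$ since $|\>q_{i+1}/q_i|<1$, after which it declares properties (i)--(iii) clear. Your telescoping/reindexing argument for property (iii) and the term-by-term differentiation for (ii) are exactly the details the paper leaves to the reader, and your convergence outline matches the paper's.
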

\begin{proof}
Each term of the sum in formula~\Ref{MSi} is a holomorphic function on \,$L'$.
\vvn.1>
Moreover, \,$\Res_{\>\TT\>=\>\Si_J+\<\>\rr\ka}
\bigl(\Phi_\bla(\TT;\zz;h;\qq)\>f(\TT;\zz;h;\qq)=0$
\,if \,$\rr\<\not\in\Z_{\leq 0}^{\la^{\{1\}}}$.
Hence, the sum over \,$\Z^{\la^{\{1\}}}\<$ reduces to the sum over
\,$\Z_{\leq 0}^{\la^{\{1\}}}\<$. The result is similar to a multidimensional
hypergeometric series multiplied by some fractional powers of \,$q_1\lc q_N$\,.
The obtained sum converges if \,$|\>q_{i+1}/q_i|<1$ \,for all \,$i=1\lc N-1$\,,
and gives a holomorphic function on \,$L$\,.
\vsk.2>
Properties \Ref{fi1}\,--\,\Ref{fi3} for the map \,$\Mc_J$ are clear.
Lemma \ref{Mji} is proved.
\end{proof}

\begin{lem}
\label{Mji+}
The function \,$\Mc_J(\Phi_\bla f)$ analytically continues to the hyperplanes
$\;h=m\<\>\ka$ \,for \,$m\in\Z_{>0}$\,.
\end{lem}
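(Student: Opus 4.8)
The plan is to exhibit $\Mc_J(\Phi_\bla f)$ as a product of explicit Gamma-function factors, arising from the evaluation of the master function \Ref{PHI} on the base discrete cycle $\Si_J$, times a power series in the ratios $q_{i+1}/q_i$, and then to verify that each of these factors, as well as the series, is holomorphic in a neighborhood of a generic point of a hyperplane $h=m\ka$ with $m\in\Z_{>0}$. By Lemma \ref{Mji} the sum in \Ref{MSi} runs effectively over $\rr\in\Z_{\le0}^{\la^{\{1\}}}$, so it suffices to analyze this series and its prefactor.

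First I would compute the residue at a shifted cycle $\Si_J+\rr\ka$. The iterated residue in \Ref{MSi} is supported on the poles of the numerator factors $\Ga((t^{(i)}_a-t^{(i+1)}_c)/\ka)$ of $\pho$, which along the cycle \Ref{cy} force, for each residue, a coincidence $t^{(i)}_a=t^{(i+1)}_c$; the companion numerator factor $\Ga((h-t^{(i)}_a+t^{(i+1)}_c)/\ka)$ then degenerates to $\Ga(h/\ka)$ at the base point. Because all shifts lie in $\ka\Z$ and $\pho(x)=\Ga(x/\ka)\Ga((h-x)/\ka)$, the $\rr$-dependence of the summand reduces to Pochhammer symbols in $h/\ka$ and $(z_a-z_b)/\ka$, to rational factors $1/(z_a-z_b-h+k\ka)$ coming from the explicit denominators of $\Phi_\bla$, and to a monomial $\prod_i(q_{i+1}/q_i)^{\ell_i(\rr)}$ with $\ell_i$ linear in $\rr$. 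Carrying this out gives a factorization
\be
\Mc_J(\Phi_\bla f)\,=\,\Ga(h/\ka)^{\,\la^{\{1\}}}\,G(\zz;h)\;S(\zz;h;\qq)\,,
\ee
where $G$ is a finite product of the base factors $\Ga((h-z_a+z_b)/\ka)^{\pm1}$ and $1/(z_a-z_b-h)$ attached to the non-linked pairs, and $S$ is the power series just described.

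Next I would locate the $h$-singularities of the prefactor. The factor $\Ga(h/\ka)^{\la^{\{1\}}}$ is singular precisely on $h\in\ka\Z_{\le0}$, hence it is holomorphic at $h=m\ka$ for every $m\in\Z_{>0}$. Each factor of $G$ is either a reciprocal Gamma, which is entire in $h$, or a factor whose polar set is contained in the progression $z_a-z_b+\ka\Z$; for $(\zz;h)\in L'$, that is, away from the hyperplanes \Ref{zzh}, this progression misses $\ka\Z$ and therefore has positive distance to $m\ka$. Consequently $\Ga(h/\ka)^{\la^{\{1\}}}G(\zz;h)$ is holomorphic in a full neighborhood of a generic point of $h=m\ka$, $m>0$.

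Finally I would control $S$. Its coefficients are products of Pochhammer symbols $(h/\ka+c)_r$, factors $1/r!$, and the rational factors $1/(z_a-z_b-h+k\ka)$; the first two are polynomial in $h/\ka$, and the last stay bounded for $h$ near $m\ka$ and $\zz\in L'$, so the coefficients are uniformly bounded on a compact neighborhood of such a point, while the monomials $\prod_i(q_{i+1}/q_i)^{\ell_i(\rr)}$ provide geometric decay throughout the domain \Ref{q/q}. Hence $S$ converges uniformly on compact subsets of a neighborhood of $h=m\ka$ and is holomorphic there, and its product with the holomorphic prefactor $\Ga(h/\ka)^{\la^{\{1\}}}G$ furnishes the asserted continuation. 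The main obstacle is the residue bookkeeping of the second step: one must track the iterated multivariable residue precisely enough to see that every chain of coincidences contributes exactly the factor $\Ga(h/\ka)$, so that all $h$-poles that can meet the hyperplanes $h=m\ka$ are confined to $m\le0$.
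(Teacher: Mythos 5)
Your proof is correct and follows essentially the same route as the paper, which simply reduces the Jackson integral to the sum over $\rr\in\Z_{\le0}^{\la^{\{1\}}}$ and observes ``by inspection'' that each residue term is regular at $h=m\ka$, $m\in\Z_{>0}$, with the limiting series still convergent for $|q_{i+1}/q_i|<1$; your explicit factorization into $\Ga(h/\ka)^{\la^{\{1\}}}$, the Gamma prefactor $G$, and the series $S$ is precisely what that inspection amounts to. The only slip is the claim that the coefficients of $S$ are uniformly bounded in $\rr$: the quotients of Pochhammer symbols by factorials grow polynomially in $|\rr|$ (locally uniformly in $(\zz,h)$ near $h=m\ka$), which the geometric factors still dominate, so the conclusion stands.
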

\begin{proof}
By the proof of Lemma \ref{Mji},
\vvn.3>
\beq
\label{MSih}
\Mc_J(\Phi_\bla f)(\zz;h;\qq)\,=\!\sum_{\rr\<\in\Z_{\leq0}^{\la^{\{1\}}}\!\!\!\!}
\Res_{\>\TT\>=\>\Si_J+\<\>\rr\ka\>}\bigl(\Phi_\bla(\TT;\zz;h;\qq)\>
f(\TT;\zz;h;\qq)\bigr)\,.\kern-1.6em
\vv-.2>
\eeq
for \,$(\zz;h;\qq)\in L$\,. By inspection, if $\;h\to m\<\>\ka$\,,
\,$m\in\Z_{>0}$\,, and \,$\rr\<\in\Z_{\leq0}^{\la^{\{1\}}}$, then
\vvn.4>
\be
\Res_{\>\TT\>=\>\Si_J+\<\>\rr\ka\>}\bigl(\Phi_\bla(\TT;\zz;h;\qq)\>
f(\TT;\zz;h;\qq)\bigr)\,\to\,
\Res_{\>\TT\>=\>\Si_J+\<\>\rr\ka\>}\bigl(\Phi_\bla(\TT;\zz;m\<\>\ka;\qq)\>
f(\TT;\zz;m\<\>\ka;\qq)\bigr)\,.
\vv.3>
\ee
Hence
\beq
\label{MSim}
\Mc_J(\Phi_\bla f)(\zz;h;\qq)\,\to\!
\sum_{\rr\<\in\Z_{\leq0}^{\la^{\{1\}}}\!\!\!\!}
\Res_{\>\TT\>=\>\Si_J+\<\>\rr\ka\>}\bigl(
\Phi_\bla(\TT;\zz;m\<\>\ka;\qq)\>f(\TT;\zz;m\<\>\ka;\qq)\bigr)\,,\kern-1.6em
\eeq
since the sum in the right-hand side converges if \,$|\>q_{i+1}/q_i|<1$
\,for all \,$i=1\lc N-1$\,.
\end{proof}

\begin{rem}
For \,${m\in\Z_{>0}}$\,, the sum
\vvn.1>
\,$\sum_{\rr\<\in\Z^{\la^{\{1\}}}}
\Res_{\>\TT\>=\>\Si_J+\<\>\rr\ka\>}\bigl(\Phi_\bla(\TT;\zz;m\<\>\ka;\qq)\>
f(\TT;\zz;m\<\>\ka;\qq)\bigr)$ diverges, and the function
\,$\Mc_J(\Phi_\bla f)(\zz;m\<\>\ka;\qq)$ \,is not given by formula \Ref{MSi}.
\end{rem}

\begin{example}
Let \,$N=2$\,, \,$\bla=(1,n-1)$\,, \,$J=(\{1\},\{2,3\lc n\})$\,. Then
\vvn.3>
\be
\Phi_\bla(\TT;\zz;h;\qq)\,=\,
(e^{\<\>\pii}\,q_2)^{\>\sum_{a=1}^nz_a\</\ka}
\Bigl(\>e^{\<\>\pii\,(n-2)}\>\frac{q_1}{q_2}\Bigr)^{t^{(1)}_1\!/\ka}
\,\prod_{a=1}^n\,\Ga\Bigl(\frac{t_1^{(1)}\<\<-z_a}\ka\Bigr)\,
\Ga\Bigl(\frac{z_a\<-t_1^{(1)}\<\<+h}\ka\Bigr)\,,
\ee
and
\beq
\label{MSe}
\Mc_{(\{1\},\{2,3\lc n\})}(\Phi_\bla\>f)\,=\,
\sum_{r\in\Z}\,\Res_{t^{(1)}_1=\>z_1+\>r\ka\>}(\Phi_\bla\>f)\,.
\eeq
Nonzero contributions to the sum in the right-hand side of \Ref{MSe}
come from the poles of \,$\Ga\bigl((t_1^{(1)}\<\<-z_1)/\ka\bigr)$\,.
Explicitly, the answer is
\vv.3>
\begin{align*}
\Mc_{(\{1\},\{2,3\lc n\})}( & \Phi_\bla\>f)\,=\,
(\>e^{\<\>\pii\,(n-1)}q_1)^{z_1\</\ka}\,
(e^{\<\>\pii}\,q_2)^{\>\sum_{a=2}^nz_a\</\ka}\>\times{}
\\[6pt]
{}\times\,{} &\ka\;\Ga\Bigl(\>\frac h\ka\>\Bigr)\,
\prod_{a=2}^n\,\Ga\Bigl(\frac{z_1\<-z_a}\ka\Bigr)\,
\Ga\Bigl(\frac{z_a\<-z_1\<+h}\ka\Bigr)\>\times{}
\\[2pt]
{}\times\,{}&\!\sum_{l=0}^\infty\,f(z_1\<-l\ka\>;\zz;h;\qq)\,
\Bigl(\>\frac{q_2}{q_1}\<\>\Bigr)^{\!l}\;
\prod_{j=0}^{l-1}\>\biggl(\>\frac{h+j\<\>\ka}{\ka+j\<\>\ka}\;
\prod_{a=2}^n\,\frac{z_a\<-z_1\<+h+j\<\>\ka}{z_a\<-z_1\<+\ka+j\<\>\ka}\>
\biggr)\>,
\\[-14pt]
\end{align*}
and the series converges if \,$|\>q_2/q_1|<1$\,.
\end{example}

\subsection{Solutions of dynamical equations}
Recall the \,$\Cnnl$-valued weight function
\,$W_\bla(\TT;\zz)$, given by \Ref{vvwf}. For \,$J\in\Il$\,, define
\vvn.3>
\beq
\label{mcF}
\Psi_J(\zz;h;\qq)\,=\,\Om_\bla(\qq)\,\Mc_J(\Phi_\bla W_\bla)(\zz;h;\qq)\,=\,
\Om_\bla(\qq)\,\sum_{I\in\Il}\,\Mc_J (\Phi_\bla W_I)(\zz;h;\qq)\,v_I\,.
\kern-1em
\eeq

\begin{thm}
\label{thm cy}
The function \,$\Psi_J(\zz;h;\qq)$ is a holomorphic
\vvn.1>
\,$\Cnnl$-valued function of $\;\zz,h,\qq$
\,on the domain $\;L\subset\<\C^n\!\times\C\times\C^N\<$ such that
\vvn.2>
\be
h\not\in\ka\>\Z_{\le0}\,,\qquad
z_a\<-z_b\<\not\in\ka\>\Z\,,\qquad z_a\<-z_b\<+h\not\in\ka\>\Z\,,
\vv.3>
\ee
for all \,$a,b=1\lc n$\,, \,$a\ne b$\,,
\vvn.1>
\be
|\>q_{i+1}/q_i|<1\,,\qquad i=1\lc N-1\,,
\vv.2>
\ee
and a branch of \,\,$\log\>q_i$ is fixed for each \,$i=1\lc N$. Furthermore,
\,$\Psi_J(\zz;h;\qq)$ \,is a solution of the dynamical differential equations
\Ref{DEQ}.
\end{thm}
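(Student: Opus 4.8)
The plan is to obtain both assertions by assembling the formal-integral machinery already in place, the only substantive point being the enlargement of the allowed $h$-domain.

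I would begin with holomorphicity. By Lemma \ref{Mji} the map $\Mc_J$ is a formal integral, and its proof shows that each Jackson integral $\Mc_J(\Phi_\bla W_I)$ converges to a holomorphic function on $L=L'\times L''$, where $L'$ is the complement of the hyperplanes \Ref{zzh} and $L''$ is the region \Ref{q/q} with fixed $\log$-branches. Next I would check that the prefactor $\Om_\bla(\qq)$ introduces no new singularities: on $L''$ the inequalities $|q_{i+1}/q_i|<1$ force $q_i\ne q_j$ for $i<j$ and place each base $1-q_j/q_i$ inside the disk $|w-1|<1$, where the principal branch of $(\,\cdot\,)^{h\la_i/\ka}$ is holomorphic; hence $\Om_\bla$ is holomorphic and nonvanishing there and $\Psi_J$ is holomorphic on all of $L$.

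The genuine content is that the theorem allows $h$ to be a positive integer multiple of $\ka$, whereas $L'$ excludes every hyperplane $h=m\ka$, $m\in\Z$. Here I would invoke Lemma \ref{Mji+}, whose reduced representation \Ref{MSih} exhibits $\Mc_J(\Phi_\bla W_I)$ as a sum over $\Z_{\le0}^{\la^{\{1\}}}$ that converges, locally uniformly, on a neighborhood of each hyperplane $h=m\ka$ with $m\in\Z_{>0}$, because $|q_{i+1}/q_i|<1$. This analytically continues $\Psi_J$ across all such hyperplanes, so the only $h$-values that remain excluded are $h\in\ka\Z_{\le0}$, while the conditions $z_a-z_b\notin\ka\Z$ and $z_a-z_b+h\notin\ka\Z$ are inherited unchanged from $L'$. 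This is exactly the domain in the statement; one cannot push further, since at $h\in\ka\Z_{\le0}$ individual residue terms develop poles and the series breaks down, as recorded in the Remark. I expect this $h$-continuation to be the main obstacle, and it is precisely the content of Lemma \ref{Mji+}.

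Finally, for the solution property I would argue that, $\Mc_J$ being a formal integral (Lemma \ref{Mji}), Lemma \ref{lem foi} applies with $\Mc=\Mc_J$ and shows that $F_{\Mc_J}=\Om_\bla\,\Mc_J(\Phi_\bla W_\bla)$ solves the dynamical equations \Ref{DEQ}. By the definition \Ref{mcF} we have $\Psi_J=F_{\Mc_J}$, and the holomorphicity just established legitimizes the differentiations $\ka q_i\,\der/\der q_i$; hence $\Psi_J$ is a genuine solution on the stated domain.
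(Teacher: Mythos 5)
Your proposal is correct and follows essentially the same route as the paper, which simply observes that the weight functions $W_I(\TT;\zz;h)$ are polynomials in $\TT,\zz,h$ independent of $\qq$ and then invokes Lemmas \ref{Mji}, \ref{Mji+}, and \ref{lem foi}. The extra details you supply (holomorphy of the prefactor $\Om_\bla$ on $L''$ and the role of Lemma \ref{Mji+} in enlarging the $h$-domain to $h\notin\ka\>\Z_{\le0}$) are consistent with, and merely make explicit, what the paper's one-line proof leaves implicit.
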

\begin{proof}
The weight functions $W_I(\TT;\zz;h)$ are polynomials in \,$\TT,\zz,h$
\,and do not depend on \,$\qq$\,. Hence, Theorem \ref{thm cy} follows from
Lemmas \ref{Mji}, \ref{Mji+}, and \ref{lem foi}.
\end{proof}

\begin{thm}
\label{thm ba}
Under conditions of Theorem \ref{thm cy}, the collection of
\vvn.2>
\,$\Cnnl$-valued functions
\,$\bigl(\Psi_J(\zz;h;\qq)\bigr)_{J\in\Il}$ \,is a basis of solutions of
the dynamical equations \Ref{DEQ}.
\end{thm}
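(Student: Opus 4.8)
The plan is to combine a dimension count with an analysis of the leading asymptotics of the Jackson integrals as \,$\qq$ \,approaches the boundary \,$q_{i+1}/q_i\to0$ \,of the domain \,$L$.

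First I would record the dimension of the solution space. The operators \,$\nabla_{\qq,\ka,i}$ \,pairwise commute, so for fixed generic \,$\zz,h$ \,the dynamical equations \Ref{DEQ} form a compatible first-order linear system in \,$\qq$ \,of rank \,$\dim_\C\Cnnl=|\Il|$ \,on the simply connected domain \,$L''$, on which the coefficients of \,$X_i$ \,are holomorphic (since \,$|q_{i+1}/q_i|<1$ \,forces \,$q_i\ne q_j$ \,for \,$i\ne j$). Hence its space of holomorphic solutions is \,$|\Il|$-dimensional over \,$\C$. By Theorem \ref{thm cy} every \,$\Psi_J$, \,$J\in\Il$, is such a solution, and there are exactly \,$|\Il|$ \,of them. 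It therefore suffices to prove that the family \,$(\Psi_J)_{J\in\Il}$ \,is linearly independent over \,$\C$ \,for generic \,$\zz,h$.

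Next I would extract the leading term of each \,$\Psi_J$. By the proof of Lemma \ref{Mji}, the Jackson integral \,$\Mc_J(\Phi_\bla W_I)$ \,is the sum over \,$\rr\in\Z_{\le0}^{\la^{\{1\}}}$ \,of the residues at \,$\TT=\Si_J+\rr\ka$, and a shift of \,$\TT$ \,by \,$\rr\ka$ \,multiplies \,$\Phi_\bla$ \,by an integer power of the ratios \,$q_{i+1}/q_i$. Evaluating \,$\Phi_\bla$ \,on the cycle \,$\Si_J$, where \,$\sum_{j=1}^{\la^{(i)}}t^{(i)}_j=\sum_{a\in\cup_{k\le i}J_k}z_a$, one reads off from \Ref{PHI} and \Ref{Om} that \,$\Psi_J$ \,equals \,$\chi_J(\qq)$ \,times \,$L_J$ \,plus terms of higher order in the \,$q_{i+1}/q_i$, where, up to a fixed phase,
\be
\chi_J(\qq)\,=\,\prod_{i=1}^N\,q_i^{\,(\sum_{a\in J_i}z_a)/\ka}\,,\qquad
L_J\,=\,c_J(\zz,h)\,\sum_{I\in\Il}\,W_I(\zz_J)\,v_I\,,
\ee
and \,$c_J(\zz,h)$ \,is the nonzero scalar given by the iterated Gamma-residue of \,$\Phi_\bla$ \,on \,$\Si_J$ \,(cf.\ the Example at the end of Section~\ref{secJ}). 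Since the diagonal value \,$W_J(\zz_J)$ \,does not vanish for generic \,$\zz,h$, the leading vector \,$L_J$ \,is nonzero, so \,$\Psi_J\not\equiv0$.

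The crux is then to separate the characters \,$\chi_J$. For generic \,$\zz$ \,the numbers \,$z_a/\ka$ \,are linearly independent over \,$\mathbb Q$, so whenever \,$J\ne J'$ \,there is an index \,$i$ \,with \,$J_i\ne J'_i$, and then \,$\bigl(\sum_{a\in J_i}z_a-\sum_{a\in J'_i}z_a\bigr)/\ka\notin\Z$. Hence the exponent of \,$q_i$ \,in \,$\chi_J$ \,and in \,$\chi_{J'}$ \,differ by a non-integer, whereas each \,$\Psi_J$ \,expands as \,$\chi_J$ \,times a power series in the \,$q_{i+1}/q_i$ \,with integer exponents. Thus the sets of \,$\qq$-monomials occurring in \,$\Psi_J$ \,and in \,$\Psi_{J'}$ \,are disjoint for \,$J\ne J'$. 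Given a relation \,$\sum_{J\in\Il}c_J\,\Psi_J=0$ \,with \,$c_J\in\C$, this disjointness forces \,$c_J\,\Psi_J=0$ \,for each \,$J$ \,separately; since \,$\Psi_J\not\equiv0$ \,we get \,$c_J=0$ \,for all \,$J$. Linear independence follows, and with the dimension count the \,$\Psi_J$ \,form a basis of solutions of \Ref{DEQ}. The step I expect to require the most care is the nonvanishing of the leading coefficient \,$L_J$: one must verify that the iterated residue of \,$\Phi_\bla$ \,on \,$\Si_J$ \,is a simple, nonzero Gamma-residue and that \,$W_J(\zz_J)\ne0$, both for generic \,$\zz,h$.
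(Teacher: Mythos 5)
Your overall strategy coincides with the paper's: both arguments extract the leading term of \,$\Psi_J$ \,as \,$q_{i+1}/q_i\to0$\,, namely the residue at \,$\rr=0$ \,of the Jackson sum, which equals \,$\Om_\bla(\qq)\,\Res_{\TT=\Si_J}(\Phi_\bla)\,\sum_{I}W_I(\Si_J;\zz;h)\,v_I$\,. Where you diverge is in how linear independence is then deduced. The paper invokes \cite[Lemma 3.1]{RTV}: the full matrix \,$\bigl(W_I(\Si_J)\bigr)_{\IJ\in\Il}$ \,is triangular with nonzero diagonal under exactly the hypotheses of Theorem \ref{thm cy}, so the leading \emph{vectors} \,$W_\bla(\Si_J)$ \,already form a basis of \,$\Cnnl$\,, and independence of the \,$\Psi_J$ \,follows for every admissible \,$(\zz,h)$\,. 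You instead use only the diagonal nonvanishing \,$W_J(\zz_J)\ne0$ \,and separate the solutions by their \,$\qq$-monomial supports via the characters \,$\chi_J(\qq)=\prod_i q_i^{\sum_{a\in J_i}z_a/\ka}$\,. Your explicit dimension count (which the paper leaves implicit) and the computation of \,$\chi_J$ \,are correct.

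The gap is in the character-separation step. The hypotheses of Theorem \ref{thm cy} only forbid \,$z_a-z_b\in\ka\Z$ \,for pairs; they do not prevent \,$\sum_{a\in J_i}z_a-\sum_{a\in J'_i}z_a\in\ka\Z$ \,for larger index sets. For instance, with \,$N=2$\,, \,$\bla=(2,2)$\,, \,$z_1=0$\,, \,$z_2=\ka/2$\,, \,$z_3=\ka\sqrt2$\,, \,$z_4=\ka\sqrt2+\ka/2$\,, all conditions of Theorem \ref{thm cy} hold, yet for \,$J=(\{1,4\},\{2,3\})$ \,and \,$J'=(\{2,3\},\{1,4\})$ \,one has \,$\chi_J=\chi_{J'}$ \,exactly, so the monomial supports of \,$\Psi_J$ \,and \,$\Psi_{J'}$ \,are not disjoint and your argument cannot distinguish them. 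Thus your proof establishes the theorem only for \,$\zz$ \,generic in the sense of \,$\Q$-linear independence, which is strictly weaker than the stated claim; a continuity argument does not repair this, since for fixed degenerate \,$(\zz,h)$ \,the Wronskian could a priori vanish identically in \,$\qq$\,. The fix is precisely the ingredient you omitted: one must compare the vector-valued leading coefficients, and the triangularity of \,$\bigl(W_I(\Si_J)\bigr)$ \,with nonzero diagonal (the cited \cite[Lemma 3.1]{RTV}, or the orthogonality relations recorded later in Propositions \ref{p111} and \ref{pdel}) shows these vectors are independent even when the characters coincide. Your final caveat about verifying that the iterated residue of \,$\Phi_\bla$ \,on \,$\Si_J$ \,is a nonzero simple Gamma-residue is well placed but is the easier half; the essential missing input is the off-diagonal (triangularity) information.
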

\begin{proof}
By formulas \Ref{MSih}, \Ref{mcF}, if \,$|\>q_{i+1}/q_i|\to 0$ \,for all
\,$i=1\lc N-1$\,, then
\vvn.4>
\begin{align}
\label{Fq0}
\Psi_J(\zz;h;\qq)\,\simeq\,{}&\Om_\bla(\qq)\,\Res_{\>\TT\>=\>\Si_J}
\bigl(\Phi_\bla(\TT;\zz;h;\qq)\bigr)\,W_\bla(\Si_J;\zz;h)\,={}
\\[8pt]
\notag
{}=\,{}&\Om_\bla(\qq)\,\Res_{\>\TT\>=\>\Si_J}
\bigl(\Phi_\bla(\TT;\zz;h;\qq)\bigr)\,
\sum_{I\in\Il}W_I(\Si_J;\zz;h)\,v_I\,.\kern-1.4em
\\[-18pt]
\notag
\end{align}
By \cite[Lemma 3.1]{RTV}, the matrix
\,$\bigl(W_I(\Si_J;\zz;h)\bigr)_{\IJ\in\Il}$ is triangular and the diagonal
entries \,$W_I(\Si_I;\zz;h)$ \,are nonzero if \,$h\ne 0$ \,and
\,$z_a\<-z_b\<\ne 0$\,, \,$z_a\<-z_b\<+h\ne 0$\,, for all \,$a,b=1\lc n$\,,
\,$a\ne b$\,. Hence the vectors \,$W_\bla(\Si_J)$\,, \,$J\in\Il$\,,
form a basis of \,$\Cnnl$ and the collection
\,$\bigl(\Psi_J(\zz;h;\qq)\bigr)_{J\in\Il}$ \,is a basis of solutions
of the dynamical equations \Ref{DEQ}.
\end{proof}

The functions \,$\Psi_J(\zz;h;\qq)$ \,were considered in \cite{TV1}.
It follows from \cite[Theorem 1.5.2]{TV1}, cf.~\cite{TV4}, that for every
\,$J\in\Il$\,, the function \,$\Psi_J(\zz;h;\qq)$ \,is a solution of the \qKZ/
equations \Ref{K_i}.

\begin{cor}
\label{cor KZdy}
The collection of \,$\Cnnl$-valued functions
\,$\bigl(\Psi_J(\zz;h;\qq)\bigr)_{J\in\Il}$ is a basis of
solutions of both the dynamical and \qKZ/ equations,
see \Ref{DEQ}, \Ref{K_i}, with values in \,$\Cnnl$\,.
\end{cor}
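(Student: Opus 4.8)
The plan is to assemble three facts that are already established. The dynamical half of the statement is precisely Theorem~\ref{thm ba}: on the domain $L$ the collection $\bigl(\Psi_J(\zz;h;\qq)\bigr)_{J\in\Il}$ consists of holomorphic solutions of the dynamical equations~\Ref{DEQ}, and, via the triangularity of the matrix $\bigl(W_I(\Si_J;\zz;h)\bigr)$ used in that proof, it is a \emph{basis} of the space of solutions of~\Ref{DEQ}, whose dimension equals the rank $\dim\Cnnl=|\Il|$ of the flat dynamical connection. The qKZ half is supplied by the result of \cite[Theorem~1.5.2]{TV1} quoted just above the corollary: each $\Psi_J$ is simultaneously a solution of the qKZ equations~\Ref{K_i}. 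So my first step is merely to record that every $\Psi_J$ lies in the space $\mathcal{E}$ of common solutions of~\Ref{DEQ} and~\Ref{K_i}.

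The second step is the dimension count that upgrades ``common solutions'' to ``basis of common solutions''. Since $\mathcal{E}$ is contained in the solution space of~\Ref{DEQ}, and the latter has dimension $|\Il|$ with the $\Psi_J$ as a basis, the $|\Il|$ linearly independent vectors $\Psi_J\in\mathcal{E}$ already exhaust a space of the maximal possible dimension; hence $\mathcal{E}$ coincides with the dynamical solution space and the $\Psi_J$ form a basis of it. I would invoke the compatibility of the two systems, Theorem~\ref{thm qkz}, to legitimize this: compatibility is exactly what guarantees that $\mathcal{E}$ is the honest joint solution space of a holonomic system of the expected rank $|\Il|$, rather than a space cut down by an inconsistency between the differential and the difference equations.

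The only genuinely delicate point---and the one I expect to require care rather than computation---is the meaning of ``basis'' when one system is differential in $\qq$ while the other is a difference system with step $\ka$ in $\zz$. A $\C$-linear combination of the $\Psi_J$ is automatically a solution of~\Ref{DEQ}, but to remain a solution of~\Ref{K_i} the coefficients must be $\ka$-periodic in the $z_a$. I would therefore phrase the conclusion as follows: the $\Psi_J$ are linearly independent over $\C$, their number equals $\dim\Cnnl$ (the rank of the compatible holonomic system), and any common solution is a combination of the $\Psi_J$ with coefficients constant with respect to both connections, i.e.\ flat in $\qq$ and $\ka$-periodic in $\zz$. With this reading the corollary follows immediately from Theorems~\ref{thm ba} and~\ref{thm qkz} together with the cited qKZ statement, and no new calculation is needed.
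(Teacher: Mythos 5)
Your proposal is correct and follows essentially the same route as the paper: the corollary is obtained there simply by combining Theorem~\ref{thm ba} (the $\Psi_J$ form a basis of solutions of the dynamical equations) with the quoted result of \cite[Theorem~1.5.2]{TV1} that each $\Psi_J$ also solves the \qKZ/ equations, so that the joint solution space, being sandwiched between the span of the $\Psi_J$ and the dynamical solution space, coincides with both. Your additional remarks on the coefficient ring (constants in $\qq$, $\ka$-periodic in $\zz$) and the role of compatibility are a reasonable sharpening of a point the paper leaves implicit, but introduce nothing beyond what the paper's own two-ingredient argument already contains.
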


\begin{rem}
The functions $\Psi_J(\zz;h;\qq)$ \,are called the {\it multidimensional
\,$q$-hypergeometric solutions} of the dynamical equations.
In \cite{TV5}, we constructed another type of solutions of the dynamical
equations called the {\it multidimensional hypergeometric solutions}.
\end{rem}

\section{Equivariant quantum differential equations}
\label{sQde}

\subsection{Partial flag varieties}
\label{sec Partial flag varieties}

Let \,$\bla\in\Z^N_{\ge 0}$\,, \,$|\bla|=n$\,.
Consider the partial flag variety
\;$\Fla$ parametrizing chains of subspaces
\be
0\,=\,F_0\subset F_1\lsym\subset F_N =\,\C^n
\ee
with \;$\dim F_i/F_{i-1}=\la_i$, \;$i=1\lc N$.
Denote by \,$\tfl$ the cotangent bundle of \;$\Fla$ and
\be
\XX_n\>=\,\bigcup_{|\bla|=n}\tfl\,.
\ee

Let $u_1\lc u_n$ be the standard basis of $\C^n$. For \,$I\in\Il$\,,
let \,$x_I\in\Fla$ \,be the point corresponding to the coordinate flag
\,$F_1\<\lsym\subset F_N$\,, where \,$F_i$ \,is the span of the standard basis
vectors \;$u_j\in\C^n$ with \,$j\in I_1\<\lsym\cup I_i$\,. We embed \,$\Fla$
\,in \,$\tfl$ \,as the zero section and consider the points \,$x_I$ \,as points
of $\tfl$\,.

\subsection{Equivariant cohomology}
\label{sec:equiv}

Let \,$A\subset GL_n(\C)$ \,be the torus of diagonal matrices and
\,$T=A\times\Cxs$. The group \,$A$ \,acts on \;$\C^n\<$ and hence
on \,$\tfl$\,. Let the group \;$\Cxs\<$ act on \,$\tfl$ \,by multiplication
in each fiber. We denote by \,$-\>h$ \,its \,$\Cxs\<$-weight.

\vsk.2>
We consider the equivariant cohomology algebras $H^*_T(\tfl\>;\C)$ and
\vvn.2>
\be
H_T^*(\XX_n)\,=\>\bigoplus_{|\la|=n}\>H^*_T(\tfl\>;\C)\,.
\ee
Denote by \;$\GG_i=\{\ga_{i,1}\lc\ga_{i,\la_i}\}$ \,the set of the Chern
roots of the bundle over \,$\Fla$ \,with fiber \,$F_i/F_{i-1}$\,.
Let \;$\GG=(\GG_1\<\>\lsym;\GG_N)$\,. Denote by \,$\zb=\{\zzz\}$
\,the Chern roots corresponding to the factors of the torus \,$A$\,. Then
\beq
\label{Hrel}
H^*_T(\tfl)\,=\,\C[\GG]^{\>\Sla}\<\<\otimes\C[\zz]\otimes\C[h]\>\Big/\Bigl\bra
\,\prod_{i=1}^N\prod_{j=1}^{\la_i}\,(u-\ga_{\ij})\,=\,\prod_{a=1}^n\,(u-z_a)
\Bigr\ket\,.
\vv.2>
\eeq
The cohomology \,$H^*_T(\tfl)$ \,is a module over
\,$H^*_T({pt};\C)=\C[\zz]\otimes\C[h]$\,.

\vsk.2>
Notice that
\vvn-.3>
\beq
\label{ch cl}
\prod_{i=1}^{N-1}\prod_{j=i+1}^N\>\prod_{a=1}^{\la_i}\,
\prod_{b=1}^{\la_j}\,(\ga_{j,b}\<-\ga_{i,a})\,(\ga_{i,a}\<-\ga_{j,b}\<-h)
\vv.3>
\eeq
is the equivariant total Chern class of the tangent bundle of \,$\tfl$ \,and
\vvn.2>
\beq
\label{c1}
c_1(E_i)\,=\,\sum_{a=1}^{\la_i}\,\ga_{i,a}\,, \qquad i=1\lc N\>,
\eeq
is the equivariant first Chern class of the vector bundle \,$E_i$ \,over
\,$\tfl$ \,with fiber \,$F_i/F_{i-1}$\,.

\vsk.2>
For \,$i=1\lc N$, denote \,$\Tht_i\<=\{\tht_{i,1}\lc\tht_{i,\la^{(i)}}\}$
\,the Chern roots of the bundle \,$\FF_i$ \,over \,$\Fla$ \,with fiber
\,$F_i$\,. Let \,$\Thb=(\Tht_1\lc\Tht_{N})$\,. The relations
\vvn.2>
\beq
\label{rE}
\prod_{a=1}^{\la^{(i)}}\,(u-\tht_{i,\<\>a})\,=\,
\prod_{j=1}^i\,\prod_{k=1}^{\la_j}\,(u-\ga_{\jk})\,, \qquad i=1\lc N\>,
\vv.3>
\eeq
define the homomorphism
\vvn.3>
\be
\C[\Thb]^{S_{\la^{(1)}}\<\lsym\times S_{\la^{(N)}}}\!
\otimes \C[\zz]\otimes\C[h]\,\to\,H^*_T(\tfl)\,.
\vv.2>
\ee

\subsection{Stable envelope map}
Recall the weight functions \,$\WW_I$ \,defined in Sections \ref{dwf}.
Let \,$\WW_I(\Thb;\zz)\in H^*_T(\tfl)$ \,be the cohomology class
represented by the polynomial \,$\WW_{\id,I}(\TT;\zz)$ \,with the variables
$\;t^{(i)}_a\<$ replaced by $\;\tht_{i,\<\>a}$ \>for all \,$i=1\lc N-1$\,,
\,$a=1\lc\la^{(i)}$. Denote
\vvn.1>
\be
c_\bla(\Thb)\,=\, \prod_{i=1}^{N-1}\,\prod_{a=1}^{\la^{(i)}}\,
\prod_{b=1}^{\la^{(i)}}\,(\tht_{i,\<\>a}\<-\tht_{i,\<\>b}\<-h)
\in H^*_T(\tfl)\,.
\ee
Observe that $c_\bla(\Thb)$ is the equivariant Euler class of the bundle
\,$\bigoplus_{a=1}^{N-1}\>\Hom(\FF_a,\FF_a)$ \,if we make \,$\Cxs\!$ act on it
with weight \,$-\>h$\,.

\begin{thm}[{\cite[Theorem 4.1]{RTV}}]
For any \,$\bla$ \>and any \,$I\in\Il$\,, the cohomology class
$\WW_I(\Thb;\zz)\in H^*_T(\tfl)$ \,is divisible by \,$c_\bla(\Thb)$\,, that
is, there exists a unique element \,$\St_{\>I}\in H^*_T(\tfl)$ \,such that
\vvn-.3>
\beq
\label{StW}
[\WW_I(\Thb;\zz)]\,=\,c_\bla(\Thb)\cdot\>\St_{\>I}\,.
\vv-1.4>
\eeq
\vv.7>
\qed
\end{thm}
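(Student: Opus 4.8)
The statement is a divisibility in the quotient ring $H^*_T(\tfl)$ presented by \Ref{Hrel}, and \emph{not} in a free polynomial ring: before the relations \Ref{rE}, \Ref{Hrel} are imposed, $\WW_I(\Thb;\zz)$ is in general not divisible by $c_\bla(\Thb)$ (already for $N=2$, $\bla=(2,0)$ one checks $\Sym\,\UU_I$ does not vanish on $\tht_{1,1}=\tht_{1,2}+h$, yet the divisibility holds modulo \Ref{Hrel}). The plan is therefore to argue by torus localization. The $T$-fixed points of $\tfl$ are the isolated coordinate flags $x_J$, $J\in\Il$, and since $\tfl$ retracts onto $\Fla$ the algebra $H^*_T(\tfl)$ is a free module over $\C[\zz]\ox\C[h]$; hence the restriction map $r\colon y\mapsto(y|_{x_J})_{J\in\Il}$ to $\bigoplus_{J\in\Il}\C[\zz]\ox\C[h]$ is injective and becomes an isomorphism after inverting $h$ and the linear forms $z_a-z_b$ and $z_a-z_b\pm h$. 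At $x_J$ the Chern roots of $F_i$ restrict to $\{z_a: a\in J_1\cup\dots\cup J_i\}$, so $c_\bla(\Thb)|_{x_J}=\prod_{i=1}^{N-1}\prod_{a,b\in J_1\cup\dots\cup J_i}(z_a-z_b-h)$ is a nonzero element of $\C[\zz]\ox\C[h]$. In particular $c_\bla(\Thb)$ is a non-zero-divisor and is invertible in the localized ring, which already yields the uniqueness of $\St_{\>I}$.

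It remains to prove existence, i.e. that the localized class $c_\bla(\Thb)^{-1}[\WW_I(\Thb;\zz)]$ lies in $H^*_T(\tfl)=\mathrm{im}(r)$. Since $\tfl$ is a GKM space, $\mathrm{im}(r)$ is exactly the set of tuples satisfying the edge congruences of the one-skeleton, so I would reduce existence to two local statements: (a) \emph{local divisibility}, that for every $J\in\Il$ the polynomial $c_\bla(\Thb)|_{x_J}$ divides $\WW_I(\Thb;\zz)|_{x_J}$ in $\C[\zz]\ox\C[h]$; and (b) \emph{coherence}, that the resulting tuple $\bigl(\WW_I|_{x_J}/c_\bla|_{x_J}\bigr)_J$ satisfies those edge congruences.

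Step (b) I expect to be formal. Both $[\WW_I(\Thb;\zz)]$ and $c_\bla(\Thb)$ are genuine classes, hence already coherent: along an edge joining $x_J$ and $x_{J'}$ with weight $w$ one has $\WW_I|_{x_J}\equiv\WW_I|_{x_{J'}}$ and $c_\bla|_{x_J}\equiv c_\bla|_{x_{J'}}\pmod w$. Reducing the numerator of $\WW_I|_{x_J}/c_\bla|_{x_J}-\WW_I|_{x_{J'}}/c_\bla|_{x_{J'}}=(\WW_I|_{x_J}\,c_\bla|_{x_{J'}}-\WW_I|_{x_{J'}}\,c_\bla|_{x_J})/(c_\bla|_{x_J}\,c_\bla|_{x_{J'}})$ modulo $w$ shows it vanishes, so the quotient tuple is again coherent whenever $w$ is prime to $c_\bla|_{x_J}\,c_\bla|_{x_{J'}}$; the only delicate edges are the cotangent ones, where $w$ itself divides $c_\bla|_{x_J}$, but there (a) forces $w\mid\WW_I|_{x_J}$ as well and the congruence persists. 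I would package this exchange behavior through the three-term relation \Ref{WW3} (equivalently \Ref{m3t}): since $c_\bla(\Thb)$ does not depend on $\zz$, it commutes with the operators $S_{\ii+1}$ of \Ref{Skk+1}, so dividing \Ref{WW3} by $c_\bla$ reproduces precisely the edge relations for $\St_{\>I}$.

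The crux, and the main obstacle, is step (a). For this I would use the triangularity of the fixed-point values from \cite[Lemma~3.1]{RTV} (the same input as in the proof of Theorem~\ref{thm ba}): $\WW_I(\Thb;\zz)|_{x_J}=0$ unless $J\ge I$, while on the diagonal $\WW_I|_{x_I}$ is an explicit product visibly containing every factor $z_a-z_b-h$ with $a,b\in J_1\cup\dots\cup J_i$, i.e. containing $c_\bla|_{x_I}$. For $J>I$ one must extract the same factors from $\Sym_{\TT^{(1)}}\dots\Sym_{\TT^{(N-1)}}\UU_I$ after the substitution $\Thb\mapsto\zz_J$: in each surviving summand the internal factors $\prod_{a<b}\tfrac{t^{(i)}_a-t^{(i)}_b-h}{t^{(i)}_a-t^{(i)}_b}$ specialize so that the denominators $z_p-z_q$ cancel against matching cross-factors, leaving exactly the numerators $z_p-z_q-h$ that assemble $c_\bla|_{x_J}$. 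Organizing this cancellation uniformly in $J$ is the real work; I would tame it as in the paper's other proofs, using the shuffle formula of Lemma~\ref{lem shuf} and the factorization of Lemma~\ref{lem fa} to reduce to the $\frak{gl}_2$ case, where the divisibility of $\WW_I|_{x_J}$ by $\prod_{a,b}(z_a-z_b-h)$ is checked directly and where symmetrization identities of the type in Theorem~\ref{thm si} do the bookkeeping.
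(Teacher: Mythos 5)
The paper offers no proof of this statement to compare against: it is imported wholesale from \cite{RTV} and asserted with only the citation. So the only question is whether your blind reconstruction is sound, and in outline it is: your opening observation that the divisibility fails in the free polynomial ring and only holds modulo \Ref{Hrel} is correct (your $\bla=(2,0)$ check is right), uniqueness does follow because $c_\bla(\Thb)$ restricts to a nonzero polynomial at every fixed point and restriction to fixed points is injective, and existence does reduce to your steps (a) and (b). One simplification you missed in (b): since $\tfl$ equivariantly retracts onto $\Fla$ and $\Cxs$ acts trivially on the zero section, $H^*_T(\tfl)\cong H^*_A(\Fla)\ox\C[h]$ and the image of the localization map is cut out by the GKM congruences of $\Fla$ alone, i.e.\ modulo the weights $z_a-z_b$ only; every factor of $c_\bla(\Thb)|_{x_J}$ involves $h$ and is therefore automatically invertible modulo each $z_a-z_b$. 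The ``delicate cotangent edges'' you worry about impose no conditions on the image, so that part of your argument can be deleted rather than patched.

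The genuine issue is that essentially all of the content of the theorem sits in your step (a) --- the divisibility of each restriction $\WW_I(\Si_J;\zz)$ by $c_\bla(\Si_J)$ in $\C[\zz]\ox\C[h]$ --- and there you give only a plan, not a proof. Triangularity plus the explicit diagonal value handles $J=I$, but for $J>I$ you are asserting exactly the fixed-point divisibility lemma of \cite{RTV} (the same cluster of lemmas from Section~3 of that paper that this text quotes elsewhere, e.g.\ in the proofs of Propositions~\ref{p111} and~\ref{pdel}), so as written your argument reduces the theorem to the one statement that carries its weight. The proposed route to it --- the shuffle formula of Lemma~\ref{lem shuf} and the factorization of Lemma~\ref{lem fa} to reduce to $\frak{gl}_2$, where the cancellation of the denominators $z_p-z_q$ against cross-factors can be checked by hand --- is plausible and is in the spirit of the inductions used elsewhere in the paper, but it would have to be carried out before this counts as a proof.
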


Define the {\it stable envelope map} by the rule
\vvn.3>
\beq
\label{stmap}
\St\>:\>\Cnn\<\<\otimes\C[\zz]\otimes\C[h]\,\to\,H_T^*(\XX_n)\,,\qquad
v_I\>\mapsto\>\St_{\>I}\,.\kern-1em
\vv.2>
\eeq

\begin{rem}
Stable envelope maps for Nakajima quiver varieties were introduced in
\cite{MO}. They were defined there geometrically in terms of the associated
torus action. The map $\;\St$ \,given by formula \Ref{stmap} is the stable
envelope map of \cite{MO} for the Nakajima quiver variety \,$\XX_n$\,,
described in terms of the Chern roots \,$\Thb,\zz,h$, see \cite{RTV}.
\end{rem}

\begin{rem}
After the substitution \,$h=1$\, the classes \,$\St_{\>I}\in H_T^*(\tfl)$
\,can be considered as elements of the equivariant cohomology
\,$H^*_{(\Cxs)^n}(\Fla)$ \,of the partial flag variety \,$\Fla$ (and not of
the cotangent bundle $\tfl$)\<\>. These new classes are the equivariant
Chern-Schwartz-MacPherson classes (CSM classes) of the corresponding Schubert
cells, see \cite{RV}.
\end{rem}

Let \,$\C(\zz;h)$ \,be the algebra of rational functions in \,$\zz,h$\,.
The map
\vvn.2>
\beq
\label{Stab}
\St\>:\>\Cnn\<\otimes\C(\zz;h)\,\to\,H_T^*(\XX_n)\,\ox\,\C(\zz;h)\,,\qquad
v_I\>\mapsto\>\St_{\>I}\,,\kern-1em
\vv.2>
\eeq
is an isomorphism of \,$\C(\zz;h)$-modules by \cite[Lemma 6.7]{RTV}.

\subsection{$H_T^*(\tfl)\<\>$-valued weight function}
Define the $H_T^*(\tfl)$-valued function \,$\Wh(\TT;\GG)$ as follows:
\beq
\label{Wh}
\Wh(\TT;\GG)\,=\,
\Sym_{\>t^{(1)}_1\!\lc\,t^{(1)}_{\la^{(1)}}}\,\ldots\;
\Sym_{\>t^{(N-1)}_1\!\lc\,t^{(N-1)}_{\la^{(N-1)}}}\Uh(\TT;\GG)\,,
\vv.4>
\eeq
\be
\Uh(\TT;\GG)\,=\,\prod_{j=1}^{N-1}\,\prod_{a=1}^{\la^{(j)}}\,\biggl(\;
\prod_{c=1}^{a-1}\;(t^{(j)}_a\<\<-t^{(j+1)}_c-h)
\prod_{d=a+1}^{\la^{(j+1)}}(t^{(j)}_a\<\<-t^{(j+1)}_d )\,
\prod_{b=a+1}^{\la^{(j)}}
\frac{t^{(j)}_a\<\<-t^{(j)}_b\<\<-h}{t^{(j)}_a\<\<-t^{(j)}_b}\,\biggr)\,.
\kern-2em
\vv.4>
\ee
where \,$(t^{(N)}_1\!\lc t^{(N)}_n)\,=\,(\ga_{1,1}\lc\ga_{1,\>\la_1},
\ga_{2,1}\lc\ga_{2,\>\la_2},\,\ldots\,,\ga_{N\<,\>1}\lc\ga_{N\<,\>\la_N})$\,,
\vvn.1>
cf.~formula \Ref{hWI-} for
\,$I=\Imi\<=\bigl(\{1\lc\la_1\}\lc\{n-\la_N\<+1\lc n\}\bigr)$\,.

\vsk.2>
\begin{example}
Let $N=2$, $\bla=(1,n-1)$.
Then \,$\Wh(\TT;\GG)=\prod_{a=1}^{n-1}\,(t^{(1)}_1\<\<-\ga_{2,a})$\,.\\[3pt]
Let $N=3$, $\bla=(1,1,1)$. Then
\begin{align*}
\Wh(\TT;\GG)\,&{}=\,(t^{(1)}_1\<\<-t^{(2)}_2)\>
(t^{(2)}_1\<\<-\ga_{2,1})\>(t^{(2)}_1\<\<-\ga_{3,1})\,
(t^{(2)}_2\<\<-\ga_{1,1}\<-h)\>(t^{(2)}_2\<\<-\ga_{3,1})\>
\frac{t^{(2)}_1\<\<-t^{(2)}_2\<\<-h}{t^{(2)}_1\<\<-t^{(2)}_2}\,+{}
\\[3pt]
&{}\>+\,
(t^{(1)}_1\<\<-t^{(2)}_1)\>
(t^{(2)}_2\<\<-\ga_{2,1})\>(t^{(2)}_2\<\<-\ga_{3,1})\,
(t^{(2)}_1\<\<-\ga_{1,1}\<-h)\>(t^{(2)}_1\<\<-\ga_{3,1})\>
\frac{t^{(2)}_2\<\<-t^{(2)}_1\<\<-h}{t^{(2)}_2\<\<-t^{(2)}_1}\;.
\\[-20pt]
\end{align*}
\end{example}

Define
\vvn-.7>
\beq
\label{QG}
Q(\GG)\,=\,\prod_{i=1}^{N-1}\prod_{j=i+1}^N\,\prod_{a=1}^{\la_i}\,
\prod_{b=1}^{\la_j}\,(\ga_{i,a}\<-\ga_{j,b}\<-h)\,\in H_T^*(\tfl)\,.\kern-1em
\vv.4>
\eeq

The image of the \,$\Cnnl$-valued weight function
\,$W_\bla(\TT;\zz)$\,, see~\Ref{vvwf}, is given by the next proposition.

\begin{prop}
\label{p111}
We have
\vvn.4>
\beq
\label{WSt}
\sum_{I\in\Il}\,W_I(\TT;\zz)\;\St_{\>\id,\>I}\,=\,Q(\GG)\,\Wh(\TT;\GG)\,.
\eeq
\end{prop}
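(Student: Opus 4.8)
The plan is to eliminate the cohomology classes $\St_{\id,I}$ in favor of the explicit polynomials $\WW_I$, reducing \Ref{WSt} to a bilinear identity for weight functions. Here $\St_{\id,I}=\St_{\>I}$ is the stable envelope determined by \Ref{StW}, so over the field $\C(\zz;h)$, where $c_\bla(\Thb)$ is invertible, one has $\St_{\id,I}=[\WW_I(\Thb;\zz)]/c_\bla(\Thb)$. Substituting this into the left-hand side of \Ref{WSt} shows that the proposition is equivalent to the identity
\[
\sum_{I\in\Il}\,W_I(\TT;\zz)\,\WW_I(\Thb;\zz)\,=\,c_\bla(\Thb)\,Q(\GG)\,\Wh(\TT;\GG)\,,\qquad(\ast)
\]
in $H^*_T(\tfl)\otimes\C[\TT]$, where $\Thb$ and $\GG$ are linked by the relations \Ref{rE}. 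Note that $\Wh(\TT;\GG)$ is itself a $\WW$-type weight function: comparing $\Uh$ with \Ref{UI} one checks $\Wh=(-h)^{-\la^{\{1\}}}\WW_{\Imi}(\TT;\GG)$, so $(\ast)$ is a statement purely about weight functions.

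To prove $(\ast)$ I would localize. Since the $T$-fixed points $x_J$, $J\in\Il$, are isolated, restriction to them is injective on $H^*_T(\tfl)\otimes\C(\zz;h)$ by the localization theorem, so it suffices to verify $(\ast)$ after restriction to each $x_J$. At $x_J$ the Chern roots specialize to $\GG_i\mapsto\zz_{J_i}$ and $\Tht_i\mapsto\zz_{J_1\cup\dots\cup J_i}$; hence $\WW_I(\Thb;\zz)|_{x_J}=\WW_I(\zz_J;\zz)$, $c_\bla(\Thb)|_{x_J}=c_\bla(\zz_J)$, and $Q(\GG)|_{x_J}=\prod_{i<j}\prod_{a\in J_i}\prod_{b\in J_j}(z_a-z_b-h)$, while $\Wh(\TT;\GG)|_{x_J}$ is again a $\WW$-type weight function in $\TT$ with the $z$-arguments reordered according to $J$. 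Thus for each $J$ the identity $(\ast)$ becomes the scalar bilinear identity
\[
\sum_{I\in\Il}\,W_I(\TT;\zz)\,\WW_I(\zz_J;\zz)\,=\,c_\bla(\zz_J)\,Q(\GG)|_{x_J}\,\Wh(\TT;\GG)|_{x_J}\,.\qquad(\ast\ast)
\]

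I would prove $(\ast\ast)$ by induction on $N$, peeling off the last step $F_{N-1}\subset F_N$ of the flag. The factorization of Lemma~\ref{lem fa} together with the shuffle decomposition of Lemma~\ref{lem shuf} allow both $W_I$ and $\WW_I$ to split along this decomposition, organizing the sum over $I\in\Il$ into a product of lower-rank sums to which the inductive hypothesis applies. The triangularity of the matrix $\bigl(\WW_I(\zz_J;\zz)\bigr)_{I,J}$, established in \cite[Lemma~3.1]{RTV} and already used in the proof of Theorem~\ref{thm ba}, restricts the sum over $I$ to the terms compatible with $J$ and isolates the nonvanishing contribution. The base case $N=2$ is a $\gl_2$ Cauchy-type identity in the single group of variables $\TT^{(1)}$, which can be checked termwise; for instance for $\bla=(1,1)$ the two summands collapse at each of the two fixed points to the single product on the right, in agreement with the Examples following \Ref{UI}.

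The main obstacle is $(\ast\ast)$ itself. Every ingredient is explicit, but controlling the full sum over $\Il$ and matching it to the single product on the right requires careful bookkeeping of the shuffle and factorization combinatorics, of the reordering of $z$-variables concealed in $\Wh(\TT;\GG)|_{x_J}$, and---most delicately---of the normalization factor $(-h)^{\la^{\{1\}}}$ that relates $W_I$ to $\WW_{\si_0,I}$ through \Ref{WW}. Formulating an inductive statement in $N$ that is stable under the specialization to $x_J$ and carries these signs correctly is where the real work lies.
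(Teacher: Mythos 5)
Your reduction of \Ref{WSt} to the fixed-point identity $(\ast\ast)$ is correct and coincides with the paper's first step: the paper passes to the equivalent identity \Ref{WWW}, which is exactly your $(\ast\ast)$ once one unwinds $\Wh(\TT;\GG)|_{x_J}=(-h)^{-\la^{\{1\}}}\,\WW_{\si^J,J}(\TT;\zz)$. The gap is in how you then propose to prove it. Triangularity of the matrix $\bigl(\WW_I(\Si_J;\zz)\bigr)_{\IJ}$ does not collapse the sum $\sum_{I}\WW_{\si_0,I}(\TT;\zz)\,\WW_I(\Si_J;\zz)$ to a single term --- it only kills the summands with $I$ on one side of $J$ in the relevant order, leaving in general many nonzero terms that must conspire to produce the single product on the right. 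Your induction on $N$ via Lemmas \ref{lem fa} and \ref{lem shuf} is a strategy, not an argument, and you concede yourself that the ``bookkeeping'' is the real work; as written there is no mechanism forcing the collapse.

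The mechanism you are missing is the one the paper uses. Set $Z(\TT;\TTT;\zz)=\sum_{I\in\Il}\WW_{\si_0,I}(\TT;\zz)\,\WW_I(\TTT;\zz)$. The three-term relations of Lemma \ref{lem W si W} imply that this pairing is invariant under the simultaneous twist
\begin{equation*}
Z(\TT;\TTT;\zz)\,=\,\sum_{I\in\Il}\WW_{\si,I}(\TT;\zz)\,\WW_{\si\si_0,I}(\TTT;\zz)
\end{equation*}
for every $\si\in S_n$. Choosing $\si=\si^J$ and specializing $\TTT=\Si_J$, one invokes not triangularity but the orthogonality relation of \cite[Lemma~3.2]{RTV}, namely $\WW_{\si^J\si_0,I}(\Si_J;\zz)=c_\bla(\zz_J)\,Q(\zz_J)\,\dl_{\IJ}$: after the twist the specialized matrix is diagonal, so the sum reduces at once to the single term $I=J$, giving \Ref{WWW} with no induction on $N$ and no shuffle combinatorics. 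To salvage your route you would have to prove a Cauchy-type identity for the untwisted pairing directly, which is substantially harder than this two-line argument.
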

\begin{proof}
Recall that \,$W_I(\TT;\zz)=
(-\>h)^{-\la^{\{1\}}}\>\WW_{\si_0\<\>,\>I}(\TT;\zz)$\,, see \Ref{WW}.
Recall the discrete cycle \,$\Si_J$ given by \Ref{cy}.
Let \,$\si^I\!\in S_n$ be a permutation such that \,$\si^I(\Imi)=I$\,.
Then formula \Ref{WSt} is equivalent to the following equality
\vvn.3>
\beq
\label{WWW}
\sum_{I\in\Il}\>\WW_{\si_0\<\>,\>I}(\TT;\zz)\;\WW_I(\Si_J;\zz)\,=\,
c_\bla(\Si_J)\,Q(\zz_J)\,\WW_{\si^J\!\<\<,\>J}(\TT;\zz)\,.\kern-1em
\vv.1>
\eeq
For the proof of formula \Ref{WWW}, consider the function
\vvn.3>
\be
Z(\TT;\TTT;\zz)\,=\,\sum_{I\in\Il}\>
\WW_{\si_0\<\>,\>I}(\TT;\zz)\;\WW_I(\TTT;\zz)\,.
\ee
Here \,$\TTT$ \,is an additional set of variables similar to \,$\TT$\,.
Then formula \Ref{WWW} reads
\vvn.4>
\beq
\label{WWZ}
Z(\TT;\Si_J;\zz)\,=\,
c_\bla(\Si_J)\,Q(\zz_J)\,\WW_{\si^J\!\<\<,\>J}(\TT;\zz)\,.
\vv.5>
\eeq
Three-term relations \Ref{WW3} imply that for any \,$\si\in S_n$\,, we have
\vvn.5>
\beq
\label{Hsi}
Z(\TT;\TTT;\zz)\,=\,\sum_{I\in\Il}\,
\WW_{\si,\>I}(\TT;\zz)\;\WW_{\si\si_0\<\>,\>I}(\TTT;\zz)\,.
\vv.2>
\eeq
By \cite[Lemma~3.2]{RTV}, we have
\,$\WW_{\si^J\!\si_0\<\>,\>I}(\Si_J,\zz)=c_\bla(z_J)\,Q(\zz_J)\,\dl_{\IJ}$\,.
Thus taking \,$\si=\si^J$, $\;\TTT=\Si_J$ \,in formula \Ref{Hsi}\,,
we get equality \Ref{WWZ}. Proposition \ref{p111} is proved.
\end{proof}

Define the cohomology classes
\vvn-.2>
\beq
\label{RG}
R(\GG)\,=\,\prod_{i=1}^{N-1}\prod_{j=i+1}^N\,\prod_{a=1}^{\la_i}\,
\prod_{b=1}^{\la_j}\,(\ga_{i,a}\<-\ga_{j,b})
\vv-.4>
\eeq
and
\vvn-.3>
\beq
\label{RGK}
R_I(\GG;\zz)\,=\,\prod_{i=1}^{N-1}\prod_{j=i+1}^N\,\prod_{a=1}^{\la_i}\>
\prod_{\,b\in I_j\!}\,(\ga_{i,a}\<-z_b)\,,\qquad I\<\in\Il\,.\kern-2em
\eeq
Notice that \,$R_I(\zz_J;\zz)=R(\zz_J)\,\dl_{\IJ}$\,.

\begin{prop}
\label{pdel}
For any \,$K\<\<\in\Il$\,, we have
\vvn.4>
\beq
\label{WDl}
\sum_{I\in\Il}\,W_I(\Si_K;\zz)\;\St_{\>\id,\>I}\,=\,
(-\>h)^{-\la^{\{1\}}}\<c_\bla(\Thb)\,R_K(\GG;\zz)\,Q(\GG)\,.\kern-1em
\vv.2>
\eeq
\end{prop}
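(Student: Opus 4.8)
The plan is to deduce Proposition \ref{pdel} from Proposition \ref{p111} by specializing the auxiliary variables $\TT$ to the discrete cycle $\Si_K$ of \Ref{cy}, and then to evaluate the resulting expression by restriction to the torus-fixed points of $\tfl$.

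First I would put $\TT=\Si_K$ in formula \Ref{WSt}. Its left-hand side is exactly the left-hand side of \Ref{WDl}, so
\be
\sum_{I\in\Il}W_I(\Si_K;\zz)\,\St_{\>\id,\>I}\,=\,Q(\GG)\,\Wh(\Si_K;\GG)\,.
\ee
Next I would observe that comparing the defining products \Ref{Wh} and \Ref{hWI-} for the minimal index $I=\Imi$ shows that $\Uh$ coincides with $\UU_{\Imi}$ once $\GG$ is substituted for the external variables $z_1\lc z_n$ in the order $\ga_{1,1}\lc\ga_{N,\la_N}$; hence $\Wh(\TT;\GG)=(-h)^{-\la^{\{1\}}}\WW_{\Imi}(\TT;\GG)$. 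Consequently Proposition \ref{pdel} will follow once I prove the $Q$-free identity
\be
\Wh(\Si_K;\GG)\,=\,(-h)^{-\la^{\{1\}}}c_\bla(\Thb)\,R_K(\GG;\zz)\,,
\ee
since multiplying both sides by $Q(\GG)$ returns \Ref{WDl}.

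To establish this identity I would use equivariant localization: the restriction of $H^*_T(\tfl)$ to its fixed points $\{x_J\}_{J\in\Il}$ is injective (the cohomology is free over $\C[\zz]\ox\C[h]$), so it suffices to verify the identity after restricting to each $x_J$, which sends the Chern roots $\GG$ to the $z$-values $\zz_J$ carried by the coordinate flag $x_J$. On the right-hand side it turns $R_K(\GG;\zz)$ into $R_K(\zz_J;\zz)=R(\zz_J)\,\delta_{KJ}$, by the relation recorded right after \Ref{RGK}, so the restriction is zero unless $J=K$ and equals $(-h)^{-\la^{\{1\}}}c_\bla(\zz_J)R(\zz_J)$ when $J=K$. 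On the left-hand side it gives $(-h)^{-\la^{\{1\}}}\WW_{\Imi}(\Si_K;\zz_J)$. Thus everything reduces to the single relation
\be
\WW_{\Imi}(\Si_K;\zz_J)\,=\,c_\bla(\zz_J)\,R(\zz_J)\,\delta_{KJ}\,.
\ee

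Finally, I would recognize this as the triangularity of weight functions on discrete cycles. Writing $\zz_J=(z_{\si^J(1)}\lc z_{\si^J(n)})$ for the minimal permutation $\si^J$ with $\si^J(\Imi)=J$, definition \Ref{WWsi} gives $\WW_{\Imi}(\TT;\zz_J)=\WW_{\si^J,J}(\TT;\zz)$, so the relation becomes $\WW_{\si^J,J}(\Si_K;\zz)=c_\bla(\zz_J)R(\zz_J)\,\delta_{KJ}$. This is the analogue, with $\si^J$ in place of $\si^J\si_0$ and hence with the factor $R$ in place of $Q$, of the restriction formula \cite[Lemma~3.2]{RTV} that already underlies the proof of Proposition \ref{p111}; its off-diagonal vanishing and diagonal value follow from the triangularity of the matrix $\bigl(\WW_{\si^J,J}(\Si_K;\zz)\bigr)_{J,K}$ together with a direct computation of the surviving diagonal term. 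I expect the main obstacle to be exactly this last step: identifying $\si^J$ and checking that the one term of the symmetrization which survives the cycle specialization reproduces precisely the Chern-class factors $c_\bla(\Thb)$ and $R_K(\GG;\zz)$ appearing on the right-hand side of \Ref{WDl}.
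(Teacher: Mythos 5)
Your proposal is correct and takes essentially the same route as the paper: both arguments specialize Proposition \ref{p111} (equivalently, the identity \Ref{WWW}) at $\TT=\Si_K$ and then evaluate by restriction to fixed points, where everything reduces to $\WW_{\si^J\<\<,\>J}(\Si_K;\zz)=c_\bla(\zz_J)\>R(\zz_J)\>\dl_{\JK}$. The only difference is that the paper simply cites this last evaluation as \cite[Lemma~3.2]{RTV} (the same lemma already invoked in the proof of Proposition \ref{p111}), whereas you flag it as a step still to be verified.
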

\begin{proof}
Formula \Ref{WDl} is equivalent to the equality
\vvn.4>
\beq
\label{WWDl}
\sum_{I\in\Il}\,\WW_{\si_0\<\>,\>I}(\Si_K;\zz)\;\WW_I(\Si_J;\zz)\,=\,
\bigl(c_\bla(\Si_J)\bigr)^2\,R(\zz_J)\,Q(\zz_J)\,\dl_{\JK}\,.\kern-.6em
\vv.1>
\eeq
By \cite[Lemma~3.2]{RTV}, we have
\,$\WW_{\si^J\!,\>J}(\Si_K,\zz;h)=c_\bla(z_J)\,R(\zz_J)\,\dl_{\JK}$\,.
Thus taking $\;\TTT=\Si_K$ \,in formula \Ref{WWW}\,,
we get equality \Ref{WWDl}.

\vsk.2>
Formula \Ref{WWDl} also follows from \cite[Lemma~3.4]{RTV}.
Proposition \ref{pdel} is proved.
\end{proof}

\subsection{Quantum multiplication by divisors on \,$H^*_T(\tfl)$\,}
The quantum multiplication by divisors on \,$H^*_T(\tfl)$ is described in
\cite{MO}. The fundamental equivariant cohomology classes of divisors on $\tfl$
are linear combinations of \,$D_i\<=\gamma_{i,1}\<\lsym+\ga_{i,\la_i}$\,,
\vv.06>
\,$i=1\lc N$. The quantum multiplication
\vv.07>
\,$D_i\>*_\qqt\<\>:H^*_T(\tfl)\to H^*_T(\tfl)$ \,by the divisor \,$D_i$
\,depends on parameters \,$\qqt=(\qti_1\lc\qti_N)\in(\Cxs)^N$ and is given
in \cite[Theorem 10.2.1]{MO}.

\vsk.2>
The quantum connection \,$\naqla_{\bla\>,\<\>\qqt,\<\>\kat}$
\>on \,$H^*_T(\tfl)$ is defined by the formula
\vvn.1>
\be
\naqla_{\bla\>,\<\>\qqt,\<\>\kat\<\>,\<\>i}\,=\,
\kat\,\qti_i\frac\der{\der\qti_i}\>-\>D_i\>*_\qqt\,,\qquad i=1\lc N\>,
\vv.3>
\ee
where \,${\kat\in\Cxs}$ is the parameter of the connection, see \cite{BMO}.
The system of equations for flat sections of the quantum connection is called
the system of the {\it equivariant quantum differential equations}.

The isomorphism $\;\St\>$ \,allows us to compare the operators
\,$\nabla_{\bla\>,\<\>\qq,\<\>\ka\<\>,\<\>i}:=
\nabla_{\qq,\<\>\ka\<\>,\<\>i}\big|_{\Cnnl}$ \,of the dynamical connection
on \,$\Cnnl$\,, see~\Ref{nady}, \,and the operators
\,$\naqla_{\bla\>,\<\>\qqt,\<\>\kat\<\>,\<\>i}$ \,of the quantum connection
on \,$H^*_T(\tfl)$\,.

\vsk.2>
Recall the dynamical Hamiltonians \,$X_i(\zz;h;\qq)$\,, see \Ref{Xi}.
Define the modified dynamical Hamiltonians
\begin{align}
\label{Xl-i}
& X^-_{\blai}(\zz;h;\qq)\,={}
\\[3pt]
\notag
&\!{}=\,X_i(\zz;h;\qq)\big|_{\Cnnl}-\>
h\>\sum_{j=1}^{i-1}\>\frac{q_i}{q_i\<-q_j}\,\min\<\>(\la_i,\la_j)\>-\>
h\sum_{j=i+1}^N\>\frac{q_j}{q_i\<-q_j}\,\min\<\>(\la_i,\la_j)\,.
\end{align}
The modified dynamical connection on $\Cnnl$ is
\vvn.2>
\beq
\label{necon}
\nabla^-_{\bla\>,\<\>\qq,\<\>\ka\<\>,\<\>i}\,=\,
\ka\>q_i\frac{\der}{\der q_i} - X^-_{\blai}(\zz;h;\qq)\,,
\qquad i=1\lc N\>.
\vv.2>
\eeq
see \cite[Section 3.4]{GRTV}. Recall that \,$\hgrtv=-\>h$\,.

\begin{thm}[{\cite[Corollary 7.6]{RTV}}]
\label{qmh}
The isomorphism $\;\St$ \,identifies the operators
\vvn.06>
\,$D_i\>*_{\qqt}$ of quantum multiplication by \,$D_i$ on $H^*_T(\tfl)$
\vvn.1>
\,with the action of the modified dynamical Hamiltonians
$\;X^-_{\blai}(\zz;h;\qqt^{-1})$ \>on \,$\Cnnl$, where
$\;\qqt^{-1}\<\<=(\qti_1^{-1}\<\lc\qti_N^{-1})$\,.
Consequently, the differential operators
$\;\naqla_{\bla\>,\<\>\qqt,\<\>\kat\<\>,\<\>i}$ \>are identified with
the differential operators $\nabla_{\bla\>,\<\>\qqt^{-1}\!,-\kat\<\>,\<\>i}$\,.
\qed
\end{thm}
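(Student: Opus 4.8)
The result is \cite[Corollary 7.6]{RTV}; the strategy I would follow is to check the operator identification directly on the two sides of the isomorphism $\St$, separating the comparison into the $\qqt$-independent (classical) part and the purely quantum part, and to pin down the scalar normalization in $X^-_{\blai}$ from the classical limit. Throughout I would work in the basis of stable envelopes $\{\St_I\}_{I\in\Il}$ on the cohomology side, which corresponds under $\St$ to $\{v_I\}$ on $\Cnnl$, and use equivariant localization at the torus-fixed points $x_J$, $J\in\Il$.

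First, classical cup product by $D_i=\sum_a\ga_{i,a}$ is diagonal in the fixed-point basis, with eigenvalue $\sum_{b\in J_i}z_b$ at $x_J$ (the restriction of $D_i$ to $x_J$). Since the restriction of a stable envelope $\St_I$ to $x_J$ is a weight-function value (see \cite{RTV}, cf.\ Proposition \ref{pdel}), the matrix of $D_i\cup$ in the stable basis is the conjugate of this diagonal action by the triangular matrix $\bigl(W_I(\Si_J;\zz)\bigr)_{I,J}$. I would then match it with the $\qq$-independent part of $X_i$ in \Ref{Xi}: the diagonal terms $\sum_a z_a e^{(a)}_{ii}$ agree because $e^{(a)}_{ii}$ selects $a\in I_i$, while the off-diagonal Gaudin-type terms $-h\sum_{a<b}\sum_k e^{(a)}_{ik}e^{(b)}_{ki}$ are precisely the corrections generated in passing from fixed points to stable envelopes, governed by the three-term relation of Lemma \ref{c3t} (the combinatorial form of the rational $\gl_N$ $R$-matrix).

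Second, for the quantum corrections I would invoke the Maulik--Okounkov divisor formula \cite[Theorem 10.2.1]{MO}, which for $\XX_n$ writes $D_i*_{\qqt}-D_i\cup$ as a sum over the positive roots $e_{ij}$ with rational coefficients in the Kähler parameters $\qti$; the essential input is the \cite{RTV} dictionary identifying the geometric $R$-matrix with the rational $\gl_N$ one. Resumming these root contributions and substituting $\qti\mapsto\qti^{-1}$, this should match the $\qq$-dependent part $-h\sum_{j\neq i}\frac{q_j}{q_i-q_j}(\et_{ij}\et_{ji}-\et_{ii})$ of $X_i$ evaluated at $\qq=\qqt^{-1}$. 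The scalar difference between $X_i$ and $X^-_{\blai}$ is then forced by agreement as $\qti\to 0$: the terms $-h\sum_{j<i}\frac{q_i}{q_i-q_j}\min(\la_i,\la_j)-h\sum_{j>i}\frac{q_j}{q_i-q_j}\min(\la_i,\la_j)$ are exactly the diagonal shifts normalizing the quantum product to reproduce ordinary cup product classically. The identification of connections then follows formally: inserting the operator equality into $\naqla_{\bla,\qqt,\kat,i}=\kat\,\qti_i\frac{\der}{\der\qti_i}-D_i*_{\qqt}$ and using that $q_i=\qti_i^{-1}$ turns $q_i\frac{\der}{\der q_i}$ into $-\qti_i\frac{\der}{\der\qti_i}$ yields $\nabla_{\bla,\qqt^{-1},-\kat,i}$, which explains both the inversion $\qqt\mapsto\qqt^{-1}$ and the sign $\kat\mapsto-\kat$.

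The main obstacle is the quantum part. The Maulik--Okounkov formula is phrased via the geometric $R$-matrix and a sum over curve classes, so the crux is to show that this data collapses to the rational expression $\frac{q_j}{q_i-q_j}(\et_{ij}\et_{ji}-\et_{ii})$; this needs both the explicit resummation available for divisors and the precise $R$-matrix identification of \cite{RTV}. Getting the normalization of the stable envelopes and the orientation of the attracting/repelling decomposition right is where the inversion and the sign reversal are most delicate, and I would verify them on a small case (e.g.\ $N=2$, using the weight functions in the Examples above) as a consistency check before attempting the general argument.
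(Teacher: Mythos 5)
This theorem is not proved in the paper: it is imported verbatim from \cite[Corollary 7.6]{RTV} (see also \cite[Theorem 7.5]{RTV} and \cite{GRTV}), so there is no internal argument to measure your proposal against. That said, your outline is essentially the strategy of the cited reference: transport both the cup product $D_i\cup{}$ and the Maulik--Okounkov purely quantum correction to $\Cnnl$ through the stable-envelope/weight-function dictionary, match the classical part with the $\qq$-independent terms of \Ref{Xi} via localization at the fixed points $x_J$ and the triangular transition matrix $\bigl(W_I(\Si_J;\zz)\bigr)_{\IJ\in\Il}$, and match the quantum part with the terms $-\>h\,\frac{q_j}{q_i-q_j}\,(\et_{\ij}\>\et_{\ji}-\et_{\ii})$ after the substitution $\qq=\qqt^{-1}$.

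Two caveats. First, what you have written is a plan rather than a proof: the step you yourself flag as the crux --- collapsing the sum over curve classes in \cite[Theorem 10.2.1]{MO}, phrased via the geometric $R$-matrix and the attracting/repelling data, to the explicit rational expression above --- is precisely the content of the cited corollary, and nothing in your sketch carries it out; without that resummation and the $R$-matrix identification of \cite{RTV} the matching of quantum parts remains an assertion. Second, your closing manipulation lands in the wrong place by a scalar: substituting $D_i*_{\qqt}\mapsto X^-_{\blai}(\zz;h;\qqt^{-1})$ and $q_i=\qti_i^{-1}$ into $\naqla_{\bla\>,\<\>\qqt,\<\>\kat\<\>,\<\>i}$ produces the \emph{modified} connection $\nabla^-_{\bla\>,\<\>\qqt^{-1}\!,\,-\kat\<\>,\<\>i}$ of \Ref{necon}, not $\nabla_{\bla\>,\<\>\qqt^{-1}\!,\,-\kat\<\>,\<\>i}$; the two differ exactly by the scalar functions separating $X_i$ from $X^-_{\blai}$ in \Ref{Xl-i}, which must be absorbed by a gauge factor of the type $\Om_\bla$, $\Omh_\bla$ (see \Ref{Om}, \Ref{Omh}). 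Any complete argument has to make that gauge transformation explicit rather than conflating the two connections.
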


See also \cite[Theorem 7.5]{RTV}.

\vsk.3>
Set
\vvn->
\beq
\label{Omh}
\Omh_\bla(\qqt\>;\kat)\,=\,\prod_{i=1}^{N-1}\prod_{j=i+1}^N
(1-\qti_i/\qti_j)^{h\min(0,\>\la_j-\la_i)/\<\kat}\>.
\vv.2>
\eeq

Set \,$\la_{\{2\}}=\sum_{1\le i<j\le N}\>\la_i\>\la_j$\,.
For any \,$I\in\Il$\,, define
\vvn.3>
\beq
\label{Psh}
\Psh_I(\zz;h;\qqt\>;\kat)\,=\,\kat^{\>-\la^{\{1\}}\<-\<\>2\<\>\la_{\{2\}}}\,
(-1)^{\la^{\{1\}}\<+\>\la_{\{2\}}}\>
\bigl(\>\Ga(-\>h/\kat)\bigr)^{-\la^{\{1\}}}\,
\Pst_I(\zz;h;\qqt\>;\kat)\,,\kern-1em
\vvn.4>
\eeq
\be
\Pst_I(\zz;h;\qqt\>;\kat)\,=\,\Omh_\bla(\qqt;\kat)\<\<
\sum_{\rr\<\in\Z_{\ge0}^{\la^{\{1\}}}\!\!\!\!}
\Res_{\>\TT\>=\>\Si_I+\<\>\rr\kat\>}
\bigl(\Phi_\bla(\TT;\zz;\qqt^{-1};-\kat)\bigr)\>
Q(\GG)\,\Wh(\Si_I+\<\>\rr\kat;\GG)\,.\kern-1em
\vv.3>
\ee
For \,$\qqt\>,\kat$\, given, \,$\Psh_I(\zz;h;\qqt\>;\kat)$ \,belongs to
the extension of \,$H^*_T(\tfl)$ \,by holomorphic functions in \,$\zz,h$
\,on the domain $\;L'\<\subset\C^n\!\times\C$ \,such that
\vvn.2>
\beq
\label{domaih}
h\not\in\kat\>\Z_{\ge0}\,,\qquad
z_a\<-z_b\<\not\in\kat\>\Z\,,\qquad z_a\<-z_b\<+h\not\in\kat\>\Z\,,
\vv.1>
\eeq
for all \,$a,b=1\lc n$\,, \,$a\ne b$\,. Furthermore,
\,$\Psh_I(\zz;h;\qqt\>;\kat)$
\vvn.06>
\,is a holomorphic function of \,$\qqt$ on the domain $\;L''\<\<\subset\C^N\<$
such that \,$|\>\qti_i/\qti_{i+1}|<1$\,, \,$i=1\lc N-1$\,, and a branch of
$\;\log\>\qti_i$ \,is fixed for each \,$i=1\lc N$.

\begin{example}
Let \,$N=2$\,, \,$n=2$\,, \,$\bla=(1,1)$\,.
Recall the Gauss hypergeometric series
\be
_2F_1(a,b;c;x)\,=\,
\sum_{m=0}^\infty\,\frac{(a)_m\,(b)_m}{(c)_m}\;\frac{x^m}{m\<\>!}\;,
\ee
where \,$(u)_m=u\>(u-1)\ldots(u-m+1)$\,. Set
\be
F(z_1,z_2;h;\kat;x)\,=\>_2F_1\Bigl(-\>\frac h\kat\,,
\frac{z_1\<-z_2\<-h}\kat\,;1+\frac{z_1\<-z_2}\kat\,;x\Bigr)
\vv-.2>
\ee
and
\vvn.2>
\be
F'(z_1,z_2;h;\kat;x)\,=\,\frac{\der F}{\der x}\>(z_1,z_2;h;\kat;x)\,.
\vv-.2>
\ee
Then
\vvn.3>
\begin{align*}
& \Psh_{(\{1\},\{2\})}(z_1,z_2;h;\qti_1,\qti_2;\kat)\,=\,{}
\\[6pt]
&{}\!=\,\kat^{-2}\,(e^{-\pii}\,\qti_1)^{\<\>z_1\</\kat}\,
(e^{-\pii}\,\qti_2)^{\<\>z_2\</\kat}
\;\Ga\Bigl(\>\frac{z_2\<-z_1}\kat\<\>\Bigr)\,
\Ga\Bigl(\>\frac{z_1\<-z_2\<-h}\kat\<\>\Bigr)
\\[3pt]
& \hp{{}={}}\!\!\times\>(\ga_{1,1}\<-\ga_{2,1}\<-h)\>
\bigl((\ga_{2,1}\<-z_1)\,F(z_1,z_2;h;\kat;\qti_1/\qti_2)-
\kat\>(\qti_1/\qti_2)\,F'(z_1,z_2;h;\kat;\qti_1/\qti_2)\bigr)
\\[-14pt]
\end{align*}
and \;\,$\Psh_{(\{2\},\{1\})}(z_1,z_2;h;\qti_1,\qti_2;\kat)=
\Psh_{(\{1\},\{2\})}(z_2,z_1;h;\qti_1,\qti_2;\kat)$\,.
\end{example}

\begin{thm}
\label{mcor}
The collection of functions \,$\bigl(\Psh_I(\zz;h;\qqt\>;\kat)\bigr)_{I\in\Il}$
\vvn-.16>
is a basis of solutions of both the quantum differential equations
\,$\naqla_{\bla\>,\<\>\qqt,\<\>\kat\<\>,\<\>i}\,f\,=\,0$\,, \;$i=1\lc N$,
and the associated \qKZ/ difference equations.
\end{thm}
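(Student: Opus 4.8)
The plan is to realize each $\Psh_I$ as the image, under the stable envelope isomorphism $\St$ of \Ref{Stab}, of the $\Cnnl$-valued $q$-hypergeometric solution $\Psi_I$ of Theorem \ref{thm ba}, after the change of parameters $\qq\mapsto\qqt^{-1}$, $\ka\mapsto-\kat$ dictated by Theorem \ref{qmh} together with a scalar gauge transformation in $\qqt$ relating the dynamical equations \Ref{DEQ} to their modified version $\nabla^-$ of \Ref{necon}. Granting the identification of $\St(\Psi_I)$ with $\Psh_I$ (up to this gauge and the scalar normalization), the final assertions follow at once: by Theorem \ref{qmh} the operators are intertwined, and, since $\St$ is an isomorphism of $\C(\zz;h)$-modules while $(\Psi_I)_{I\in\Il}$ is a basis of solutions by Theorem \ref{thm ba}, the family $(\Psh_I)_{I\in\Il}$ is a basis as well.

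For the differential equations, recall from Theorem \ref{qmh} that $\St$ carries $X^-_{\blai}(\zz;h;\qqt^{-1})$ to $D_i*_\qqt$, and hence $\naqla_{\bla,\qqt,\kat,i}$ to the modified dynamical operator at parameters $(\qqt^{-1},-\kat)$. The functions $\Psi_I$ solve the \emph{unmodified} equations \Ref{DEQ}, and by \Ref{Xl-i} one has $X_i=X^-_{\blai}+hM_i$ with the scalar $M_i(\qq)=\sum_{j<i}\frac{q_i}{q_i-q_j}\min(\la_i,\la_j)+\sum_{j>i}\frac{q_j}{q_i-q_j}\min(\la_i,\la_j)$. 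I would therefore multiply $\Psi_I$ by the scalar factor $g_\bla(\qqt)=\prod_{i<j}(1-\qti_i/\qti_j)^{h\min(\la_i,\la_j)/\kat}$, which satisfies $\kat\,\qti_i\,\partial_{\qti_i}\log g_\bla=-h\,M_i(\qqt^{-1})$ and hence removes the term $hM_i$, so that the gauged function solves $\naqla_{\bla,\qqt,\kat,i}f=0$ for every $i$. A short computation, using $\min(0,\la_j-\la_i)=\min(\la_i,\la_j)-\la_i$, shows that $g_\bla$ multiplied by $\Om_\bla$ evaluated at $(\qqt^{-1},-\kat)$ collapses to $\Omh_\bla(\qqt;\kat)$ of \Ref{Omh}.

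It remains to bring the gauged integral into the stated shape. Since each class $\St_{\id,I}$ is independent of the integration variables $\TT$, it may be pulled inside the Jackson integral $\Mc_I$ by the linearity \Ref{fi1}, whereupon Proposition \ref{p111} replaces the vector integrand $\sum_I W_I\,v_I$ by $Q(\GG)\,\Wh(\TT;\GG)$; thus $\St$ sends $\Phi_\bla W_\bla$ to $\Phi_\bla\,Q(\GG)\,\Wh$. Under $\ka\mapsto-\kat$ the lattice sum defining $\Mc_I$, which by \Ref{MSih} is supported on $\rr\in\Z^{\la^{\{1\}}}_{\le0}$, turns into the sum over $\rr\in\Z^{\la^{\{1\}}}_{\ge0}$ appearing in $\Pst_I$. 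Collecting the residual scalars --- the power $\kat^{-\la^{\{1\}}-2\la_{\{2\}}}$, the sign $(-1)^{\la^{\{1\}}+\la_{\{2\}}}$, and the factor $(\Ga(-h/\kat))^{-\la^{\{1\}}}$ that normalizes the residues of $\Phi_\bla(\TT;\zz;\qqt^{-1};-\kat)$ along $\Si_I$ --- then reproduces precisely the prefactor in \Ref{Psh}, identifying the gauged $\St(\Psi_I)$ with $\Psh_I$ and showing that the $\Psh_I$ solve the quantum differential equations.

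For the \qKZ/ equations I would use that the same $\Psi_I$ solve the $\Cnnl$-valued \qKZ/ system \Ref{K_i} by Corollary \ref{cor KZdy}, together with the fact, established in \cite{RTV} within the framework of \cite{MO}, that $\St$ intertwines the operators $K_i$ with the geometric difference operators on $H^*_T(\XX_n)$ that constitute the \qKZ/ equations associated with the quantum connection; hence $\Psh_I$ solves those equations too. The step I expect to be the main obstacle is not conceptual but the exact normalization invoked in the second and third paragraphs: verifying in detail how $\Om_\bla$, the modification $X_i\mapsto X^-_{\blai}$, and the substitution $(\qq,\ka)\mapsto(\qqt^{-1},-\kat)$ conspire to yield $\Omh_\bla$ together with the precise $\kat$-powers, sign, and $\Ga$-factor of \Ref{Psh}.
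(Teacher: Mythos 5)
Your proposal is correct and follows essentially the same route as the paper: the paper's proof is a one-line citation of Theorems \ref{thm cy}, \ref{thm ba}, \ref{qmh}, Proposition \ref{p111}, and Corollary \ref{cor KZdy}, which are exactly the ingredients you assemble. Your explicit bookkeeping — the gauge factor $g_\bla$ turning $\Om_\bla(\qqt^{-1})$ into $\Omh_\bla(\qqt;\kat)$ via $\min(0,\la_j-\la_i)=\min(\la_i,\la_j)-\la_i$, and the substitution $(\qq,\ka)\mapsto(\qqt^{-1},-\kat)$ — correctly fills in the normalization details the paper leaves implicit in the definition \Ref{Psh}.
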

\begin{proof}
The statement follows from Theorems \ref{thm cy}, \ref{thm ba}, \ref{qmh},
and Proposition \ref{p111}, see Corollary \ref{cor KZdy}.
\end{proof}

\begin{rem}
The integral representations for solutions of the equivariant quantum
differential equations is a manifestation of a version of mirror symmetry.
The basis of solutions given by Theorem \ref{mcor} is an analog of Givental's
\,$J$-function.
\end{rem}

For \,$\qti_i/\qti_{i+1}\to0$ \,for all \,$i=1\lc N-1$\,, the leading term of
the asymptotics of $\,\,\Psh_I(\zz;h;\qqt\>;\kat)$ \>is given by taking
the residue at \;$\TT\>=\>\Si_I$\,.

\begin{thm}
\label{lead}
Assume that \,$\qti_i/\qti_{i+1}\to0$ \,for all \,$i=1\lc N-1$\,. Then
\vvn.3>
\begin{align}
\label{FHI}
\Psh_I(\zz;h;\qqt\>;\kat)\,&{}=\,
\prod_{i=1}^N\>\bigl(\>e^{\<\>\pii\,(\la_i-\<\>n)}\>\qti_i\bigr)
^{\>\sum_{a\in I_i}\<\<z_a/\kat}
\\[5pt]
\notag
&{}\times\,\prod_{i=1}^{N-1}\prod_{j=i+1}^N\,\prod_{a\in I_i}\,
\prod_{b\in I_j}\,\Ga\Bigl(1+\frac{z_b\<-z_a}\kat\Bigr)\,
\Ga\Bigl(1+\frac{z_a\<-z_b\<-h}\kat\Bigr)
\\[2pt]
\notag
&{}\times\,\biggl(\Dl_I+\!
\sum_{\satop{\mb\in\Z_{\ge 0}^{N\<-1}\!\!\!\!}{\mb\ne 0}}
\>\Psh_{I\<,\<\>\mb}(\zz;h;\kat)
\,\prod_{i=1}^{N\<-1}\>\Bigl(\>\frac{\qti_i}{\qti_{i+1}}\>\Bigr)^{\!m_i}
\biggr),
\\[-20pt]
\notag
\end{align}
where $\;\Dl_I(\GG,\zz)=R_I(\GG;\zz)/R(\zz_I)$ \,is the cohomology class
such that $\;\Dl_I(\zz_J;\zz)=\dl_{\IJ}$\,, and the classes
\;$\Psh_{I\<,\<\>\mb}(\zz;h;\kat)$ are rational functions in \,$\zz,h,\kat$\,,
\,regular on the domain $\;h\not\in\kat\>\Z_{\ge0}$\,,
$\;z_a\<-z_b\<\not\in\kat\>\Z$\,, $\;z_a\<-z_b\<+h\not\in\kat\>\Z$\,,
\;for all \,$a,b=1\lc n$\,, \,$a\ne b$\,.
\end{thm}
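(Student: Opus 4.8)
The plan is to read off the leading term of the Jackson integral in \Ref{Psh} as $\qti_i/\qti_{i+1}\to0$ and to check the resulting cohomology-class identity \Ref{FHI} by equivariant localization at the torus-fixed points. First I would substitute $\qq=\qqt^{-1}$ and $\ka=-\kat$ into the master function \Ref{PHI} and split off its exponential prefactor, writing $\Phi_\bla=(\text{prefactor})\cdot\Phi^\circ_\bla$, where $\Phi^\circ_\bla$ is the product of Gamma and rational factors. At a residue point $\TT=\Si_I+\rr\kat$ one has $\sum_{j}t^{(i)}_j=\sum_{a\in I_1\cup\dots\cup I_i}z_a+\kat\sum_j r^{(i)}_j$, and a telescoping computation turns the prefactor into
\[
\prod_{i=1}^N\bigl(e^{\pii(\la_i-n)}\qti_i\bigr)^{\sum_{a\in I_i}z_a/\kat}\ \prod_{i=1}^{N-1}\Bigl(\frac{\qti_i}{\qti_{i+1}}\Bigr)^{m_i},\qquad m_i=\sum_j r^{(i)}_j\ge0.
\]
Grouping the sum over $\rr\in\Z_{\ge0}^{\la^{\{1\}}}$ by $\mb=(m_1\lc m_{N-1})$ produces the series displayed in \Ref{FHI}; since all $m_i\ge0$, its minimal power occurs only for $\rr=0$, which is therefore the leading term. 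Because a shift of $\TT$ by $\rr\kat$ multiplies each Gamma factor of $\Phi^\circ_\bla$ by a Pochhammer ratio $\Ga(x/\kat+r)/\Ga(x/\kat)$, the coefficients $\Psh_{I,\mb}$ are rational in $\zz,h,\kat$ after the division by $\Ga(-h/\kat)^{\la^{\{1\}}}$ in \Ref{Psh}, with poles only on $h\in\kat\Z_{\ge0}$, $z_a-z_b\in\kat\Z$, $z_a-z_b+h\in\kat\Z$, as asserted (cf. Lemmas~\ref{Mji} and~\ref{Mji+}).

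It remains to identify the $\rr=0$ term with the first two lines of \Ref{FHI} times $\Dl_I$. After tensoring with $\C(\zz;h)$ the classes localize faithfully at the isolated fixed points $x_J$, $J\in\Il$, and $\Dl_I(\zz_J;\zz)=\dl_{IJ}$, so it suffices to verify \Ref{FHI} after the substitution $\GG=\zz_J$ for every $J$. Since $Q(\GG)\Wh(\TT;\GG)$ is polynomial in $\TT$, the residue acts only on $\Phi^\circ_\bla$, and the $\rr=0$ contribution localizes to $\bigl(\Res_{\TT=\Si_I}\Phi^\circ_\bla\bigr)\,\bigl[Q(\GG)\Wh(\Si_I;\GG)\bigr]\big|_{\GG=\zz_J}$. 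By Propositions~\ref{p111} and~\ref{pdel},
\[
Q(\GG)\Wh(\Si_I;\GG)=(-h)^{-\la^{\{1\}}}c_\bla(\Thb)\,R_I(\GG;\zz)\,Q(\GG),
\]
and because $R_I(\zz_J;\zz)=R(\zz_J)\,\dl_{IJ}$ this vanishes for $J\ne I$ and equals the scalar $(-h)^{-\la^{\{1\}}}c_\bla(\zz_I)\,R(\zz_I)\,Q(\zz_I)$ for $J=I$. The off-diagonal vanishing matches $\Dl_I(\zz_J;\zz)=\dl_{IJ}=0$, so only the diagonal $J=I$ requires a computation.

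The diagonal case, which is the main obstacle, is the purely scalar identity obtained by inserting the above into \Ref{Psh}:
\[
\kat^{-\la^{\{1\}}-2\la_{\{2\}}}(-1)^{\la^{\{1\}}+\la_{\{2\}}}\,\Ga(-h/\kat)^{-\la^{\{1\}}}\,\Omh_\bla\,\bigl(\Res_{\TT=\Si_I}\Phi^\circ_\bla\bigr)(-h)^{-\la^{\{1\}}}c_\bla(\zz_I)R(\zz_I)Q(\zz_I)=\prod_{i<j}\prod_{a\in I_i}\prod_{b\in I_j}\Ga\Bigl(1+\tfrac{z_b-z_a}\kat\Bigr)\Ga\Bigl(1+\tfrac{z_a-z_b-h}\kat\Bigr).
\]
To evaluate $\Res_{\TT=\Si_I}\Phi^\circ_\bla$ I would reduce the weight data to its $\gl_2$ constituents as in Lemmas~\ref{lem fa} and~\ref{lem shuf} and compute the iterated residue over the nested poles of the factors $\Ga\bigl((t^{(i)}_a-t^{(i+1)}_c)/(-\kat)\bigr)$: each of the $\la^{\{1\}}$ integration variables yields a residue constant $-\kat$ together with a diagonal factor $\Ga(-h/\kat)$, the cross-level factors of \Ref{PHI} evaluated at $\Si_I$ supply the Gamma functions $\Ga((z_b-z_a)/\kat)\Ga((z_a-z_b-h)/\kat)$ indexed by the $\la_{\{2\}}$ pairs $a\in I_i$, $b\in I_j$ with $i<j$, and the within-group rational and Gamma factors supply products of $z$-differences. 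Converting each $\Ga((z_b-z_a)/\kat)$ to $\Ga(1+(z_b-z_a)/\kat)$ by $\Ga(x+1)=x\Ga(x)$ produces the $\kat$-powers and $z$-difference denominators that, together with $\Omh_\bla$ from \Ref{Omh}, cancel the products $c_\bla(\zz_I)$, $R(\zz_I)$, $Q(\zz_I)$ and the factor $(-h)^{-\la^{\{1\}}}$; the powers of $-\kat$ and $\Ga(-h/\kat)$ are absorbed by the normalization of \Ref{Psh}, while the sign $(-1)^{\la^{\{1\}}+\la_{\{2\}}}$ and the power $\kat^{-\la^{\{1\}}-2\la_{\{2\}}}$ are exactly accounted for by the counts $\la^{\{1\}}$ of integration variables and $\la_{\{2\}}$ of ordered pairs. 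The delicate point is this sign- and $z$-difference bookkeeping; once the residue has been factored it is routine, and it may be confirmed by induction on $n$ using the same shuffle step as in the proof of Theorem~\ref{thm1}.
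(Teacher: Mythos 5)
Your proposal is correct and follows essentially the same route as the paper: the paper's proof is precisely "formula \Ref{Psh} plus Propositions~\ref{p111} and~\ref{pdel}," i.e.\ the $\rr=0$ residue gives the leading term, the combination of the two propositions identifies $Q(\GG)\,\Wh(\Si_I;\GG)$ with $(-h)^{-\la^{\{1\}}}c_\bla(\Thb)\,R_I(\GG;\zz)\,Q(\GG)$, hence with a scalar multiple of $\Dl_I$ after restriction to fixed points, and the remaining scalar Gamma-function bookkeeping is left implicit there just as you sketch it. Your write-up simply makes explicit the telescoping of the $\qq$-prefactor, the grading of the Jackson sum by $\mb$, and the diagonal residue evaluation (where $\Omh_\bla$ should be replaced by its limit $1$ in the leading coefficient), all of which is consistent with the paper's intended argument.
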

\begin{proof}
The statement follows from formula \Ref{Psh} and Propositions~\ref{p111},
\ref{pdel}.
\end{proof}

\begin{example} Let \,$N=2$\,, \,$n=2$\,, \,$\bla=(1,1)$\,.
As \,$\qti_1/\qti_2\to 0$\,, the leading term of the solution
\,$\Psh_{(\{1\},\{2\})}(z_1,z_2;h;\qti_1,\qti_2;\kat)$ \,is the cohomology
class
\vvn.2>
\be
(e^{-\pii}\,\qti_1)^{\<\>z_1\</\kat}\,
(e^{-\pii}\,\qti_2)^{\<\>z_2\</\kat}
\;\Ga\Bigl(1+\frac{z_2\<-z_1}\kat\<\>\Bigr)\,
\Ga\Bigl(1+\frac{z_1\<-z_2\<-h}\kat\<\>\Bigr)\,
\Dl_{(\{1\},\{2\})}\!
\vv.1>
\ee
and the leading term of the solution
\,$\Psh_{(\{2\},\{1\})}(z_1,z_2;h;\qti_1,\qti_2;\kat)$ \,is the cohomology
class
\vvn.3>
\be
\hp.(e^{-\pii}\,\qti_1)^{\<\>z_2\</\kat}\,
(e^{-\pii}\,\qti_2)^{\<\>z_1\</\kat}
\;\Ga\Bigl(1+\frac{z_1\<-z_2}\kat\<\>\Bigr)\,
\Ga\Bigl(1+\frac{z_2\<-z_1\<-h}\kat\<\>\Bigr)\,
\Dl_{(\{2\},\{1\})}\,.\!\!
\vv.1>
\ee
\end{example}

\section{Quantum Pieri rules}
\label{QuPr}

\subsection{Quantum equivariant cohomology algebra $\Hqtl$}
Let
\vvn.06>
\,$\qqt=(\qti_1\lc\qti_N)\in(\Cxs)^N$ have distinct coordinates.
\vv.06>
The {\it quantum equivariant cohomology algebra} \,$\Hqtl$ \,is the
\vv.06>
algebra generated by the operators \,$D_i\>*_\qqt:H^*_T(\tfl)\to H^*_T(\tfl)$
\,of quantum multiplication by the divisors \,$D_i$\,, \,$i=1\lc N$, see
details in \cite{MO, GRTV}. The algebra can be defined by generators and
relations as follows.

\vsk.2>
Introduce the variables $\gaq_{i,1}\lc\gaq_{i,\<\>\la_i}$ \,for
\,$i=1\lc N$. Set
\beq
\label{W}
\Wqt(u)\,=\,\det\>\biggl(\qti_i^{\>j-1}\;\prod_{k=1}^{\la_i}\,
\bigl(u-\gaq_{\ik}\<\<-h\>(i-j)\bigr)\<\biggr)_{i,j=1}^N\,.
\vv.1>
\eeq

\begin{thm}
\label{thm qca}
The quantum equivariant cohomology algebra \,$\Hqtl$ is isomorphic to the
algebra
\vvn-.7>
\beq
\label{HK}
\C[\GGq]^{\>\Sla}\<\<\otimes\C[\zz]\otimes\C[h]\>\Big/
\Bigl\bra\Wqt(u)\>=\<\!\prod_{1\le i<j\le N}\!(\qti_j\<-\qti_i)\,
\prod_{a=1}^n\,(u-z_a)\Bigr\ket
\eeq
where \;$\GGq=(\gaq_{1,\<\>1}\lc\gaq_{1,\<\la_1}\lc
\gaq_{N\<,\<\>1}\lc\gaq_{N\<,\<\la_N})$\,, and the correspondence is
\vvn.2>
\be
D_i\>*_\qqt\,\mapsto\,\Bigl[\,\sum_{k=1}^{\la_i}\,\gaq_{\ik}-\>
h\>\sum_{j=1}^{i-1}\>\frac{\qti_j}{\qti_j\<-\qti_i}\,\min\<\>(0,\la_j\<-\la_i)
\>-\>h\sum_{j=i+1}^N\>\frac{\qti_i}{\qti_j\<-\qti_i}\,\min\<\>(0,\la_j\<-\la_i)
\>\Bigr]\,.
\vv-.8>
\ee
\vv-.2>
\qed
\end{thm}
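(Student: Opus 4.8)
The plan is to read off the presentation from Theorem~\ref{qmh}, which already computes the quantum multiplication by divisors, and then to identify the commutative algebra generated by those operators with the quotient on the right-hand side of~\Ref{HK}. First I would apply Theorem~\ref{qmh}: the isomorphism \,$\St$ \,of~\Ref{Stab} intertwines the operators \,$D_i\>*_\qqt$ \,on \,$H^*_T(\tfl)\ox\C(\zz;h)$ \,with the modified dynamical Hamiltonians \,$X^-_{\blai}(\zz;h;\qqt^{-1})$ \,acting on \,$\Cnnl\ox\C(\zz;h)$. Because the operators \,$\nabla_{\bla,\qq,\ka,i}$ \,pairwise commute, the \,$D_i\>*_\qqt$ \,commute as well, so \,$\Hqtl$ \,is commutative, and it is enough to present the algebra generated by the \,$X^-_{\blai}(\zz;h;\qqt^{-1})$ \,over \,$\C(\zz;h)$, afterwards descending to \,$\C[\zz]\ox\C[h]$ \,since both sides are defined integrally.

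Two compatibility checks then pin down the candidate map. Setting \,$h=0$, the entries of \,$\Wqt(u)$ \,become \,$\qti_i^{\,j-1}\prod_k(u-\gaq_{ik})$, so the determinant factors as a Vandermonde \,$\prod_{i<j}(\qti_j-\qti_i)$ \,times \,$\prod_{i,k}(u-\gaq_{ik})$; relation~\Ref{HK} thus degenerates precisely to the classical presentation~\Ref{Hrel}, fixing the \,$\C[\zz]\ox\C[h]$-module structure. Next, writing \,$q_i=\qti_i^{-1}$ \,in~\Ref{Xl-i}, and using \,$\min(\la_i,\la_j)=\la_i+\min(0,\la_j-\la_i)$ \,together with \,$q_i/(q_i-q_j)=\qti_j/(\qti_j-\qti_i)$, I would match the scalar corrections of \,$X^-_{\blai}(\zz;h;\qqt^{-1})$ \,with those in the stated image of \,$D_i\>*_\qqt$, so that the image is exactly \,$\sum_k\gaq_{ik}$ \,plus those corrections.

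The substance of the proof is to show that the generators satisfy~\Ref{HK} and generate no further relations. I would interpret the coefficients of \,$\Wqt(u)$ \,as the symmetric functions of the quantum Chern roots and argue that assembling the actions of the family \,$X^-_{\blai}$ \,into the determinant yields the operator identity \,$\Wqt(u)=\prod_{i<j}(\qti_j-\qti_i)\prod_a(u-z_a)$. Equivalently, the joint eigenvalues of the \,$X^-_{\blai}(\zz;h;\qqt^{-1})$ \,are the Bethe values of the \,$\gaq_{ik}$, and~\Ref{HK} is precisely the characteristic identity they satisfy; this is the transfer-matrix (Bethe-algebra) identity for the associated trigonometric \,$\gln$ \,model. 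Since the joint spectrum is simple and indexed by \,$\Il$ — equivalently, the quotient~\Ref{HK} has dimension \,$|\Il|=\dim_{\C(\zz;h)}\bigl(\Cnnl\ox\C(\zz;h)\bigr)$ — the relation~\Ref{HK} is complete, the map is an isomorphism, and the same relations express the higher symmetric Chern classes through the \,$D_i\>*_\qqt$, matching the description of \,$\Hqtl$ \,as generated by divisors.

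The main obstacle is this operator identity: I must check that the off-diagonal shifts \,$h(i-j)$ \,in \,$\Wqt(u)$ \,reproduce exactly the \,$q$-dependent quantum corrections furnished by Theorem~\ref{qmh}, and not merely the classical \,$h=0$ \,term and the first-order terms matched above. I expect this to rest on the Bethe-ansatz (QQ-system) description of the joint eigenvalues, combined with the dimension count to exclude spurious relations.
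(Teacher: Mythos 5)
Your outline reconstructs the right strategy, but it stops exactly where the theorem actually lives. The paper itself gives no self-contained argument here: Theorem \ref{thm qca} is proved by citation to [GRTV, Theorems 6.5, 7.10, and Lemma 6.10] and to [MTV2], together with the parameter dictionary $h=-\hgrtv$, $\qti_i=q_i^{-1}$. Your preliminary reductions are all sound and consistent with that route: Theorem \ref{qmh} transports $D_i\>*_\qqt$ to $X^-_{\blai}(\zz;h;\qqt^{-1})$; commutativity follows from flatness of the dynamical connection (the $\ka$-independent and $\ka$-linear parts of $[\nabla_i,\nabla_j]=0$ vanish separately); the $h=0$ specialization of \Ref{Wqt} does factor as the Vandermonde in $\qti$ times $\prod_{i,k}(u-\gaq_{\ik})$, recovering \Ref{Hrel}; and the identity $q_i/(q_i-q_j)=\qti_j/(\qti_j-\qti_i)$ together with $\min(\la_i,\la_j)=\la_i+\min(0,\la_j-\la_i)$ matches the scalar corrections, in agreement with \Ref{1sr sy}.

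The gap is the step you yourself flag as "the main obstacle": proving that the operators $X^-_{\blai}(\zz;h;\qqt^{-1})$ satisfy the determinantal relation $\Wqt(u)=\prod_{i<j}(\qti_j-\qti_i)\prod_a(u-z_a)$ with the shifts $h\>(i-j)$ exactly as written, that these relations are \emph{complete} (your dimension count $\dim=|\Il|$ needs the quotient \Ref{HK} to be shown to have that dimension as a $\C(\zz;h)$-vector space, which is itself nontrivial), and that the images of the $N$ divisor classes alone generate the whole quotient (this is the content of [GRTV, Lemma 6.10]; your remark that "the same relations express the higher symmetric Chern classes through the $D_i\>*_\qqt$" asserts rather than proves it). All three points are precisely what the cited Bethe-algebra results of [GRTV] and [MTV2] establish — the identification of the algebra generated by the dynamical Hamiltonians on $\Cnnl$ with the Bethe algebra of the Yangian and its Wronskian-type presentation. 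Deferring to "the Bethe-ansatz (QQ-system) description of the joint eigenvalues" is therefore not a proof but a restatement of the theorem being proved; note also that an eigenvalue argument alone requires simplicity of the joint spectrum for generic parameters, which again rests on the completeness results ([MTV2, Theorem 7.3]) that the paper invokes elsewhere (Section \ref{proofs}). So the proposal is a correct road map to the literature, but not a proof.
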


This theorem follows from \cite[Theorems~6.5, 7.10, and Lemma~6.10\>]{GRTV},
see also \cite{MTV2}. Notice that the parameters in this paper and in
\cite{GRTV}
\vv.06>
are related as follows: \,$h=-\>\hgrtv$, \,$\qti_i\<=q_i^{-1}\<$, \,$i=1\lc N$.

\begin{example}
Let \,$N=2$\,, \,$n=2$\,, \,$\bla=(1,1)$\,. Then
\,$D_i\>*_\qqt\<\>\mapsto\gaq_{i,\<\>1}$\,, \,$i=1,2$\,, and the relations are
\vvn-.3>
\beq
\label{erel}
\gaq_{1,\<\>1}\<+\gaq_{2,\<\>1}\>=\,z_1\<+z_2\,,\qquad
\gaq_{1,\<\>1}\>\gaq_{2,\<\>1}-\>
h\,\frac{\qti_1}{\qti_2\<-\qti_1}\,(\gaq_{1,\<\>1}-\gaq_{2,\<\>1}-h)\,=\,
z_1\<\>z_2\,.
\vv.1>
\eeq
\end{example}

It is easy to see that the algebra \,$\Hqtl$ \,does not change if all
\vvn.1>
\,$\qti_1\lc\qti_N$ are multiplied by the same number. In the limit
\vv.06>
$\qti_i/\qti_{i+1}\to 0$, \,$i=1\lc N-1$, the relations in \,$\Hqtl$
\,turn into the relations in \,$H^*_T(\tfl)$\,, see formula \Ref{Hrel}.

\subsection{Quantum equivariant Pieri rules}

Recall the weight functions \,$W_I(\TT;\zz)$\,,
\vvn.1>
see \Ref{hWI--}.
Introduce the variables $\Thq_i=\{\thq_{i,\<\>1}\lc\thq_{i,\<\la^{(i)}}\}$,
$\Thq=(\Thq_1\lc\Thq_N)$\,. Let \,$W_I(\Thq;\zz)$ \,be the polynomial
\,$W_I(\TT;\zz)$ \,with the variables $\;t^{(i)}_a\<$ replaced by
$\;\thq_{i,\<\>a}$ \>for all \,$i=1\lc N-1$\,, \,$a=1\lc\la^{(i)}$.
For any \,$m=1\lc N-1$\,, the relation
\vvn-.1>
\beq
\label{Well}
\det\>\biggl(\qti_i^{j-1}\,\prod_{k=1}^{\la_i}\,
\bigl(u-\gaq_{\ik}-h\>(i-j)\bigr)\<\biggr)_{i,j=1}^m\,=
\prod_{1\leq i<j\leq m}(\qti_j\<-\qti_i)\;
\prod_{a=1}^{\la^{(m)}}\,(u-\thq_{m,\<\>a})
\vv.1>
\eeq
allows us to express the elementary symmetric functions in the variables
\,$\Thq_m$ \>in terms of the elementary symmetric functions in the variables
\,$\GGq_i=(\gaq_{i,\<\>1}\lc\gaq_{i,\<\la_i})$ \,with \,$i=1\lc m$\,.
For example,
\vvn-.1>
\beq
\label{1sr sy}
\sum_{a=1}^{\la^{(m)}}\thq_{\ell,a}\,=\,
\sum_{i=1}^m\,\sum_{k=1}^{\la_i}\gaq_{\ik}-\>h\!
\sum_{1\leq i<j\leq m}\!\!(\la_i\<-\la_j)\,\frac{\qti_i}{\qti_i\<-\qti_j}\;,
\qquad m=1\lc N.\kern-2em
\eeq
Relations \Ref{Well} define a homomorphism
\vvn.3>
\be
\C[\Thq]^{\>S_{\la^{(1)}}\<\lsym\times\>S_{\la^{(N)}}}\<\<\otimes
\C[\zz]\otimes\C[h]\,\to\,\Hqtl\,.
\vv.2>
\ee
Let \,$\{W_I\}\in\Hqtl$ \,be the cohomology class represented by the image of
\,$W_I(\Thq;\zz)$\,.

\begin{thm}
\label{thm Pi}
For any \,$i=1\lc N$ and \,$I\in\Il$\,, the following relation in \,$\Hqtl$
holds:
\vvn-.3>
\begin{align}
\label{PR}
\biggl(\>\sum_{k=1}^{\la_i}\,\gaq_{\ik}
-\sum_{a\in I_i}\, z_a\biggr)\>\{W_I\}\,=\,h\>\la_i\>\biggl(\>
\sum_{j=1}^{i-1}\,\frac{\qti_j}{\qti_i\<-\qti_j}\>+
\sum_{j=i+1}^N\,\frac{\qti_i}{\qti_i\<-\qti_j}\>\biggr)\>\{W_I\}\>+{}&
\\[4pt]
\notag
{}-\,h\,\sum_{\satop{j=1}{j\ne i}}^N\,\sum_{m_1=1}^{\la_i}\>\biggl(
\sum_{\satop{m_2=1}{\ell_{i,{m_1}}\!<\>\ell_{j,{m_2}}\!\!\!\!}}^{\la_j}
\{W_{I_{i,j;m_1,m_2}}\}-\,\frac{\qti_i}{\qti_i\<-\qti_j}\,
\sum_{m_2=1}^{\la_j}\{W_{I_{\ij;\>m_1\<,m_2}}\}\biggr) &,
\kern-1em
\\[-16pt]
\notag
\end{align}
where \;$\ell_{i,{m_1}},\<\>\ell_{j,{m_2}},\<\>I_{\ij;\>m_1\<,m_2}$ are defined
in Section \ref{seckey}.
\end{thm}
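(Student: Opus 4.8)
The plan is to obtain the Pieri rule \Ref{PR} as the image in \,$\Hqtl$ \,of the polynomial identity of Corollary \ref{corth}, under the homomorphism \Ref{Well} sending the weight function \,$W_I(\TT;\zz)$ \,to the class \,$\{W_I\}$ \,(that is, $t^{(i)}_a\mapsto\thq_{i,a}$), together with the parameter identification \,$\qti_i=q_i^{-1}$ \,from the remark following Theorem \ref{thm qca}. First I would rewrite Corollary \ref{corth} with every weight function replaced by its class and with \,$q_i$ \,replaced by \,$\qti_i^{-1}$, using that under this substitution \,$q_j/(q_i-q_j)=-\qti_i/(\qti_i-\qti_j)$ \,and \,$q_i/(q_i-q_j)=-\qti_j/(\qti_i-\qti_j)$.

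Two ingredients then turn this into \Ref{PR}. The first is the identification of the Chern-root divisor on the left of Corollary \ref{corth} with the quantum divisor \,$\sum_k\gaq_{\ik}$. Applying relation \Ref{1sr sy} at levels \,$i$ \,and \,$i-1$ \,gives, in \,$\Hqtl$,
\be
\sum_{j=1}^{\la^{(i)}}\thq_{i,j}-\sum_{j=1}^{\la^{(i-1)}}\thq_{i-1,j}
\,=\,\sum_{k=1}^{\la_i}\gaq_{\ik}+h\sum_{p=1}^{i-1}(\la_p-\la_i)\,
\frac{\qti_p}{\qti_i-\qti_p}\,,
\ee
so the left-hand side of Corollary \ref{corth} supplies both the divisor term \,$(\sum_k\gaq_{\ik}-\sum_{a\in I_i}z_a)\{W_I\}$ \,of \Ref{PR} and an explicit \,$q$-correction. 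The second ingredient is that the discrete differential \,$D_{\Ii}$ \,on the right of Corollary \ref{corth} must represent the zero class in \,$\Hqtl$.

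Granting the vanishing of \,$D_{\Ii}$, the remainder is bookkeeping that I have verified. The \,$q$-correction from the divisor conversion combines with the \,$q_i\la_j/(q_i-q_j)$ \,summand of Corollary \ref{corth} to give exactly \,$-h\la_i\sum_{j<i}\qti_j/(\qti_i-\qti_j)$, while the \,$q_j\la_i/(q_i-q_j)$ \,summand gives \,$-h\la_i\sum_{j>i}\qti_i/(\qti_i-\qti_j)$; transposing these to the right reproduces the \,$h\la_i(\cdots)\{W_I\}$ \,term of \Ref{PR}, and the remaining sums of weight-function classes (carrying the coefficient \,$q_j/(q_i-q_j)=-\qti_i/(\qti_i-\qti_j)$) match the last line of \Ref{PR} term by term.

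The main obstacle is therefore the vanishing of \,$D_{\Ii}$ \,in \,$\Hqtl$. I would isolate this as a lemma stating that the homomorphism \Ref{Well} annihilates every discrete differential \,$d_{\{\albt\}}g$ \,of the form \Ref{dab}. The underlying mechanism is that the defining relations \Ref{HK} of \,$\Hqtl$ \,— the determinant \,$\Wqt(u)$ \,with its shifts \,$h(i-j)$ \,and its \,$\qti$-entries — are precisely the quantum relations trivializing the \,$\ka$-lattice shift \,$t^{(i)}_{\la^{(i)}}\mapsto t^{(i)}_{\la^{(i)}}+\ka$ \,underlying the discrete-differential structure \Ref{41}. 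Concretely I would expand \,$d_{\{\albt\}}g=\frac{q_\bt}{q_\al-q_\bt}\dti_{\{\albt\}}g-\dch_{\{\albt\}}g$ \,via \Ref{dabg}, use Theorem \ref{thm1} to rewrite the \,$\dti$-part as a combination of classes \,$\{W\}$, and show the resulting expression lies in the ideal generated by the relations \Ref{Well}. Proving this inclusion — equivalently, that passing to the quantum relations kills exactly the image of the lattice difference operator — is the technical heart of the argument.
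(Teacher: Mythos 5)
Your reduction of \Ref{PR} to Corollary \ref{corth} --- the substitutions \,$q_i=\qti_i^{-1}$, \,$t^{(i)}_a\mapsto\thq_{i,a}$\,, the divisor conversion via \Ref{1sr sy}, and the resulting bookkeeping --- is exactly what the paper does, and your sign computations check out. The gap is the step you yourself flag as the technical heart: the vanishing of \,$D_{\Ii}$ \,in \,$\Hqtl$\,. You propose to prove it by showing that the image of each discrete differential \,$d_{\{\albt\}}g$ \,lies in the ideal generated by the relations \Ref{Well}, on the grounds that those relations ``trivialize the $\ka$-lattice shift.'' That mechanism is left unsubstantiated, and it is not how the connection actually runs: the discrete differential \Ref{41} is governed by the ratio \,$\Phi_\bla(\TT+\ww;\zz;h;\qq)/\Phi_\bla(\TT;\zz;h;\qq)$\,, whose \,$\ka\to0$ \,limit produces the Bethe ansatz equations \Ref{dcp}, not the quantum relations \Ref{Well} directly; a direct ideal-membership argument would still have to explain why points killed by the Bethe equations exhaust the spectrum of \,$\Hqtl$\,.

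The paper closes this gap through the Bethe ansatz, and you should not expect to bypass it. Lemma \ref{lem bae} observes that \,$D_{\Ii}(\TTc;\zz;h;\qq)=0$ \,whenever \,$\TTc$ \,solves the Bethe equations \Ref{BAE}: the bracketed factor in \Ref{dab} is precisely the combination those equations annihilate, so the vanishing is immediate pointwise. Theorem \ref{thm WB} (imported from \cite{MTV2}, ultimately \cite{MV2, MV1}) then assigns to each Bethe root a tuple \,$\GGq_1\lc\GGq_{N-1}$ \,satisfying the defining relations \Ref{HK} of \,$\Hqtl$\,, so \Ref{PR} holds at all such points; and the completeness of the Bethe ansatz, \cite[Theorem 7.3]{MTV2}, says these points form a Zariski open, hence dense, subset of the variety cut out by \Ref{HK}, which forces the identity in the algebra. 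Both external inputs --- Theorem \ref{thm WB} and completeness --- are substantial theorems, and without them (or an actual proof of your ideal-membership claim) the argument does not go through.
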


Theorem \ref{thm Pi} is proved in Section \ref{proofs}.

\begin{example}
Let \,$N=2$\,, \,$n=2$\,, \,$\bla=(1,1)$\,. Then
\vvn.2>
\be
\{W_{(\{1\},\{2\})}\}\>=\>\gaq_{1,1}\<-z_2\<-h\,,\qquad
\{W_{(\{2\},\{1\})}\}=\gaq_{1,1}\<-z_1\,,
\vv.3>
\ee
and the quantum Pieri rules take the form
\vvn.1>
\begin{align}
\label{exPi}
& (\gaq_{1,1}\<-z_1)\>\{W_{(\{1\},\{2\})}\}\,=\,
h\,\frac{\qti_1}{\qti_1\<-\qti_2}\,
\bigl(\{W_{(\{1\},\{2\})}\}+\{W_{(\{2\},\{1\})}\}\bigr)
-h\>\{W_{(\{2\},\{1\})}\}\,,
\\[6pt]
\notag
& (\gaq_{1,1}\<-z_2)\>\{W_{(\{2\},\{1\})}\}\,=\,
h\,\frac{\qti_1}{\qti_1\<-\qti_2}\,
\bigl(\{W_{(\{2\},\{1\})}\}+\{W_{(\{1\},\{2\})}\}\bigr)\,.
\end{align}
These are the same relations as in formula \Ref{erel}.
\end{example}

\subsection{Bethe ansatz equations}
The Bethe ansatz equations is the following system of algebraic equations with
respect to the variables \,$\TT$\,:
\vvn.3>
\beq
\label{BAE}
\prod_{k=1}^{\la^{(i-1)}}\,
\frac{t^{(i-1)}_k\!-t^{(i)}_j\!-h}{t^{(i-1)}_k-t^{(i)}_j}\;
\prod_{k=1}^{\la^{(i+1)}}
\frac{t^{(i)}_j\!-t^{(i+1)}_k}{t^{(i)}_j\!-t^{(i+1)}_k\!-h}\;
\prod_{\satop{k=1}{k\ne j}}^{\la^{(i)}}\,
\frac{t^{(i)}_j\!-t^{(i)}_k\!-h}{t^{(i)}_j\!-t^{(i)}_k\!+h}\;=\,
\frac{q_{i+1}}{q_i}\;,
\vv-.2>
\eeq
for \,$i=1\lc N-1$\,, \,$j=1\lc\la^{(i)}$.
This system can be reformulated as the system of equations:
\vvn-.3>
\beq
\label{dcp}
\lim_{\ka\to 0}\frac{\Phi_\bla({}\dots,t^{(i)}_j\!+\ka,\dots{};\zz;h;\qq)}
{\Phi_\bla(\TT; \zz;h;\qq)}\;=\,1\,,\qquad
i=1\lc N-1\,,\quad j=1\lc\la^{(i)},\kern-2em
\vv.2>
\eeq
see \cite{TV1, MTV1}.

\begin{lem}
\label{lem bae}
For \,$I\in\Il$ and \;$i=1\lc N-1$\,, let \,$D_{\Ii}(\TT;\zz;h;\qq)$ be
the function defined in \Ref{Di}. Let $\TTc$ be a solution of the Bethe ansatz
equations \Ref{BAE}. Then \,$D_{\Ii}(\TTc;\zz;h;\qq)=0$ and the right-hand side
of formula \Ref{Dic} equals zero at \,$\TT=\TTc$.
\end{lem}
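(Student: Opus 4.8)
The plan is to show that each special discrete differential entering $D_{\Ii}$, see \Ref{Di}, vanishes at a Bethe root, and to leverage the fact that $D_{\Ii}$ is a rational function free of $\ka$. First I would recall from \Ref{dab} that each $d_{\TT^{\{\albt\}}}g$, and hence every symmetrized term $d_{\{\ij\}}U_J$, $d_{\{\ji\}}U_J$ and the whole $D_{\Ii}$, is an explicit rational function of $\TT,\zz,h,\qq$ in which $\ka$ does not appear. On the other hand, Lemma \ref{ldab} expresses the same quantity as a genuine discrete differential, $d_{\TT^{\{\albt\}}}g=\frac{\Phi_\bla(\TT+\ww)}{\Phi_\bla(\TT)}\,f(\TT+\ww)-f(\TT)$, where $\ww$ shifts the variables $t^{(\al)}_{\la^{(\al)}}\lc t^{(\bt-1)}_{\la^{(\bt-1)}}$ by $\ka$ and $f$ is the polynomial displayed in the proof of Lemma \ref{ldab}. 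Since the left-hand side does not depend on $\ka$, its value at any fixed point may be computed by letting $\ka\to0$ in the right-hand side.

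The crux is the $\ka\to0$ limit of the prefactor at $\TT=\TTc$. Writing the multiple shift as a telescoping product of single shifts—at the $i$-th step shifting only $t^{(i)}_{\la^{(i)}}$ by $\ka$—I observe that as $\ka\to0$ every intermediate base point tends to $\TTc$ and each factor tends to the single-variable ratio $\lim_{\ka\to0}\Phi_\bla(\dots,t^{(i)}_{\la^{(i)}}\!+\ka,\dots)/\Phi_\bla(\TTc)$ of \Ref{dcp} for the variable $t^{(i)}_{\la^{(i)}}$. By the reformulation \Ref{dcp} of the Bethe ansatz equations \Ref{BAE}, each such single-shift limit equals $1$ at the root $\TTc$, so the entire prefactor tends to $1$; and since $f(\TTc+\ww)\to f(\TTc)$, I obtain $d_{\TT^{\{\albt\}}}g(\TTc)=1\cdot f(\TTc)-f(\TTc)=0$. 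Being $\ka$-free, this value is then identically $0$.

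It remains to pass to the symmetrized differentials and to $D_{\Ii}$. Since equations \Ref{BAE} are invariant under permuting the variables inside each group $\TT^{(i)}$, every point obtained from $\TTc$ by such a permutation is again a solution; applying the previous step to each summand of $\Sym_{\>t^{(1)}_1\!\lc\,t^{(1)}_{\la^{(1)}}}\ldots\Sym_{\>t^{(N-1)}_1\!\lc\,t^{(N-1)}_{\la^{(N-1)}}}d_{\TT^{\{\albt\}}}g$ gives $d_{\{\albt\}}g(\TTc)=0$, and summing the terms in \Ref{Di} yields $D_{\Ii}(\TTc)=0$. Because the right-hand side of \Ref{Dic} is precisely $D_{\Ii}$, this simultaneously proves its vanishing at $\TTc$. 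I expect the one delicate point to be the interchange of the limit $\ka\to0$ with the telescoping product, i.e.\ the joint continuity in $(\TT,\ka)$ of the factors near $\TTc$; after the standard simplification $\Ga((x+\ka)/\ka)/\Ga(x/\ka)=x/\ka$ each factor is rational and extends continuously to $\ka=0$ at a nondegenerate root, so this presents no real obstacle.
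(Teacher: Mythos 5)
Your argument is correct, and it reaches the key vanishing by a genuinely different route from the paper. The paper's proof is a single algebraic observation: the second factor in \Ref{dab} has the form $q_\bt A-q_\al B$, and the ratio $B/A$ is exactly the telescoped product over $i=\al\lc\bt-1$ of the left-hand sides of the Bethe equations \Ref{BAE} for the variables $t^{(i)}_{\la^{(i)}}$, hence equals $q_\bt/q_\al$ at $\TTc$, so that factor vanishes. You instead exploit that $d_{\TT^{\{\albt\}}}g$ is simultaneously $\ka$-free and, by Lemma \ref{ldab}, a discrete differential, and you evaluate it at $\TTc$ by letting $\ka\to0$ and invoking the limit form \Ref{dcp} of the Bethe equations. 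The two proofs rest on the same telescoping identity (the product of single-shift ratios of $\Phi_\bla$ at $\ka=0$ is $q_\al B/(q_\bt A)$ up to sign), but yours derives it conceptually from \Ref{dcp} rather than by inspecting the two halves of \Ref{dab}; the price is the continuity bookkeeping at the intermediate base points of the telescoping, which you correctly flag and which does hold at a nondegenerate root. Two small inaccuracies, neither fatal: the ratio $\Ga((x+\ka)/\ka)/\Ga(x/\ka)=x/\ka$ alone does \emph{not} extend continuously to $\ka=0$ --- only the paired combination $\pho(x+\ka)/\pho(x)=x/(h-x-\ka)$ coming from both Gamma factors of $\pho$ is regular there; and \Ref{dab} coincides with the expression \Ref{41} only up to the $\ka$-independent constant $-\>q_\bt/(q_\al-q_\bt)$, to be absorbed into $f$, which is harmless since it does not affect the $\ka\to0$ argument. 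The final passage from the unsymmetrized differentials to $D_{\Ii}(\TTc)=0$, via the invariance of \Ref{BAE} under permutations within each group $\TT^{(i)}$, matches what the paper leaves implicit.
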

\begin{proof}
If \,$\TTc$ \,is a solution of equations \Ref{BAE}, then the second of
the two factors in the right-hand side of formula \Ref{dab} equals zero
at \,$\TT=\TTc$\,.
\end{proof}

\subsection{Proof of Theorem \ref{thm Pi}}
\label{proofs}

We have the following theorem.

\begin{thm}
\label{thm WB}
Let \,$\TTc$ be a solution of the Bethe ansatz equations \Ref{BAE}. Then there
exist unique polynomials \;$\prod_{k=1}^{\la_i}(u-\gac_{i,k})\in\C[u]$\,,
\,$i=1\lc N$, such that
\beq
\label{tWe}
\det\>\biggl(q_i^{m-j}\,\prod_{k=1}^{\la_i}\,
\bigl(u-\gac_{\ik}\<-h\>(i-j)\bigr)\<\biggr)_{\ij=1}^m=\,
\prod_{1\leq i<j\leq m}(q_i\<-q_j)\,\prod_{a=1}^{\la^{(m)}}\,(u-\tch^{(i)}_a)
\vv.1>
\eeq
for \,$i=1\lc N-1$\,, and
\beq
\label{tWen}
\det\>\biggl(q_i^{N-j}\,\prod_{k=1}^{\la_i}\,
\bigl(u-\gac_{\ik}-h\>(i-j)\bigr)\<\biggr)_{\ij=1}^N=
\,\prod_{1\leq i<j\leq N}(q_i\<-q_j)\,\prod_{a=1}^N\,(u-z_a).
\vv-1.1>
\eeq
\vv.5>
\qed
\end{thm}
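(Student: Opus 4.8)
The plan is to read \Ref{tWe}--\Ref{tWen} as a statement about the successive ``$q$-Wronskians''
\[
\Xi_m(u)=\det\Bigl(q_i^{\,m-j}\prod_{k=1}^{\la_i}\bigl(u-\gac_{\ik}-h\,(i-j)\bigr)\Bigr)_{i,j=1}^{m},\qquad m=1\lc N,
\]
and to produce the monic polynomials $P_i(u)=\prod_{k=1}^{\la_i}(u-\gac_{\ik})$ one level at a time. The first observation is a leading-term computation: retaining only the top-degree part $q_i^{\,m-j}u^{\la_i}$ of each entry, the degree-$\la^{(m)}$ part of $\Xi_m$ equals $u^{\la^{(m)}}\det(q_i^{\,m-j})_{i,j=1}^m$, and the last determinant is a Vandermonde determinant equal to $\prod_{1\le i<j\le m}(q_i-q_j)$. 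Thus $\Xi_m$ is a polynomial of degree $\la^{(m)}$ with leading coefficient exactly the constant $c_m:=\prod_{1\le i<j\le m}(q_i-q_j)$ appearing in \Ref{tWe}. Since $\deg\Xi_m=\la^{(m)}=\deg\prod_a(u-\tch^{(m)}_a)$, the identity \Ref{tWe} reduces to producing $P_m$ for which $\Xi_m$ acquires precisely the level-$m$ Bethe roots of $\TTc$, and \Ref{tWen} is the same assertion at level $N$ with the prescribed roots being the fixed points $z_1\lc z_n$.

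Next I would construct $P_1\lc P_N$ by induction on $m$. For $m=1$ one has $\Xi_1=P_1$, so $P_1(u)=\prod_a(u-\tch^{(1)}_a)$ and $\gac_{1,k}=\tch^{(1)}_k$. For the inductive step, expanding $\Xi_m$ along its last row shows that $\Xi_m$ is a homogeneous linear function of $P_m$, since each permutation uses exactly one entry of that row. A reindexing of the columns identifies the two extreme cofactors, up to an explicit monomial in $q_1\lc q_{m-1}$, with $\Xi_{m-1}(u)$ and its shift $\Xi_{m-1}(u+h)$, while the remaining cofactors depend only on $P_1\lc P_{m-1}$ and are therefore known by the inductive hypothesis. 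Consequently the requirement $\Xi_m=c_m\prod_a(u-\tch^{(m)}_a)$ becomes an inhomogeneous linear difference equation of order $m-1$ for the unknown polynomial $P_m$, whose boundary coefficients are proportional to $\Xi_{m-1}(u)=c_{m-1}\prod_a(u-\tch^{(m-1)}_a)$ and to $\Xi_{m-1}(u+h)$. The whole content of the step is that this difference equation admits a polynomial solution of degree $\la_m$.

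The solvability of that difference equation is where the Bethe ansatz equations enter, and it is the step I expect to be the main obstacle. A dimension count matches the codimension of the image of the relevant difference operator with $\la^{(m-1)}$, the number of level-$(m-1)$ Bethe roots, which in turn equals the number of Bethe equations \Ref{BAE} at level $m-1$. I would therefore aim to match the $\la^{(m-1)}$ obstructions to polynomiality of $P_m$ one-to-one with the equations \Ref{BAE} (equivalently their residue form \Ref{dcp}): evaluating the candidate quotient at the roots of $\prod_a(u-\tch^{(m-1)}_a)$, the ratios of shifted Bethe polynomials that appear are exactly the left-hand sides of \Ref{BAE}, so \Ref{BAE} forces the numerators to vanish and the quotient to be a genuine polynomial of degree $\la_m$ --- the same ``factor-vanishing-at-a-Bethe-root'' mechanism already used in Lemma \ref{lem bae}. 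Controlling the intermediate cofactors for general $m$ is most cleanly organized through the discrete Miura / reproduction formalism of the $\mathfrak{gl}_N$ $XXX$ Bethe ansatz, in which each step is reduced to a three-level ($\mathfrak{sl}_2$-type) discrete-Wronskian relation among $\prod_a(u-\tch^{(m-2)}_a)$, $\prod_a(u-\tch^{(m-1)}_a)$ and $\prod_a(u-\tch^{(m)}_a)$; I would invoke that formalism to keep the bookkeeping finite.

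Uniqueness and termination are comparatively routine. For uniqueness, the same Vandermonde computation shows that the top-degree coefficient of $\Xi_m$, viewed as a linear function of $P_m$, is the nonzero constant $c_m$ (the $q_i$ being distinct), so $P_m\mapsto\Xi_m$ is injective and $P_m$, hence the unordered set $\{\gac_{i,k}\}$, is determined at each stage. For termination, at $m=N$ the prescribed roots are the inhomogeneities $z_1\lc z_n$ rather than free Bethe roots, and the resulting identity is precisely \Ref{tWen}; no further Bethe equation is required because \Ref{BAE} runs only up to $i=N-1$. Finally, I would record the conceptual shortcut that \Ref{tWe}--\Ref{tWen} are the images, under the spectral character $\Hqtl\to\C$ attached to the Bethe solution $\TTc$, of the defining relations \Ref{Well} and \Ref{HK} of Theorem \ref{thm qca}, with the image of $\thq_{m,a}$ identified with the level-$m$ Bethe roots via \Ref{1sr sy}; this yields an alternative derivation once the existence of that character is granted.
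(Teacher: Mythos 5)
You should note first that the paper contains no proof of Theorem \ref{thm WB} to compare against: the statement is imported with a \,\qed\, and the single sentence ``This is \cite[Theorem 7.2]{MTV2}, which is \cite[Proposition 7.6]{MV2}, which in its turn is a generalization of \cite[Lemma 4.8]{MV1}.'' The honest comparison is therefore with the proofs in those references, which do proceed through discrete Wronskian identities and the reproduction procedure that you invoke. Your architecture matches theirs and is sound as far as it goes: the leading-coefficient computation via $\det(q_i^{\,m-j})_{i,j=1}^m=\prod_{1\le i<j\le m}(q_i-q_j)$ is correct; the Laplace expansion of $\Xi_m$ along its last row does exhibit $\Xi_m$ as an order-$(m-1)$ inhomogeneous linear difference operator applied to $P_m$, with the two extreme cofactors equal (up to monomials in $q_1\lc q_{m-1}$) to $\Xi_{m-1}(u)$ and $\Xi_{m-1}(u+h)$; the codimension count $\la^{(m-1)}$ is right; and injectivity of $P_m\mapsto\Xi_m$ on polynomials of degree $\la_m$ gives uniqueness by induction.

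The genuine gap is that the one step carrying all the content is announced rather than executed, and you flag this yourself. The claim that the $\la^{(m-1)}$ obstructions to polynomiality of $P_m$ are annihilated exactly by the level-$(m-1)$ equations of \Ref{BAE} is something you ``would aim to'' verify, and the control of the intermediate cofactors $M_{mj}$ for $1<j<m$ --- mixed discrete Wronskians of $P_1\lc P_{m-1}$ that are not determined by $\Xi_{m-1}$ alone --- is delegated to an unproved appeal to the ``discrete Miura / reproduction formalism.'' Without an explicit identity expressing the evaluation of the difference relation at a root of $\prod_a(u-\tch^{(m-1)}_a)$ in terms of the left-hand side of \Ref{BAE}, with the shifts and signs checked, the existence of $P_m$ is not established; that identity is precisely the content of \cite[Lemma 4.8]{MV1} in the two-step case and of the fertility argument of \cite{MV2} in general. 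Two smaller points to make precise: the step $m=N$ still uses the level-$(N-1)$ equations of \Ref{BAE} (which involve $t^{(N)}_a=z_a$), so ``no further Bethe equation is required'' should be read only as ``no level-$N$ equation is needed''; and your closing ``conceptual shortcut'' via the spectral character of $\Hqtl$ is circular in the present paper, since Theorem \ref{thm WB} is itself an ingredient in the proof of Theorem \ref{thm Pi} and of the identification of the Bethe algebra with \Ref{HK}.
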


This is \cite[Theorem 7.2]{MTV2}, which is \cite[Proposition 7.6]{MV2},
which in its turn is a generalization of \cite[Lemma 4.8]{MV1}.

\begin{proof}[Proof of Theorem \ref{thm Pi}]
Formula \Ref{PR} is obtained from formula \Ref{Dic} by several substitutions.
First take \,$q_i\<=\qti_i^{-1}$ for all \,$i=1\lc N$, substitute the variables
\,$t^{(i)}_j$ by \,$\thq_{\ij}$\,, and replace the term $D_{\Ii}$ by zero.
\vv.06>
Then write symmetric functions in the variables \,$\Thq_m$ \>via symmetric
functions in the variables \,$\GGq_i$\,, \,$i=1\lc m$\,.
\vv.06>
As a result, the expression
\,$\sum_{j=1}^{\la^{(i)}}t^{(i)}_j\!-\sum_{j=1}^{\la^{(i-1)}}\!t^{(i-1)}_j$
\vv.06>
\>becomes \,$\sum_{j=1}^{\la_i}\gaq_{\ij}-
h\>\sum_{j=1}^{i-1}(\la_i-\la_j)\,\qti_j\<\>/(\qti_i\<-\qti_j)$ \,according
to formula \Ref{1sr sy}.

\vsk.2>
Lemma \ref{lem bae} and Theorem \ref{thm WB} mean that formula \Ref{PR} holds
for those values of \,$\GGq_1\lc\GGq_{N-1}$ that come from solutions $\TTc$
of the Bethe ansatz equations \Ref{BAE}. By \cite[Theorem 7.3]{MTV2} of
completeness of the Bethe ansatz, such values of \,$\GGq_1\lc\GGq_{N-1}$ form
a Zariski open subset of all values of \,$\GGq_1\lc\GGq_{N-1}$ satisfying
defining relations of the algebra \,$\Hqtl$\,, see \Ref{HK}.
This proves Theorem \ref{thm Pi}.
\end{proof}

\subsection{Limit \,$\qti_i/\qti_{i+1}\to 0$\,, \,$i=1\lc N-1$\,,
and CSM classes of Schubert cells}

In the limit \,$\qti_i/\qti_{i+1}\to 0$ \,for all \,$i=1\lc N-1$\,,
the algebra \,$\Hqtl$ \,turns into the algebra \,$H^*_T(\tfl)$ \,and
the classes $\{W_I\}\in\Hqtl$ become the classes \,$[W_I]\in H^*_T(\tfl)$\,.
Then formula \Ref{PR} takes the form
\begin{align}
\label{PRl}
\Bigl(\>{}&\sum_{k=1}^{\la_i}\ga_{\ik}\>-\sum_{a\in I_i}z_a\Bigr)\>[W_I]\,={}
\\[1pt]
\notag
&{}=\,h\>\sum_{j=1}^{i-1}\,\sum_{m_1=1}^{\la_i}\!
\sum_{\satop{m_2=1}{\ell_{i,{m_1}}\<>\>\ell_{j,{m_2}}\!\!\!\!}}^{\la_j}
[W_{I_{i,j;m_1,m_2}}] -\>h\sum_{j=i+1}^N\,\sum_{m_1=1}^{\la_i}\!
\sum_{\satop{m_2=1}{\ell_{i,{m_1}}\<<\>\ell_{j,{m_2}}\!\!\!\!}}^{\la_j}
[W_{I_{i,j;m_1,m_2}}]\,.
\\[-14pt]
\notag
\end{align}
In particular, identities in \Ref{exPi} turn into the identities
\vvn.3>
\beq
\label{exPi0}
(\ga_{1,1}\<-z_1)\,[W_{(\{1\},\{2\})}]\,=\,-\>h\,[W_{(\{2\},\{1\})}]\,,
\qquad
(\ga_{1,1}\<-z_2)\,[W_{(\{2\},\{1\})}]\,=\,0\,.
\vv.3>
\eeq

\begin{rem}
After the substitution \,$h=1$\,, the classes \,$[W_I]\in H^*_T(\tfl)$
\,can be considered as elements of the equivariant cohomology
\,$H^*_{(\Cxs)^n}(\Fla)$\,. By \cite{RV} these new classes \,$[W_I]_{h=1}$
\,are proportional to the CSM classes \,$\ka_I$ \>of the corresponding Schubert
cells with the coefficient of proportionality independent of the index \,$I$\,.
Hence formula \Ref{PRl} induces the equivariant Pieri rules for the equivariant
CSM classes:
\vvn.3>
\begin{align}
\label{PRlC}
\Bigl(\>{}&\sum_{k=1}^{\la_i}\ga_{\ik}\>-\sum_{a\in I_i}z_a\Bigr)\>\ka_I\,={}
\\[1pt]
\notag
&{}=\,h\>\sum_{j=1}^{i-1}\,\sum_{m_1=1}^{\la_i}\!
\sum_{\satop{m_2=1}{\ell_{i,{m_1}}\<>\>\ell_{j,{m_2}}\!\!\!\!}}^{\la_j}
\ka_{I_{i,j;m_1,m_2}}\<-\>h\sum_{j=i+1}^N\,\sum_{m_1=1}^{\la_i}\!
\sum_{\satop{m_2=1}{\ell_{i,{m_1}}\<<\>\ell_{j,{m_2}}\!\!\!\!}}^{\la_j}
\ka_{I_{i,j;m_1,m_2}}\>,
\\[-14pt]
\notag
\end{align}
see detailed definitions of the CSM classes in \cite{RV}.
\end{rem}

\section{Solutions of quantum differential equations and equivariant
\,$K\<$-theory}
\label{seckth}
\subsection{Solutions and equivariant \,$K\<$-theory}
\label{equivkth}

Introduce more variables: \,$y=e^{\<\>2\<\>\pii\,h/\kat}$,
\;$\tdd^{\>(i)}_j\!=e^{\<\>2\<\>\pii\;t^{(i)}_j\!\</\<\kat}$,
\,$\zdd_i\<\<=e^{\<\>2\<\>\pii\,z_i/\kat}$,
\,$\gmd_{\ij}\<=e^{\<\>2\<\>\pii\,\gm_{i,j}/\kat}$, etc.
\vvn-.1>
We will use the acute superscript also for the corresponding collections
\vvn.1>
of those variables like \;$\GGd,\>\ttd\>,\zzd$\,. We will write
\;$\GGd^{\pm1}\<,\alb\>\ttd^{\pm1}\<,\alb\zzd^{\pm1}$ for the collections
extended by the inverse variables, for instance,
\,$\zzd^{\pm1}\<=(\zdd_1^{\pm1}\lc\zdd_n^{\pm1})$\,.

\vsk.2>
Let \,$P$ \,be a Laurent polynomial in the variables \;$\ttd,\zzd,y$\,,
symmetric in \,$\tdd^{\>(i)}_1\!\lc\tdd^{\>(i)}_{\la^{(i)}}$ \,for each
\,$i=1\lc N-1$\,. Define
\beq
\label{PPI}
\Psh_P(\zz;h;\qqt\>;\kat)\,=\,
\sum_{I\in\Il}\,P(\Sid_I,\zzd,y)\,\Psh_I(\zz;h;\qqt\>;\kat)\,,
\vv-.3>
\eeq
where \,$\Psh_I(\zz;h;\qqt\>;\kat)$ \,are given by \Ref{Psh}.
\begin{lem}
\label{PsiPsol}
The function \,$\Psh_P(\zz;h;\qqt\>;\kat)$ \,is a solution of
both the quantum differential equations
\,${\naqla_{\bla\>,\<\>\qqt,\<\>\kat\<\>,\<\>i}\,f=0}$\,, \;$i=1\lc N$,
and the associated \qKZ/ difference equations.
\end{lem}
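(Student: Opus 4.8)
The plan is to reduce everything to Theorem~\ref{mcor}, which already guarantees that each individual function $\Psh_I(\zz;h;\qqt\>;\kat)$, $I\in\Il$, solves both systems; the only issue is to control the coefficients in the linear combination \Ref{PPI}. Write $c_I:=P(\Sid_I,\zzd,y)$, so that $\Psh_P=\sum_{I\in\Il}c_I\,\Psh_I$. The decisive observation is that on the discrete cycle $\Si_I$, defined by \Ref{cy} through $t^{(k)}_i=z_{j^{(k)}_i}$, the exponential variables reduce to $\tdd^{(k)}_i=\zdd_{j^{(k)}_i}$; hence $\Sid_I$ is a tuple in the variables $\zdd$, and each $c_I$ is a Laurent polynomial in $\zdd$ and $y$ only. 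In particular $c_I$ does not depend on $\qqt$, and it depends on $\zz,h$ solely through the exponentials $\zdd_a=e^{\<\>2\<\>\pii\,z_a/\kat}$ and $y=e^{\<\>2\<\>\pii\,h/\kat}$.

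For the quantum differential equations I would use that $c_I$ is independent of $\qqt$, so $\kat\,\qti_i\,\der c_I/\der\qti_i=0$ and the operator $\kat\,\qti_i\,\der/\der\qti_i$ acts only on $\Psh_I$. Since $D_i\>*_\qqt$ is a linear operator on the $H^*_T(\tfl)$-valued part of the solution, it commutes with multiplication by the scalar functions $c_I$ of $\zz,h,\kat$. Therefore
\begin{align*}
\naqla_{\bla\>,\<\>\qqt,\<\>\kat\<\>,\<\>i}\,\Psh_P\,
&{}=\,\sum_{I\in\Il}\Bigl(\kat\,\qti_i\frac\der{\der\qti_i}-D_i\>*_\qqt\Bigr)
\bigl(c_I\,\Psh_I\bigr)\\
&{}=\,\sum_{I\in\Il}c_I\,
\naqla_{\bla\>,\<\>\qqt,\<\>\kat\<\>,\<\>i}\,\Psh_I\,=\,0\,,
\end{align*}
the last equality by Theorem~\ref{mcor}.

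For the associated \qKZ/ difference equations the key is the periodicity designed into the exponential variables. The shift $z_a\mapsto z_a+\kat$ sends $\zdd_a=e^{\<\>2\<\>\pii\,z_a/\kat}$ to $\zdd_a\,e^{\<\>2\<\>\pii}=\zdd_a$, because $e^{\<\>2\<\>\pii}=1$; it likewise fixes $y$ and, via $\tdd^{(k)}_i=\zdd_{j^{(k)}_i}$, every entry of $\Sid_I$. Thus each $c_I$ is invariant under $z_a\mapsto z_a+\kat$. Denoting this shift by $T_a$ and by $\mathcal K_a$ the matrix-valued operator of the \qKZ/ system satisfied by the $\Psh_I$, and using that the scalars $c_I$ commute with $\mathcal K_a$, we obtain
\begin{align*}
T_a\,\Psh_P\,&{}=\,\sum_{I\in\Il}(T_a c_I)\,(T_a\Psh_I)\,
=\,\sum_{I\in\Il}c_I\,\mathcal K_a\,\Psh_I\\
&{}=\,\mathcal K_a\sum_{I\in\Il}c_I\,\Psh_I\,=\,\mathcal K_a\,\Psh_P\,,
\end{align*}
again invoking Theorem~\ref{mcor} for $T_a\Psh_I=\mathcal K_a\Psh_I$.

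The only genuinely substantive step, which I would verify most carefully, is the reduction of $\Sid_I$ to the $\zdd$-variables on the cycle $\Si_I$: it is precisely this fact that simultaneously removes the $\qqt$-dependence of the coefficients $c_I$ (yielding the differential part) and forces their $\kat$-periodicity in the $z$-variables (yielding the difference part). Everything else is the linearity of the two connections over scalar coefficient functions combined with Theorem~\ref{mcor}.
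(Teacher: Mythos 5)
Your proposal is correct and takes essentially the same route as the paper, whose proof of Lemma \ref{PsiPsol} is simply the one-line statement that it follows from Theorem \ref{mcor}; you have just made explicit the two facts implicitly used there, namely that the coefficients $P(\Sid_I,\zzd,y)$ are independent of $\qqt$ and are $\kat$-periodic in the $z_a$ because they depend on $\zz,h$ only through the exponentials $\zdd,y$.
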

\begin{proof}
The statement follows from Theorem~\ref{mcor}.
\end{proof}

\begin{lem}
\label{PsiPhom}
For a Laurent polynomial \,$P$ in \,$\ttd,\zzd,y$ \>symmetric in
\vvn-.06>
\,$\tdd^{\>(i)}_1\!\lc\tdd^{\>(i)}_{\la^{(i)}}$ \,for each \,$i=1\lc N-1$\,,
the function \,$\Psh_P(\zz;h;\qqt\>;\kat)$ \,is holomorphic in \,$\qqt$ on
\vvn.06>
the domain $\;L''\<\<\subset\C^N\<$ such that \,$|\>\qti_i/\qti_{i+1}|<1$\,,
\,$i=1\lc N-1$\,, and a branch of $\;\log\>\qti_i$ is fixed for each
\,$i=1\lc N$, and \,$\Psh_P(\zz;h;\qqt\>;\kat)$ is holomorphic in \,$\zz,h$
\,on the domain $\;L'''\<\subset\C^n\!\times\C$ \,such that
\vvn.4>
\beq
\label{domain}
h\not\in\kat\>\Z_{\ge0}\,,\kern2.6em z_a\<-z_b\<+h\not\in\kat\>\Z\,,\kern1.4em
a,b=1\lc n\,,\quad a\ne b\,.\kern-2em
\vv.1>
\eeq
\end{lem}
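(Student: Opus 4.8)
The plan is to treat the two holomorphy claims separately, since the dependence on $\qqt$ and the dependence on $(\zz,h)$ enter $\Psh_P$ through very different mechanisms.

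Holomorphy in $\qqt$ is essentially free. By definition \eqref{PPI} we have $\Psh_P=\sum_{I\in\Il}P(\Sid_I,\zzd,y)\,\Psh_I$, and the coefficients $P(\Sid_I,\zzd,y)$ depend only on $\zzd,y$ and not on $\qqt$ (the substitution $\TT=\Si_I$ merely replaces each $\tdd^{(k)}_i$ by some $\zdd_j$). Each $\Psh_I(\zz;h;\qqt;\kat)$ is holomorphic in $\qqt$ on the domain $L''$ where $|\qti_i/\qti_{i+1}|<1$ with a branch of $\log\qti_i$ fixed, as recorded after \eqref{Psh}. Hence $\Psh_P$, a finite combination of the $\Psh_I$ with $\qqt$-independent coefficients, is holomorphic in $\qqt$ on $L''$.

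For the $(\zz,h)$-dependence I would first fold $P$ into the residue. The point is that each exponential $\tdd^{(i)}_j=e^{\pii\,2\,t^{(i)}_j/\kat}$ is invariant under $t^{(i)}_j\mapsto t^{(i)}_j+\kat$, so $P(\ttd,\zzd,y)$ takes the same value at every lattice translate $\Si_I+\rr\kat$ of the cycle $\Si_I$; that is, $P(\Sid_I,\zzd,y)=P(\ttd,\zzd,y)\big|_{\TT=\Si_I+\rr\kat}$ for all $\rr$. Since $P$ is a Laurent polynomial it is an entire, $\kat$-periodic function of $\TT$, so carrying it inside the residues in \eqref{Psh} creates no new poles. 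Absorbing the $\kat$-powers and the sign into an $h$-dependent scalar $C(h)$ that is entire in $h$ (the factor $\Ga(-h/\kat)^{-\la^{\{1\}}}$ is entire and harmless off $h\in\kat\Z_{\ge0}$), I obtain the resummed form $\Psh_P=C(h)\,\Omh_\bla\sum_{I\in\Il}\sum_{\rr\in\Z_{\ge0}^{\la^{\{1\}}}}\Res_{\TT=\Si_I+\rr\kat}(\Phi_\bla(\TT;\zz;\qqt^{-1};-\kat)\,Q(\GG)\,\Wh(\TT;\GG)\,P(\ttd,\zzd,y))$. The integrand is now symmetric in each group $\TT^{(i)}$, $i=1,\ldots,N-1$: $\Phi_\bla$ and $\Wh$ are symmetric by construction, $P$ is symmetric by hypothesis, and $Q(\GG)$ is $\TT$-independent. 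This symmetry is exactly what the symmetry hypothesis on $P$ buys.

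It then remains to show that this symmetric Jackson sum extends holomorphically across the hyperplanes $z_a-z_b\in\kat\Z$. I would reduce to the diagonal $z_a=z_b$ by using that $\Psh_P$ solves the \qKZ/ equations (Lemma \ref{PsiPsol}): the \qKZ/ operators are built from $R$-matrices whose only singularities lie on $z_a-z_b+h\in\kat\Z$, so they propagate holomorphy across every translate $z_a-z_b=m\kat$ once it is established on $z_a=z_b$, provided one stays off $z_a-z_b+h\in\kat\Z$ and $h\in\kat\Z_{\ge0}$. For the base case, the pole of an individual $\Psh_I$ along $z_a=z_b$ arises from the collision of the cycle $\Si_I$ with $\Si_{s_{a,b}(I)}$; the two residues are opposite, and on $z_a=z_b$ the $z$-label assignments to the $\TT$-variables in $\Si_I$ and $\Si_{s_{a,b}(I)}$ agree up to a permutation within the $\TT^{(i)}$-groups, so the symmetry of $P$ forces $P(\Sid_I,\zzd,y)=P(\Sid_{s_{a,b}(I)},\zzd,y)$ there and the singular parts cancel pairwise. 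The main obstacle is making this cancellation precise for general $N$: one must track how the transposition $s_{a,b}$ permutes the residue data simultaneously at all levels $\TT^{(1)},\ldots,\TT^{(N-1)}$ and verify that the leading singular coefficients are genuinely exchanged, for which the three-term relation of Lemma \ref{c3t} and the interpolation identities of Propositions \ref{p111} and \ref{pdel} are the natural tools. The residual statement, that the only remaining singular loci are $h\in\kat\Z_{\ge0}$ and $z_a-z_b+h\in\kat\Z$, then follows by inspecting the $\Ga$-factors of $\Phi_\bla(\TT;\zz;\qqt^{-1};-\kat)$ in the resummed integrand.
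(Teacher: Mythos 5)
Your reduction is the right one: holomorphy in $\qqt$ on $L''$ is immediate from \Ref{PPI} since the coefficients $P(\Sid_I,\zzd,y)$ do not involve $\qqt$, and by the properties of the individual $\Psh_I$ recorded after \Ref{Psh} the only thing left to prove is regularity across the hyperplanes $z_a\<-z_b\in\kat\>\Z$. This is exactly how the paper frames the problem. But your treatment of that remaining step has a genuine gap, and you concede it yourself: the entire content of the lemma is the cancellation of the $z_a\<-z_b$ singularities in the symmetrized sum, and you assert rather than prove it (``the two residues are opposite,'' ``the singular parts cancel pairwise''), deferring the verification to an unspecified bookkeeping with the three-term relation. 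Moreover, the picture you give of where the poles come from is too narrow. The singularities of an individual $\Psh_I$ along $z_a\<-z_b\in\kat\>\Z$ are not only a collision phenomenon between the cycles $\Si_I$ and $\Si_{s_{a,b}(I)}$ near $z_a=z_b$: already inside a single Jackson sum $\Mc_I(\Phi_\bla f)$, taking residues produces explicit factors such as $\Ga\bigl((z_a\<-z_b)/\kat\bigr)$ (see the explicit $N=2$ example in Section~\ref{secJ}), so every term of every $\Psh_I$ is singular along infinitely many translates $z_a\<-z_b=m\kat$, and the cancellation is a statement about the full double sum over $I\in\Il$ and $\rr\in\Z_{\le0}^{\la^{\{1\}}}$, not a pairwise matching of leading residues at a single collision. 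Your proposed \qKZ/-propagation from the base hyperplane $z_a=z_b$ is a sensible way to organize the translates, but it does not remove the need to prove the base case, which is where the difficulty lives.

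The paper closes this gap by a different and cleaner mechanism, which avoids the combinatorial cancellation entirely: for $h/\kat$ a sufficiently large negative real it rewrites $\Psh_P$ as a single convergent contour integral (Lemma~\ref{intlem}),
\begin{equation*}
\Pst_P(\zz;h;\qqt\>;\kat)\,=\,
\frac{\Omh_\bla(\qqt;\kat)}{\la^{(1)}\<\>!\dots\la^{(N\<-1)}\<\>!}\;
\int_{\<\>\Cc_\bla(\zz)}\!\!P(\ttd;\zzd;y)\,
\Phi_\bla(\TT;\zz;\qqt^{-1};-\kat)\,
Q(\GG)\,\Wh(\TT\<\>;\GG)\;d^{\>\la^{\{1\}}}\<\TT\,,
\end{equation*}
where $\Cc_\bla(\zz)$ is a product of nested parabolas separating the two families of poles of the Gamma factors ($t$ near the $z_a$ versus $t$ near the $z_a+h$). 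The symmetry of $P$ in each group $\ttd^{\>(i)}$ is precisely what lets the sum over $I$ and $\rr$ be reassembled into this one integral, with the factor $1/\prod_i\la^{(i)}!$ accounting for the symmetrization; the equality with \Ref{PPI} is then obtained by pushing the contours off to infinity and collecting residues. Since the integrand on $\Cc_\bla(\zz)$ has no singularity in $z_a\<-z_b$ at all, regularity across $z_a\<-z_b\in\kat\>\Z$ is manifest, and the restriction on $h$ is removed by analytic continuation. If you want to salvage your residue-cancellation route you would need to carry out the base-case computation in full; otherwise the Mellin--Barnes repackaging is the intended argument.
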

\begin{proof}
By the properties of \,$\Psh_I(\zz;h;\qqt\>;\kat)$\,, see~\Ref{domaih},
we need only to show that the function \,$\Psh_P(\zz;h;\qqt\>;\kat)$ is regular
at the hyperplanes \,$z_a\<-z_b\in\kat\>\Z$\,. This will be done in
Section~\ref{secpsiph} below.
\end{proof}

Consider the equivariant \,$K\<\<$-theory algebra, see
\cite[Section 2.3]{RTV2}, \cite[Section 4.4]{RTV3},
\vvn.3>
\beq
\label{Krel}
K_T(\tfl)\,=\,\C[\GGd^{\pm1}]^{\>\Sla}\<\<\otimes
\C[\zzd^{\pm1}]\otimes\C[y^{\pm1}]\>\Big/\Bigl\bra
\,\prod_{i=1}^N\prod_{j=1}^{\la_i}\,(u-\gmd_{\ij})\,=\,
\prod_{a=1}^n\,(u-\zdd_a)\Bigr\ket\,,\kern-1.6em
\vv.1>
\eeq
cf.~\Ref{Hrel}.
Introduce the variables \,$\thd_{i,\<\>\la^{(i)}}$\,, \,$i=1\lc N$.
The relations
\vvn.1>
\beq
\label{kre}
\prod_{a=1}^{\la^{(i)}}\,(u-\thd_{i,\<\>a})\,=\,
\prod_{j=1}^i\,\prod_{k=1}^{\la_j}\,(u-\gmd_{\jk})\,, \qquad i=1\lc N\>,
\vv.2>
\eeq
define the epimorphism
\,$\C[\Thd^{\pm1}]^{S_{\la^{(1)}}\<\lsym\times S_{\la^{(N)}}}\!
\otimes \C[\zzd^{\pm1}]\otimes\C[y^{\pm1}]\,\to\,K_T(\tfl)$\,.
\vvn.1>
Thus the assignment \,$P\mapsto\Psh_P$ \,defines a map from \,$K_T(\tfl)$ \,to
\vvn.06>
the space of solutions of the quantum differential equations and the associated
\vvn.1>
\qKZ/ difference equations with values in \,$H^*_T(\tfl)$ extended by functions
in \,$\zz,h,\qqt$ \,holomorphic in the domain \,$L'''\!\times L''$. We evaluate
below the determinant of this map.

\vsk.3>
The cohomology algebra \,$H^*_T(\tfl)$ \,is a free module over
\,$H^*_T({pt};\C)=\C[\zz]\otimes\C[h]$\,, with a basis given
by the classes of Schubert polynomials
\vvn.4>
\beq
\label{YG}
Y_I(\GG)\,=\,A_{\si^I}(\ga_{1,1}\lc\ga_{1,\>\la_1},
\ga_{2,1}\lc\ga_{2,\>\la_2},\,\ldots\,,\ga_{N\<,\>1}\lc\ga_{N\<,\>\la_N})\,,
\qquad I\in\Il\,.\kern-1em
\vv.4>
\eeq
Similarly, the algebra \,$K_T(\tfl)$ \,is a free module over
\,$K_T({pt};\C)=\C[\zzd^{\pm1}]\otimes\C[y^{\pm1}]$\,, with
a basis given by the classes of Schubert polynomials
\vvn.4>
\beq
\label{YhG}
\Yh_I(\GGd)\,=\,A_{\si^I}(\gmd_{1,1}\lc\gmd_{1,\>\la_1},
\gmd_{2,1}\lc\gmd_{2,\>\la_2},\,\ldots\,,\gmd_{N\<,\>1}\lc\gmd_{N\<,\>\la_N})\,,
\qquad I\in\Il\,.\kern-1em
\vv.4>
\eeq
Both assertions are clear from Proposition~\ref{fAsil}.

\vsk.2>
Expand solutions of the quantum differential equation using those Schubert
bases:
\vvn.4>
\beq
\label{PshY}
\Psh_{\Yh_I}(\GG;\zz;h;\qqt\>;\kat)\,=\,\sum_{J\in\Il\!}\,
\Pshb_{\IJ}(\zz;h;\qqt\>;\kat)\,Y_J(\GG)\,.
\vv-.2>
\eeq

\begin{thm}
\label{detPYY}
Let \,$n\ge 2$\,. We have
\vvn.3>
\begin{align}
\label{detPsh}
\det\bigl(\Pshb_{\IJ}(\zz;h;\qqt\>;\kat)\bigr)_{\IJ\in\Il}\<=\,
\prod_{i=1}^{N-1}\prod_{j=i+1}^N(1-\qti_i/\qti_j)
^{h\min(\la_i,\la_j)/\<\kat}\;\prod_{i=1}^N\,
\qti_i^{\,d^{(1)}_{\bla,i}\sum_{a=1}^n z_a\</\<\kat}\,\times{}&
\\[3pt]
\notag
{}\times\,\prod_{a=1}^n\,\prod_{\satop{b=1}{b\ne a}}^n\,
\Bigl(\<\>2\<\>\pi\<\>\sqrt{-1}\,\,
\Gm\Bigl(1+\frac{z_a\<-z_b\<-h}\kat\>\Bigr)
\Bigr)^{\!d^{(2)}_\bla}\!,\; &
\\[-24pt]
\notag
\end{align}
where
\vvn-.3>
\beq
\label{dla12}
d^{(1)}_{\bla\<\>,\<\>i}\>=\,
\frac{\la_i\>(n-1)\<\>!}{\la_1\<\>!\ldots\la_N\<\>!}\;,\qquad
d^{(2)}_\bla\>=\,\frac{2\>(n-2)\<\>!}{\la_1\<\>!\ldots\la_N\<\>!}\,
\sum_{i=1}^{N-1}\sum_{j=i+1}^N\la_i\>\la_j\,.\kern-2em
\vv.1>
\eeq
\end{thm}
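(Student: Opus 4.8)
The plan is to treat $\Pshb=(\Pshb_{\IJ})_{\IJ\in\Il}$ as the matrix of a fundamental system of solutions and to compute its determinant by the Abel--Liouville method: $\det\Pshb$ satisfies a first--order differential system in $\qqt$ and a first--order difference system in $\zz$, and together these pin it down up to an overall constant that I fix by the known leading asymptotics. First I would record that by Lemma \ref{PsiPsol} each $\Psh_{\Yh_I}$ solves $\naqla_{\bla,\qqt,\kat,i}f=0$, i.e. $\kat\,\qti_i\,\der_{\qti_i}f=(D_i\>*_\qqt)f$, and that $\Pshb$ is precisely the matrix of these $|\Il|$ solutions written in the $\C[\zz]\otimes\C[h]$--basis $\{Y_J(\GG)\}$ of $H^*_T(\tfl)$, so it is a fundamental matrix. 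Differentiating \Ref{PshY} and applying Jacobi's formula gives, for each $i$,
\[
\kat\,\qti_i\,\der_{\qti_i}\log\det\Pshb\,=\,\tr\bigl(D_i\>*_\qqt\bigr),
\]
the trace taken on $H^*_T(\tfl)$; and the qKZ compatibility (Corollary \ref{cor KZdy}, Theorem \ref{mcor}) yields the companion difference relation $\det\Pshb(\dots,z_i+\kat,\dots)=\det K_i\cdot\det\Pshb$, with $\det K_i$ the determinant of the qKZ operator on $\Cnnl$.

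Next I evaluate the two right--hand sides. For the differential side I use Theorem \ref{qmh} to identify $D_i\>*_\qqt$ with $X^-_{\blai}(\zz;h;\qqt^{-1})$ on $\Cnnl$ and compute $\tr X^-_{\blai}$. The $h$--dependent operator part of $X_i$ in \Ref{Xi} is traceless on $\Cnnl$: the traces of $\et_{\ii}(1-\et_{\ii})/2$ and of $\sum_{a<b}\sum_k e^{(a)}_{\ik}e^{(b)}_{\ki}$ cancel, and $\tr(\et_{\ij}\et_{\ji}-\et_{\ii})=0$, so $\tr(X_i|_{\Cnnl})=d^{(1)}_{\bla,i}\sum_a z_a$, while the scalar corrections in \Ref{Xl-i} contribute the $\min(\la_i,\la_j)$--terms. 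After the substitution $q_k=\qti_k^{-1}$ this integrates to the monomial prefactor $\prod_i\qti_i^{\,d^{(1)}_{\bla,i}\sum_a z_a/\kat}$ and to the $(1-\qti_i/\qti_j)$--factors of \Ref{detPsh}. For the difference side I use that each $R^{(ab)}(u)=(u-h\>P^{(ab)})/(u-h)$ has eigenvalues $1$ and $(u+h)/(u-h)$, so $\det K_i$ is a product of such ratios times the $\qti$--diagonal; solving the resulting scalar difference equation in $\zz$ forces the factors $\Gm\bigl(1+(z_a-z_b-h)/\kat\bigr)$, the multiplicities being recorded by $d^{(2)}_\bla$.

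Finally I determine the remaining $\qqt$--independent constant. Letting $\qti_i/\qti_{i+1}\to0$ and invoking Theorem \ref{lead}, I factor $\Pshb=A\,B$ with $A_{IK}=\Yh_I(\Sid_K,\zzd,y)$ (independent of $\qqt$) and $B_{KJ}$ the coefficient of $Y_J$ in $\Psh_K$, whose leading term is a scalar $g_K$ times $\Dl_K$, where $\Dl_K(\zz_J;\zz)=\dl_{KJ}$. Thus $\det A$ is a Schubert--at--fixed--points determinant and the leading part of $\det B=\prod_K g_K\cdot\det(\Dl_K)_J$ supplies the $2\>\pii$ normalisations and matches the Gamma factors found on the difference side. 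Regularity of $\Pshb$ across $z_a-z_b\in\kat\>\Z$ (Lemma \ref{PsiPhom}) guarantees that no spurious rational factor enters the constant, so its value is fixed and the product formula \Ref{detPsh} follows.

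The hard part will be the joint bookkeeping on the difference side: extracting the exact Gamma--factor exponent $d^{(2)}_\bla=\tfrac{2(n-2)!}{\la_1!\cdots\la_N!}\sum_{i<j}\la_i\la_j$ from $\det K_i$ and verifying its compatibility with the leading--term normalisation coming from Theorem \ref{lead}, since this is where the combinatorial multiplicities enter and where the $2\>\pii$ prefactors and the reflection/shift identities for $\Gm$ must be reconciled; keeping the $\qti$--powers, the $(1-\qti_i/\qti_j)$--factors, and the $\zz$--dependent Gamma factors consistent between the differential and difference computations is the principal technical burden.
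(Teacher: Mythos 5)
Your proposal follows essentially the same route as the paper: Jacobi/Abel--Liouville applied to the quantum differential equations gives $\kat\,\qti_i\<\>\der_{\qti_i}\<\log\det\Pshb=\tr X^-_{\blai}$, which produces the $\qti$-powers and the $(1-\qti_i/\qti_j)$-factors, and the remaining $\qqt$-independent factor is fixed in the limit $\qti_i/\qti_{i+1}\to0$ via Theorem \ref{lead} together with the Schubert-polynomial determinant of Proposition \ref{detA}. The additional difference-equation analysis of $\det K_i$ is not used in the paper and is logically redundant --- by itself it would determine the $\zz$-dependence only up to $\kat$-periodic functions --- but since you ultimately fix everything from the leading asymptotics, this does not affect the correctness of your argument.
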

\begin{proof}
By Lemma \ref{PsiPsol}, the left-hand side of~\Ref{detPsh} solves
the differential equations
\vv.3>
\be
\Bigl(\<\>\kat\>\qti_i\frac{\der}{\der\qti_i}\>-\>
\tr\bigl(X^-_{\blai}(\zz;-h;\qqt^{-1})\bigr)\<\<\Bigr)\>
\det\<\>\bigl(\Pshb_{\IJ}(\zz;h;\qqt\>;\kat)\bigr)_{\IJ\in\Il}\<=\,0\,,
\qquad i=1\lc N\>,\kern-2em
\vv.1>
\ee
where \,$X^-_{\blai}$ \,are the modified dynamical Hamiltonians \Ref{Xl-i}.
\vv.06>
Thus \;$\det\<\>(\Pshb_{\IJ})$ \,equals the first two products in
the right-hand side of \Ref{detPsh} multiplied by a factor that does not
depend on \,$\qqt$\,. The remaining factor is found by taking the limit
\,$\qti_i/\qti_{i+1}\to0$ \,for all \,$i=1\lc N-1$\,, and applying
Theorem~\ref{lead} and Proposition~\ref{detA}.
\end{proof}

\begin{cor}
\label{basisK}
The collection of functions
\,$\bigl(\Psh_{\Yh_I}(\zz;h;\qqt\>;\kat)\bigr)_{I\in\Il}$
\vvn-.16>
is a basis of solutions of both the quantum differential equations
\,$\naqla_{\bla\>,\<\>\qqt,\<\>\kat\<\>,\<\>i}\,f\,=\,0$\,, \;$i=1\lc N$,
and the associated \qKZ/ difference equations.
\qed
\end{cor}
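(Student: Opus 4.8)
The plan is to deduce the corollary directly from the determinant evaluation of Theorem~\ref{detPYY} together with a count of solutions. First I would observe that each function $\Psh_{\Yh_I}(\zz;h;\qqt\>;\kat)$ is of the form $\Psh_P$ with $P=\Yh_I$, so by Lemma~\ref{PsiPsol} it already solves both the quantum differential equations $\naqla_{\bla\>,\<\>\qqt,\<\>\kat\<\>,\<\>i}\,f=0$, $i=1\lc N$, and the associated \qKZ/ difference equations. Since there are exactly $|\Il|$ such functions, and $|\Il|$ equals the rank of $H^*_T(\tfl)$, which by Theorem~\ref{mcor} is the dimension of the space of solutions, it remains only to establish that the collection $\bigl(\Psh_{\Yh_I}\bigr)_{I\in\Il}$ is linearly independent.

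For the independence I would pass to the expansion~\Ref{PshY},
\be
\Psh_{\Yh_I}(\GG;\zz;h;\qqt\>;\kat)=\sum_{J\in\Il}\Pshb_{\IJ}(\zz;h;\qqt\>;\kat)\,Y_J(\GG),
\ee
in the Schubert basis $\bigl(Y_J(\GG)\bigr)_{J\in\Il}$, which is a free basis of $H^*_T(\tfl)$ over $H^*_T({pt};\C)$, see~\Ref{YG}. A nontrivial linear relation $\sum_I c_I\,\Psh_{\Yh_I}=0$ among the solutions forces $\sum_I c_I\,\Pshb_{\IJ}=0$ for every $J$, because the $Y_J(\GG)$ are independent; this in turn forces $\det\bigl(\Pshb_{\IJ}\bigr)_{\IJ\in\Il}=0$ as a function of $\zz,h,\qqt,\kat$. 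Hence it suffices to show that this determinant does not vanish identically on the domain $L'''\times L''$ of Lemma~\ref{PsiPhom}.

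This is precisely where Theorem~\ref{detPYY} carries the load: it expresses the determinant as an explicit product of factors $(1-\qti_i/\qti_j)^{h\min(\la_i,\la_j)/\kat}$, powers $\qti_i^{\>d^{(1)}_{\bla,i}\sum_a z_a/\kat}$, and Gamma factors $\bigl(2\<\>\pi\<\>\sqrt{-1}\,\Gm(1+(z_a\<-z_b\<-h)/\kat)\bigr)^{d^{(2)}_\bla}$. I would then check that each factor is finite and nonzero on $L'''\times L''$: there one has $\qti_i/\qti_j\ne1$, so the first factors are nonzero, while the condition $z_a\<-z_b\<-h\not\in\kat\>\Z$ keeps every argument away from the poles of $\Gm$, and $\Gm$ has no zeros. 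Consequently the determinant is nonzero, the functions $\Psh_{\Yh_I}$ are linearly independent, and by the dimension count they form a basis. The genuine obstacle is the determinant computation of Theorem~\ref{detPYY} itself; granting that result, the present argument is the routine linear-algebra passage just outlined, and the only remaining care is the nonvanishing check of the explicit factors on the stated domain.
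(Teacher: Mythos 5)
Your proposal is correct and follows the paper's intended argument: Corollary \ref{basisK} is stated with no separate proof precisely because it is the immediate consequence of Lemma \ref{PsiPsol} (each $\Psh_{\Yh_I}$ is a solution) and the explicit nonvanishing of the determinant in Theorem \ref{detPYY}. Your spelled-out version — expansion in the Schubert basis, reduction of linear independence to $\det(\Pshb_{\IJ})\ne0$, and the check that each factor of \Ref{detPsh} is finite and nonzero on $L'''\times L''$ — is exactly the routine passage the paper leaves implicit.
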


\subsection{End of proof of Lemma~\ref{PsiPhom}}
\label{secpsiph}

It is enough to show the regularity of \,$\Psh_P(\zz;h;\qqt\>;\kat)$
\,at the hyperplanes \,$z_a\<-z_b\in\kat\>\Z$ \,assuming that
\,$h/\kat$ is real negative and sufficiently large.

\vsk.2>
For a number \,$A$\,, let \,$C(A)\subset\C$ \,be a parabola with the following
parametrization:
\vvn.3>
\beq
\label{CA}
C(A)\,=\,\{\,\kat\>\bigl(A+s^2\<\<-s\>\sqrt{\<-1}\>\bigr)\ \,|\ \,s\in\R\,\}\,.
\vv.2>
\eeq
Given \,$\zz,\>\kat$\,, take \>$A$ \,such that all the points \,$\zzz$ \,are
inside \,$C(A+N-2)$\,. Suppose \,$h/\kat$ is a sufficiently large negative real
so that all the points \,$z_1\<+h\lc z_n\<+h$ \,are outside \,$C(A)$\,. Set
\vvn-.7>
\beq
\label{Ccz}
\Cc_\bla(\zz)=\bigl(C(A)\bigr)^{\times\<\>\la^{(1)}}\!\<\<\lsym\times
\bigl(C(A+N-2)\bigr)^{\times\<\>\la^{(N\<-1)}}
\vv.2>
\eeq
indicating the dependence on \,$\bla$ \,and \,$\zz$ \,explicitly. The integral
\Ref{PshP} below does not depend on a particular choice of \,$A$\,.

\goodbreak
\vsk.2>

\begin{lem}
\label{intlem}
For a Laurent polynomial \,$P$ in \,$\ttd,\zzd,y$ \>symmetric in
\vvn-.06>
\,$\tdd^{\>(i)}_1\!\lc\tdd^{\>(i)}_{\la^{(i)}}$ \,for each \,$i=1\lc N-1$\,,
we have
\vvn.3>
\beq
\label{PshP}
\Psh_P(\zz;h;\qqt\>;\kat)\,=\,
\kat^{\>-\la^{\{1\}}\<-\<\>2\<\>\la_{\{2\}}}\,
(-1)^{\la^{\{1\}}\<+\>\la_{\{2\}}}\>
\bigl(\>2\<\>\pi\<\>\sqrt{-1}\;\Ga(-\>h/\kat)\bigr)^{-\la^{\{1\}}}\,
\Pst_P(\zz;h;\qqt\>;\kat)\,,
\vvn.2>
\eeq
\be
\Pst_P(\zz;h;\qqt\>;\kat)\,=\,
\frac{\Omh_\bla(\qqt;\kat)}{\la^{(1)}\<\>!\dots\la^{(N\<-1)}\<\>!}\;
\int_{\<\>\Cc_\bla(\zz)}\!\!P(\ttd;\zzd;y)\,
\Phi_\bla(\TT;\zz;\qqt^{-1};-\kat)\,
Q(\GG)\,\Wh(\TT\<\>;\GG)\;d^{\>\la^{\{1\}}}\<\TT\,.
\vv.4>
\ee
\end{lem}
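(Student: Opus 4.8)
The plan is to compute the contour integral on the right-hand side of \Ref{PshP} by iterated residues and to match the outcome with the residue-sum definition \Ref{Psh} of the functions $\Psh_I$. All poles of the integrand
\be
P(\ttd;\zzd;y)\,\Phi_\bla(\TT;\zz;\qqt^{-1};-\kat)\,Q(\GG)\,\Wh(\TT\<\>;\GG)
\ee
lie in the master function $\Phi_\bla$, since $Q(\GG)$ is polynomial, $\Wh$ is regular whenever the variables of a single group are pairwise distinct, and $P$ is a Laurent polynomial in the finite nonzero quantities $\ttd,\zzd,y$. After the substitution $\ka=-\kat$ in \Ref{PHI}, the factor $\pho(t^{(k)}_a\<\<-t^{(k+1)}_c)$ becomes $\Gm\bigl((t^{(k+1)}_c\<\<-t^{(k)}_a)/\kat\bigr)\,\Gm\bigl((t^{(k)}_a\<\<-t^{(k+1)}_c\<\<-h)/\kat\bigr)$, whose poles in $t^{(k)}_a$ are the two ladders $t^{(k)}_a\<=t^{(k+1)}_c\<\<+m\<\>\kat$ and $t^{(k)}_a\<=t^{(k+1)}_c\<\<+h-m\<\>\kat$, $m\in\Z_{\ge0}$ (with $t^{(N)}_c\<=z_c$). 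The parabolas $C(A+k-1)$ carrying the variables of group $k$ are spaced so as to enclose only the ascending ladders; cascading from level $k=N-1$ down to $k=1$ one checks that the totality of enclosed poles is precisely the set of shifted discrete cycles $\TT=\Si_J+\rr\kat$ of \Ref{cy}, with $J\in\Il$ and $\rr\<\in\Z_{\ge0}^{\la^{\{1\}}}$.

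Evaluating the iterated residue then yields
\begin{align*}
\int_{\<\>\Cc_\bla(\zz)}\!\!P\,\Phi_\bla\,Q\,\Wh\;d^{\>\la^{\{1\}}}\<\TT\,={}
&\,(2\>\pi\<\>\sqrt{-1}\,)^{\la^{\{1\}}}\prod_{k=1}^{N-1}\la^{(k)}!
\sum_{J\in\Il}\,\sum_{\rr\<\in\Z_{\ge0}^{\la^{\{1\}}}}\<
P(\Sid_J,\zzd,y)\,\times{}
\\
&{}\times\,\Res_{\>\TT\>=\>\Si_J+\<\>\rr\kat\>}\bigl(\Phi_\bla\bigr)\,
Q(\GG)\,\Wh(\Si_J+\<\>\rr\kat\>;\GG)\,,
\end{align*}
through three bookkeeping observations. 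Each of the $\la^{\{1\}}$ single-variable residues contributes a factor $2\>\pi\<\>\sqrt{-1}$. The integrand is symmetric in the variables of every group $\TT^{(k)}$, so the $\prod_k\la^{(k)}!$ orderings of the variables within a level produce equal residues and absorb the factorial denominator in $\Pst_P$. Finally, at a point $\TT=\Si_J+\rr\kat$ each variable has the form $t^{(i)}_a\<=z_{(\cdots)}+r^{(i)}_a\<\>\kat$, so $\tdd^{\>(i)}_a\<=e^{\<\>2\<\>\pii\,(z_{(\cdots)}+r^{(i)}_a\kat)/\kat}=e^{\<\>2\<\>\pii\,z_{(\cdots)}/\kat}$ is independent of $\rr$ and coincides with the corresponding coordinate of $\Sid_J$; hence $P$ evaluates to the constant $P(\Sid_J,\zzd,y)$ and factors out of the $\rr$-sum. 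Comparing with \Ref{Psh}, the inner double sum equals $\sum_J P(\Sid_J,\zzd,y)\,\Pst_J/\Omh_\bla$, so that $\Pst_P=(2\>\pi\<\>\sqrt{-1}\,)^{\la^{\{1\}}}\sum_J P(\Sid_J,\zzd,y)\,\Pst_J$. The extra factor $(2\>\pi\<\>\sqrt{-1}\,)^{-\la^{\{1\}}}$ carried by the prefactor in \Ref{PshP}, relative to \Ref{Psh}, cancels this, and \Ref{PshP} collapses to the defining identity \Ref{PPI} for $\Psh_P$.

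The main obstacle is the analytic justification of this residue evaluation: one must show that the integrand decays along the parabolas fast enough to close each contour onto its enclosed poles, and that the enclosed poles are exactly the ascending lattice and nothing else. This is where the precise shape \Ref{CA} enters. Along $C(A)$ a variable has real part $\sim\kat\<\>s^2$ and imaginary part $\sim-\kat\<\>s$, and Stirling's estimate $|\Gm(\sigma+\sqrt{-1}\,\tau)|\sim\sqrt{2\pi}\,|\tau|^{\sigma-1/2}e^{-\pi|\tau|/2}$ shows that per group-$k$ variable the net number of Gamma factors of $\Phi_\bla$ in the numerator exceeds that in the denominator by $2(\la_{k+1}+1)>0$, giving exponential decay in $|\tau|$; the exponential prefactor $(\qti_i/\qti_{i+1})^{\sum_j t^{(i)}_j/\kat}$ supplies the decay in the real direction when $|\qti_i/\qti_{i+1}|<1$. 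The hypothesis that $h/\kat$ be real, negative and sufficiently large places every point $z_a+h$ to the left of $C(A)$, hence of all the parabolas, so that the descending ladders stay outside while the $z_a$ and their ascending shifts stay inside; the unit spacing of $A,A+1,\ldots,A+N-2$ propagates these enclosures through the cascade of levels. Once \Ref{PshP} is established in this regime, both of its sides are holomorphic on the domain \Ref{domain}, and the identity---together with the regularity at $z_a-z_b\in\kat\<\>\Z$ asserted in Lemma~\ref{PsiPhom}---extends by analytic continuation.
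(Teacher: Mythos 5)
Your proposal takes essentially the same route as the paper. The paper's own proof consists of asserting convergence of the integral for \,$|\>\qti_i/\qti_{i+1}|<1$ \,and then evaluating it by residues, concretely by replacing \,$\Cc_\bla(\zz)$ \,with the translated contour \,$\bigl(C(A+B)\bigr)^{\times\<\>\la^{(1)}}\!\<\<\lsym\times\bigl(C(A+B+N-2)\bigr)^{\times\<\>\la^{(N\<-1)}}$ \,and sending \,$B\to\infty$\,, so that the residues swept up reproduce the series in \Ref{Psh} and hence \Ref{PPI}; your direct ``closing of the parabolas onto the enclosed ascending ladders'' is the same computation. Your bookkeeping --- one factor of \,$2\<\>\pi\<\>\sqrt{-1}$ \,per one-dimensional residue, the factor \,$\prod_{k}\la^{(k)}\<\>!$ \,coming from the symmetry of the integrand in each group of variables and cancelling the denominator of \,$\Pst_P$\,, and the \,$\kat$-periodicity of \,$P(\ttd;\zzd;y)$ \,in \,$\TT$\,, which makes it constant equal to \,$P(\Sid_J,\zzd,y)$ \,on every shifted cycle --- is exactly what the paper leaves implicit in the phrase ``by \Ref{Psh}, the resulting series yields formula \Ref{PPI}''. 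The identification of the enclosed poles with the ascending ladders, using that \,$h/\kat$ \,is large negative and the parabolas are spaced by unit multiples of \,$\kat$\,, also matches the setup preceding the lemma.

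The one place where your written justification is off is the convergence estimate. The asymptotic \,$|\Gm(\sigma+\sqrt{-1}\,\tau)|\sim\sqrt{2\pi}\,|\tau|^{\sigma-1/2}e^{-\pi|\tau|/2}$ \,is valid for \,$\sigma$ \,bounded and \,$|\tau|\to\infty$\,, whereas on \,$C(A)$ \,the real part grows like \,$\kat\>s^2\sim\tau^2/\kat$\,, so this form of Stirling does not apply. Moreover, your count of excess numerator Gamma factors per group-$k$ variable, \,$2\>(\la_{k+1}+1)$\,, omits the \,$2\>\la^{(k-1)}$ \,numerator factors \,$\pho(t^{(k-1)}_b\<\<-t^{(k)}_a)$ \,coupling group \,$k$ \,to group \,$k-1$ \,and counts the within-group denominator product over \,$b\ne a$ \,only once rather than for both ordered pairs; the correct balance is \,$2\>(\la_{k+1}\<-\la_k+2)$\,, which can be negative. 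Neither issue sinks the argument: pairing the two Gammas inside each \,$\pho$ \,shows that the Gamma factors contribute at most \,$e^{\<\>O(|s|)}$ \,growth along the parabola, and the factor \,$|\>\qti_i/\qti_{i+1}|^{\Re t/\kat}\<\sim e^{-c\<\>s^2}$ \,(which you correctly identify, and which is all the paper invokes) dominates. But as written the decay mechanism you attribute to the Gamma count is not the one that makes the integral converge.
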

\begin{proof}
The integral converges provided \,$|\>\qti_i/\qti_{i+1}|<1$ \,for all
\vvn.06>
\,$i=1\lc N-1$\,, and a branch of $\;\log\>\qti_i$ is fixed for each
\,$i=1\lc N$\,. Evaluate the integral by residues in the following way:
replace \,$\Cc_\bla(\zz)$ \,by
\,$\bigl(C(A+B)\bigr)^{\times\<\>\la^{(1)}}\!\<\<\lsym\times
\bigl(C(A+B+N-2)\bigr)^{\times\<\>\la^{(N\<-1)}}$,
\vvn.1>
where \,$B\<\in\<\R_{\ge0}$\,, and send \,$B$ to infinity.
Then by~\Ref{Psh}, the resulting series yields formula \Ref{PPI}.
\end{proof}

The integrand in formula~\Ref{PshP} is regular at the hyperplanes
\,$z_a\<-z_b\in\kat\>\Z$, and so does the function
\,$\Psh_P(\zz;h;\qqt\>;\kat)$\,. Lemma~\ref{PsiPhom} is proved.
\qed

\subsection{The homogeneous case \,$\zz=0$\,}
\label{homogen}

The quantum differential equations
\,$\naqla_{\bla\>,\<\>\qqt,\<\>\kat\<\>,\<\>i}\,f\,=\,0$
\,depend on \,$\zz$ \,as a parameter and are well defined at \,$\zz=0$\,.

\vsk.2>
For any Laurent polynomial \,$P$ \,in \;$\ttd,y$\,, symmetric in
\,$\tdd^{\>(i)}_1\!\lc\tdd^{\>(i)}_{\la^{(i)}}$ \,for each \,$i=1\lc N-1$\,,
the function \,$\Psh_P(0\<\>;h;\qti;\kat)$ is a solution of the quantum
differential equations
\,$\naqla_{\bla\>,\<\>\qqt,\<\>\kat\<\>,\<\>i\<\>,\>\zz=0}\,f\,=\,0$\,,
\;$i=1\lc N$, see Lemma~\ref{PsiPsol}.

\begin{lem}
\label{PsiPhom0}
The function \,$\Psh_P(0\<\>;h;\qti;\kat)$ \,is holomorphic in \,$\qqt,h$
\vvn.1>
provided \,$|\>\qti_i/\qti_{i+1}|<1$\,, \,$i=1\lc N-1$\,, a branch of
$\;\log\>\qti_i$ \,is fixed for each \,$i=1\lc N$, and
\,$h\not\in\kat\>\Z_{\ge0}$\,.
\end{lem}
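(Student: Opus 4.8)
The plan is to read the statement off the integral representation of Lemma~\ref{intlem} specialized to $\zz=0$, rather than off the residue series \Ref{Psh}. The latter degenerates at $\zz=0$, since the discrete cycles $\Si_I$ all collapse onto $\TT=0$ once the $z_a$ coincide, whereas the contour $\Cc_\bla(\zz)$ of Lemma~\ref{intlem} stays non-degenerate and may be used at $\zz=0$ (place the point $0$ inside $C(A+N-2)$ and take $h/\kat$ real, negative and large so that $0+h=h$ lies outside $C(A)$). I would first record that \Ref{PshP} at $\zz=0$ reads
\[
\Psh_P(0;h;\qqt\>;\kat)=\kat^{-\la^{\{1\}}-2\la_{\{2\}}}(-1)^{\la^{\{1\}}+\la_{\{2\}}}\bigl(2\pi\sqrt{-1}\,\Ga(-h/\kat)\bigr)^{-\la^{\{1\}}}\,\Pst_P(0;h;\qqt\>;\kat),
\]
and that the scalar prefactor is harmless: the power of $\kat$ is constant, while $\bigl(\Ga(-h/\kat)\bigr)^{-\la^{\{1\}}}$ is entire in $h$, its zeros lying exactly on $h\in\kat\Z_{\ge0}$. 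Thus everything reduces to proving that the integral defining $\Pst_P(0;h;\qqt\>;\kat)$ is holomorphic in $\qqt$ on the domain $|\qti_i/\qti_{i+1}|<1$ (with the fixed branches of $\log\qti_i$) and in $h$ for $h\notin\kat\Z_{\ge0}$.

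For the dependence on $\qqt$ I would repeat the convergence analysis of Lemmas~\ref{Mji} and~\ref{intlem}. At $\zz=0$ the $\qqt$-dependence of the integrand sits only in the factor $\Omh_\bla(\qqt;\kat)$, holomorphic on $L''$ once the branches are fixed, and in the prefactor of $\Phi_\bla$ of the shape $\prod_{i=1}^{N-1}(\qti_{i+1}/\qti_i)^{-\sum_j t^{(i)}_j/\kat}$ (recall $\qq=\qqt^{-1}$, $\ka=-\kat$); the latter is holomorphic with the chosen branches, and its exponential decay along the right-opening tails of the parabolas $C(A+i-1)$ is precisely what makes the integral converge when $|\qti_i/\qti_{i+1}|<1$. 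At $\zz=0$ the remaining $\zz$-prefactor $(\cdots\,q_N)^{\sum_a z_a/\ka}$ of \Ref{PHI} equals $1$, so no further $\qqt$-singularity is introduced.

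The heart of the matter, and the step I expect to be the main obstacle, is holomorphy in $h$. Lemma~\ref{PsiPhom} already gives regularity of $\Psh_P$ at $\zz=0$ whenever $h\notin\kat\Z$, because its domain $L'''$ requires $z_a-z_b+h\notin\kat\Z$, which at $\zz=0$ is simply $h\notin\kat\Z$; what remains is to gain regularity at the negative points $h\in\kat\Z_{<0}$. For this I would fix a height $A$ with $A\notin\Z$ and track, as $h$ varies near a point $m\kat$ with $m<0$, the poles of the integrand contributed by $\Phi_\bla$: the rational factors $1/(t^{(i)}_a-t^{(i)}_b-h)$ and the Gamma factors $\Ga\bigl((t^{(i)}_a-t^{(i+1)}_c-h)/\kat\bigr)$. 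On $\Cc_\bla(0)$ the difference of two variables lying on the same parabola is never a nonzero multiple of $\kat$, so the rational poles $h=t^{(i)}_a-t^{(i)}_b$ never reach $h\in\kat\Z_{<0}$; and for each cross-level variable $t^{(i+1)}_c$ the two families of Gamma-poles lie at $t^{(i)}_a-\kat\Z_{\ge0}$ and at $t^{(i)}_a-h+\kat\Z_{\ge0}$, which for $h\in\kat\Z_{<0}$ sit on strictly opposite sides of $t^{(i)}_a$ and can never collide, so no pair of poles pinches $\Cc_\bla(0)$. Consequently a pole may cross the contour transversally as $h$ moves, merely deforming it, but the integral continues holomorphically for all $h\notin\kat\Z_{\ge0}$. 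Combining the harmless prefactor with holomorphy in $\qqt$ and in $h$ then gives the lemma. The one genuinely delicate point is the bookkeeping of the pole locations of $\Phi_\bla$ relative to the staggered parabolas $C(A),\dots,C(A+N-2)$, carried out exactly as in Section~\ref{secpsiph}, to confirm that no pinching occurs at $h\in\kat\Z_{<0}$.
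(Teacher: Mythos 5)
Your reduction is the same as the paper's: by Lemma~\ref{PsiPhom} the only issue is regularity at \,$h\in\kat\>\Z_{<0}$\,, and the right tool is the contour-integral representation \Ref{PshP} at \,$\zz=0$\, rather than the residue series. Your observation that the prefactor \,$\bigl(\Ga(-h/\kat)\bigr)^{-\la^{\{1\}}}$ is harmless, and your identification of the two families of $\Gm$-poles of \,$\Phi_\bla$ in each variable \,$t^{(i+1)}_c$ (at \,$t^{(i)}_a-\kat\>\Z_{\ge0}$ \,and \,$t^{(i)}_a-h+\kat\>\Z_{\ge0}$\,, colliding only when \,$h\in\kat\>\Z_{\ge0}$) are both correct.

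The gap is in the step you defer to ``bookkeeping'': the no-pinching argument you sketch checks each adjacent pair of levels with the other variable sitting on its \emph{original} parabola. In the iterated integral this is not enough. Once \,$h/\kat$ has risen past the fixed heights \,$A\lc A+N-2$\,, the level-$(N-1)$ contour must be deformed to keep the poles \,$h+\kat\>\Z_{\le0}$ \,outside; the level-$(N-1)$ variables then sit on fingers near \,$h,\>h-\kat,\dots$, which drags the pole loci for the level-$(N-2)$ variables toward \,$2h+\kat\>\Z_{\le0}$\,, and so on down the tower. A fixed contour with $h$-independent heights cannot separate all these migrating families uniformly, so ``no pairwise collision'' does not by itself yield the continuation; one must exhibit a consistent simultaneous deformation. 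The paper's proof supplies exactly this missing device: for \,$h/\kat\in\R_{<0}$ it replaces \,$\Cc_\bla(0)$ \,by the $h$-dependent contour \,$\Cc'_\bla(h,\kat)$ \,whose level-$i$ parabola is \,$C\bigl((N-i)\>\eps\bigr)$ \,with \,$\eps=h/(N\kat)$\,. These heights interlace the pole families for \emph{every} negative real \,$h/\kat$ at once (the inside family for level $i$ lies at heights \,$\ge(N-i-1)\>\eps$ \,and the outside family at heights \,$\le(2N-i-1)\>\eps$\,, with \,$(N-i)\>\eps$ strictly between them), so the integral over \,$\Cc'_\bla(h,\kat)$ is manifestly regular on all of \,$h/\kat\in\R_{<0}$ and agrees with \Ref{PshP} for \,$h/\kat$ large negative. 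You should either produce this explicit contour or an equivalent inductive deformation scheme; as written, the claim that no pinching occurs on \,$\Cc_\bla(0)$ does not close the argument.
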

\begin{proof}
By Lemma~\ref{PsiPhom}, we need only to show that \,$\Psh_P(0\<\>;h;\qti;\kat)$
\,is regular if \,$h\in\kat\>\Z_{<0}$\,. We will prove that
\,$\Psh_P(0\<\>;h;\qti;\kat)$ \,is regular if \,$h/\kat\in\R_{<0}$\,.

\vsk.2>
If \,$h/\kat$ \,is a sufficiently large negative real, write
\,$\Psh_P(0\<\>;h;\qti;\kat)$ \,by formula~\Ref{PshP}. Then one can replace
the integration contour \,$\Cc_\bla(0)$ \,by the contour
\vvn.1>
\be
\Cc'_\bla(h,\kat)\,=\,
\bigl(C\bigl((N\<\<-1)\>\eps\bigr)\bigr)^{\times\<\>\la^{(1)}}\!\<\<
\times
\bigl(C\bigl((N\<\<-2)\>\eps\bigr)\bigr)^{\times\<\>\la^{(2)}}\!\<\<
\lsym\times
\bigl(C(\eps)\bigr)^{\times\<\>\la^{(N\<-1)}},
\vv.2>
\ee
where \,$\eps=h/(N\<\>\kat)$\,, without changing the integral.
With the integration over \,$\Cc'_\bla(h,\kat)$\,, it is clear that
\,$\Psh_P(0\<\>;h;\qti;\kat)$ continues to a function regular for all negative
real \,$h/\kat$\,.
\end{proof}

Consider the algebras
\vvn.2>
\be
H^*_\Cxx(\tfl)\,=\,H^*_T(\tfl)/\bra\zz=0\>\ket\,,\qquad
K_\Cxx(\tfl)\,=\,K_T(\tfl)/\bra\zzd=(1\lc 1)\<\>\ket\,.
\vv.2>
\ee
The algebra \,$H^*_\Cxx(\tfl)$ \,is a free module over \,$\C[h]$ \,and
\vvn.1>
the algebra \,$K_\Cxx(\tfl)$ \,is a free module over \,$\C[y^{\pm1}]$\,,
with bases given by the respective classes of Schubert polynomials,
see \Ref{YG}, \Ref{YhG}.

\vsk.2>
Expand solutions of the quantum differential equation at \,$\zz=0$ \,using
those Schubert bases:
\beq
\label{PshY0}
\Psh_{\Yh_I}(\GG;\>\<0\<\>;h;\qqt\>;\kat)\,=\,\sum_{J\in\Il\!}\,
\Pshb_{\IJ}(0\<\>;h;\qqt\>;\kat)\,Y_J(\GG)\,.
\vv.2>
\eeq
Let \,$d_\bla=\<\>n\<\>!\<\>/(\la_1\<\>!\ldots\la_N\<\>!\<\>)$\,.
Formula~\Ref{detPsh} at \,$\zz=0$ \,takes the form
\vvn.4>
\begin{align}
\label{detPsh0}
& \det\bigl(\Pshb_{\IJ}(0\<\>;h;\qqt\>;\kat)\bigr)_{\IJ\in\Il}\<={}
\\[2pt]
&\;{}=\,\Bigl(\<\>2\<\>\pi\<\>\sqrt{-1}\,\,\Gm\Bigl(1-\frac h\kat\>\Bigr)\Bigr)
^{d_\bla\sum_{1\le i<j\le N}\la_i\>\la_j}\,
\prod_{i=1}^{N-1}\prod_{j=i+1}^N(1-\qti_i/\qti_j)^{h\min(\la_i,\la_j)/\<\kat}.
\notag
\end{align}

\begin{cor}
\label{basisK1}
The collection of functions
\,$\bigl(\Psh_{\Yh_I}(0\<\>;h;\qqt\>;\kat)\bigr)_{I\in\Il}$
\vvn-.16>
is a basis of solutions of the quantum differential equations
\,$\naqla_{\bla\>,\<\>\qqt,\<\>\kat\<\>,\<\>i\<\>,\>\zz=0}\,f\,=\,0$\,,
\;$i=1\lc N$.
\qed
\end{cor}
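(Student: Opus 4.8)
The plan is to derive Corollary~\ref{basisK1} from the homogeneous counterparts of the ingredients already used for Corollary~\ref{basisK}, so the argument is short and follows the same three-step scheme: produce the solutions, count the dimension of the solution space, and check linear independence via a determinant. First I would verify that each $\Psh_{\Yh_I}(0;h;\qqt;\kat)$, $I\in\Il$, is genuinely a solution of the equations $\naqla_{\bla,\qqt,\kat,i,\zz=0}f=0$. As observed at the start of Section~\ref{homogen}, the operators $\naqla_{\bla,\qqt,\kat,i}$ are well defined at $\zz=0$; by Lemma~\ref{PsiPhom0} the specialization $\Psh_{\Yh_I}(0;h;\qqt;\kat)$ is a well-defined holomorphic function of $\qqt,h$ on the indicated domain; and by Lemma~\ref{PsiPsol} applied with $P=\Yh_I$ it solves both the quantum differential equations and the associated \qKZ/ equations. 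Hence all $|\Il|$ functions in the collection are solutions.

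Next I would count dimensions. The quantum connection $\naqla_{\bla,\qqt,\kat}$ acts on $H^*_\Cxx(\tfl)$, which is free of rank $\dim H^*_\Cxx(\tfl)=|\Il|$ with Schubert basis indexed by $\Il$, and the connection is flat, so its space of solutions has dimension $|\Il|$ (with coefficients that are constant in $\qqt$). Since the collection $\bigl(\Psh_{\Yh_I}(0;h;\qqt;\kat)\bigr)_{I\in\Il}$ already has the correct cardinality, it remains only to establish linear independence.

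Linear independence is where the determinant evaluation enters. Expanding each solution in the Schubert basis $(Y_J)_{J\in\Il}$ as in~\Ref{PshY0} produces the transition matrix $\bigl(\Pshb_{\IJ}(0;h;\qqt;\kat)\bigr)_{\IJ\in\Il}$, and the $\Psh_{\Yh_I}$ are linearly independent precisely when this matrix is invertible. Its determinant is given in closed form by~\Ref{detPsh0}, obtained from Theorem~\ref{detPYY} by setting $\zz=0$. On the relevant domain this determinant is a nonzero function: the factor $\Gm(1-h/\kat)$ is finite and nonvanishing, since $h\notin\kat\Z_{\ge0}$ forces $1-h/\kat\notin\Z_{\le0}$ and the Gamma function has no zeros, while the factor $\prod_{i<j}(1-\qti_i/\qti_j)^{h\min(\la_i,\la_j)/\kat}$ is nonzero for $\qqt$ with distinct coordinates in the domain $|\qti_i/\qti_{i+1}|<1$. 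Evaluating at a generic point where the determinant is nonzero forces the vanishing of any coefficient relation, so the solutions are linearly independent, and together with the dimension count this proves the collection is a basis.

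I do not expect a genuine obstacle, since all the analytic work has been prepared in advance; the corollary is essentially a specialization of Corollary~\ref{basisK}. The only points that require care are the two features of the degenerate value $\zz=0$: that the solutions remain holomorphic there, which is supplied by Lemma~\ref{PsiPhom0} through the contour-integral representation~\Ref{PshP}, and that the determinant neither vanishes identically nor acquires a pole after setting $\zz=0$, which is guaranteed by the explicit formula~\Ref{detPsh0}. The conceptually hardest ingredient, the evaluation of the determinant itself, has already been carried out in Theorem~\ref{detPYY}, so here it is used as a black box.
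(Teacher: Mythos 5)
Your proposal is correct and follows essentially the same route as the paper: the corollary is stated there with no separate argument precisely because it is read off from Lemma~\ref{PsiPsol}, Lemma~\ref{PsiPhom0}, and the nonvanishing of the determinant~\Ref{detPsh0} on the stated domain, exactly as you lay out. The extra care you take in checking that $\Gm(1-h/\kat)$ and the factors $(1-\qti_i/\qti_j)^{h\min(\la_i,\la_j)/\kat}$ do not vanish is the right (if routine) justification of that last step.
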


\subsection{The limit \,$h\to\infty$\,}
\label{hinfty}
Suppose that \,$\qti_i/\qti_{i+1}=(-\<\>h)^{-\la_i\<-\la_{i+1}}p_i/p_{i+1}$\,,
\,$i=1\lc N-1$\,, and \,$\qti_N\<=p_N$\,, where \,$p_1\lc p_N$ \,are new
variables. The limit \,$h\to\infty$ \,keeping \,$p_1\lc p_N$ \,fixed
corresponds to replacing the cotangent bundle \,$\tfl$ \>by the partial flag
variety \,$\Fla$ \>itself, the algebras \,$H^*_T(\tfl)$\,, \,$K_T(\tfl)$ \,by
the respective algebras \,$H^*_A(\Fla)$\,, \,$K_A(\Fla)$\,, where \,$A\subset
GL_n(\C)$ \,is the torus of diagonal matrices, and the equivariant quantum
differential equations for $\tfl$ by the analogous equations for $\Fla$.
We will discuss this limit in detail in a separate paper making here only
a few remarks.

\vsk.2>
We identify \,$H^*_A(\Fla)$ with the subalgebra in \,$H^*_T(\tfl)$ \,of
\,$h$-independent elements, and \,$K_A(\Fla)$ with the subalgebra in
\,$K_T(\tfl)$ \,of \,$y$-independent elements.

\vsk.2>
The discussion of the limit \,$h\to\infty$ \,is based on Stirling's formula
\vvn.5>
\beq
\label{Stir}
\frac{\Gm(\al-h/\kat)}{\Gm(\bt-h/\kat)}\;\sim\,(-\<\>h/\kat)^{\al-\bt}\>,
\qquad h\to\infty\,.\kern-2em
\vv.5>
\eeq
For \,$\Fla$\,, \,we have the following counterparts of the master function
\vvn.5>
\begin{align}
\label{PHIo}
\Phi^\cirs_\bla(\TT;\zz;\pp\<\>;\kat)\,=\,
(e^{\<\>\pii\,(\la_N\<-\<\>n)}p_N)^{\>\sum_{a=1}^nz_a/\kat}\,
\prod_{i=1}^{N-1}\>\Bigl(\>
\frac{e^{\<\>\pii\,(\la_i\<-\la_{i+1})}}{\kat^{\>\la_i+\la_{i+1}}}\>
\frac{p_i}{p_{i+1}}\>\Bigr)^{\sum_{j=1}^{\la^{(i)}} t^{(i)}_j\!/\kat}\times{}&
\\[2pt]
\notag
{}\times\<{}\prod_{i=1}^{N-1}\,\prod_{a=1}^{\la^{(i)}}\,\biggl(\,
\prod_{\satop{b=1}{b\ne a}}^{\la^{(i)}}\,
\frac1{\Gm\bigl((t_b^{(i)}\<\<-t_a^{(i)})/\kat\bigr)}\,
\prod_{c=1}^{\la^{(i+1)}}\<\Gm\bigl((t_c^{(i+1)}\<\<-t_a^{(i)})/\kat\bigr)
\<\biggr)\,&,
\\[-21pt]
\notag
\end{align}
the weight function
\vvn-.3>
\beq
\label{Who}
\Who(\TT,\GG)\,=\,\prod_{i=1}^{N-1}\prod_{j=i+1}^N\,
\prod_{a=1}^{\la^{(i)}}\>\prod_{b=\la^{(i)}\<+1}^{\la^{(i+1)}}\!
(t^{(i)}_a\<\<-\gm_{j,\<\>b})\,,
\vv.4>
\eeq
and solutions of the quantum differential equations
\vvn.5>
\beq
\label{Psho}
\Psh^\cirs_I(\zz;\pp\<\>;\kat)\,=\,
(-\>\kat)^{\>-\la^{\{1\}}\<-\<\>\la_{\{2\}}}\;
\kat^{\>\sum_{i=1}^{N-1}\sum_{a\in I^i}(\la_i+\<\>\la_{i+1})\>z_a/\kat}
\;\Pst^\cirs_I(\zz;\pp\<\>;\kat)\,,\kern-1em
\vvn.3>
\eeq
\be
\Pst^\cirs_I(\zz;\pp\<\>;\kat)\,=
\sum_{\rr\<\in\Z_{\ge0}^{\la^{\{1\}}}\!\!\!\!}
\Res_{\>\TT\>=\>\Si_I+\<\>\rr\kat\>}
\bigl(\Phi^\cirs_\bla(\TT;\zz;\pp\<\>;\kat)\bigr)\>
\Who(\Si_I+\<\>\rr\kat;\GG)\,,\kern-1em
\ee
where \;$I^{\>i}\<=\bigcup_{\>j=1}^{\,i}I_j$ \,and
\,$\la_{\{2\}}=\sum_{1\le i<j\le N}\>\la_i\>\la_j$\,.
The series converges and defines a holomorphic function
\,$\Psh^\cirs_P(\zz;\pp\<\>;\kat)$ \,of \,$\zz,\pp$ on the domain in
\vvn.1>
$\;\C^n\!\times\C^N\<$ such that a branch of $\;\log\>p_i$ \,is fixed for each
\,$i=1\lc N$, \,and \,$z_a\<-z_b\<\not\in\kat\>\Z$ \,for all \,$a,b=1\lc n$\,,
\,$a\ne b$\,.

\vsk.3>
Set \,$\la^{\{2\}}=\,\sum_{i=1}^{N-1}\,\bigl(\la^{(i)}\bigr)^2$.
As \,$h\to\infty$\,, we have
\,$(-\<\>h)^{\la^{\{1\}}\<-\la^{\{2\}}}\>\Wh(\TT,\GG)\>\to\,\Who(\TT,\GG)$\,,
\vvn.7>
\be
\frac{(-\<\>h)^{\la^{\{2\}}\<-\<\>\la^{\{1\}}}\>
(-\<\>h/\kat)^{(n\<\>-\la_N)\sum_{a=1}^nz_a/\kat}}
{\bigl(\<\>\Gm(-\<\>h/\kat)\bigr)^{\la^{\{1\}}\<+\<\>\la_{\{2\}}}}\;
\Phi_\bla(\TT;\zz;h;\qqt^{-1};-\kat)\,\to\,
\Phi^\cirs_\bla(\TT;\zz;\pp\<\>;\kat)\,,
\vv-.1>
\ee
and
\vvn.3>
\be
\frac{\kat^{\>\sum_{i=1}^{N-1}\sum_{a\in I^i}(\la_i+\<\>\la_{i+1})\>z_a/\kat}
\>(-\<\>h/\kat)^{(n\<\>-\la_N)\sum_{a=1}^nz_a/\kat}}
{\bigl(\<\>\Gm(1-\<\>h/\kat)\bigr)^{\>\la_{\{2\}}}}\;
\Psh_I(\zz;h;\qqt\>;\kat)\,\to\,
\Psh^\cirs_I(\zz;\pp\<\>;\kat)\,.
\vv.8>
\ee
If \,$p_i/p_{i+1}\to0$ \,for all \,$i=1\lc N-1$\,, then similarly to \Ref{FHI},
\vvn.4>
\begin{align}
\label{FHIo}
\Psh^\cirs_I(\zz;\pp\<\>;\kat)\,=\,
\prod_{i=1}^N\>\bigl(\>e^{\<\>\pii\,(\la_i-\<\>n)}\>p_i\bigr)
^{\>\sum_{a\in I_i}\<\<z_a/\kat}
\,\prod_{i=1}^{N-1}\prod_{j=i+1}^N\,\prod_{a\in I_i}\,
\prod_{b\in I_j}\,\Ga\Bigl(1+\frac{z_b\<-z_a}\kat\Bigr)\times{}\! &
\\[2pt]
\notag
{}\times\,\biggl(\Dl_I+\!
\sum_{\satop{\mb\in\Z_{\ge 0}^{N\<-1}\!\!\!\!}{\mb\ne 0}}
\>\Psh^\cirs_{I\<,\<\>\mb}(\zz;\kat)
\,\prod_{i=1}^{N\<-1}\>\Bigl(\>\frac{p_i}{p_{i+1}}\>\Bigr)^{\!m_i}
\biggr)&,
\\[-15pt]
\notag
\end{align}
where $\;\Dl_I(\GG,\zz)=R_I(\GG;\zz)/R(\zz_I)$ \,is the cohomology class
such that $\;\Dl_I(\zz_J;\zz)=\dl_{\IJ}$\,, and the classes
\;$\Psh^\cirs_{I\<,\<\>\mb}(\zz;\kat)$ are rational functions
in \,$\zz,\kat$\,, regular if \,$z_a\<-z_b\<\not\in\kat\>\Z$
\,for all \,$a,b=1\lc n$\,, \,$a\ne b$\,.

\vsk.2>
Recall the contour $\;\Cc_\bla(\zz)$\,, see~\Ref{Ccz}.
Given a Laurent polynomial \,$P$ \,in the variables \;$\ttd,\zzd$\,,
symmetric in \,$\tdd^{\>(i)}_1\!\lc\tdd^{\>(i)}_{\la^{(i)}}$ \,for each
\,$i=1\lc N-1$\,, define
\vvn.3>
\beq
\label{PshPo}
\Psh^\cirs_P(\zz;\pp\<\>;\kat)\,=\,
\frac{(-\>\kat)^{\>-\la^{\{1\}}\<-\<\>\la_{\{2\}}}\;
\kat^{\>\sum_{i=1}^{N-1}\sum_{a\in I^i}(\la_i+\<\>\la_{i+1})\>z_a/\kat}}
{\bigl(2\<\>\pi\<\>\sqrt{-1}\,\bigr)^{\la^{\{1\}}}}\;
\Pst^\cirs_P(\zz;\pp\<\>;\kat)\,,
\vvn.2>
\eeq
\be
\Pst^\cirs_P(\zz;\pp\<\>;\kat)\,=\,
\frac1{\la^{(1)}\<\>!\dots\la^{(N\<-1)}\<\>!}\;
\int_{\<\>\Cc_\bla(\zz)}\!\!P(\ttd;\zzd)\,
\Phi^\cirs_\bla(\TT;\zz;\pp\<\>;\kat)\bigr)\,
\Wh^\cirs(\TT\<\>;\GG)\;d^{\>\la^{\{1\}}}\<\TT\,,\kern-1.4em
\vv.2>
\ee
cf.~\Ref{Psho}. The integral converges and defines a holomorphic function
\vvn.06>
\,$\Psh^\cirs_P(\zz;\pp\<\>;\kat)$ \,of \,$\zz,\pp$ on the domain in
$\;\C^n\!\times\C^N\<$ such that a branch of $\;\log\>p_i$ \,is fixed for each
\,$i=1\lc N$. Furthermore,
\beq
\label{PPIo}
\Psh^\cirs_P(\zz;\pp\<\>;\kat)\,=\,
\sum_{I\in\Il}\,P(\Sid_I,\zzd)\,\Psh^\cirs_I(\zz;\pp\<\>;\kat)\,,
\eeq
and the assignment \,$P\mapsto\Psh^\cirs_P$ \,defines a map from \,$K_A(\Fla)$
\,to the space of solutions of the quantum differential equations with values
in \,$H^*_A(\Fla)$\,.

\vsk.2>
Consider the classes \,$Y_I(\GG)\>,\,\Yh_I(\GGd)$ \,given by \Ref{YG},
\Ref{YhG}, and write
\vvn.4>
\beq
\label{PshYo}
\Psh^\cirs_{\Yh_I}(\GG;\zz;\pp\<\>;\kat)\,=\,
\sum_{J\in\Il\!}\,\Pshb^\cirs_{\IJ}(\zz;\pp\<\>;\kat)\,Y_J(\GG)\,,
\eeq
cf.~\Ref{PshY}. Taking the limit \,$h\to\infty$ \,in formula \Ref{detPsh}
yields
\vvn.1>
\beq
\label{detPsho}
\det\bigl(\Pshb^\cirs_{\IJ}(\zz;\pp\<\>;\kat)\bigr)_{\IJ\in\Il}\<=\,
\bigl(\<\>2\<\>\pi\<\>\sqrt{-1}\,\bigr)
^{d_\bla\sum_{1\le i<j\le N}\la_i\>\la_j}\,
\prod_{i=1}^N\,p_i^{\,d^{(1)}_{\bla,i}\sum_{a=1}^n z_a\</\<\kat},
\vv-.2>
\eeq
where
\vvn-.3>
\be
d_\bla\>=\,\frac{n\<\>!}{\la_1\<\>!\ldots\la_N\<\>!}\;,\qquad
d^{(1)}_{\bla\<\>,\<\>i}\>=\,
\frac{\la_i\>(n-1)\<\>!}{\la_1\<\>!\ldots\la_N\<\>!}\;.
\vv.4>
\ee
Therefore, the collection of functions
\,$\bigl(\Psh^\cirs_{\Yh_I}(\zz;\pp\<\>;\kat)\bigr)_{I\in\Il}$ is a basis
of solutions of both the quantum differential equations with values in
\,$H^*_A(\Fla)$\,.

\appendix
\section{Basics on Schubert polynomials}
\label{Sch}

For references regarding Schubert polynomials, see for example \cite{L,M}.

\vsk.2>
Let \,$D_1\lc D_{n-1}$ \,be the divided difference operators acting on
functions of \,$\xxx$\,:
\vvn.4>
\be
\Dx_if(\xxx)\,=\,
\frac{f(x_1\lc x_n)-f(x_1\lc x_{i+1},x_i\lc x_n)}{x_i\<-x_{i+1}}\:,
\vv.3>
\ee
cf.~\Ref{Skk+1}. They satisfy the nil-Coxeter algebra relations,
\vvn.4>
\beq
\label{nilCx}
(\Dx_i)^2=\>0\,,\qquad \Dx_i\Dx_{i+1}\Dx_i=\>\Dx_{i+1}\<\>\Dx_i\Dx_{i+1}\,,
\qquad \Dx_i\Dx_j=\>\Dx_j\Dx_i\,,\quad |i-j|>1\,.
\vv.3>
\eeq
Given \,$\si\in S_n$ \,with a reduced decomposition
\,$\si=s_{i_1\<,\<\>i_1+1}\ldots s_{i_j\<,\<\>i_j+1}$\,, define
\,$\Dx_\si\<=\<\>\Dx_{i_1}\<\ldots\Dx_{i_j}$\,. For instance,
\,$\Dx_{\id}$ is the identity operator and \,$\Dx_{s_{\ii+1}}\!=\Dx_i$.
Due to relations \Ref{nilCx}, the operator \,$\Dx_\si$ \,does not depend
on the choice of a reduced decomposition. Moreover,
\vvn.4>
\be
\Dx_\si\>\Dx_\tau=\<\>\Dx_{\si\tau}\,,\quad
\mathrm{if}\ \;|\<\>\si\<\>|+|\<\>\tau\<\>|=|\<\>\si\tau\<\>|\,,\qquad
\Dx_\si\>\Dx_\tau=\<\>0\,,\quad\mathrm{otherwise}\,.\kern-1em
\vv.2>
\ee
Here \,$|\<\>\si\<\>|$ is the length of \,$\si$.
Denote \,$\xx_\si=(x_{\si(1)}\lc x_{\si(n)})$\,.
Let \,$\si_0$ \,be the longest permutation, \,$\si_0(i)=n+1-i$\,,
\;$i=1\lc n$\,. Then
\be
\Dx_{\si_0}\<\>f(\xx)\,=\<\prod_{1\le i<j\le n}\!(x_i\<-x_j)^{-1}
\,\sum_{\si\in S_n\!}\,(-1)^\si\<\>f(\xx_\si)\,.
\ee

\vsk.2>
The Schubert polynomials \,$A_\si(\xx)$\,, \,$\si\in S_n$\,, are defined
by the rule
\vvn.4>
\beq
\label{Schx}
A_\si(\xx)\,=\,
\Dx_{\si^{-1}\si_0}(x_1^{n-1}\>x_2^{n-2}\!\ldots\<\>x_{n-1})\,.
\vv.1>
\eeq
In particular, \,$A_{\si_0}\!=x_1^{n-1}\>x_2^{n-2}\!\ldots\<\>x_{n-1}$ \,and
\,$A_{\id}\<=1$\,.

\begin{prop}
\label{orth}
For any \,$\si,\tau\in S_n$\,,
\vvn.2>
\beq
\label{Asit}
\Dx_{\si_0}\bigl(A_\si(\xx)\>A_{\tau\si_0}(\xx_{\si_0})\bigr)\,=\,
(-1)^{\si\si_0}\>\dl_{\si\<,\<\>\tau}\,.
\vv-1.2>
\eeq
\vv-.2>
\qed
\end{prop}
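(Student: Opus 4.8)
The plan is to exhibit $\Dx_{\si_0}(fg)$ as a nondegenerate pairing for which the two Schubert families in \Ref{Asit} are dual bases, up to sign. Two structural properties of the divided differences drive everything, both consequences of the relations \Ref{nilCx} and of the product rule $\Dx_\si\Dx_\tau=\Dx_{\si\tau}$ (or $0$). The first is an \emph{adjunction}: for every $\pi\in S_n$,
\[
\Dx_{\si_0}\bigl((\Dx_\pi f)\,g\bigr)\,=\,\Dx_{\si_0}\bigl(f\,(\Dx_{\pi^{-1}}g)\bigr).
\]
I would prove the case $\pi=s_{\ii+1}$ from the elementary Leibniz rule $\Dx_i(uv)=(\Dx_iu)v+(s_iu)(\Dx_iv)$ (with $s_i$ the transposition of $x_i,x_{i+1}$), together with $\Dx_i^2=0$ and the identity $s_i\Dx_if=\Dx_if$: these give $\Dx_i((\Dx_if)g)=(\Dx_if)(\Dx_ig)=\Dx_i(f\,\Dx_ig)$, and applying $\Dx_{\si_0s_{\ii+1}}$ — legitimate because $\Dx_{\si_0}=\Dx_{\si_0s_{\ii+1}}\Dx_i$ since $\si_0s_{\ii+1}<\si_0$ — yields the base case; a reduced word for $\pi$ lets me iterate, turning the word into its reverse $\pi^{-1}$. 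The second property is that the involutive substitution $\iota\colon f(\xx)\mapsto f(\xx_{\si_0})$ satisfies $\iota\Dx_i\iota=-\Dx_{n-i}$ by a direct computation on the definition of $\Dx_i$, whence $\iota\Dx_\pi\iota=(-1)^{|\pi|}\Dx_{\si_0\pi\si_0}$. Applying this to $A_{\tau\si_0}=\Dx_{\si_0\tau^{-1}\si_0}A_{\si_0}$ and using $\si_0(\si_0\tau^{-1}\si_0)\si_0=\tau^{-1}$ gives $\iota A_{\tau\si_0}=(-1)^{|\tau|}\Dx_{\tau^{-1}}(\iota A_{\si_0})$.

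Combining the two tools reduces \Ref{Asit} to a single quantity. Since $A_{\tau\si_0}(\xx_{\si_0})=\iota A_{\tau\si_0}$, the adjunction moves $\Dx_{\tau^{-1}}$ onto $A_\si$:
\[
\Dx_{\si_0}\bigl(A_\si\,A_{\tau\si_0}(\xx_{\si_0})\bigr)\,=\,(-1)^{|\tau|}\,\Dx_{\si_0}\bigl((\Dx_\tau A_\si)\,A_{\si_0}(\xx_{\si_0})\bigr).
\]
Now $\Dx_\tau A_\si=\Dx_\tau\Dx_{\si^{-1}\si_0}A_{\si_0}$ equals $A_{\si\tau^{-1}}$ when $|\si\tau^{-1}|=|\si|-|\tau|$ and vanishes otherwise, by the product rule; observe that $\si=\tau$ always falls in the first case and $|\si|<|\tau|$ always in the second. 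Thus the whole proposition rests on evaluating $\Dx_{\si_0}\bigl(A_\nu\,A_{\si_0}(\xx_{\si_0})\bigr)$ for $\nu=\si\tau^{-1}$.

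The crux — and the step I expect to be the main obstacle — is the claim $\Dx_{\si_0}\bigl(A_\nu\,A_{\si_0}(\xx_{\si_0})\bigr)=(-1)^{\binom{n}{2}}\dl_{\nu,\id}$. Here $A_{\si_0}(\xx_{\si_0})=\prod_i x_i^{i-1}$. The key input is that every monomial $x^\al$ occurring in a Schubert polynomial $A_\nu$ obeys the staircase bound $\al_i\le n-i$; I would prove this by induction from $A_{\si_0}=x_1^{n-1}\!\cdots x_{n-1}$ (whose unique exponent is the maximal staircase) using that each $\Dx_i$ preserves this support, a short check on $\Dx_i(x_i^ax_{i+1}^b)$. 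Consequently every monomial of $A_\nu\prod_i x_i^{i-1}$ has all exponents $\le n-1$. For such a monomial the explicit formula $\Dx_{\si_0}h=\bigl(\prod_{i<j}(x_i-x_j)\bigr)^{-1}\sum_{\si}(-1)^\si h(\xx_\si)$ shows $\Dx_{\si_0}$ kills it unless its exponents are distinct — forcing them to be a permutation of $(0,\dots,n-1)$ — in which case it is the constant $\pm1$; no nonconstant Schur function can occur, since that would require an exponent $\ge n$. Hence $\Dx_{\si_0}(A_\nu\prod_i x_i^{i-1})$ is a constant, and a surviving term requires the exponent vector to be a permutation of $(0,\dots,n-1)$, i.e. $|\nu|=0$. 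This gives $0$ for $\nu\ne\id$, while for $\nu=\id$ the Vandermonde evaluation yields $\Dx_{\si_0}(\prod_i x_i^{i-1})=(-1)^{\binom{n}{2}}$.

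Finally I would reconcile the signs. In the nonvanishing case one has $\nu=\id$, i.e. $\si=\tau$, and the accumulated sign is $(-1)^{|\tau|}(-1)^{\binom{n}{2}}=(-1)^{|\si|+\binom{n}{2}}=(-1)^{\binom{n}{2}-|\si|}=(-1)^{\si\si_0}$, matching \Ref{Asit}; in every other case one of the two dichotomies of the second paragraph already forces $0=\dl_{\si,\tau}$. The genuinely delicate point is the endgame above — the staircase support of $A_\nu$ and the resulting triangularity that collapses the output to a constant — whereas the adjunction and the $\si_0$-conjugation are formal manipulations within the nil-Coxeter calculus.
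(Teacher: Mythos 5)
Your proof is correct. The paper states Proposition \ref{orth} without proof, referring to the standard literature \cite{L,M}, and your argument --- the adjunction $\Dx_{\si_0}\bigl((\Dx_\pi f)\,g\bigr)=\Dx_{\si_0}\bigl(f\,(\Dx_{\pi^{-1}}g)\bigr)$, the conjugation $\iota\Dx_\pi\iota=(-1)^{|\pi|}\Dx_{\si_0\pi\si_0}$, and the staircase-support evaluation of $\Dx_{\si_0}\bigl(A_\nu\prod_i x_i^{i-1}\bigr)$ --- is precisely the classical proof found there (e.g.\ Macdonald's notes), with all steps, including the sign bookkeeping via $|\si\si_0|=\binom{n}{2}-|\si|$, checking out.
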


\begin{prop}
\label{Cauchy}
Cauchy formula holds,
\vvn-.4>
\beq
\label{Cxy}
\sum_{\si\in S_n\!}\,(-1)^\si A_\si(\xx)\,A_{\si\si_0}(\yy)\,=\,
\prod_{i=1}^{n-1}\,\prod_{j=1}^{n-i}\,(y_i\<-x_j)\,.
\vv-1.4>
\eeq
\vv-.2>
\qed
\end{prop}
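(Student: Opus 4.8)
The plan is to expand the right--hand side in the Schubert basis in the $\yy$--variables and to read off the coefficients by means of the orthogonality relation of Proposition \ref{orth}. Write $P(\xx,\yy)=\prod_{i=1}^{n-1}\prod_{j=1}^{n-i}(y_i-x_j)$. Since $y_i$ occurs in $P$ to degree exactly $n-i$ for each $i$, the polynomial $P$, viewed as a polynomial in $\yy$ with coefficients in $\C[\xx]$, lies in the linear span of the monomials $y_1^{a_1}\cdots y_n^{a_n}$ with $0\le a_i\le n-i$. These $n!$ monomials span the same space as the Schubert polynomials $\{A_\rho(\yy):\rho\in S_n\}$, so there is a unique expansion $P=\sum_{\rho\in S_n}c_\rho(\xx)\,A_\rho(\yy)$ with $c_\rho\in\C[\xx]$. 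Applying in the $\yy$--variables the functional $f\mapsto(-1)^{\mu\si_0}D_{\si_0}\!\bigl(f\cdot A_{\mu\si_0}(\yy_{\si_0})\bigr)$ and using Proposition \ref{orth} gives $c_\mu(\xx)=(-1)^{\mu\si_0}D_{\si_0}^{(\yy)}\!\bigl(P\cdot A_{\mu\si_0}(\yy_{\si_0})\bigr)$. Setting $\mu=\si\si_0$, the asserted formula \Ref{Cxy} becomes equivalent to the reproducing identity
\begin{equation*}
D_{\si_0}^{(\yy)}\bigl(P(\xx,\yy)\,A_\si(\yy_{\si_0})\bigr)=A_\si(\xx),\qquad \si\in S_n .
\end{equation*}

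First I would dispose of the base case $\si=\id$. Because $D_{\si_0}$ lowers total degree by $|\si_0|$, it annihilates every $\yy$--homogeneous component of $P$ of degree below $|\si_0|$; the top component is obtained by choosing $y_i$ from every factor, giving exactly $A_{\si_0}(\yy)$ with coefficient $1$, and $D_{\si_0}^{(\yy)}A_{\si_0}(\yy)=A_{\id}=1$ by \Ref{Schx}. This settles $\si=\id$. For the general case I would use three standard ingredients, all available from \Ref{Schx} and the nil--Coxeter relations \Ref{nilCx}: the action $D_iA_\si=A_{\si s_{\ii+1}}$ when $\si(i)>\si(i+1)$ and $D_iA_\si=0$ otherwise; the conjugation rule $D_i^{(\yy)}\bigl(f(\yy_{\si_0})\bigr)=-\bigl(D_{\,n-i,\,n-i+1}^{(\yy)}f\bigr)(\yy_{\si_0})$ coming from $\si_0 s_{\ii+1}\si_0=s_{\,n-i,\,n-i+1}$; and the elementary product computation $D_i^{(\xx)}P=-\,\widetilde P_i\prod_{m=1}^{n-i-1}(y_m-x_i)(y_m-x_{i+1})$, where $\widetilde P_i$ is the part of $P$ free of $x_i,x_{i+1}$.

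The core of the argument is then an induction on the length $|\si|$ carried by the divided differences $D_i^{(\xx)}$. Setting $E_\si(\xx):=D_{\si_0}^{(\yy)}\bigl(P\,A_\si(\yy_{\si_0})\bigr)$, one has $E_{\id}=1$, and since $D_i^{(\xx)}$ commutes with $D_{\si_0}^{(\yy)}$ and passes through the $\xx$--free factor $A_\si(\yy_{\si_0})$, applying $D_i^{(\xx)}$ to $E_\mu$ for $\mu$ with a descent at $i$ yields $D_i^{(\xx)}E_\mu=D_{\si_0}^{(\yy)}\bigl((D_i^{(\xx)}P)\,A_\mu(\yy_{\si_0})\bigr)$. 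Thus the inductive step $E_\mu\mapsto E_{\mu s_{\ii+1}}$ reduces to the vanishing statement
\begin{equation*}
D_{\si_0}^{(\yy)}\Bigl(P\,A_{\mu s_{\ii+1}}(\yy_{\si_0})-(D_i^{(\xx)}P)\,A_\mu(\yy_{\si_0})\Bigr)=0 ,
\end{equation*}
after which $E_\si$ satisfies the same descent recursion $D_i^{(\xx)}E_\mu=E_{\mu s_{\ii+1}}$ as the Schubert polynomials and hence $E_\si=A_\si$ throughout. In this vanishing identity one substitutes $A_{\mu s_{\ii+1}}(\yy_{\si_0})=-D_{\,n-i,\,n-i+1}^{(\yy)}\bigl(A_\mu(\yy_{\si_0})\bigr)$ and expands $D_{\si_0}^{(\yy)}$ through the explicit factored form of $D_i^{(\xx)}P$ by a Leibniz rule.

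I expect this vanishing lemma to be the main obstacle: it is the single place where the precise staircase shape of $P$ and the interaction between the $\xx$-- and $\yy$--variables must be matched by a genuine computation rather than by formal manipulation of divided differences. Once it is in hand, the coefficient extraction above converts the reproducing identity into \Ref{Cxy}. As an alternative that bypasses the reproducing identity entirely, I would, if the induction proves unwieldy, pair both sides of \Ref{Cxy} simultaneously against the dual bases $\{(-1)^{\tau\si_0}A_{\tau\si_0}(\xx_{\si_0})\}$ in $\xx$ and $\{(-1)^{\rho\si_0}A_{\rho\si_0}(\yy_{\si_0})\}$ in $\yy$ via $D_{\si_0}^{(\xx)}D_{\si_0}^{(\yy)}$; by Proposition \ref{orth} the left side evaluates to $(-1)^{\tau}\dl_{\rho,\tau\si_0}$, reducing the theorem to the single symmetric evaluation of $D_{\si_0}^{(\xx)}D_{\si_0}^{(\yy)}\bigl(P\,A_{\tau\si_0}(\xx_{\si_0})\,A_{\tau}(\yy_{\si_0})\bigr)$.
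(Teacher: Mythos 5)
The paper gives no proof of Proposition \ref{Cauchy}: it is stated as a standard fact about Schubert polynomials with references to \cite{L}, \cite{M}, so there is no in-paper argument to compare against. Your overall framework is the classical one: expanding the product in the Schubert basis in $\yy$ and extracting coefficients with Proposition \ref{orth} correctly reduces \Ref{Cxy} to the reproducing identity $D_{\si_0}^{(\yy)}\bigl(P(\xx,\yy)\,A_\si(\yy_{\si_0})\bigr)=A_\si(\xx)$, and your auxiliary computations (the factored form of $D_i^{(\xx)}P$, the conjugation rule for $D_i^{(\yy)}$ on $f(\yy_{\si_0})$, and the evaluation $E_{\id}=1$) are all correct.

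There are, however, two genuine gaps. First, the induction is anchored at the wrong end. The descent recursion $D_i^{(\xx)}E_\mu=E_{\mu s_{\ii+1}}$ (for $\mu(i)>\mu(i+1)$) produces the value at the \emph{shorter} permutation $\mu s_{\ii+1}$ from the value at the longer one, so the recursion determines the whole family from $E_{\si_0}$, not from $E_{\id}$; knowing $E_{\id}=1$ pins down nothing else (replacing $E_{\si_0}$ by $E_{\si_0}+g$ with $g$ symmetric preserves both the recursion and the value at $\id$). What must be proved is $E_{\si_0}=x_1^{n-1}x_2^{n-2}\cdots x_{n-1}$, after which one descends along reduced words. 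This top evaluation is not covered by the degree argument you use for $E_{\id}$: the product $P\,A_{\si_0}(\yy_{\si_0})$ has $\yy$-homogeneous components of degree strictly larger than $|\<\>\si_0\<\>|$, and one must show their images under $D_{\si_0}^{(\yy)}$ cancel. Second, the vanishing lemma driving the recursion is the actual substance of the theorem, and you explicitly leave it unproven. As written, the one step you complete ($E_{\id}=1$) is the one step the argument does not need, while the two steps it does need (the top evaluation and the vanishing lemma) are both missing; the closing alternative of pairing against dual bases in both sets of variables merely restates the problem as the evaluation of $D_{\si_0}^{(\xx)}D_{\si_0}^{(\yy)}$ of essentially the same product.
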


\vsk.1>
For any \,$f\in\C[\xx]$ \,and \,$\si\in S_n$\,, define
\,$f_\si\in\C[\xx]^{S_n}$ by the rule
\vvn.3>
\beq
\label{fsi}
f_\si(\xx)\,=\,(-1)^{\si\si_0}\<\>
\Dx_{\si_0}\bigl(f(\xx)\>A_{\si\si_0}(\xx_{\si_0})\bigr)\,.
\vv.3>
\eeq

\begin{prop}
\label{free}
For any \,$f\in\C[\xx]$\,,
\vvn.1>
\beq
\label{fAsi}
f(\xx)\,=\,\sum_{\si\in S_n\!}\,f_\si(\xx)\,A_\si(\xx)\,.
\vv-1.4>
\eeq
\vv-.2>
\qed
\end{prop}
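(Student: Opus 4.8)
The plan is to reduce everything to the orthogonality relation of Proposition~\ref{orth} together with the classical fact that the Schubert polynomials form a free basis of $\C[\xx]$ over the subring $\C[\xx]^{S_n}$ of symmetric polynomials. Once the coefficients $f_\si$ are recognized as $\C[\xx]^{S_n}$-linear and are computed on the basis elements, the identity \Ref{fAsi} becomes automatic.

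First I would record two elementary properties of the maps $f\mapsto f_\si$ defined in \Ref{fsi}. From the explicit antisymmetrization formula for $\Dx_{\si_0}$, the output of $\Dx_{\si_0}$ is always symmetric, so $f_\si\in\C[\xx]^{S_n}$ for every $\si\in S_n$. Next, if $g\in\C[\xx]^{S_n}$ then for each $i$ the numerator in the definition of $\Dx_i$ factors out $g$ (since $g$ is invariant under $s_{\ii+1}$), giving $\Dx_i(g\>h)=g\>\Dx_i h$; hence $\Dx_{\si_0}(g\>h)=g\>\Dx_{\si_0}h$ and therefore $(g\>f)_\si=g\>f_\si$. Thus each map $f\mapsto f_\si$, and so the map $T\colon f\mapsto\sum_{\si\in S_n}f_\si\>A_\si$, is $\C[\xx]^{S_n}$-linear.

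Next I would compute these maps on the Schubert polynomials themselves. Substituting $f=A_\tau$ into \Ref{fsi} and applying the orthogonality relation \Ref{Asit}, I obtain
\be
(A_\tau)_\si=(-1)^{\si\si_0}\>\Dx_{\si_0}\bigl(A_\tau(\xx)\>A_{\si\si_0}(\xx_{\si_0})\bigr)=(-1)^{\si\si_0}(-1)^{\tau\si_0}\>\dl_{\tau,\si}=\dl_{\tau,\si}\,.
\ee
Consequently $T(A_\tau)=A_\tau$ for all $\tau$, so $T$ agrees with the identity on every Schubert polynomial. By the $\C[\xx]^{S_n}$-linearity of $T$, this already proves \Ref{fAsi} for any $f$ that is a $\C[\xx]^{S_n}$-combination of Schubert polynomials: if $f=\sum_\tau g_\tau\>A_\tau$ with $g_\tau\in\C[\xx]^{S_n}$, then $f_\si=\sum_\tau g_\tau(A_\tau)_\si=g_\si$, so the coefficients are forced to be the $f_\si$, and $f=\sum_\si f_\si\>A_\si$.

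It therefore remains to guarantee that every $f\in\C[\xx]$ admits such an expansion, i.e.\ that $\{A_\si\}_{\si\in S_n}$ spans $\C[\xx]$ as a module over $\C[\xx]^{S_n}$. This is the one substantive input and I expect it to be the main obstacle, the rest of the argument being purely formal. It is the classical statement that $\{A_\si\}$ is a free $\C[\xx]^{S_n}$-basis of $\C[\xx]$, see \cite{L,M}. I note that the linear independence half comes for free from the computation above: if $\sum_\si c_\si A_\si=0$ with $c_\si\in\C[\xx]^{S_n}$, applying the $\si$-th functional gives $c_\si=0$. For the spanning half one uses the freeness of $\C[\xx]$ over $\C[\xx]^{S_n}$ of rank $n!=|S_n|$ and reduces, by a dimension count, to the statement that the $A_\si$ descend to a basis of the coinvariant algebra $\C[\xx]\big/\langle e_1\lc e_n\rangle$; since there are exactly $n!$ of them and they are independent, they form such a basis, and their lifts form a $\C[\xx]^{S_n}$-basis of $\C[\xx]$. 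With the spanning in hand, writing any $f=\sum_\tau f_\tau\>A_\tau$ and invoking the coefficient computation completes the proof of \Ref{fAsi}.
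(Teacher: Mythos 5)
Your argument is correct, and since the paper states Proposition~\ref{free} with no proof (it is quoted from the references \cite{L,M}), there is no internal proof to compare against; what you give is essentially the standard argument those references supply. The computation $(A_\tau)_\si=\dl_{\tau,\si}$ via Proposition~\ref{orth}, together with the $\C[\xx]^{S_n}$-linearity of $f\mapsto f_\si$, is exactly right, and you correctly isolate the only substantive input: that the $A_\si$ span $\C[\xx]$ over $\C[\xx]^{S_n}$. One small point of logical hygiene: the paper deduces the sentence ``Thus $\C[\xx]$ is a free module over $\C[\xx]^{S_n}$ of rank $n!$ with a basis given by Schubert polynomials'' \emph{from} Proposition~\ref{free}, whereas you use the spanning half of that statement as an input, so within the paper's ordering your argument would look circular. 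It is not actually circular, because the spanning statement has independent classical proofs (e.g.\ via the dimension $n!$ of the coinvariant algebra and the unitriangularity of the Schubert polynomials against the monomials $x_1^{a_1}\cdots x_n^{a_n}$ with $a_i\le n-i$, or equivalently the graded Nakayama argument you sketch); it would just be worth stating explicitly that this input is taken from the literature rather than from the paper's own subsequent assertion.
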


Thus \,$\C[\xx]$ \,is a free module over \,$\C[\xx]^{S_n}$ of rank \,$n\<\>!$
with a basis given by Schubert polynomials.

\goodbreak
\vsk.2>
Recall the notation from Section~\ref{Nts}, and \,$\Imi,\>\Ima\!\in\Il$\,,
\vvn.4>
\be
\Imi\>=\,\bigl(\{1\lc\la_1\}\lc\{n-\la_N\<+1\lc n\}\bigr)\,,
\vv.1>
\ee
\be
\Ima\>=\bigl(\{n-\la_1+1\lc n\}\lc\{1\lc\la_N\}\bigr)\,.
\vv.4>
\ee
For \,$I=(I_1\lc I_N)\in\Il$\,,
\,$I_j=\{\<\>i_{j,1}\<\<\lsym<i_{j,\<\>\la_j}\}$\,, define the permutations
\,$\si^I$,
\vvn.3>
\be
\si^I(k)\>=\>i_{\jk-\<\la^{(j-1)}}\,,\qquad k\in\Imi_j,\quad j=1\lc N\>,
\kern-2em
\vv.4>
\ee
and \,$\si_I\<=\si^I(\si^{\Ima})^{-1}$\>. Then \,$\si^I(\Imi)=\si_I(\Ima)=I$\,.

\vsk.3>
Let \,$\Sla\!\subset S_n$ \>be the isotropy subgroup
\vv.5>
of \,$\Imi$.

\begin{lem}
\label{Asil}
For any \,$I\in\Il$\,, we have \,$A_{\si^I}(\xx)\in\C[\xx]^{\Sla}$.
\vv.6>
\qed
\end{lem}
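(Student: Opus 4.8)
The plan is to reduce the statement to the standard descent criterion for the symmetry of a Schubert polynomial, which itself follows directly from the definition \Ref{Schx}, the nil-Coxeter relations \Ref{nilCx}, and the multiplication rule $\Dx_\si\Dx_\tau=\Dx_{\si\tau}$ (or $0$). First I would prove that, for $\si\in S_n$ and $1\le k\le n-1$, the polynomial $A_\si(\xx)$ is symmetric in $x_k$ and $x_{k+1}$ if and only if $\si(k)<\si(k+1)$. Since $\Sla$ is the Young (isotropy) subgroup of $\Imi$, generated by the transpositions $s_{k,k+1}$ with $k$ and $k+1$ lying in a common block $\Imi_j=\{\la^{(j-1)}+1\lc\la^{(j)}\}$, it then suffices to verify this inequality for $\si=\si^I$ at each such $k$.

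To obtain the criterion, fix $\si\in S_n$ and set $w=\si^{-1}\si_0$, so that $A_\si=\Dx_w(A_{\si_0})$ by \Ref{Schx}. Writing $\Dx_k=\Dx_{s_{k,k+1}}$ and using the multiplication rule, $\Dx_kA_\si=\Dx_{s_{k,k+1}w}(A_{\si_0})$ when $|s_{k,k+1}w|=|w|+1$ and $\Dx_kA_\si=0$ otherwise. The bookkeeping is that, since $|\tau\si_0|=|\si_0|-|\tau|$ for every $\tau\in S_n$, one has $|w|=|\si_0|-|\si|$ and $|s_{k,k+1}w|=|\si_0|-|\si s_{k,k+1}|$; hence $|s_{k,k+1}w|=|w|-1$ exactly when $|\si s_{k,k+1}|=|\si|+1$, that is, when $\si(k)<\si(k+1)$. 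In that case $\Dx_kA_\si=0$, which is precisely the symmetry of $A_\si$ in $x_k,x_{k+1}$ (indeed, $\Dx_k f=0$ is equivalent to the symmetry of $f$ in $x_k$ and $x_{k+1}$, immediately from the definition of $\Dx_k$); when $\si(k)>\si(k+1)$ one instead obtains the nonzero polynomial $A_{\si s_{k,k+1}}$, so the equivalence is sharp.

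It then remains to apply this to $\si=\si^I$. If $k$ and $k+1$ both lie in the block $\Imi_j$, the definition of $\si^I$ gives $\si^I(k)=i_{j,\,k-\la^{(j-1)}}$ and $\si^I(k+1)=i_{j,\,k+1-\la^{(j-1)}}$, and these satisfy $\si^I(k)<\si^I(k+1)$ because the elements of $I_j$ are listed in increasing order $i_{j,1}<\dots<i_{j,\la_j}$. Hence $A_{\si^I}$ is symmetric in every adjacent pair $x_k,x_{k+1}$ with $k,k+1$ in one block, and as these transpositions generate $\Sla$ we conclude $A_{\si^I}(\xx)\in\C[\xx]^{\Sla}$.

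I expect no genuine difficulty; the only step needing care is the length bookkeeping of the second paragraph, keeping the conventions straight so that the relevant argument is $\si s_{k,k+1}$ (right multiplication) rather than $s_{k,k+1}\si$, and correctly using $|\tau\si_0|=|\si_0|-|\tau|$ under the normalization of \Ref{Schx}.
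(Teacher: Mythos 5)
Your proof is correct. The paper states Lemma \ref{Asil} without proof, treating it as a standard fact about Schubert polynomials (with \cite{L,M} as general references), and your argument is precisely the standard one: the descent criterion $\Dx_k A_\si=0\iff\si(k)<\si(k+1)$, derived from \Ref{Schx} and the multiplication rule via the length bookkeeping $|s_{k,k+1}\si^{-1}\si_0|=|\si_0|-|\si s_{k,k+1}|$, combined with the observation that $\si^I$ has no descents inside the blocks of $\Imi$ because each $I_j$ is listed in increasing order. All steps check out, including the identification of $\Sla$ as the subgroup generated by the adjacent transpositions within blocks.
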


For example,
\vv.4>
\,$A_{\si^{\Ima}}(\xx)=\prod_{\<\>a=1}^{N\<-1}\>
\prod_{\>i\in\Imi_a}\>x_i^{N\<-a}$\>.

\begin{prop}
\label{orthI}
For any \,$I,\<J\<\in\Il$\,,
\vvn.3>
\beq
\label{AsIt}
\Dx_{\si^{\Ima}}\bigl(A_{\si^I}(\xx)\>A_{\si_J}(\xx_{\si_0})\bigr)\,=\,
(-1)^{\si_I}\,\dl_{\IJ}\,.
\vv-1.2>
\eeq
\vv-.2>
\qed
\end{prop}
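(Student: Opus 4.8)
The plan is to deduce Proposition~\ref{orthI} from the full-flag orthogonality of Proposition~\ref{orth} via the parabolic factorization of the longest permutation. Let $w_\bla$ be the longest element of the isotropy subgroup $\Sla$ of $\Imi$. Since $\si_0(\Imi)=\Ama$ and $\si^{\Ima}$ is the order-preserving (hence minimal length) representative of the coset of permutations carrying $\Imi$ to $\Ama$, the element $\si_0$ lies in $\si^{\Ima}\Sla$ and, being longest, satisfies $\si_0=\si^{\Ima}w_\bla$ with $|\si_0|=|\si^{\Ima}|+|w_\bla|$. Consequently $\Dx_{\si_0}=\Dx_{\si^{\Ima}}\Dx_{w_\bla}$, and also $(\si^{\Ima})^{-1}=w_\bla\si_0$.

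First I would record the combinatorial identity $|\si^J|+|\si_J|=|\si^{\Ima}|=\sum_{1\le i<j\le N}\la_i\la_j$, valid for every $J\in\Il$. It is obtained by counting the cross-block inversions of the order-preserving maps $\si^J\colon\Imi\to J$ and $\si_J\colon\Ama\to J$: for blocks $i<j$ a pair $(a,b)$ with $a\in J_i$, $b\in J_j$ is an inversion of exactly one of $\si^J$, $\si_J$ according as $a>b$ or $a<b$, so the two lengths together count all $\la_i\la_j$ such pairs.

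Next I would move $\Dx_{w_\bla}$ onto the second factor. In the variables $\yy=\xx_{\si_0}$ the substitution $x_i=y_{n+1-i}$ turns $\Dx_i$ into $-\Dx_{n-i}$, so $\Dx_{w_\bla}$ becomes $(-1)^{|w_\bla|}\Dx_{w'_\bla}$ with $w'_\bla=\si_0w_\bla\si_0$. Using the rule $\Dx_wA_\rho=A_{\rho w^{-1}}$ when $|\rho w^{-1}|=|\rho|-|w|$ (which follows from \Ref{Schx} and \Ref{nilCx}), together with $\si^J\si_0(w'_\bla)^{-1}=\si^J(\si^{\Ima})^{-1}=\si_J$ and the length identity above, I get $\Dx_{w'_\bla}A_{\si^J\si_0}=A_{\si_J}$, hence $\Dx_{w_\bla}\bigl(A_{\si^J\si_0}(\xx_{\si_0})\bigr)=(-1)^{|w_\bla|}A_{\si_J}(\xx_{\si_0})$. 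Since $A_{\si^I}$ is $\Sla$-symmetric by Lemma~\ref{Asil}, the Leibniz rule $\Dx_i(fg)=(\Dx_if)g+(s_if)(\Dx_ig)$ lets me pull it through $\Dx_{w_\bla}$; combining this with $\Dx_{\si_0}=\Dx_{\si^{\Ima}}\Dx_{w_\bla}$ yields
\be
\Dx_{\si^{\Ima}}\bigl(A_{\si^I}(\xx)\,A_{\si_J}(\xx_{\si_0})\bigr)\,=\,
(-1)^{|w_\bla|}\,\Dx_{\si_0}\bigl(A_{\si^I}(\xx)\,A_{\si^J\si_0}(\xx_{\si_0})\bigr)\,.
\ee
Proposition~\ref{orth} with $\si=\si^I$ and $\tau=\si^J$ now evaluates the right-hand side to $(-1)^{|w_\bla|}(-1)^{\si^I\si_0}\dl_{IJ}$.

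It remains to reconcile the sign. When $I=J$ the exponent satisfies $|w_\bla|+|\si^I\si_0|=|w_\bla|+|\si_0|-|\si^I|\equiv|\si^{\Ima}|-|\si^I|=|\si_I|\pmod2$, so the coefficient is exactly $(-1)^{\si_I}$, as required. The conceptual step is merely the passage from the full flag variety to the partial one; the part demanding the most care is the sign-and-length bookkeeping — verifying that $\si^J\si_0(w'_\bla)^{-1}=\si_J$ with lengths subtracting (so that the divided-difference rule does not kill the term) and that the accumulated sign $(-1)^{|w_\bla|+|\si^I\si_0|}$ collapses to $(-1)^{\si_I}$ after the $\xx_{\si_0}$ substitution.
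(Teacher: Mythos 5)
Your argument is correct. The paper states Proposition~\ref{orthI} with no proof, as a standard fact about Schubert polynomials (deferring to the references cited at the start of Appendix~\ref{Sch}), so there is no in-text argument to compare against; your reduction to the full-flag orthogonality of Proposition~\ref{orth} via the parabolic factorization is a clean way to supply one. The steps that needed checking all hold: \,$\si_0=\si^{\Ima}w_\bla$ with lengths adding (so \,$\Dx_{\si_0}=\Dx_{\si^{\Ima}}\Dx_{w_\bla}$\,, since \,$\si^{\Ima}$ is increasing on each block \,$\Imi_j$ and hence is the minimal-length representative of its coset modulo \,$\Sla$); the inversion count \,$|\si^J|+|\si_J|=|\si^{\Ima}|=\sum_{i<j}\la_i\la_j$\,, which splits the cross-block pairs between the two order-preserving maps exactly as you describe; the identity \,$\si_0(w'_\bla)^{-1}=w_\bla\si_0=(\si^{\Ima})^{-1}$, which together with the length count shows that the lengths subtract and \,$\Dx_{w'_\bla}A_{\si^J\si_0}=A_{\si_J}$ rather than \,$0$\,; the passage of \,$A_{\si^I}$ through \,$\Dx_{w_\bla}$ by the twisted Leibniz rule and Lemma~\ref{Asil} (every reduced word for \,$w_\bla$ uses only simple reflections lying in \,$\Sla$); and the final congruence \,$|w_\bla|+|\si^I\si_0|\equiv|\si^{\Ima}|-|\si^I|=|\si_I|\pmod2$\,. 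The only cosmetic remark is that the sign bookkeeping matters only for \,$I=J$\,, since for \,$I\ne J$ both sides of \Ref{AsIt} vanish.
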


\begin{prop}
\label{CauchI}
We have,
\vvn.4>
\beq
\label{CxyI}
\sum_{I\in\Il\!}\,(-1)^{\si^I}\!A_{\si^I}(\xx)\,A_{\si_I}(\yy_{\si_0})\,=\!
\prod_{1\le a<b\le N}\>\prod_{i\in\Imi_a\!}\>\prod_{j\in\Imi_b\!}
(y_j\<-x_i)\,.
\vv-1.2>
\eeq
\vv-.2>
\qed
\end{prop}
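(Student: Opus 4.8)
The plan is to deduce Proposition \ref{CauchI} from the ordinary Cauchy formula \Ref{Cxy} together with the parabolic orthogonality \Ref{AsIt}, treating \Ref{CxyI} as the dual-basis expansion of a reproducing kernel. First I would record two preliminary facts. The right-hand side $K^\bla(\xx,\yy):=\prod_{1\le a<b\le N}\prod_{i\in\Imi_a}\prod_{j\in\Imi_b}(y_j-x_i)$ is symmetric in the $x$-variables within each block $\Imi_a$: for fixed $a$, every $x_i$ with $i\in\Imi_a$ enters only through $\prod_{b>a}\prod_{j\in\Imi_b}(y_j-x_i)$, which is the same expression for all $i\in\Imi_a$, so $K^\bla\in\C[\xx]^{\Sla}\otimes\C[\yy]$. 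Secondly, by Lemma \ref{Asil} the classes $A_{\si^I}(\xx)$, $I\in\Il$, lie in $\C[\xx]^{\Sla}$, and by \Ref{AsIt} they are linearly independent over $\C[\xx]^{S_n}$; since their number $|\Il|$ equals the rank of $\C[\xx]^{\Sla}$ over $\C[\xx]^{S_n}$, they form a basis. This is the parabolic analogue of Proposition \ref{free}.

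Next I would expand $K^\bla$ in this basis. Treating the $y$-variables as parameters, write $K^\bla(\xx,\yy)=\sum_{I\in\Il}c_I(\xx,\yy)\,A_{\si^I}(\xx)$ with a priori $c_I\in\C[\xx]^{S_n}\otimes\C[\yy]$, and extract the coefficients by the parabolic analogue of the projection formula \Ref{fsi}: applying the divided-difference operator $\Dx_{\si^{\Ima}}$ in the $x$-variables and using \Ref{AsIt} gives $c_I=(-1)^{\si_I}\Dx_{\si^{\Ima}}\bigl(K^\bla(\xx,\yy)\,A_{\si_I}(\xx_{\si_0})\bigr)$. The target is then the closed form $c_I=(-1)^{\si^I}A_{\si_I}(\yy_{\si_0})$, which is exactly \Ref{CxyI}. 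To evaluate $c_I$ I would relate $K^\bla$ to the full Cauchy kernel: after the substitution $\yy\mapsto\yy_{\si_0}$ the right-hand side of \Ref{Cxy} becomes $\prod_{1\le a<b\le n}(y_b-x_a)$, and $K^\bla$ is obtained from this full product by deleting the same-block factors. Feeding the expansion \Ref{Cxy} and the scalar orthogonality \Ref{Asit} into the computation of $\Dx_{\si^{\Ima}}(\,\cdot\,A_{\si_I}(\xx_{\si_0}))$ should collapse the $x$-dependence and leave precisely $A_{\si_I}(\yy_{\si_0})$, with the sign tracked through $|\si^I|$, $|\si_I|$, and $|\si^{\Ima}|$.

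The main obstacle is this final evaluation: reconciling the block product $K^\bla$ with the full Cauchy kernel while applying $\Dx_{\si^{\Ima}}$, and showing that the $S_n$-sum in \Ref{Cxy} folds along the cosets $\si^I\Sla$ so that only the minimal coset representatives $\si^I$ survive. The sign bookkeeping is the most error-prone point, and I would manage it using $\si^I=\si_I\si^{\Ima}$ together with the length additivity $|\si_0|=|\si^{\Ima}|+|w_P|$ for the longest element $w_P$ of $\Sla$. As a cleaner alternative that isolates the same crux, I would instead verify the polynomial identity \Ref{CxyI} on a Zariski-dense locus of $\yy$-specializations in which distinct blocks receive distinct coordinate values: there the right-hand side collapses by its block structure, and the left-hand side is pinned down by \Ref{AsIt}, so matching the two sides for every $I\in\Il$ forces equality of polynomials.
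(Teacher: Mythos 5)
The paper offers no proof of Proposition \ref{CauchI}: like the rest of Appendix \ref{Sch} it is stated as standard material on Schubert polynomials with \cite{L,M} as references, so there is no argument of the authors' to compare yours against, and I can only judge the proposal on its own terms. Your preliminary reductions are correct and consistent with the surrounding propositions: the kernel $K^{\bla}(\xx,\yy)$ on the right of \Ref{CxyI} does lie in $\C[\xx]^{\Sla}\otimes\C[\yy]$, the $A_{\si^I}(\xx)$ do form a basis of $\C[\xx]^{\Sla}$ over $\C[\xx]^{S_n}$ (this is Proposition \ref{fAsil}), and the orthogonality \Ref{AsIt} reduces the whole identity to the single evaluation
$\Dx_{\si^{\Ima}}\bigl(K^{\bla}(\xx,\yy)\,A_{\si_I}(\xx_{\si_0})\bigr)=(-1)^{\si^{\Ima}}A_{\si_I}(\yy_{\si_0})$ for every $I\in\Il$.

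The gap is that this evaluation, which carries the entire content of the proposition, is never carried out. Note first that it includes the nonobvious claim that the result is independent of $\xx$: the extraction formula \Ref{fsI} a priori lands in $\C[\xx]^{S_n}\otimes\C[\yy]$, and for $|\si_I|>0$ degree counting alone does not exclude a factor from $\C[\xx]^{S_n}$. Your first route asserts that the $S_n$-sum in \Ref{Cxy} ``folds along the cosets $\si^I\Sla$'' so that only minimal representatives survive; but $K^{\bla}$ differs from the full Cauchy kernel by the same-block factors $\prod_{a}\prod_{i<j,\;i,j\in\Imi_a}(y_j-x_i)$, and dividing by these does not commute with $\Dx_{\si^{\Ima}}$ in any evident way, so the asserted folding is precisely the parabolic Cauchy formula restated, not a consequence of \Ref{Cxy} and \Ref{Asit}. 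Your second route is not a workable argument as described: on a Zariski-dense set of $\yy$ no ``collapse by block structure'' of the right-hand side occurs, whereas a genuine collapse (for example setting all $y_j$ with $j\in\Imi_b$ equal) confines $\yy$ to a proper subvariety and cannot establish the polynomial identity; as you concede, it isolates the same crux rather than resolving it. To close the argument you would need an actual computation of the divided difference, for instance by induction on the number of blocks $N$, peeling off one block at a time and using the $\C[\xx]^{S_n}$-linearity of $\Dx_{\si^{\Ima}}$, or else a direct citation of the parabolic Cauchy formula from \cite{M}.
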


\begin{prop}
\label{fAsil}
For any \,$f\in\C[\xx]^{\Sla}$, \,we have
\vvn.2>
\beq
\label{fAsI}
f(\xx)\,=\,\sum_{I\in\Il\!}\,f_{\si^I}(\xx)\,A_{\si^I}(\xx)\,,
\vv.1>
\eeq
that is, in formula \Ref{fAsi}, \,$f_\si=0$ \,unless \,$\si=\si^I$ for some
\,$I\in\Il$\,, \>and
\vvn.5>
\beq
\label{fsI}
f_{\si^I}(\xx)\,=\,(-1)^{\si_I}\<\>
\Dx_{\si^{\Ima}}\bigl(f(\xx)\>A_{\si_I}(\xx_{\si_0})\bigr)\,.
\vv-1.2>
\eeq
\vv-.2>
\qed
\end{prop}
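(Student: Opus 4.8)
The plan is to deduce Proposition~\ref{fAsil} from the non-parabolic expansion of Proposition~\ref{free} in three moves: first expand $f$ over the full Schubert basis $\{A_\si\}_{\si\in S_n}$, then use $\Sla$-invariance to kill every coefficient except those indexed by the minimal-length coset representatives $\si^I$, and finally pin down the surviving coefficients by pairing against $A_{\si_J}(\xx_{\si_0})$ and invoking the parabolic orthogonality of Proposition~\ref{orthI}.

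First I would record two facts about the operators $D_i$. From $D_if=(f-s_{\ii+1}f)/(x_i-x_{i+1})$ one sees that $D_if=0$ exactly when $s_{\ii+1}f=f$, so $\C[\xx]^{\Sla}$ is precisely the set of $f$ with $D_if=0$ for every $i$ with $s_{\ii+1}\in\Sla$. Next, from the normalization \Ref{Schx} and the nil-Coxeter relations \Ref{nilCx} one obtains the right-descent rule $D_iA_\si=A_{\si s_{\ii+1}}$ when $|\si s_{\ii+1}|<|\si|$ and $D_iA_\si=0$ otherwise; here the length bookkeeping rests on $|\si^{-1}\si_0|=|\si_0|-|\si|$. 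I would also note that $D_\si$ is $\C[\xx]^{S_n}$-linear, since the Leibniz identity gives $D_i(gh)=g\,D_ih$ whenever $g$ is symmetric, and hence $D_\si(gh)=g\,D_\si h$ by induction on a reduced word for $\si$.

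Now take $f\in\C[\xx]^{\Sla}$ and write $f=\sum_{\si\in S_n}f_\si A_\si$ with $f_\si\in\C[\xx]^{S_n}$ as in \Ref{fAsi}. For each $i$ with $s_{\ii+1}\in\Sla$, applying $D_i$ and using $\C[\xx]^{S_n}$-linearity gives $0=D_if=\sum_\si f_\si\,D_iA_\si=\sum_{\tau:\,|\tau s_{\ii+1}|>|\tau|}f_{\tau s_{\ii+1}}A_\tau$, after reindexing $\tau=\si s_{\ii+1}$. Since the $A_\tau$ form a free $\C[\xx]^{S_n}$-basis, every coefficient must vanish, that is $f_\si=0$ whenever $\si$ has a right descent at a position with $s_{\ii+1}\in\Sla$. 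The permutations with no such descent are precisely those increasing on each block $\Imi_j$, namely the $\si^I$, $I\in\Il$. Thus $f=\sum_{I\in\Il}f_{\si^I}A_{\si^I}$ with $f_{\si^I}\in\C[\xx]^{S_n}$, which is the vanishing asserted in \Ref{fAsI}.

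To finish I would extract the coefficients. Applying the $\C[\xx]^{S_n}$-linear operator $g\mapsto D_{\si^{\Ima}}\bigl(g\,A_{\si_J}(\xx_{\si_0})\bigr)$ to $f=\sum_I f_{\si^I}A_{\si^I}$ and using $D_{\si^{\Ima}}\bigl(A_{\si^I}A_{\si_J}(\xx_{\si_0})\bigr)=(-1)^{\si_I}\dl_{\IJ}$ from Proposition~\ref{orthI}, one gets $D_{\si^{\Ima}}\bigl(f\,A_{\si_J}(\xx_{\si_0})\bigr)=(-1)^{\si_J}f_{\si^J}$, which is exactly \Ref{fsI}. The one step demanding care — and the likeliest source of a sign or convention slip — is the interplay between the normalization \Ref{Schx} of $A_\si$ and the right-descent rule for $D_iA_\si$, together with the identification of the $\si^I$ with the minimal-length representatives of $S_n/\Sla$; once these are matched against the base case $A_{\si^{\Ima}}(\xx)=\prod_{a=1}^{N-1}\prod_{i\in\Imi_a}x_i^{N-a}$, the remainder of the argument is formal.
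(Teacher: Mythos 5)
Your argument is correct and complete. Note that the paper itself supplies no proof of Proposition~\ref{fAsil} (all the statements in Appendix~\ref{Sch} are recorded as standard facts with a pointer to \cite{L,M}), so there is nothing to compare against; your write-up is essentially the textbook derivation. The three ingredients all check out against the paper's conventions: with $A_\si=\Dx_{\si^{-1}\si_0}(x_1^{n-1}\cdots x_{n-1})$ and $|\si^{-1}\si_0|=|\si_0|-|\si|$, the nil-Coxeter relations give exactly $\Dx_iA_\si=A_{\si s_{\ii+1}}$ when $\si(i)>\si(i+1)$ and $0$ otherwise; the characterization of $\C[\xx]^{\Sla}$ by the vanishing of the relevant $\Dx_i$ and the $\C[\xx]^{S_n}$-linearity of $\Dx_\si$ are immediate from the definition; and the uniqueness of the coefficients in \Ref{fAsi} (needed to conclude $f_{\tau s_{\ii+1}}=0$ term by term, and to identify the coefficient produced by Proposition~\ref{orthI} with the $f_{\si^I}$ of \Ref{fsi}) follows from Proposition~\ref{orth}. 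The identification of the surviving $\si$ with the $\si^I$, i.e.\ with the permutations increasing on each block $\Imi_j$, is likewise correct, and the final pairing against $A_{\si_J}(\xx_{\si_0})$ yields \Ref{fsI} with the right sign since $(-1)^{\si_J}$ is its own inverse. The one caveat you raise yourself (sign and descent conventions) is resolved correctly under the paper's normalization \Ref{Schx}.
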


Define
\vvn-.4>
\be
R_\bla(\xx)\,=\!\prod_{1\le a<b\le N}\>
\prod_{i\in\Imi_a\!}\>\prod_{j\in\Imi_b\!}(x_i\<-x_j)\,.
\vv.2>
\ee

\begin{prop}
\label{DIma}
For any \,$f\in\C[\xx]^{\Sla}$, \,we have
\vvn.2>
\beq
\label{Dimaf}
\Dx_{\si^{\Ima}}f(\xx)\,=\,
\sum_{I\in\Il\!}\,\frac{f(\xx_{\si^I})}{R_\bla(\xx_{\si^I})}\;.
\vv-1.4>
\eeq
\vv-.2>
\qed
\end{prop}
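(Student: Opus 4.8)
The plan is to derive the formula from the explicit expression for $\Dx_{\si_0}$ recorded above, by factoring $\si_0$ through $\si^{\Ama}$ and then grouping a sum over $S_n$ into cosets of $\Sla$. First I would put the $\si_0$-formula into localization form: since $\prod_{i<j}(x_{\si(i)}-x_{\si(j)})=(-1)^\si\prod_{i<j}(x_i-x_j)$, the stated identity for $\Dx_{\si_0}$ becomes
\[
\Dx_{\si_0}g(\xx)\,=\,\sum_{\si\in S_n}\frac{g(\xx_\si)}{\Delta(\xx_\si)}\,,\qquad g\in\C[\xx]\,,
\]
where $\Delta(\xx)=\prod_{1\le i<j\le n}(x_i-x_j)$.

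Next I would record the Coxeter-theoretic factorization. Let $w$ be the longest element of the isotropy subgroup $\Sla$ of $\Imi$. Because $\si_0(\Imi)=\Ama=\si^{\Ama}(\Imi)$, the permutations $\si_0$ and $\si^{\Ama}$ lie in the same left coset $\si^{\Ama}\Sla$; since $\si^{\Ama}$ is its minimal-length representative (it is increasing on the blocks of $\Imi$) and $\si_0$ is the longest element of $S_n$, one has $\si_0=\si^{\Ama}w$ with $|\si_0|=|\si^{\Ama}|+|w|$, so that $\Dx_{\si_0}=\Dx_{\si^{\Ama}}\Dx_w$. Let $m(\xx)$ be the product of the staircase monomials of the blocks of $\Imi$, so that $\Dx_w m=1$. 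For $f\in\C[\xx]^{\Sla}$ the divided differences composing $\Dx_w$ act within blocks, where $f$ is symmetric, hence $\Dx_w(fm)=f\,\Dx_w m=f$; applying $\Dx_{\si^{\Ama}}$ gives the key reduction
\[
\Dx_{\si^{\Ama}}f\,=\,\Dx_{\si^{\Ama}}\Dx_w(fm)\,=\,\Dx_{\si_0}(fm)\,=\,\sum_{\si\in S_n}\frac{f(\xx_\si)\,m(\xx_\si)}{\Delta(\xx_\si)}\,.
\]

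Finally I would split the sum according to cosets, writing each $\si\in S_n$ uniquely as $\si=\si^I v$ with $I=\si(\Imi)\in\Il$ and $v\in\Sla$, and using $\xx_{\si^I v}=(\xx_{\si^I})_v$. Both $f$ and $R_\bla$ are $\Sla$-symmetric, so $f(\xx_{\si^I v})=f(\xx_{\si^I})$ and $R_\bla(\xx_{\si^I v})=R_\bla(\xx_{\si^I})$; moreover the full Vandermonde factors as $\Delta=R_\bla\prod_{k=1}^N\Delta_k$ into its cross-block part $R_\bla$ and the within-block Vandermondes $\Delta_k(\xx)=\prod_{i<j,\ i,j\in\Imi_k}(x_i-x_j)$, so all the $v$-dependence is confined to the $\Delta_k$. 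This yields
\[
\Dx_{\si^{\Ama}}f\,=\,\sum_{I\in\Il}\frac{f(\xx_{\si^I})}{R_\bla(\xx_{\si^I})}\sum_{v\in\Sla}\frac{m(\xx_{\si^I v})}{\prod_{k=1}^N\Delta_k(\xx_{\si^I v})}\,,
\]
and the inner sum is precisely $(\Dx_w m)(\xx_{\si^I})=1$ by the block-factored version of the localization formula, giving the claim.

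The two places needing care are the length-additive factorization $\si_0=\si^{\Ama}w$, which is what legitimizes $\Dx_{\si_0}=\Dx_{\si^{\Ama}}\Dx_w$, and the separation of $\Delta$ into cross-block and within-block parts, which is what makes $R_\bla$ appear and forces the within-block sum to collapse to $1$. The conceptual crux is the reduction $\Dx_{\si^{\Ama}}f=\Dx_{\si_0}(fm)$ through the normalizing monomial $m$, since it converts the operator $\Dx_{\si^{\Ama}}$, for which we have no closed formula, into $\Dx_{\si_0}$, for which we do.
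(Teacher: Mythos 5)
Your proof is correct. The paper states Proposition~\ref{DIma} with no argument at all, treating it as a standard fact about divided differences (the appendix only points to the general references on Schubert polynomials), so there is no in-text proof to compare against; what you have written is the standard derivation, and every step checks out. In particular: the length-additive factorization $\si_0=\si^{\Ima}w$ is justified because $\si^{\Ima}$ is increasing on each block of $\Imi$, hence is the minimal-length representative of the coset $\si_0\>\Sla$, and length is additive over cosets of the parabolic subgroup $\Sla$; the reduction $\Dx_{\si^{\Ima}}f=\Dx_{\si_0}(f\<\>m)$ is legitimate since every simple reflection in a reduced word for $w$ lies in $\Sla$, so the Leibniz rule $\Dx_i(fg)=(\Dx_if)\>g+(s_if)\>\Dx_ig$ lets the $\Sla$-invariant $f$ pass through $\Dx_w$, while $\Dx_w m=1$ is the blockwise instance of definition \Ref{Schx}; and the final coset splitting works because $f$ and $R_\bla$ are $\Sla$-invariant, the Vandermonde factors as $\Delta=R_\bla\prod_k\Delta_k$ into cross-block and within-block parts, and the inner sum over $\Sla$ is exactly the localization form of $(\Dx_wm)(\xx_{\si^I})=1$. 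The only blemish is notational: the permutation you denote by attaching the superscript ``$\mathrm{max}$'' should consistently be the paper's $\si^{\Ima}$.
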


\begin{prop}
\label{detA}
Let \,$n\ge 2$\,. Then
\vvn.1>
\beq
\label{detAIJ}
\det\bigl(A_{\si^I}(\xx_{\si^J})\bigr)_{\IJ\in\Il}=\<
\prod_{1\le i<j\le n}\!(x_j\<-x_i)^{m_\bla}\>,
\vvn-.1>
\eeq
where
\vv-.3>
\be
m_\bla\>=\,\frac{2\>(n-2)\<\>!}{\la_1\<\>!\ldots\la_N\<\>!}\,
\sum_{i=1}^{N-1}\sum_{j=i+1}^N\la_i\>\la_j\,.\kern-2em
\vv-1.4>
\ee
\vv-.2>
\qed
\end{prop}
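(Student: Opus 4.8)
The plan is to turn the orthogonality of Proposition~\ref{orthI} into a matrix factorization and then evaluate the resulting determinants combinatorially. Write $M=\bigl(A_{\si^I}(\xx_{\si^J})\bigr)_{\IJ\in\Il}$ for the matrix in question. Applying Proposition~\ref{DIma} to $f(\xx)=A_{\si^I}(\xx)\,A_{\si_J}(\xx_{\si_0})$ and using $\Dx_{\si^{\Ama}}f=(-1)^{\si_I}\dl_{\IJ}$ from Proposition~\ref{orthI} gives
\[
\sum_{K\in\Il}\frac{A_{\si^I}(\xx_{\si^K})\,A_{\si_J}(\xx_{\si^K\si_0})}{R_\bla(\xx_{\si^K})}=(-1)^{\si_I}\dl_{\IJ}.
\]
Read as matrices this says $M\,N=\mathrm{diag}\bigl((-1)^{\si_I}\bigr)$, where $N$ has $(K,J)$-entry $A_{\si_J}(\xx_{\si^K\si_0})/R_\bla(\xx_{\si^K})$. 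Taking determinants and pulling the row factor $R_\bla(\xx_{\si^K})^{-1}$ out of $N$ yields
\[
\det M\cdot\det\bigl(A_{\si_J}(\xx_{\si^K\si_0})\bigr)_{K,J}=\pm\prod_{K\in\Il}R_\bla(\xx_{\si^K}).
\]

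First I would evaluate the right-hand product. Since $\si^K$ maps the block $\Imi_a$ onto $K_a$, one has $R_\bla(\xx_{\si^K})=\prod_{a<b}\prod_{p\in K_a}\prod_{q\in K_b}(x_p-x_q)$, so $\prod_{K\in\Il}R_\bla(\xx_{\si^K})$ is a product of the linear forms $(x_p-x_q)$. For a fixed ordered pair $p\ne q$ the form $(x_p-x_q)$ occurs once for every $K\in\Il$ that places $p$ in an earlier block than $q$, and counting such $K$ gives the multiplicity $\frac{(n-2)!}{\la_1!\cdots\la_N!}\sum_{a<b}\la_a\la_b=\tfrac12 m_\bla$, independent of the pair. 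Combining the two ordered pairs attached to each $i<j$ then gives $\prod_{K\in\Il}R_\bla(\xx_{\si^K})=\pm\prod_{1\le i<j\le n}(x_j-x_i)^{m_\bla}$.

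Next I would establish the self-duality $\det\bigl(A_{\si_J}(\xx_{\si^K\si_0})\bigr)_{K,J}=\pm\det M$. Using $\si_J=\si^J(\si^{\Ama})^{-1}$ and the reversal symmetry of Schubert polynomials under $\xx\mapsto\xx_{\si_0}$, I would produce a sign and a bijection of $\Il$ under which the entries $A_{\si_J}(\xx_{\si^K\si_0})$ become, up to a global sign, the entries of $M$ evaluated in the reversed variables $\xx_{\si_0}$; since $\det M$ is a product of factors $(x_j-x_i)$, reversing the variables changes it only by a sign. With this, the displayed identity becomes $(\det M)^2=\pm\prod_{i<j}(x_j-x_i)^{m_\bla}$, so $\det M=\pm\prod_{i<j}(x_j-x_i)^{m_\bla/2}$, and the exponent is confirmed directly by the degree count $\deg\det M=\sum_{I\in\Il}|\si^I|=\tfrac12\binom{n}{\bla}\sum_{a<b}\la_a\la_b$ read off from the Poincar\'e polynomial of $S_n/\Sla$. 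A single leading-term comparison (say the term coming from $I=J=\Imi$, where $\si^I=\id$) fixes the sign to $+1$. As an independent check, specializing the Cauchy identity of Proposition~\ref{CauchI} at $\xx\mapsto\xx_{\si^J}$, $\yy\mapsto\xx_{\si^K}$ yields a matrix product in which the diagonal entries on the right are again $\pm R_\bla(\xx_{\si^K})$.

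The step I expect to be the main obstacle is the self-duality: one must reconcile the combinatorics of the minimal representatives $\si^J$ with that of the $\si_J$ and of the longest element $\si_0$, and track every sign, so as to identify $\det\bigl(A_{\si_J}(\xx_{\si^K\si_0})\bigr)$ with $\pm\det M$ precisely. Once that identification and the multiplicity count for $\prod_K R_\bla(\xx_{\si^K})$ are in hand, the remaining bookkeeping --- pulling out row factors, the degree count, and the sign normalization --- is routine.
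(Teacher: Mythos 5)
The paper offers no proof of Proposition~\ref{detA}: like the other statements in Appendix~\ref{Sch} it is recorded without argument, so there is nothing of the authors' to compare your proposal against, and I can only judge it on its own terms. Your strategy is viable. The identity $M\,N=\diag\bigl((-1)^{\si_I}\bigr)$ does follow from Propositions~\ref{orthI} and~\ref{DIma} (you should note that Proposition~\ref{DIma} applies here because $A_{\si^I}(\xx)\,A_{\si_J}(\xx_{\si_0})$ is $\Sla$-invariant, which requires a small check on the second factor), and your multiplicity count giving $\prod_{K\in\Il}R_\bla(\xx_{\si^K})=\pm\prod_{i<j}(x_j-x_i)^{m_\bla}$ is correct. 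The self-duality step you single out as the main obstacle can in fact be bypassed: once $\det M\cdot\det\bigl(A_{\si_J}(\xx_{\si^K\si_0})\bigr)_{K,J}=\pm\prod_{i<j}(x_j-x_i)^{m_\bla}$ is known, unique factorization forces $\det M=c\prod_{i<j}(x_j-x_i)^{e_{ij}}$; since $\xx\mapsto\xx_w$ merely permutes the columns of $M$ (because $w\<\>\si^J=\si^{w(J)}u$ with $u\in\Sla$ and $A_{\si^I}$ is $\Sla$-invariant by Lemma~\ref{Asil}), all $e_{ij}$ coincide, and your degree count $\deg\det M=\sum_{I}|\si^I|=\tfrac12\binom{n}{2}\,m_\bla$ pins down the common exponent; a leading-term comparison then fixes $c$.

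The genuine problem is that the exponent your argument produces is $m_\bla/2$, not the $m_\bla$ of~\Ref{detAIJ}, and you never confront this: you write $\det M=\pm\prod_{i<j}(x_j-x_i)^{m_\bla/2}$ and then declare the exponent ``confirmed,'' although it contradicts the statement you set out to prove. In fact your computation is the correct one and the proposition as printed appears to be off by a factor of two. For $N=2$, $n=2$, $\bla=(1,1)$ one has $\si^{(\{1\},\{2\})}=\id$, $\si^{(\{2\},\{1\})}=s_{1,2}$, $A_{\id}=1$, $A_{s_{1,2}}=x_1$, so the matrix has rows $(1,1)$ and $(x_1,x_2)$ and determinant $x_2-x_1$, whereas \Ref{detAIJ} predicts $(x_2-x_1)^2$ since $m_{(1,1)}=2$; more generally the homogeneity degree $\sum_I|\si^I|=\tfrac12\binom{n}{2}m_\bla$ rules out the exponent $m_\bla$. (The same factor of two surfaces as a tension between \Ref{detPsh} and its specialization \Ref{detPsh0}.) So the correct identity is \Ref{detAIJ} with exponent $m_\bla/2$, equivalently with the leading $2$ removed from the definition of $m_\bla$; your proof, once the equivariance shortcut or the self-duality is actually written out and the sign is settled, establishes that corrected statement, and you should say explicitly that it contradicts the statement as given rather than leaving the discrepancy silent.
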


\section{Leading Terms of Solutions and Gamma Conjecture}
\label{app A}

The formula \Ref{FHI} for the asymptotics of solutions
\,$\bigl(\Psh_I(\zz;h;\qqt\>;\kat)\bigr)_{I\in\Il}$ to the joint system of the
quantum differential equations and associated qKZ difference equations reminds
the statement of the gamma conjecture, see \cite{D1,D2,KKP,GGI,GI,GZ}.

The gamma conjecture \cite{D2,GGI} is a conjecture relating the quantum
cohomology of a Fano manifold $X$ with its topology. The quantum cohomology of $X$
defines a flat quantum connection over $\Cxs$ in the direction of first Chern class
$c_1(X)$. The connection has a regular singular point at $t=0$ and an irregular singular point
at $t=\infty$. The connection has a distinguished (multivalued) flat section $J_X(t)$
defined by Givental in \cite{G1} and called
the J-function. Under certain assumptions, the limit of the J-function:
\vvn.3>
\be
A_X\>:=\>
\lim_{t\to\infty}\frac{J_X(t)}{\langle [\on{pt}], J_X(t)\rangle} \in H^*(x)
\vv.4>
\ee
exists and defines the {\it principal asymptotic class} $A_X$ of $X$.
The gamma conjecture says that $A_X$ equals the gamma class $\hat\Ga_X$ of the
tangent bundle of $X$.

The gamma class of a holomorphic
vector bundle $E$ over a topological space $X$ is the multiplicative characteristic class, in the sense
of Hirzebruch, associated to the power series expansion $\Ga(1 + x) = 1 -\ga x
+\frac{\ga^2+\zeta(2)}2x^2+\dots $
of the gamma function at 1, where $\ga$ is the Euler constant and $\zeta(2)$ is the value at
2 of the
zeta function. In other words, the gamma class is the function that associates to a holomorphic
bundle $E$ over $X$ the cohomology class $\hat\Ga(E) = \prod_i\Ga(1+\tau_i)\in H^*(X;\R)$, where the total
Chern class
of $E$ has the formal factorization $c(E)=\prod_i(1+\tau_i)$ with the Chern roots $\tau_i$ of degree 2.
If $E$ is the tangent bundle
of $X$, we write $\hat\Ga_X$ for $\hat\Ga(E)$.
Its terms of degree $\leq 3$ are given by the formula
\vvn.3>
\begin{align}
\hat\Ga(E)\,=\,{}&
1 -\ga c_1 + \big(-\zeta(2) c_2 + \frac{\zeta(2)+\ga^2}2c_1^2\big)
\\
&\!\!{}+\big(-\zeta(3)c_3 +(\zeta(3) + \ga\zeta(2))c_1c_2 -\frac{2\zeta(3)+
3\ga\zeta(2)+\ga^3}6c_1^3\big) + \ldots{}\,,
\notag
\\[-24pt]
\notag
\end{align}
see \cite{GZ}.

\vsk.3>
Consider the equivariant gamma class of \,$\tfl$\,,
\vvn.4>
\be
\Gmh_\tfls\>=\,\prod_{i=1}^{N-1}\prod_{j=i+1}^N\,\prod_{a=1}^{\la_i}\,
\prod_{b=1}^{\la_j}\,\Ga(1+\ga_{j,b}\<-\ga_{i,a})\,
\Ga(1+\ga_{i,a}\<-\ga_{j,b}\<-h)\,.
\vv.1>
\ee
cf.~\Ref{ch cl}, and the equivariant first Chern classes
\vvn.1>
\,$c_1(E_i)=\sum_{a=1}^{\la_i}\,\ga_{i,a}$\,, \,$i=1\lc N$,
of the vector bundles \,$E_i$ \,over
\,$\tfl$ \,with fibers \,$F_i/F_{i-1}$\,,
see~\Ref{c1}. Theorem~\ref{lead} can be reformulated as follows.

\begin{thm}
[Gamma theorem for $\tfl$]
\label{thm gamma}
For $\ka=1$, the leading term of the asymptotics of the \,$q$-hypergeometric
solutions \,$\bigl(\Psh_I(\zz;h;\qqt\>;\kat)\bigr)_{I\in\Il}$ in \Ref{FHI} is
the product of the equivariant gamma class of $\;\tfl$ and the exponentials of
the equivariant first Chern classes of the associated vector bundles
\,$E_1,\dots,E_N$:
\vvn.1>
\beq
\label{FHE}
\hat \Ga_{T^*\Fla}\prod_{l=1}^N\,
\bigl(\>e^{\<\>\pii\,(\la_i-\<\>n)}\>\qti_i\bigr)^{c_1(E_i)}.
\vv->
\eeq
\qed
\end{thm}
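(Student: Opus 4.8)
The plan is to recognize the asserted identity as a fixed-point reinterpretation of Theorem \ref{lead}, so that no new analysis is required beyond a bookkeeping of torus weights. Taking $\ka=\kat=1$, I would read off from \Ref{FHI} the leading term of $\Psh_I$ as $\qti_i/\qti_{i+1}\to0$, i.e.\ the summand with $\mb=0$:
\begin{equation*}
L_I\,=\,\prod_{i=1}^N\bigl(e^{\pii(\la_i-n)}\qti_i\bigr)^{\sum_{a\in I_i}z_a}\,
\prod_{i=1}^{N-1}\prod_{j=i+1}^N\prod_{a\in I_i}\prod_{b\in I_j}
\Ga(1+z_b-z_a)\,\Ga(1+z_a-z_b-h)\,\cdot\,\Dl_I\,.
\end{equation*}
Here the scalar prefactor depends only on $\zz,h$, while the cohomological content sits in the class $\Dl_I$, characterized by $\Dl_I(\zz_J;\zz)=\dl_{\IJ}$.

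First I would compute the restriction of the equivariant gamma class $\Gmh_{\tfls}$ and of the Chern-class exponentials to the torus fixed point $x_I$. Restriction to $x_I$ substitutes the Chern roots $\GG$ by $\zz_I$, that is, it sends the multiset $\{\ga_{i,1}\lc\ga_{i,\la_i}\}$ to $\{z_a:a\in I_i\}$. Applying this to the definition of $\Gmh_{\tfls}$ turns the double product $\prod_{a=1}^{\la_i}\prod_{b=1}^{\la_j}$ over Chern roots into $\prod_{a\in I_i}\prod_{b\in I_j}$, reproducing exactly the product of Gamma factors appearing in $L_I$. Likewise, since $c_1(E_i)=\sum_{a=1}^{\la_i}\ga_{i,a}$ by \Ref{c1}, one has $c_1(E_i)\big|_{x_I}=\sum_{a\in I_i}z_a$, so the exponential $\prod_{i=1}^N(e^{\pii(\la_i-n)}\qti_i)^{c_1(E_i)}$ restricts to the exponential prefactor of $L_I$. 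Hence the scalar coefficient of $\Dl_I$ in $L_I$ is precisely $\bigl(\Gmh_{\tfls}\prod_{i=1}^N(e^{\pii(\la_i-n)}\qti_i)^{c_1(E_i)}\bigr)\big|_{x_I}$.

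It then remains to assemble these componentwise identities. The classes $(\Dl_I)_{I\in\Il}$ form the fixed-point dual basis of the localized equivariant cohomology: by $\Dl_I(\zz_J;\zz)=\dl_{\IJ}$, each $\Dl_I$ restricts to $1$ at $x_I$ and to $0$ at every other fixed point, so the partition-of-unity (localization) formula reads $\alpha=\sum_{I\in\Il}(\alpha|_{x_I})\,\Dl_I$ for any class $\alpha$. Applying this to $\alpha=\Gmh_{\tfls}\prod_{i=1}^N(e^{\pii(\la_i-n)}\qti_i)^{c_1(E_i)}$ and comparing with the preceding paragraph yields $L_I=(\alpha|_{x_I})\,\Dl_I$ for each $I$, and summing over $I$ gives $\sum_{I\in\Il}L_I=\alpha$. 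This is exactly the assertion \Ref{FHE} that the leading term of the $q$-hypergeometric solutions is the equivariant gamma class of $\tfls$ times the exponentials of the equivariant first Chern classes.

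The only real work is the weight bookkeeping in the second step, where one must verify that the passage from products over Chern roots to products over the blocks $I_i,I_j$ is compatible with the ordering conventions built into both the gamma class and into $\Dl_I=R_I(\GG;\zz)/R(\zz_I)$; the identity $R_I(\zz_J;\zz)=R(\zz_J)\,\dl_{\IJ}$ recorded before Proposition \ref{pdel} is what guarantees the $\Dl_I$ normalization used above. I do not anticipate any analytic difficulty, since the subleading corrections $\Psh_{I,\mb}$ with $\mb\ne0$ carry strictly positive powers of the ratios $\qti_i/\qti_{i+1}$ and so drop out of the leading term by construction in Theorem \ref{lead}.
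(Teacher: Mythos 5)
Your proposal is correct and matches the paper's intent exactly: the paper states Theorem \ref{thm gamma} as a direct reformulation of Theorem \ref{lead} (hence the immediate \qed), and your argument is precisely the spelling-out of that reformulation — restricting $\Gmh_{\tfls}$ and the Chern-class exponentials to the fixed points $x_I$ via $\GG\mapsto\zz_I$, matching the scalar prefactors in \Ref{FHI} at $\kat=1$, and reassembling through the idempotent basis $\Dl_I$ with $\Dl_I(\zz_J;\zz)=\dl_{\IJ}$. No gaps.
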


Similarly formula \Ref{FHIo} can be reformulated as follows.

\begin{thm}
[Gamma theorem for \,$\Fla$\,]
\label{thm gamma fl}
For \,$\ka=1$\,, the leading term of the asymptotics of
the \,$q$-hypergeometric solutions
\,$\bigl(\Psh^\cirs_I(\zz;\pp\<\>;\kat)\bigr)_{I\in\Il}$ \,in~\Ref{Psho} is
the product of the equivariant gamma class of $\;\tfl$ and the exponentials
of the equivariant first Chern classes of the associated vector bundles
\,$E_1,\dots,E_N$:
\vvn.1>
\beq
\label{FHEo}
\hat\Ga_{\Fla}\prod_{l=1}^N\,
\bigl(\>e^{\<\>\pii\,(\la_i-\<\>n)}\>p_i\bigr)^{c_1(E_i)}.
\vv->
\eeq
\qed
\end{thm}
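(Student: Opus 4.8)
The plan is to read Theorem~\ref{thm gamma fl} as the cohomological repackaging of the asymptotic expansion~\Ref{FHIo}, which I may assume. Concretely, I would recognize the scalar prefactor of the leading term of each $\Psh^\cirs_I$ as the restriction to the torus fixed point $x_I$ of the single class displayed in~\Ref{FHEo}, and then use the delta-basis property $\Dl_I(\zz_J;\zz)=\dl_{\IJ}$ of the classes $\Dl_I$ to assemble the family $\bigl(\Psh^\cirs_I\bigr)_{I\in\Il}$ of leading terms into that class. This mirrors the reformulation of Theorem~\ref{lead} as Theorem~\ref{thm gamma}.

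First I would record the fixed-point data. The torus $A$ acts on $\Fla$ with isolated fixed points $x_I$, $I\in\Il$, and at $x_I$ the fiber $E_i=F_i/F_{i-1}$ carries precisely the weights $\{\,z_a : a\in I_i\,\}$; hence the Chern roots $\ga_{i,a}$ restrict to these weights. Consequently the first Chern class $c_1(E_i)=\sum_{a=1}^{\la_i}\ga_{i,a}$, see~\Ref{c1}, restricts to $\sum_{a\in I_i}z_a$ at $x_I$, which is exactly the exponent of $e^{\pii(\la_i-n)}p_i$ in the leading term of~\Ref{FHIo}.

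Next I would identify the gamma factor. The tangent bundle of $\Fla$ is $\bigoplus_{i<j}\Hom(E_i,E_j)$, so its Chern roots are the differences $\ga_{j,b}-\ga_{i,a}$ with $i<j$, and therefore
\be
\hat\Ga_{\Fla}\,=\,\prod_{i=1}^{N-1}\prod_{j=i+1}^N\,\prod_{a=1}^{\la_i}\,
\prod_{b=1}^{\la_j}\,\Ga(1+\ga_{j,b}-\ga_{i,a})\,,
\ee
which is the $h\to\infty$ specialization of $\Gmh_{\tfls}$ obtained by discarding the cotangent factors $\Ga(1+\ga_{i,a}-\ga_{j,b}-h)$. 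Restricting to $x_I$ by the substitution above yields $\prod_{i<j}\prod_{a\in I_i}\prod_{b\in I_j}\Ga(1+z_b-z_a)$, which for $\ka=1$ is the gamma prefactor appearing in~\Ref{FHIo}. Combining the two computations, the restriction of the class~\Ref{FHEo} to $x_I$ equals exactly the scalar multiplying $\Dl_I$ in the leading term of $\Psh^\cirs_I$.

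Finally I would assemble the pieces. Writing the leading term of $\Psh^\cirs_I$ as its scalar prefactor times $\Dl_I$ and summing over $I\in\Il$, the identity $\Dl_I(\zz_J;\zz)=\dl_{\IJ}$ shows that this sum and the class~\Ref{FHEo} have the same restriction to every fixed point $x_J$; by equivariant localization they agree. I expect the only delicate point to be this last identification: one must interpret the family of leading terms as coefficients against the delta basis $\{\Dl_I\}$ and invoke the fact that fixed-point restrictions separate classes in the localized equivariant cohomology. Once~\Ref{FHIo} is granted, everything else is bookkeeping of these restrictions together with the standard description of $T\Fla$.
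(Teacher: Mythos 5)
Your proposal is correct and follows the same route as the paper: the paper proves Theorem~\ref{thm gamma fl} simply by asserting that formula \Ref{FHIo} ``can be reformulated'' as the stated Gamma theorem, and your write-up supplies exactly the intended bookkeeping --- identifying the scalar prefactor in \Ref{FHIo} as the fixed-point restriction of the class \Ref{FHEo} (with $c_1(E_i)|_{x_I}=\sum_{a\in I_i}z_a$ and $T\Fla=\bigoplus_{i<j}\Hom(E_i,E_j)$) and assembling the family via the delta-basis property of the $\Dl_I$. No gaps.
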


\begin{example}
Let \,$N=2$\,, \,$n=2$\,, \,$\bla=(1,1)$\,.
For $\kat=1$, the leading term of the asymptotics of the \,$q$-hypergeometric
solutions for \,$T^*P^1$ is the class
\vvn.4>
\be
(e^{-\pii}\,\qti_1)^{\<\>\ga_{1,1}}\,
(e^{-\pii}\,\qti_2)^{\<\>\ga_{2,1}}
\;\Ga(1+\ga_{2,1}\<-\ga_{1,1})\,\Ga(1+\ga_{1,1}\<-\ga_{2,1}\<-h)
\vv.5>
\ee
and
the leading term of the asymptotics of the \,$q$-hypergeometric solutions
for \,$P^1$ is the class
\vvn.4>
\be
(e^{-\pii}\,p_1)^{\<\>\ga_{1,1}}\,(e^{-\pii}\,p_2)^{\<\>\ga_{2,1}}
\;\Ga(1+\ga_{2,1}\<-\ga_{1,1})\,.
\vv.5>
\ee
\end{example}

\end{document}